\documentclass[twoside]{amsart}

\usepackage{a4}

\usepackage[toc,page]{appendix}

\usepackage{amsmath}

\usepackage{amssymb}

\usepackage{amscd}

\usepackage{amsfonts}

\usepackage{amsthm}

\usepackage{array}%
 
\usepackage[english]{babel}

\usepackage{booktabs}%

\usepackage{bbm}

\usepackage{bm}%

\usepackage{blkarray}

\usepackage{bmpsize}

\usepackage{cancel}  

\usepackage[font=footnotesize]{caption}

\usepackage{color}

\usepackage{enumerate}

\usepackage{enumitem}		

\usepackage{extarrows}%

\usepackage{graphicx}

\usepackage[driverfallback=dvipdfm, colorlinks=true, citecolor=magenta]{hyperref}
\usepackage[capitalise]{cleveref} 

\usepackage{indentfirst}	

\usepackage[modulo,right,pagewise]{lineno} 

\usepackage[scr=boondox]{mathalfa}%

\usepackage{mathdots}%

\usepackage{mathrsfs}

\usepackage{mathtools}		
\usepackage{mathabx}		

\usepackage{multirow}%

\usepackage{siunitx}%

\usepackage{thmtools}

\usepackage{tikz}
\usepackage{tikz-cd}
\usetikzlibrary{fadings}
\usetikzlibrary{patterns}
\usetikzlibrary{shadows.blur}
\usetikzlibrary{shapes}

\usepackage{verbatim}

\usepackage{tabularx}%

\newtheorem{theorem}{Theorem}[section]
\newtheorem{mainthm}{Theorem}

\newtheorem{proposition}[theorem]{Proposition}
\newtheorem{corollary}[theorem]{Corollary}
\newtheorem{corollarymain}[mainthm]{Corollary}
\newtheorem{lemma}[theorem]{Lemma}

\theoremstyle{definition}

\newtheorem{definition}[theorem]{Definition}

\newtheorem{example}[theorem]{Example}

\newtheorem{remark}[theorem]{Remark}

\numberwithin{equation}{section}
\numberwithin{theorem}{section}

\newcommand{\wtilde}[1]{\widetilde{#1}}
\newcommand{\mbb}[1]{\mathbb{#1}}
\newcommand{\msf}[1]{\mathsf{#1}}

\newcommand{\Hy}{\mathbb{H}}

\newcommand{\bbP}{\mathbb{P}}
\renewcommand{\P}{\mathbb{P}}

\newcommand{\R}{\mathbb{R}}

\newcommand{\Z}{\mathbb{Z}}

\newcommand{\calA}{\mathcal{A}}

\newcommand{\calB}{\mathcal{B}}

\newcommand{\calC}{\mathcal{C}}

\newcommand{\calD}{\mathcal{D}}
\newcommand{\Dc}{{\mathcal D}}

\newcommand{\calF}{\mathcal{F}}

\newcommand{\calH}{\mathcal{H}}
\newcommand{\Hc}{{\mathcal H}}

\newcommand{\calK}{\mathcal{K}}
\newcommand{\calL}{\mathcal{L}}

\newcommand{\calM}{\mathcal{M}}

\newcommand{\calO}{\mathcal{O}}

\newcommand{\scrD}{\mathscr{D}}

\newcommand{\scrH}{\mathscr{H}}

\newcommand{\frakm}{\mathfrak{m}}
\newcommand{\frakn}{\mathfrak{n}}

\newcommand{\al}{\alpha}
\renewcommand{\b}{\beta}
\newcommand{\G}{\Gamma}

\newcommand{\gam}{\gamma}
\newcommand{\D}{\Delta}
\newcommand{\Del}{\Delta}

\newcommand{\del}{\delta}
\newcommand{\ep}{\epsilon}
\renewcommand{\epsilon}{\varepsilon}

\newcommand{\Thet}{\Theta}

\renewcommand{\k}{\kappa}

\newcommand{\Lam}{\Lambda}

\newcommand{\lam}{\lambda}
\newcommand{\m}{\mu}
\newcommand{\n}{\nu}

\newcommand{\Sig}{\Sigma}

\newcommand{\sig}{\sigma}

\renewcommand{\P}{\Psi}
\newcommand{\p}{\psi}

\DeclareMathOperator{\PSL}{PSL}

\DeclareMathOperator{\Id}{Id}

\DeclareMathOperator{\Dil}{Dil}

\DeclareMathOperator{\supp}{supp}

\DeclareMathOperator{\CAT}{CAT}

\DeclareMathOperator{\Leb}{Leb}

\DeclareMathOperator{\Par}{Par}

\newcommand{\gpr}[2]{(#1 \mid#2)}

\def\({\left(}
\def\){\right)}

\newcommand{\bs}{\backslash}
\newcommand{\Ra}{\Rightarrow}
\newcommand{\ra}{\rightarrow}

\newcommand{\longhookrightarrow}{\lhook\joinrel\relbar\joinrel\rightarrow}

\newcommand{\wh}{\widehat}
\newcommand{\wt}{\widetilde}
\newcommand{\ov}[1]{\overline{#1}}

\newcommand{\vp}{\mathbf{p}}

\newcommand{\vv}{\mathbf{v}}

\newcommand{\vw}{\mathbf{w}}
\newcommand{\vvu}{\mathbf{u}}
\newcommand{\vx}{\mathbf{x}}
\newcommand{\vy}{\mathbf{y}}
\newcommand{\vz}{\mathbf{z}}

\newcommand{\UG}{\msf{U}\G}
\newcommand{\BMS}{\msf{BMS}}

\DeclareMathOperator{\MC}{MC}
\newcommand{\pG}{\partial \G}

\newcommand{\ppG}{\partial^2 \G}

\newcommand{\wtg}{\wt{\msf{g}}}
\newcommand{\vg}{\msf{g}}
\newcommand{\vc}{\msf{c}}
\newcommand{\sfP}{\mathsf{\Psi}}

\begin{document}

\title[A geometric correspondence for reparameterizations]{A geometric correspondence for reparameterizations of geodesic flows}

\author{Stephen Cantrell}
\address{Mathematical Institute, University of St Andrews, North Haugh, St Andrews, KY16 9SS, Scotland}
\email{sjc33@st-andrews.ac.uk}

\author{Dídac Martínez-Granado}
\address{Department of Mathematics, University of Luxembourg, Av. de la Fonte 6, L-4364 Esch-sur-Alzette, Luxembourg}
\email{didac.martinezgranado@uni.lu}

\author{Eduardo Reyes}
\address{Departamento de Matem\'aticas, Pontificia Universidad Cat\'olica de Chile}
\email{eduardoreyes@uc.cl}


\begin{abstract}
For any non-elementary, torsion-free hyperbolic group, we provide a correspondence between the left-invariant Gromov-hyperbolic metrics on the group that are quasi-isometric to a word metric, and continuous reparameterizations of the associated Mineyev's flow space. From this correspondence, we produce the first examples of continuous reparameterizations of geodesic flows on negatively curved manifolds with all periodic orbits having integer lengths. For surface and free groups, this also yields isometric actions on Gromov-hyperbolic spaces on which loxodromic elements are precisely the non-simple elements. Key ingredients in our proof are an analysis of the geometry of Mineyev's flow space (such as the metric-Anosov property recently proven by Dilsavor), and the density of Green metrics in the moduli space of (symmetric) metrics on the group.
\end{abstract}

\maketitle


\section{Introduction}
Let $M$ be a closed, negatively curved Riemannian manifold with fundamental group $\G$. Write $\msf{g}=(\msf{g}_t)_{t\in \R}$ for the geodesic flow on the unit tangent bundle $\msf{T}^1 M$. In this work we construct a correspondence between 
the following objects:
\begin{itemize}
    \item \emph{Reparameterizations} of the geodesic flow, i.e. continuous flows on $\msf{T}^1 M$ whose oriented flow lines coincide with those of $\msf{g}$.
    \item Left-invariant pseudometrics\footnote{Through this paper pseudometrics are not necessarily symmetric. They are nonnegative functions $d:X\times X\ra \R$ satisfying $d(x,x)=0$ and the triangle inequality $d(x,z)\leq d(x,y)+d(y,z)$.}  on $\G$ which are Gromov-hyperbolic and quasi-isometric to a word metric.
\end{itemize}
We consider these objects up to natural equivalence relations (coming from scaling conditions and flow conjugacy) and write $\Par(\msf{T}^1 M,\msf{g})$ for the collection of (classes of) reparameterizations and $\scrH_\G^{++}$ for the collection of (classes of) pseudometrics.
 Our first main result, stated as \Cref{mainthm:dictionary_manifold} below, establishes a bridge between these spaces. Consequently, we uncover new phenomena and examples, both in the setting of reparameterizations and that of pseudometrics. 

As a motivating example, suppose that $M$ is a surface and let $\rho:\G\ra \PSL(n,\R)$ be a \emph{Hitchin} representation (see for instance, the survey \cite{wienhard.ICM}). Then, setting
\begin{equation}\label{eq:dHitchin}
    d_{\rho}(g,h) \coloneq \log \sigma_1(\rho(g^{-1}h)) \quad \text{ for }g,h\in \G,
\end{equation}
where $\sig_1$ denotes the first singular value, defines a pseudometric on $\G$ that is typically asymmetric for $n\geq 3$. By the work of Sambarino \cite{sambarino.quantitative,sambarino.orbital}, there also exists a \emph{H\"older} reparameterization $\sfP^\rho=(\sfP_t^\rho)_{t\in \R}$ of $(\msf{T}^1M,\msf{g})$ that reflects the geometry of $d_{\rho}$. For an arbitrary reparameterization $\sfP$, its \emph{period function} $\msf{L}_\sfP:\G \ra \R$ maps a non-trivial element $g\in \G$ to the time $\sfP$ takes to run through the unique closed orbit in the free homotopy class of $g$. In the current case, the reparameterization $\sfP^\rho$ satisfies
\[
\msf{L}_{\sfP^\rho}[g]=\log \lam_1(\rho(g)) \quad \text{ for all } g\in \G,
\]
where $\lam_1$ denotes the spectral radius.

A similar duality holds for the more general class of \emph{Anosov representations} of hyperbolic groups into real algebraic semi-simple Lie groups \cite{labourie.anosov,guichard-wienhard}. In this case, the work of Bridgeman--Canary--Labourie--Sambarino \cite{BCLS.pressure,BCLS.rootflows} associates H\"older reparameterizations to several pseudometrics induced by these representations, where, for an arbitrary (non-elementary, torsion-free) hyperbolic group $\G$, the geodesic flow is replaced by \emph{Mineyev's flow space} $(\UG,\msf{g})$ \cite{gromov.hypgroups,mineyev.flow}. See also \Cref{sec:mineyevflow}.

The H\"older regularity of such reparameterizations allows for the implementation of tools from thermodynamic formalism. Using such techniques it is possible to deduce results of various flavours for Anosov representations. These include counting asymptotics ~\cite{sambarino.quantitative,sambarino.orbital}, existence of pressure metrics and analyticity of Hausdorff dimensions of limit sets ~\cite{BCLS.pressure,BCLS.rootflows}, entropy rigidity \cite{PS17:Entropy}, and mixing results~\cite{sambarino.orbital,chow-sarkar.local,chow-sarkar.exp,DMS.I}. 

The space $\Par(\msf{T}^1M,\msf{g})$ is defined as the space of continuous reparameterizations of the flow $\msf{g}$, up to weak conjugacy and rescaling (see \Cref{def:PAR} for the precise notions). Since weak conjugacy implies conjugacy among H\"older reparameterizations, the works \cite{BCLS.pressure,CDPW} imply that many subspaces of Anosov representations (such as Hitchin components) embed into $\Par(\msf{T}^1M,\msf{g})$. When $M$ is a closed hyperbolic surface, this allows us to interpret $\Par(\msf{T}^1 M,\msf{g})$ as the  ``highest Teichm\"uller space'', a perspective discussed by Tholozan \cite{tholozan} and based on ideas by Sullivan \cite{Sullivan91:LinkingUniversalities}. 

Other than Tholozan's work in dimension 2, not much is known about the geometry of $\Par(\msf{T}^1M,\msf{g})$ for general $M$, especially beyond the H\"older setting. For instance, it is unclear what functions can be the period function of a continuous reparameterization, whereas Liv\v{s}ic's theorem implies that the period of a H\"older reparameterization $\sfP$ is \emph{non-arithmetic}. That is, no $a\in \R$ satisfies $\msf{L}_{\sfP}[g]\in a\Z$ for all $g\in \G$ (see also \Cref{ex:nonarithmetic}). 

On the metric side, the geometry of the space $\scrH_\G^{++}$ has been studied in recent years \cite{oregonreyes.metric,cantrell-reyes.manhattan,CRS}. 
 To be more precise, let $\G$ be a hyperbolic group with identity element $o$, and let $\calH_\G^{++}$ be the space of left-invariant functions $\G\times \G \ra \R$ that are
\begin{itemize}
    \item left-invariant;
    \item Gromov hyperbolic (in the sense of \Cref{eq:gromovhyp}); and,
    \item quasi-isometric to a word metric for a finite generating set.
\end{itemize}
See \Cref{sec:preliminaries} and \Cref{def:HMP} for the relevant concepts involved. Elements of $\calH_\G^{++}$ are called \emph{(properly positive) hyperbolic metric potentials}, a notion introduced in \cite{CRS}. This space is an asymmetric extension of the space $\calD_\G\subset \calH_\G^{++}$ of \emph{symmetric} pseudometrics, originally introduced by Furman \cite{furman}. The space $\scrH^{++}_\G$ is then the set of equivalence classes $[\p]$ for $\p\in \calH^{++}_\G$, where $\p,\p'\in \calH^{++}_\G$ are equivalent if the function $|\p-c\p'|$ is bounded for some real number $c>0$. We refer to $\scrH^{++}_\G$ as the space of  \emph{(properly positive) metric structures}. We let $\scrD_\G\subset \scrH_\G^{++}$ be the subspace induced by $\calD_\G$.

Any proper and cobounded isometric action of $\G$ on a geodesic, symmetric metric space induces a metric structure in $\scrD_\G$: if $\G$ acts on $X$ this way, then the metric structure induced by the pseudometric $d(g,h)=d_X(gx,hx)$ on $\G$ is independent of the base point $x\in X$. Natural asymmetric examples in $\scrH_\G^{++}$ arise from Anosov representations. For  instance, the pseudometric $d_{\rho}$ from \Cref{eq:dHitchin} belongs to $\calH_\G^{++}$ \cite[Lemma~7.1]{cantrell-tanaka.invariant}, and the same holds for several other pseudometrics related to Anosov representations~\cite[Corollary~4.8]{dey-kapovich},\cite[Corollary~6.8]{CRS},\cite[Proposition~9.7]{KMG26:Horoboundary}. 

Any hyperbolic metric potential $\p\in \calH_\G^{++}$ is within bounded distance from a genuine pseudometric and so has a well-defined \emph{stable translation length} $\ell_\p:\G \ra \R$ given by
\[\ell_\p[g]=\lim_{k\to \infty}\frac{\p(o,g^k)}{k} \quad \text{ for }g\in \G.\]
The homothety class of $\ell_{\p}$ completely determines the point $[\p]\in \scrH_\G^{++}$ (see \Cref{subsec:hypgroups}), and it can be used to define a natural metric $\Del$ on $\scrH^{++}_\G$ (\Cref{eq:formulaDel}), extending the symmetrized Thurston distance on Teichm\"uller space when $\G$ is a hyperbolic surface group. By \cite{BCLS.pressure,CDPW}, we also obtain embeddings of several subspaces of 
Anosov representations into $\scrH_\G^{++}$.

\subsection{A dictionary between metrics and reparameterizations}\label{subsec:introdictionary} Our first theorem gives a map from $\Par(\msf{T}^1 M,\msf{g})$ into $\MC(\scrH_\G^{++})$, the metric completion of $\scrH_\G^{++}$ with respect to $\Del$. This map extends the embeddings of spaces of Anosov representations into both spaces. In the statement below, $\MC(\msf{X})$ denotes the completion of a subspace $\msf{X}\subset \scrH_\G^{++}$ (see also \Cref{def:MCHandD} for the related spaces $\MC(\calD_\G)$, $\MC(\calH_\G^{++})$, and the translation lengths for $\p\in \MC(\calH_\G^{++})$). Also, we let $\Par^s(\msf{T}^1M,\msf{g})$ and $\Par^h(\msf{T}^1M,\msf{g})$ denote the subspaces of $\Par(\msf{T}^1M,\msf{g})$ represented by the reparameterizations that are reversible and H\"older respectively; see \Cref{subsec:reparameterizations} for the details.

\begin{mainthm}[Dictionary metrics-reparameterizations,  manifold case]\label{mainthm:dictionary_manifold}
    Let $M$ be a closed negatively curved Riemannian manifold with fundamental group $\G$, and let $\msf{g}$ be the geodesic flow on its unit tangent bundle $\msf{T}^1M$. Then there exists an injection \begin{equation*}
        \Thet:\Par(\msf{T}^1M,\msf{g}) \ra \MC(\scrH^{++}_\G)
    \end{equation*} satisfying the following:
    \begin{enumerate}
    \item (Duality) If $\sfP$ is a reparameterization of $\vg$, then $\Thet([\sfP])=[\psi]$ for some $\psi\in \MC(\calH^{++}_\G)$ such that 
    \[\msf{L}_{\sfP}[g]=\ell_{\p}[g] \quad \text{ for any non-trivial }g\in \G.\]
    \item (Bijection on symmetric metrics) $\Thet(\Par^s(\msf{T}^1M,\msf{g}))=\MC(\scrD_\G)$.
    \item (H\"older reparameterizations in the interior) $\Thet(\Par^h(\msf{T}^1M,\msf{g}))\subset \scrH_\G^{++}$.
    \end{enumerate}
\end{mainthm}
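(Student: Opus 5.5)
We reduce to Mineyev's flow space and build $\Thet$ from an approximation of periods by Green metrics. Because $\G$ acts properly and cocompactly on $\wt M$, the geodesic flow $(\msf{T}^1M,\vg)$ is orbit equivalent, via a $\G$-equivariant map isotopic to the identity along flow lines, to the quotient of $(\UG,\vg)$. Reparameterizations then correspond bijectively, up to weak conjugacy and rescaling, to $\G$-invariant reparameterizations of Mineyev's flow on $\UG$. We may therefore work entirely with $\UG$ and its identification with $\partial^2\G\times\R$.

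\textbf{Step 1: a metric from a reparameterization.} Let $\sfP$ be a reparameterization. It is given by a positive continuous cocycle, i.e. a continuous function $f>0$ on the compact quotient. Fix a basepoint $x_0\in\UG$. For $g,h\in\G$ we would like to set
\[
\p_\sfP(g,h)=\text{the $\sfP$-time needed to travel from near $gx_0$ to near $hx_0$ along a flow line joining them.}
\]
Such lines exist only approximately. We use a bi-infinite flow line passing within bounded distance of both points, which is possible by quasi-convexity and the metric-Anosov property of Mineyev's flow.

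Since $f$ is only continuous, we do not expect $\p_\sfP$ to be quasi-isometric to a word metric with bounded additive error. Instead, uniform continuity of $f$ lets us approximate $f$ uniformly by Hölder potentials $f_n$. For Hölder positive $f_n$, the construction of Bridgeman--Canary--Labourie--Sambarino (equivalently, the pressure-zero/Patterson--Sullivan construction) gives $\p_n\in\calH_\G^{++}$ with $\ell_{\p_n}[g]=\int_{\gamma_g}f_n=\msf{L}_{\sfP_n}[g]$. This already proves (3): in the Hölder case we may take $f_n=f$.

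From $\|f_n-f_m\|_\infty\to 0$ we deduce
\[
|\ell_{\p_n}-\ell_{\p_m}|\le \|f_n-f_m\|_\infty\,\ell_{\vg},
\]
which makes $[\p_n]$ $\Del$-Cauchy. We then define $\Thet([\sfP])$ as its limit in $\MC(\scrH_\G^{++})$. Translation lengths pass to the limit, which gives the duality (1). Well-definedness follows from two facts: weakly conjugate reparameterizations have equal periods, and the point of $\MC(\scrH_\G^{++})$ is determined by the homothety class of $\ell$.

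\textbf{Injectivity.} Suppose two reparameterizations have proportional period functions. We must show they are weakly conjugate, which is a Livšic-type statement for merely continuous cocycles. Our approach uses the metric-Anosov property and closing lemma of $(\UG,\vg)$, following Dilsavor. Equal periods on all closed orbits should give equality of the integrals of $f$ and $f'$ along every orbit segment up to a sublinear error. This in turn should allow an orbit-equivalence on the universal cover that is equivariant and whose time change $u$ satisfies $f-f'=\partial_\vg u$ in the weak sense defining weak conjugacy. We expect this step to be the main obstacle, because the Hölder Livšic theorem is unavailable.

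\textbf{Step 2: symmetric metrics.} Reversible reparameterizations have symmetric $f$ (invariant under the flip $v\mapsto -v$). For these, $\ell_{\p_n}[g]=\ell_{\p_n}[g^{-1}]$, and symmetrizing $\p_n$ changes it by a bounded amount, so $\Thet(\Par^s)\subset\MC(\scrD_\G)$.

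For surjectivity we use the density of Green metrics in $\scrD_\G$. Each Green metric $d_\mu$ of a symmetric finitely supported random walk has a reversible reparameterization $\sfP_\mu$ with period $\ell_{d_\mu}$, obtained from the Busemann/Martin cocycle. Given a point of $\MC(\scrD_\G)$, we approximate it by Green metrics $d_{\mu_k}$ converging in $\Del$. The $\Del$-convergence gives uniform multiplicative control on periods. Our plan is to upgrade this to uniform convergence of suitably normalized cocycles $f_{\mu_k}$, which are cohomologous modulo coboundaries. We would try to do this by choosing representatives through a uniform averaging (smoothing along flow lines over a fixed time window) and applying Arzelà--Ascoli on the compact quotient.

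The limit cocycle is continuous, positive and flip-invariant, so it defines a reversible reparameterization whose image under $\Thet$ is the given point. This compactness argument is the second delicate point of the proof.
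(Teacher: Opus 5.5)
The overall architecture matches the paper: uniform approximation gives a $\Del$-Cauchy sequence, symmetrization handles reversibility, and density of Green metrics drives surjectivity. However, the two steps you flag as delicate are genuine gaps. Both are closed by the same soft functional-analytic fact, not by hard dynamics.

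\textbf{Injectivity.} You do not prove it, and the route you sketch cannot be carried out as stated. For merely continuous potentials, weakly cohomologous does not imply cohomologous, so no Liv\v{s}ic theorem will produce $u$ with $f-f'=\partial_\vg u$. Weak conjugacy is defined as a uniform limit precisely so that this step becomes soft.

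First, if $\msf{L}_\sfP$ and $\msf{L}_{\sfP'}$ are proportional, rescale so they agree. Then $\int f\,d\frakm=\int f'\,d\frakm$ for every $\frakm\in\calM_\vg(\UG)$, by density of periodic-orbit measures (in the paper this comes from \Cref{lem:homeoCM} and density of rational currents). The relevant statement is about invariant measures, not about sublinear errors along orbit segments.

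Second, by Hahn--Banach and Riesz, the uniform closure of $\{\partial_\vg u\}$ in $C(\UG)$ is the annihilator of the $\vg$-invariant signed measures, whose Jordan parts are again invariant. Hence $f'=\lim_n (f+\partial_\vg u_n)$ uniformly. Each $f+\partial_\vg u_n$ defines a reparameterization conjugate to $\sfP$, and these converge to $\sfP'$ uniformly on compacts. This is \Cref{lemm:weak.conj} (Tholozan), which together with \Cref{cor:DeltaformulaMC} gives well-definedness and injectivity at once.

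\textbf{Surjectivity in (2).} The Arzel\`a--Ascoli step has no source of equicontinuity. Averaging along flow lines over a fixed window improves regularity only in the flow direction; transversally, the averaged function has the same modulus of continuity as $f_{\mu_k}$. Nothing controls the H\"older data of the Martin-cocycle potentials uniformly in $k$. Any mechanism giving uniform H\"older control would in fact be self-defeating: for $d$ a word metric, the limit would be a H\"older potential with arithmetic periods, contradicting Liv\v{s}ic (\Cref{ex:nonarithmetic}).

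The missing tool is the quantitative form of the same Hahn--Banach fact, the approximate Ma\~n\'e lemma: if $\sup_{\frakm}|\int G\,d\frakm|\le\epsilon$, then $G$ is cohomologous to some $G'$ with $\|G'\|_\infty\le 2\epsilon$. The paper combines this with three ingredients.
\begin{enumerate}
\item After normalizing growth rates, $\sup_\frakm|\int F_m\,d\frakm-\int F_k\,d\frakm|\le c\,\Del([d_m],[d_k])$.
\item Pass to a subsequence with summable $\Del([d_{m+1}],[d_m])$, apply the lemma to $G_m=F_{m+1}-F'_m$, and set $F'_{m+1}=F'_m+G'_m$. The $F'_m$ are cohomologous to $F_m$ and Cauchy in sup norm.
\item One initial application makes $F'_1$ bounded below by a positive constant, so the limit stays positive.
\end{enumerate}
No compactness is needed.

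A minor point: for H\"older $f_n$, BCLS does not give you an element of $\calH_\G^{++}$. What does is your discarded first construction, the supremum of $\int f$ over flow segments from $gK$ to $hK$. It lies in $\calH_\G^{++}$ whenever $f$ has the Bowen property, which H\"older functions do. For general continuous $f$ the obstruction is the Gromov-product control, not the quasi-isometry.
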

\begin{table}[ht]
\centering
\[
\begin{array}{ccc}
\Par(\mathsf{T}^1M,\msf{g}) 
& \longhookrightarrow 
& \MC(\scrH^{++}_\Gamma) \\
\scriptstyle \text{reparameterizations} && \scriptstyle \text{pseudometrics} \\[0.4em]
\cup && \cup \\[0.4em]
\Par^s(\mathsf{T}^1M,\msf{g}) 
& \longleftrightarrow 
& \MC(\scrD_\Gamma) \\
\text{\scriptsize
\begin{tabular}{c}
reversible \\
reparameterizations
\end{tabular}}
&& 
\scriptstyle \text{symmetric pseudometrics}
\end{array}
\]
\caption{Inclusions given by \Cref{mainthm:dictionary_manifold}.}
\end{table}

That is, for any reparameterization $\sfP$ of the geodesic flow there is a pseudometric $\p$ on $\G$ whose stable length $\ell_\p$ corresponds to the periods on closed orbits $\msf{L}_\sfP$ of $\sfP$. 
This result is especially notable in that it does not require any a priori regularity of the reparameterization, extending beyond the H\"older framework. In fact, there are at least two known families of metric structures in $\scrH_\G^{++}$ that cannot be encoded by H\"older reparameterizations.

\begin{example}\label{ex:nonarithmetic}
    Suppose $d$ is a left-invariant metric on $\G$ with \emph{arithmetic} translation length: there exists $a\in \R$ such that $\ell_d[g]\in a\Z$ for any $g\in \G$. Then \emph{no} H\"older reparameterization  $\sfP$ of $\msf{g}$ satisfies $\msf{L}_{\sfP}=\ell_d$, as this would contradict Liv\v{s}ic's theorem \cite{livsic} (cf.~\cite[Theorem~1.3]{GMM} and \cite[Theorem~1.4]{cantrell.mixing}). This is the case when $d$ is a word metric for a finite generating set of $\G$ \cite{cannon} or $d$ is induced by the geometric action of $\G$ on a $\CAT(0)$ cube complex with the combinatorial metric \cite{haglund}.
\end{example}

\begin{example}Suppose $M$ is a surface and  $\mu$ is a filling (symmetric) \emph{geodesic current} on $\G$ \cite{bonahon.annals}; see also \Cref{subsec:currents}. By \cite{BIPP, cantrell-reyes.manhattan, derosa-martinezgranado},  there exists a pseudometric $d_\mu\in \calD_\G$ such that $\ell_{d_\mu}[g]=i(\mu,\al_{[g]})$ for all $g\in \G$, where $i$ denotes Bonahon's intersection number and $\al_{[g]}$ is the rational geodesic current associated to $g$. It is not clear for which $\mu$ we can recover $\ell_{d_\mu}$ as the period function of a H\"older reparameterization. On the one hand, there is a dense subset of filling currents whose length spectra arise as periods of H"older reparameterizations~\cite{JyothisMartinezGranado:NegCurvCurrents}; on the other hand, weighted filling closed curves are dense among filling geodesic currents, and each induces an arithmetic translation length function.
\end{example}

Since these pseudometrics induce points in $\scrD_\G$, we can,  as a consequence of \Cref{mainthm:dictionary_manifold}~(2), relate them to continuous reparameterizations of the geodesic flow.

\begin{corollarymain} \label{cor:combinatorial_examples}
   Let $M$ be and $\G$ be as in \Cref{mainthm:dictionary_manifold}, and let $\ell:\G\ra \R$ be either:
   \begin{enumerate}
       \item the stable translation length of a word metric for a finite generating set;
       \item the stable translation length for a geometric action on a $\CAT(0)$ cube complex with the combinatorial metric; or
       \item the intersection pairing with a filling geodesic current when $M$ is a surface.
   \end{enumerate}
  Then there exists a reparameterization $\sfP$ of the geodesic flow on $\msf{T}^1M$ such that
   \[\msf{L}_{\sfP}[g]=\ell[g] \quad \text{ for any non-trivial }g\in \G.\]
\end{corollarymain}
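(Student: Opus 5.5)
The plan is to obtain the corollary directly from \Cref{mainthm:dictionary_manifold}~(2), which identifies $\Thet(\Par^s(\msf{T}^1M,\msf{g}))$ with all of $\MC(\scrD_\G)$, together with the duality in \Cref{mainthm:dictionary_manifold}~(1). It is therefore enough to show, in each of the three cases, that $\ell=\ell_d$ for some $d\in\calD_\G$. Then $[d]\in\scrD_\G\subset\MC(\scrD_\G)$, so surjectivity gives a reversible reparameterization $\sfP$ with $\Thet([\sfP])=[d]$.

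\textbf{Producing $d$ in each case.} In case (1), I would take $d$ to be the word metric $d_S(g,h)=|g^{-1}h|_S$. It is symmetric, left-invariant, quasi-isometric to a word metric, and Gromov hyperbolic because $\G$ is hyperbolic. Hence $d_S\in\calD_\G$, and its stable translation length is $\ell$ by definition. In case (2), I would fix a vertex $x$ of the cube complex $X$ and set $d(g,h)=d_X(gx,hx)$ with the combinatorial ($\ell^1$) metric. The action is geometric, so this orbit pseudometric is quasi-isometric to a word metric by Švarc--Milnor. Gromov hyperbolicity transfers from $\G$ through this quasi-isometry, since $X$ is a geodesic space quasi-isometric to a hyperbolic group. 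So $d\in\calD_\G$ with $\ell_d=\ell$. In case (3), I would use $d_\mu\in\calD_\G$ from \cite{BIPP, cantrell-reyes.manhattan, derosa-martinezgranado}, quoted in the example above, which satisfies $\ell_{d_\mu}[g]=i(\mu,\al_{[g]})$.

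\textbf{Matching normalizations.} By \Cref{mainthm:dictionary_manifold}~(1), $\Thet([\sfP])=[\psi]$ for some $\psi$ with $\msf{L}_\sfP=\ell_\psi$. This $\psi$ represents the same class as $d$, and the class fixes $\ell_\psi$ only up to homothety, so $\ell_\psi=c\,\ell_d$ for some $c>0$. I would then replace $\sfP$ by the constant time-rescaling $\sfP'_t=\sfP_{t/c}$. This is again a reparameterization of $\msf{g}$, and its periods are $c^{-1}\msf{L}_\sfP=\ell_d=\ell$ on every non-trivial $g$, as required.

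\textbf{Main obstacle.} The step needing the most care is this last normalization. One must check that the equivalence relation defining $\Par$ includes constant rescaling and that $\Thet$ is compatible with it, so that a representative with exactly the prescribed periods exists, not merely periods proportional to $\ell$. Membership in $\calD_\G$ itself, in particular hyperbolicity in case (2), follows from quasi-isometry invariance of hyperbolicity for geodesic spaces and is routine.
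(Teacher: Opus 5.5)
Your proposal is correct and follows the paper's own route. Each $\ell$ is $\ell_d$ for some $d\in\calD_\G$ (the word metric, the orbit metric on the cube complex, or $d_\mu$), and the corollary is then read off from \Cref{mainthm:dictionary_manifold}~(2). In fact the paper's proof of (2) builds $\sfP$ directly from a positive $\iota$-invariant $F$ dual to $d$, so $\msf{L}_\sfP=\ell_d$ exactly and no rescaling is needed.

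One small slip: if $\msf{L}_\sfP=c\,\ell_d$, the correct rescaling is $\sfP'_t=\sfP_{ct}$ (the paper's $c\sfP$). Your choice $\sfP_{t/c}$ multiplies periods by $c$ rather than dividing by $c$.
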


By rescaling the reparameterization in items (1) or (2) above, we obtain continuous reparameterizations of the geodesic flow of a closed negatively curved manifold \emph{all whose periods are integers}. To the best of the authors' knowledge, these are the first examples of reparameterizations with this property. It is worth remarking that all continuous reparameterizations of the geodesic flow are weak mixing by a result of Arnol'd, see \cite[Section 23]{anosov}.

Our dictionary from \Cref{mainthm:dictionary_manifold} and \Cref{cor:combinatorial_examples} also hold for arbitrary non-elementary, torsion-free hyperbolic group when considering reparameterizations of Mineyev's flow space $(\UG,\msf{g})$; see \Cref{thm:dictionary}. This dictionary also allows us to use the dynamical structure of the space of reparameterizations to deduce new properties of the space $\scrH_\G^{++}$.
For instance, we deduce non-completeness of $\scrH_\G^{++}$, justifying the consideration of the metric completion in \Cref{mainthm:dictionary_manifold}.

\begin{mainthm}\label{thmmain.H_Gnotcomplete}
    Let $\G$ be a non-elementary, torsion-free hyperbolic group. Then $(\scrH^{++}_\G,\Del)$ is not complete. 
\end{mainthm}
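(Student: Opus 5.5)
We will produce a Cauchy sequence in $(\scrH^{++}_\G,\Del)$ whose limit in $\MC(\scrH^{++}_\G)$ does not come from any metric structure. Recall that $\Del$ is defined through the translation lengths, roughly as a log-ratio of the best Lipschitz constants:
\[\Del([\p],[\p'])=\log\Big(\sup_g \tfrac{\ell_\p[g]}{\ell_{\p'}[g]}\Big)+\log\Big(\sup_g \tfrac{\ell_{\p'}[g]}{\ell_{\p}[g]}\Big).\]
Hence $\Del$-Cauchy sequences are exactly those whose length functions converge uniformly in ratio, up to scaling. Every $[\p]\in\scrH^{++}_\G$ has a positive exponential growth rate $v_\p$, and $v_\p$ is continuous, in fact $\log$-Lipschitz, in $\Del$ once we normalise. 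So $\MC(\scrH^{++}_\G)$ contains points that are limits of a length function degenerating to zero on some infinite family of conjugacy classes. Such a limit cannot be properly positive: a properly positive structure has $\ell_\p[g]\geq c|g|_{\rm stable}$ for some $c>0$.

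\textbf{Construction.} We use the dictionary (\Cref{thm:dictionary} for $(\UG,\msf{g})$). Fix a reference reparameterization and choose a continuous, nonnegative, reversible speed function $f$ on $\UG$ that vanishes only on a closed invariant set $K$. We take $K$ to be a single closed $\msf{g}$-orbit, or a point on one. We arrange that $\int_{\gamma} f>0$ for every closed orbit $\gamma$. This holds automatically if $f$ vanishes only at a point $x_0$ that is not periodic, or if $f$ vanishes on a nonperiodic point of a minimal set, since no periodic orbit lies entirely in $\{f=0\}$. The function $f$ defines a reversible reparameterization $\sfP$, with periods $\msf{L}_\sfP[g]=\int_{\gamma_g} f$. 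Using a flow-box near $x_0$, we choose closed orbits $\gamma_n$ that spend a proportion of time tending to $1$ near $x_0$, via the closing lemma applied to long orbit segments shadowing a recurrent piece close to $x_0$.

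\textbf{Failure of proper positivity.} With these orbits, $\msf{L}_\sfP[g_n]/\ell_{d_w}[g_n]\to 0$, where $d_w$ is a word metric. Therefore no $\p\in\calH_\G^{++}$ has $\ell_\p\propto\msf{L}_\sfP$, because $\ell_\p\ge c\,\ell_{d_w}$ for $c>0$.

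\textbf{Completing the argument.} Approximate $f$ uniformly by $f_n=\max(f,1/n)$. These are positive and continuous, so by \Cref{mainthm:dictionary_manifold}(2) they give classes in $\MC(\scrD_\G)$. We want them in $\scrD_\G$ itself. To get this, smooth $f_n$ to Hölder functions; these are reversible after symmetrizing. Then \Cref{mainthm:dictionary_manifold}(3) puts them in $\scrH^{++}_\G$. Uniform convergence of speeds $f_n\to f$ does not give ratio-convergence of periods near zero, so we need more care. We use instead $\sup_\gamma \int_\gamma f_n/\int_\gamma f_m$. This ratio is controlled because $f\ge0$ and each $\gamma$ spends a definite fraction of time where $f\ge\delta$. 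However, the sequence $\gamma_n$ above violates exactly this control.

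The main obstacle is this step: the degenerate limit is at infinite $\Del$-distance unless the degeneration is mild. We would therefore try the opposite strategy. Construct a Cauchy sequence of Hölder reversible structures converging in $\Del$ to a continuous reversible reparameterization $\sfP$ that is not weakly conjugate to any Hölder one. An example is an arithmetic $\msf{L}_\sfP$, such as a word-metric-type length realised via \Cref{cor:combinatorial_examples}. Ratio-convergence then follows from uniform convergence of strictly positive speeds, since $f\ge c>0$. Its $\Del$-limit $\Thet([\sfP])$ lies in $\MC(\scrD_\G)$. It remains to show it lies outside $\scrH^{++}_\G$. For this we must exclude that $\ell_{d_w}$ is realised by another element of $\scrH^{++}_\G$ in the same $\Del$-class. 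That is false in general, since word metrics lie in $\scrD_\G$. So instead we pick a limit whose length function is not realised by any pseudometric, for example by violating the growth–length relations that hold for properly positive $\p$. This last identification is where we expect the real difficulty.
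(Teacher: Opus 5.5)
Your proposal does not reach a proof. It stops at exactly the step that carries the theorem: an invariant that separates a $\Del$-limit from every point of $\scrH^{++}_\G$.

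\textbf{The first strategy cannot work at all.} The problem is not only that the degeneration must be ``mild''. A $\Del$-Cauchy sequence has length functions that converge uniformly in ratio after normalisation. So the limit length function stays within a bounded ratio of $\ell_{\p_1}$, and hence of a word length. This is \Cref{prop:MC(H)}~(2): every point of $\MC(\scrH^{++}_\G)$ is represented by a pseudometric quasi-isometric to a word metric. Lengths degenerating relative to $\ell_{d_w}$ therefore never occur in the completion. Your inference from $v_\p>0$ to the existence of degenerate points in $\MC(\scrH^{++}_\G)$ is a non sequitur.

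\textbf{The second strategy has the right Cauchy mechanism, but no obstruction.} The mechanism is the one the paper uses. Approximate a positive continuous $F$ uniformly by positive H\"older (or Bowen) functions $F_k$. These are dual to $\p_k\in\calH^{++}_\G$ by \Cref{prop.dictionaryBowen} and \Cref{lem:bowendense}. Because the speeds are bounded below, $\Del([\p_k],[\p_m])$ is controlled by $\|F_k-F_m\|$ (\Cref{lem:convF}). However, as you noticed, ``not weakly conjugate to a H\"older reparameterization'' is no obstruction. \Cref{thm:dictionary}~(3) only gives $\Thet(\Par^h(\UG))\subset\scrH^{++}_\G$, not the converse, and word metrics violate the converse. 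Neither an arithmetic length spectrum nor any coarse comparison of lengths can tell the limit apart from $\scrH^{++}_\G$.

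\textbf{The missing idea is a thermodynamic invariant of the dual potential.} The paper uses \Cref{prop:Gibbs=BMS}: if $F\in C(\UG)$ is dual to some element of $\calH_\G$, then $F$ has a unique equilibrium state. This comes from Dilsavor's uniqueness theorem for almost additive families with the coarse Bowen property, after identifying $\int F\,d\frakm_\Lam$ with $\ell_\p(\Lam)$ via \Cref{coro.ellL} and \Cref{cor:dualintegral}.

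Against this, \Cref{prop:israel} (Israel's theorem) gives a dense set of continuous potentials with several ergodic equilibrium states. It needs finite entropy, expansivity, and entropy density; the last follows from the weak specification of \Cref{prop:weakspecification}. The argument then runs as follows:
\begin{enumerate}
\item Pick $F>0$ such that $-F$ has more than one equilibrium state.
\item Run the Cauchy construction above. The limit $[\p]$ is dual to $F$.
\item If $[\p]$ were in $\scrH^{++}_\G$, a rescaled representative in $\calH^{++}_\G$ would be dual to $F$, so its negative would be dual to $-F$.
\item Then $-F$ would have a unique equilibrium state, a contradiction.
\end{enumerate}
Without an invariant that is preserved by duality, holds on all of $\calH_\G$, and fails on a dense set of continuous potentials, your argument has no way to conclude.
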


The proof uses the dictionary. By Ledrappier's correspondence \cite{ledrappier,sambarino.report}, reparameterizations of $(\UG,\msf{g})$ give rise to positive continuous functions $F:\UG\ra \R$ (\Cref{prop:par->C}). The work of Israel \cite{israel,iommi-velozo} then yields a dense subset of such $F$ with infinitely many equilibrium states.  \Cref{thm:dictionary} relates the associated reparameterizations to metric structures in the completion $\MC(\scrH_\G^{++})$ that do not belong to $\scrH_\G^{++}$. This approach is described in \Cref{subsec:noncomplete}.

As another application of the dictionary, we construct isometric actions of hyperbolic surface groups with unusual properties. We say that an isometric action of a hyperbolic group $\Gamma$ on a Gromov hyperbolic (symmetric) metric space $X$ has \emph{bounded backtracking} if for a fixed word metric on $\G$, orbits of geodesics in $\G$ remain quasigeodesics (up to reparameterization) in $X$ with controlled quasi-isometry constants. This notion was first introduced for studying actions on real trees \cite{GJLL}. For surface groups, we can construct the following actions.

\begin{corollarymain}\label{maincor:positiveonnonsimpleintro}
    Let $\G$ be a hyperbolic surface group. Then there exists a Gromov hyperbolic symmetric metric space $X$ and an isometric and cobounded action of $\G$ on $X$ such that:
    \begin{enumerate}
        \item Any element representing a simple closed geodesic is elliptic.
        \item Any element representing a non-simple closed geodesic is loxodromic.
        \item The action has bounded backtracking.
\end{enumerate}
\end{corollarymain}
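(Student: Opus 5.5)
Fix a closed hyperbolic surface $S$ with $\pi_1 S=\G$, so $(\msf{T}^1S,\vg)$ plays the role of Mineyev's flow space. The plan is to realize the desired length function as the period function of a reversible reparameterization. \Cref{mainthm:dictionary_manifold}~(2) then places it in $\MC(\scrD_\G)$. Finally we show that the corresponding metric structure actually comes from an action on a hyperbolic space.

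The natural candidate is the self-intersection function. Let $\mu=\mathcal{L}_S$ be the Liouville current. Consider the functional $F(\al)=i(\al,\al)$ restricted to closed curves, normalised so that it scales linearly. A cleaner route is to choose a geodesic current $\nu$ whose support consists exactly of geodesics meeting every closed non-simple geodesic. Such a $\nu$ should vanish on simple closed curves and be positive on non-simple ones. The first thing to try is $\nu$ supported on a suitable non-filling family, e.g.\ a current built as a limit of weighted curves on the complement of the simple lamination set.

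Since this is hard to control, I would instead construct a symmetric pseudometric directly. For each $g$, set
\[
d(o,g)=\inf \sum_i i(\text{segment}_i,\ \text{Birman--Series set}),
\]
i.e.\ count intersections of the geodesic word path with the closed, nowhere dense Birman--Series set $\mathcal{BS}\subset \msf{T}^1S$ of complete simple geodesics, thickened appropriately. Concretely, I would take a continuous function $f\ge 0$ on $\msf{T}^1S$ that vanishes exactly on $\mathcal{BS}$, is flip-invariant, and is positive elsewhere. The integral of $f$ along a closed orbit is then zero exactly for simple geodesics and positive for non-simple ones, since a non-simple closed geodesic leaves $\mathcal{BS}$. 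Adding $\epsilon>0$ gives a positive function $f+\epsilon$. By Ledrappier's correspondence (\Cref{prop:par->C}) this yields a reversible reparameterization, whose image under $\Thet$ is a point $[\p_\epsilon]\in\MC(\scrD_\G)$ with $\ell_{\p_\epsilon}[g]=\int_{[g]}(f+\epsilon)$.

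The key step is letting $\epsilon\to 0$ while retaining a metric space. Here I would use the representatives $\p_\epsilon$ as genuine symmetric pseudometrics that are Gromov hyperbolic with constants controlled by the dictionary construction. They are built as orbit pseudometrics on the flow space, via lengths of flow segments between fibers, i.e.\ $d(x,y)=\inf$ of $\int f$ along flow lines plus jumps across strong stable/unstable leaves. Taking $X$ to be $\G$ (or $\msf{T}^1\wt S$) with the limiting pseudometric $\p_0$ built from $f$ itself and quotienting the zero-distance relation gives a symmetric space on which $\G$ acts isometrically and coboundedly. Hyperbolicity follows because $\p_0$ is a pointwise limit of $\delta$-hyperbolic pseudometrics with uniform $\delta$. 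The uniform $\delta$ is the main obstacle: I would obtain it from the metric-Anosov property of the flow, which gives thin triangles in flow coordinates independent of the weight function. Integrating a bounded weight over thin triangles then gives hyperbolicity constants depending only on $\sup f$.

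With $X$ in hand, the remaining items are quick. Simple elements have $\ell_{\p_0}[g]=\int_{[g]}f=0$. The orbit of such an element stays near a lift of its axis in $\mathcal{BS}$, where distances vanish, so its orbits are bounded and it is elliptic. Non-simple elements have positive stable length, so they are loxodromic. Bounded backtracking follows because word geodesics fellow-travel flow lines. Along flow lines, $\p_0$ is monotone in time, since $f\ge0$ is integrated. Hence orbits of geodesics are unparameterized quasigeodesics, with constants controlled by the fellow-travelling constant and $\sup f$.
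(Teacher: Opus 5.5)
Your choice of weight (a nonnegative, flip-invariant function vanishing exactly on the lifted Birman--Series set) is essentially the paper's. The gap is in the step you call key, the uniform hyperbolicity. The claim that thin triangles plus integrating a bounded weight give hyperbolicity constants depending only on $\sup f$ is false. Two flow segments that fellow-travel at bounded distance for time $T$ can have integrals of a merely bounded, or even continuous, weight that differ by an amount growing with $T$. What makes the discrepancy $O(1)$ in the paper is exponential convergence of fellow-travelling flow segments (\Cref{prop.geodesicsclose,prop.geodesicsclose2}) combined with the Bowen property of the weight (\Cref{lem.liftBowen}). That estimate is exactly what shows $\p_F\in\calH_\G$ in \Cref{prop.dictionaryBowen}.

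In fact no argument with constants depending only on $\sup f$ can exist. It would attach to every positive continuous $F$ an element of $\calH^{++}_\G$ dual to $F$. By \Cref{prop:Gibbs=BMS}, $F$ would then have a unique equilibrium state, contradicting \Cref{prop:israel}; this is precisely the mechanism behind \Cref{thmmain.H_Gnotcomplete}. For the same reason, the detour through $f+\epsilon$, $\Thet$ and $\MC(\scrD_\G)$ buys nothing: points of the completion need not be represented by hyperbolic pseudometrics.

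Three smaller problems:
\begin{itemize}
\item Your infimum-with-jumps pseudometric is ill-posed. If jumps along strong stable/unstable leaves are free, then on a hyperbolic surface the two horocycle flows generate all of $\PSL(2,\R)$, so the pseudometric collapses to $0$. If jumps are charged, you must recheck duality with $f$.
\item Monotonicity along flow lines does not give bounded backtracking. You need rough additivity (bounded Gromov products) along geodesics, which is again the Bowen-type estimate.
\item \Cref{prop:par->C} goes from reparameterizations to potentials, not the other way.
\end{itemize}

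The repair is to drop $\epsilon$ and the completion entirely, as the paper does. Let $\calL\subset\ppG$ be the set of endpoint pairs of lifts of complete simple geodesics. Take $f$ to be the $d_{\UG}$-distance to the compact, flow- and flip-invariant set $\pi(\calL\times\R)$. This $f$ is Lipschitz, hence Bowen (\Cref{lem:bowendense}). So \Cref{prop.dictionaryBowen}~(2) directly gives a symmetric $d\in\ov\calD_\G$ with $\ell_d[g]=\int_{\gam_{[g]}}f$. This vanishes exactly on simple classes (\Cref{prop:mather,prop:positiveonnonsimple}), so $d\in\partial\calD_\G$.

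Then embed the metric quotient of $(\G,d)$ equivariantly and isometrically into a geodesic Gromov hyperbolic space \cite{lang}. Coboundedness and bounded backtracking follow from rough geodesicity of $d$ and \cite[Lemma~6.3]{cantrell-reyes.manhattan}. Simple elements are elliptic because $\ell_d[g]=0$, together with rough additivity along $k\mapsto g^k$, bounds $d(o,g^k)$. Non-simple elements are loxodromic since $\ell_d[g]>0$.
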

The action in the proposition above is \emph{not proper}: there exist balls in $X$ intersecting its translates for infinitely many elements of $\G$. We contrast this example with those of Deroin--Tholozan~\cite{deroin-tholozan}, who construct representations of fundamental groups of punctured spheres into $\PSL(2,\R)$ for which no simple element acts loxodromically (yet, which fail to satisfy the bounded backtracking property for the action on $\Hy^2$). The proof of this proposition also makes use of the dictionary: if $\Sig$ is a hyperbolic surface with fundamental group $\G$ and $F:\msf{T}^1 \Sig \ra \R$ is H\"older continuous and nonnegative, we associate a hyperbolic metric potential $\p$ in the \emph{extension} $\calH^{+}_\G$ of $\calH_\G^{++}$, see
\Cref{sec:HMP}. After appropriately choosing $F$, the hyperbolic metric potential $\p$ obtained via~\Cref{thm:dictionary} corresponds to the desired isometric action of $\G$ on $X$.

The proof of this proposition is given in \Cref{subsec:exotic}. There, we also show that $X$ is neither a real tree nor induced by a geodesic current on $\Gamma$ (see \Cref{prop:positiveonnonsimple}). We also exhibit similar exotic isometric actions for free groups, see \Cref{prop:positiveonnonsimple_free}.

\subsection{Related work}\label{subsec:introotherwork}
\Cref{mainthm:dictionary_manifold} is inspired by \emph{Ledrappier's correspondence} \cite{ledrappier}, which for a closed negatively curved manifold $M$ with $\G=\pi_1(M)$ establishes correspondences between
\begin{enumerate}
    \item conjugacy classes of H\"older reparameterizations in $\Par^h(\msf{T}^1M,\vg)$; 
    \item cohomology classes of positive real-valued H\"older continuous potentials $F:\msf{T}^1M\ra \R$; and,
    \item cohomology classes of H\"older cocycles $\msf{c}:\G \times \partial \G\ra \R$ with positive periods and finite entropy.
\end{enumerate}

See \Cref{subsec:metricanosovledrappier} for the detailed definitions. Note that the cocycles in item (3) above are only defined in terms of $\G$, and therefore can be obtained from geometric information that is not necessarily related to $M$. In the setting of Anosov representations, this correspondence was first used by Sambarino
\cite{sambarino.quantitative} to prove counting results for strictly convex (for instance, Hitchin) representations. See also \cite{sambarino.report} for a version of Ledrappier's correspondence for Mineyev's flow space $\UG$.

For compact locally $\CAT(-1)$ spaces, some distance-like functions on $\G$ have been produced from H\"older functions on the geodesic flow space, similar to \Cref{mainthm:dictionary_manifold}.
This is the case of the works \cite{connell-muchnik.gibbs,connell-muchnik.harmonicity} on harmonicity of Patterson--Sullivan measures, and \cite{gekhtman-tiozzo} on singularity of hitting measures (for random walks with superexponential moment) with respect to Gibbs measures.

In our setting, we obtain points in $\scrH_\G^{++}$ from positive continuous potentials $F:\UG \ra \R$ satisfying the \emph{Bowen condition}, see \Cref{def:bowen} and \Cref{prop.dictionaryBowen}. This condition is strictly weaker than being H\"older, and it also appears in the recent work of Carrasco and Rodr\'iguez-Hertz \cite{carasco-hertz}. For compact, locally $\CAT(-1)$ spaces, they show a correspondence between \emph{measurable} Bowen potentials on the flow space $\UG$ and \emph{quasimorphisms} of $\G$, in a similar spirit to \Cref{mainthm:dictionary_manifold}. 
Our work is disjoint from theirs in two aspects. First, we are able to construct \emph{continuous} potentials from geometric data on $\G$, which is not immediately accessible from their methods. Second, for a hyperbolic group $\G$, its space of quasimorphisms is disjoint from $\calH_\G^{++}$ (up to bounded functions) and is ``orthogonal'' to $\calD_\G$.

Beyond the continuous setting, there are also recent developments in the Patterson--Sullivan theory for \emph{coarse} cocycles associated to convergence group actions by Blayac--Canary--Zhu--Zimmer~\cite{BCZZ24:GPS1,BCZZ24:GPS2}. See also Dilsavor's thesis \cite{dilsavor} for related notions and Kim--Zimmer's work on PS systems \cite{KZ25:Rigidity}.  We wonder if the continuous cocycles obtained from our work (or a modification thereof) are instances of continuous GPS-systems in the sense of \cite{BCZZ24:GPS2}.


\subsection{Green metrics}\label{subsec:introGreen}
To obtain continuous reparameterizations in \Cref{mainthm:dictionary_manifold}, we make use of \emph{Green metrics} on hyperbolic groups. These metrics were introduced by Blach\`ere and Brofferio in \cite{blachere-brofferio} and encode probabilistic information about random walks on the group. 

A probability measure $\mu$ on the (non-necessarily hyperbolic group) $\G$ is \emph{admissible} if its support generates $\G$ as a semi-group. If the random walk $(Z_k)_k$ on $\G$ with transition probabilities given by $\mu$ is transient, the Green metric $d_{\mu}$ on $\G$ is given by
\[d_\mu(g, h) := -\log \bbP(\exists k \text{ s.t. } gZ_k = h).\]
Note that $d_\mu$ is not necessarily symmetric. For $\G$ a non-elementary hyperbolic group, it is known that $d_\mu$ belongs to $\calH_\G^{++}$ as long as $\mu$ has superexponential moment and $d_\mu\in \calD_\G$ if in addition $\mu$ is symmetric \cite{BHM.harmonic,gouezel,GGPY}. These results are based on the work of Ancona~\cite{Anc87:InequalityAncona}.

For a non-elementary hyperbolic group, we show that Green metrics induce a dense subspace of $\scrD_\G$. More precisely, by considering suitably chosen random walks supported on large spheres, we construct sequences of Green metrics whose induced metric structures converge to an arbitrary point in $\scrD_\G$.

\begin{mainthm}[Density of Green metrics]\label{mainthm.greendense}
    Let $\G$ be a non-elementary hyperbolic group and $d\in \calD_\G$. Then there exists a constant $\del_1>0$ satisfying the following. Let $\del>\del_1$ and $\mu_l$ be the uniform probability measure supported on $S_l=\{g\in \G \colon l-\del<d(o,g)\leq l\}$. Then $S_l$ generates $\G$ for $l$ large enough and $[d_{\mu_l}]$ converges to $[d]$ in $\scrD_\G$.
\end{mainthm}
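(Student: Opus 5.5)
Our plan is to compare the Green metric $d_{\mu_l}$ with the rescaled metric $\frac{\log|S_l|}{l}\, d$, and to show that the multiplicative error in stable translation lengths goes to zero as $l\to\infty$. Since convergence in $\Del$ is controlled by $\sup_g \ell_{d_{\mu_l}}[g]/\ell_d[g]$ and $\inf_g \ell_{d_{\mu_l}}[g]/\ell_d[g]$ (see \Cref{eq:formulaDel}), it suffices to prove that there are constants $\lambda_l>0$ and $\epsilon_l\to 0$ such that
\[
(1-\epsilon_l)\lambda_l\, \ell_d[g]\le \ell_{d_{\mu_l}}[g]\le (1+\epsilon_l)\lambda_l\,\ell_d[g]
\]
for all $g$. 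We expect to take $\lambda_l=\log|S_l|/l$. By purely exponential growth, $|S_l|\asymp e^{vl}$ for $\del$ large, so $\lambda_l\to v$.

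\textbf{Generation.} We first check that $S_l$ generates $\G$. Choose $\del_1$ at least the quasi-geodesic/roughness constant of $d$, so that for every $g$ with $d(o,g)$ small there is a product $g=ab^{-1}$ with $a,b\in S_l$. This uses coarse connectedness together with the fact that the annuli $S_l$ are nonempty and ``dense in direction''. Non-elementarity is used here.

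\textbf{Upper bound on $d_{\mu_l}$.} Write $p=\mu_l$. We have $d_{\mu_l}(o,h)\le -\log p^{*n}(h)$ for any $n$. Take $h=g^k$ and a $d$-quasigeodesic from $o$ to $h$, and cut it into $n\approx d(o,h)/l$ pieces whose increments lie in $S_l$; a little slack is absorbed by the width $\del$. Then
\[
p^{*n}(h)\ge |S_l|^{-n},
\]
so $d_{\mu_l}(o,h)\le n\log|S_l|\approx \lambda_l\, d(o,h)(1+O(\del/l))$. Dividing by $k$ and letting $k\to\infty$ gives the upper inequality.

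\textbf{Lower bound on $d_{\mu_l}$.} This is the main obstacle, since we must control the full hitting probability $\sum_n p^{*n}(h)$, not just one path. Our first attempt is to bound
\[
p^{*n}(h)\le |S_l|^{-n}\,\#\{(s_1,\dots,s_n)\in S_l^n : s_1\cdots s_n=h\}.
\]
We then count such tuples via hyperbolicity. When $l$ is large relative to $\del$ and the hyperbolicity constant, a product of $n$ elements of $S_l$ either is nearly geodesic, with $d$ close to $nl$, or has backtracking. Backtracking at a junction loses length $2t$ while gaining at most $e^{vt}$ choices, against a factor $|S_l|^{-1}$ per step.

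Summing over $n$ and over backtracking patterns should give
\[
\bbP(\text{hit } h)\le C_l\, e^{-(1-\epsilon_l)\, v\, d(o,h)}
\]
with $\epsilon_l\to 0$. This holds because the number of tuples reaching distance $m$ is at most $e^{(1+\epsilon_l) vm}\,|S_l|^{n}e^{-vnl}$, roughly. Here we use purely exponential growth of spheres and of "cones" for $d$ (Coornaert). The error constant $C_l$ disappears after passing to stable lengths.

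The delicate part is making the backtracking estimate uniform in $n$. We would organize this as a subadditive or renewal argument along the junction points, where the $l$-large regime makes each backtracking event exponentially penalized.
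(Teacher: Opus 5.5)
Your architecture matches the paper's. The upper bound on $d_{\mu_l}$ comes from a single path and the word length $|g|_{S_l}$ (as in \Cref{lem.greeneasyineq} and \Cref{lem.worddensesphere}). The lower bound comes from $\mu_l^{*n}(h)=|S_l|^{-n}\#\calO_l(h,n)$, where $\calO_l(h,n)=\{(s_1,\dots,s_n)\in S_l^n: s_1\cdots s_n=h\}$. Using purely exponential growth to get $\log|S_l|/l\to v$ is also fine.

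However, the lower bound is the only nontrivial step, and it is not actually proved. The one concrete count you give is false. For $n=2k$, the tuples $(s_1,s_1^{-1},\dots,s_k,s_k^{-1})$ lie in $S_l^n$ (since $d$ is symmetric) and multiply to $o$. Hence $\#\calO_l(o,n)\geq|S_l|^{n/2}\approx e^{vnl/2}$, whereas your bound at $m=0$ is $(|S_l|e^{-vl})^n\leq K^n$. Even taken at face value, it would not give your hitting estimate: $\sum_{n\geq m/l}|S_l|^{-n}e^{(1+\epsilon_l)vm}|S_l|^ne^{-vnl}\approx e^{\epsilon_l vm}$, which does not decay.

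What is needed is a per-endpoint bound
\[\#\calO_l(h,n)\leq \bigl(C_\delta(\tfrac{l}{\delta}+1)\bigr)^n e^{v(nl-d(o,h))/2},\]
with $C_\delta$ independent of $l$, $n$ and $h$. This is your heuristic ``losing $2t$ buys $e^{vt}$'' made precise, with total backtracking $(nl-d(o,h))/2$. The paper imports exactly this from \cite[Lemma~B.3]{CDS}. Establishing it requires handling consecutive steps that cancel almost completely, where local-to-global arguments break down; your proposal only gestures at a ``renewal argument'' for this.

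Once that bound is available, uniformity in $n$ is automatic. One has $\mu_l^{*n}(h)\leq e^{-vd(o,h)/2}\rho_l^n$ with $\rho_l=C_\delta(l/\delta+1)e^{vl/2}/|S_l|<1$ for large $l$. Summing over $n\geq|h|_{S_l}\geq d(o,h)/l$ gives $d_{\mu_l}(o,h)\geq d(o,h)\bigl(\frac{\log|S_l|}{l}-\frac{\log(C_\delta(l/\delta+1))}{l}\bigr)-\kappa_l$, as in \Cref{prop.greenmetricineqHYP}. It is essential that the per-step constant grow only subexponentially in $l$. Otherwise the correction term does not vanish and you do not get $\epsilon_l\to 0$.
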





When $\G$ is the fundamental group of a closed hyperbolic surface, Green metrics are also related to the \emph{singularity conjecture} \cite{kaimanovich-leprince}. This conjecture predicts that if $\mu$ is an admissible finitely supported probability measure on $\G$ and $\rho:\G \ra \PSL(2,\R)$ is a discrete and faithful representation, then the hitting measure of $\mu$ on $\mbb{S}^1$ is mutually singular to the Lebesgue measure. As proven in \cite{BHM.harmonic}, this is equivalent to $[d_\mu]\neq \vx_{\rho}\in \scrD_\G$, where $\vx_\rho$ is the point induced by the action of $\G$ on $\Hy^2$ given by $\rho$. In a sense, \Cref{mainthm.greendense} reveals that the singularity conjecture is as hard as it can be from the point of view of the length functions: for any point $\vx$ in the Teichm\"uller space of $\G$, any neighborhood of $\vx$ in $\scrD_\G$ contains a point induced by a Green metric for a finitely supported measure.

The geometry of Green metrics and \Cref{mainthm.greendense} are crucial in the proof of \Cref{mainthm:dictionary_manifold}~(2). The main property is that Green metrics have H\"older continuous Busemann cocycles, which follows from the H\"older continuity of the Martin kernel \cite[Theorem~3.3]{INO} and the identification of the Martin boundary and the Gromov boundary \cite[Theorem~1.7]{BHM.harmonic}. This H\"older regularity also follows from the fact that Green metrics are strongly hyperbolic \cite{nica-spakula}. For these Busemann cocycles, Ledrappier's correspondence applies, and then a limiting argument using \Cref{mainthm.greendense} upgrades this correspondence to continuous potentials on $\UG$ that are dual to metrics in $\calD_\G$, see \Cref{prop:D_G->C}.



\subsection*{Organization}

In \Cref{sec:preliminaries} we review background on Gromov hyperbolic spaces, pseudometrics, hyperbolic metric potentials, 
as well as quasi-conformal measures and geodesic currents. In \Cref{sec:metriccompletion} we describe points in the metric completion $\MC(\scrH_\G^{++})$ as equivalence classes of left-invariant pseudometrics on $\G$. In \Cref{sec:mineyevflow} we recall the construction of Mineyev’s flow space and develop the dynamical properties needed later. In \Cref{sec:dictionary} we assume \Cref{mainthm.greendense}
(whose proof is deferred to the end) and prove \Cref{thm:dictionary}, which is our dictionary result that specializes to \Cref{mainthm:dictionary_manifold} in the manifold case. In \Cref{sec:meandistortion_length} we prove \Cref{thm.tauvsell}, a technical result relating mean distortion to ratios of length functions evaluated on the appropriately normalized BMS current. In \Cref{sec:examples} we construct examples via the dictionary: we deduce \Cref{thmmain.H_Gnotcomplete} about non-completeness of $(\scrH_\G^{++},\Del)$, and exhibit metrics on surface and free groups that vanish precisely on simple elements, which imply  \Cref{maincor:positiveonnonsimpleintro}.
Finally, in \Cref{sec:proofgreen} we prove density of Green metrics in $\scrD_\G$, establishing \Cref{mainthm.greendense}.

\subsection*{Acknowledgments}
The authors thank Caleb Dilsavor, Ursula Hamenst\"adt, Giuseppe Martone, Eduardo Silva and Ralf Spatzier for helpful discussions and comments related to this work. Support for D. Mart\'inez-Granado was provided by the Luxembourg National Research Fund (AFR / Bilateral--ReSurface 22/17145118) joint with the National University of Singapore, the Marie Sklodowska--Curie Action CurrGeo grant (101154865), and by the Long Term Visitor fund of the Thematic Program on Randomness and Geometry at the Fields Institute 2024. E. Reyes is supported by ANID Fondecyt Iniciaci\'on grant 11260637. He also thanks the hospitality of the University of Chicago, and the hospitality and support from the Max Planck Institut für Mathematik, the Fields Institute during the Thematic Program on Randomness and Geometry in 2024, and the Simons Laufer Mathematical Sciences Institute in Berkeley, California, during the Spring 2026 semester (National Science Foundation Grant No. DMS-2424139).



\section{Preliminaries}
\label{sec:preliminaries}
\subsection{Pseudometric spaces and quasi-isometries}\label{subsec:quasiisometries} 
Let $(X,d)$ be a \emph{pseudometric} space; that is, $d \colon X \times X \to \mathbb{R}$ is a non-negative function satisfying $d(x,x)=0$ and the triangle inequality. This is weaker than being a metric: in particular, we may have $d(x,y)=0$ for distinct points $x,y \in X$, and $d$ need not be symmetric.
The \emph{double difference} in $X$ is the function $( \cdot, \cdot \mid \cdot, \cdot ) \colon X^4 \to \mathbb{R}$ defined by
\[
( a, a' \mid b, b' ) \coloneq \frac{1}{2} \left( d(a,b) - d(a',b) - d(a,b')+d(a',b') \right).
\]
This extends the notion introduced
by Paulin in~\cite{paulin96} when $d$ is a symmetric distance. The function
\[
( a \mid b )_c \coloneqq ( a,c \mid c, b) =
\frac{1}{2} (d(a,c) + d(c,b) - d(a,c))
\]
is the \emph{Gromov product} with respect to the pseudometric $d$, and we have the relation
\[
( a, b \mid x, y) = ( b \mid x )_a - ( b \mid y )_a.
\]
If we want to emphasize the dependence of $d$, we will write $\gpr{a,b}{a',b'}=\gpr{a,b}{a',b'}^d$ and $( a \mid b )_c= ( a\mid b )^d_c$.

Given an interval $I \subset \mathbb{R}$ and constants $L,C>0$, we
say that a map $\gamma \colon I \to X$ is an \emph{$(L,C,d)$-quasigeodesic}
if
\begin{equation*}
L^{-1}|s-t|-C \leq d(\gamma(s),\gamma(t)) \leq L |s-t| + C
\quad \text{for all } s,t \in I,
\label{eq:qi}
\end{equation*}
and a \emph{$(C,d)$-rough geodesic} if
\[
|s-t| - C \leq d(\gamma(s), \gamma(t)) \leq |s-t| + C
\quad \text{for all } s,t \in I .
\]
A $(0,d)$-rough geodesic is referred to as a \emph{$d$-geodesic}. Unless necessary, we will drop the dependence on $d$ and refer, respectively, to $(L,C)$-quasigeodesics,
$C$-rough geodesics, or geodesics.
The pseudometric space $(X,d)$ is called \emph{$C$-roughly geodesic} if for
each $x, y \in X$, there is a $C$-rough geodesic connecting $x$ and $y$.
We simply say $(X,d)$ is \emph{roughly geodesic} if it is $C$-roughly geodesic
for some $C\geq 0$.

Given two pseudometric spaces $(X,d)$ and $(Y,\hat d)$,
a map $F \colon X \to Y$ is a \emph{quasi-isometric embedding} if there exist
constants $\lambda_1,\lambda_2,C>0$ such that
\[
\lambda_1 d(x,y) - C \leq \hat d(F(x),F(y)) \leq \lambda_2 d(x,y) + C
\quad \text{for all } x,y \in X .
\]
If, moreover, there is a constant $C'>0$ such that every $y \in Y$ is
within $C'$ of some element of $F(X)$, we say that $F$ is a \emph{quasi-isometry}.
A quasi-isometry $F$ satisfying $\lambda_1=\lambda_2$ is called a
\emph{rough similarity}, and when $\lambda_1=\lambda_2=1$ it is called
a \emph{rough isometry}. If $d_1,d_2$ are two pseudometrics on the same set $X$, we say that $d_1,d_2$ are \emph{quasi-isometric/roughly similar/roughly isometric} if the identity map $\Id_X:(X,d_1)\ra (X,d_2)$ is a quasi-isometry/rough similarity/rough isometry.


\subsection{Gromov hyperbolic spaces}\label{subsec:hyperbolic}

A pseudometric space $(X,d)$ is $\delta$\emph{-hyperbolic} $(\del\geq 0)$ if
\begin{equation}\label{eq:gromovhyp}
\gpr{x}{z}_{w} \geq \min \{ \gpr{x}{y}_w, \gpr{y}{z}_w \} - \delta
 \quad \text{ for all }x,y,z,w \in X,\end{equation}
and $(X,d)$ is \emph{hyperbolic} if it is $\delta$-hyperbolic for some $\delta \geq 0$. Equivalently, we say that $d$ is $(\del$-)hyperbolic. A pseudometric space $(X,d)$ is \emph{strongly hyperbolic} with parameter $\ep>0$ if 
\begin{equation}\label{eq:defstronghyp}
    e^{-\ep\gpr{x}{z}_w}\leq e^{-\ep\gpr{x}{y}_w}+e^{-\ep\gpr{y}{z}_w} \quad \text{ for all }x,y,z,w\in X.
\end{equation}
Any strongly hyperbolic space with parameter $\ep$ is $\del$-hyperbolic with $\del=(\log 2)/\ep$ \cite[Theorem~4.2]{nica-spakula}.

Hyperbolic pseudometric spaces have a natural completion at infinity: the Gromov boundary. A sequence $\{x_k\}_{k\geq 0}$ of points in the hyperbolic space $X$ \emph{diverges}
if $\gpr{x_k}{x_m}^d_o$ tends to infinity as $\min\{k, m\}$ tends to infinity, for $o$ a fixed base point in $X$. Two divergent sequences $\{x_k\}_{k\geq 0}$ and $\{y_k\}_{k\geq 0}$ are said to be \emph{equivalent} if $\gpr{x_k}{y_m}^d_o$ tends to infinity as $\min\{k, m\}$ tends to infinity. The notions of divergence and equivalence are independent of the base point $o\in X$. The \emph{Gromov boundary} $\partial X$ of $X$ is the space of equivalence classes of divergent sequences in $X$. If $a \in \partial X$ is represented by the divergent sequence $\{x_k\}_k$ we say that $x_k$ \emph{converges} to $a$. We denote $\ov X \coloneqq X \cup \partial X$.

Using hyperbolicity, we form a quasi-extension of the Gromov product $\gpr{\cdot}{\cdot}^d_o:\ov X^2 \ra \R$
in such a way that for some constant $C\ge 0$ we have that
\begin{equation*}
\limsup_{k \to \infty}\gpr{x_k}{y_k}^d_o-C \le \gpr{a}{b}^d_o \le \liminf_{k \to \infty}\gpr{x_k'}{y_k'}^d_o+C
\end{equation*}
for all $a, b \in \ov{X}$ and for all $\{x_k\}_{k \geq 0}, \{x_k'\}_{k\geq 0} \in a$ and $\{y_k\}_{k\geq 0}, \{y_k'\}_{k\geq 0}^\infty \in b$ (if $a\in X$ we interpret $\{x_k\}_{k \geq 0}\in a$ as as sequence $\{x_k\}_k$ converging to $a$).


The \emph{Busemann function} $\b^d_o:X \times \partial X$ is defined according to
\begin{equation}\label{eq.defbusemann}
  \b^d_o(x, a):=\sup\big\{\limsup_{k \to \infty}(d(x, x_k)-d(o, x_k)) \ : \ \{x_k\}_{k\geq 0} \in a\big\}.
\end{equation}

When $(X,d)$ is strongly hyperbolic, the Busemann function is obtained as a genuine limit (i.e. $\b^d_o(x, a):=\lim_{k \to \infty}(d(x, x_k)-d(o, x_k))$). Similarly, the Gromov product extends uniquely to $\ov X$, and is continuous for the corresponding topology on $\ov{X}$. We have the following criterion for strong hyperbolicity.

\begin{lemma}[{\cite[Lemma~6.2]{nica-spakula}}]\label{lem:nicaspacula}
A hyperbolic pseudometric space $(X,d)$ is strongly hyperbolic with parameter $\ep$ if and only if there exist constants $C,R_0 > 0$ such that if $x,y,z,w\in \ov{X}$ satisfy
\[\gpr{x,w}{y,z}\geq R\geq R_0,\]
then
\[|\gpr{x,z}{y,w}|\leq Ce^{-\ep R/2}.\]  
\end{lemma}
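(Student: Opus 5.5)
The approach is to rewrite strong hyperbolicity as a Ptolemy-type four-point inequality for sums of distances, and then compare it with the double differences appearing in \Cref{lem:nicaspacula}. For $x,y,z,w$, set
\[
P=d(x,y)+d(w,z),\qquad Q=d(w,y)+d(x,z),\qquad T=d(z,y)+d(x,w).
\]
Unwinding the definitions gives $\gpr{x,w}{y,z}=\tfrac12(P-Q)$ and $\gpr{x,z}{y,w}=\tfrac12(P-T)$. Multiplying \Cref{eq:defstronghyp} by $e^{\ep(d(x,w)+d(y,w)+d(z,w))/2}$ turns it into
\[
e^{\ep(d(x,z)+d(y,w))/2}\leq e^{\ep(d(x,y)+d(z,w))/2}+e^{\ep(d(y,z)+d(x,w))/2}.
\]
Since $d$ need not be symmetric, I would keep track of which ordered pairs occur and apply this inequality with all relabellings of the quadruple. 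Strong hyperbolicity with parameter $\ep$ is then equivalent to the following: for every quadruple, each of the relevant sums is at most the log-sum-exp of the other two, at scale $2/\ep$. First I would check that both this reformulation and the double-difference condition extend from $X$ to $\ov X$, using the fact that, in the strongly hyperbolic case, the Gromov product extends continuously to $\ov X$.

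\textbf{Forward direction.} Assume $\gpr{x,w}{y,z}\geq R\geq R_0$, so that $P\geq Q+2R$. By $\del$-hyperbolicity with $\del=(\log 2)/\ep$, the two largest of $P,Q,T$ differ by at most $2\del$. Hence $P$ and $T$ are the two largest, and $T-Q\geq 2R-2\del$. The Ptolemy inequality with $P$ (respectively $T$) on the left gives
\[
P-T\leq \tfrac{2}{\ep}\log\bigl(1+e^{-\ep(T-Q)/2}\bigr)\leq \tfrac{2}{\ep}e^{-\ep(T-Q)/2},
\]
and the symmetric bound holds for $T-P$. Therefore $|\gpr{x,z}{y,w}|=\tfrac12|P-T|\leq C e^{-\ep R}$, which is stronger than the required $Ce^{-\ep R/2}$.

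\textbf{Converse.} I would first show that the decay hypothesis makes Busemann functions and Gromov products genuine limits. The idea is that the errors in approximating a boundary point along a rough geodesic ray are summable geometric series, because the double differences decay like $e^{-\ep R/2}$. Next, I would aim to prove that for each base point $w$ the function $\rho_w(a,b)=e^{-\ep\gpr{a}{b}_w}$ satisfies the triangle inequality on $\ov X$, since this is exactly \Cref{eq:defstronghyp}. Given $a,b,c$, I would march $b$ toward $a$ or $c$ along a rough geodesic and telescope the changes in $\log\rho_w$. Each step is a double difference $\gpr{\cdot,\cdot}{\cdot,\cdot}$ at depth roughly $k$, so it is controlled by $Ce^{-\ep k/2}$. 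Summing these steps should express $\rho_w(a,c)$ as a limit of quantities dominated by $\rho_w(a,b)+\rho_w(b,c)$.

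\textbf{Main obstacle.} The hard step is obtaining the sharp parameter $\ep$ in the converse, rather than a smaller one. The telescoping argument naturally yields only the triangle inequality up to a multiplicative constant, i.e.\ a quasi-metric at parameter $\ep$. The standard route to an honest metric (Frink's chain lemma) loses a factor in the exponent, and that loss is exactly what must be avoided here. What I would try first is to exploit the exponent $\ep R/2$: the hypothesis decays at half the rate given by the forward direction, and I would try to use that slack to absorb the multiplicative constant. Concretely, I would test the triangle inequality only on configurations where the two largest sums are far apart from the third, reducing to that case by inserting points along geodesics, and argue that in such configurations the decay bound makes the log-sum-exp inequality hold exactly. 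The bounded-gap configurations are the delicate ones, because there the inequality must be obtained from the limiting (boundary) version rather than from the decay estimate directly.
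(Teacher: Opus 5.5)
You prove the forward direction correctly when $d$ is symmetric. The four-point rewriting of \Cref{eq:defstronghyp}, the observation that $\gpr{x,w}{y,z}\ge R>\delta$ forces $P$ and $T$ to be the two largest sums, and the estimate $\log(1+u)\le u$ together give $|\gpr{x,z}{y,w}|\le \frac{2}{\varepsilon}e^{-\varepsilon R}$. This is the only direction the paper uses, for the symmetric metric $\hat d$ in the proof of \Cref{prop.geodesicsclose}; the paper itself only cites Nica--\v{S}pakula.

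The genuine gap is the converse, and it is not a technical one. With the same $\varepsilon$ on both sides the converse is false, so no telescoping or ``slack'' argument can overcome the sharp-parameter obstacle you identify. The decay hypothesis only sees quadruples with $\gpr{x,w}{y,z}\ge R_0$, whereas strong hyperbolicity with parameter $\varepsilon$ constrains every scale.
\begin{itemize}
\item Let $X=\{p_1,p_2,p_3,p_4\}$ be the $4$-cycle with its graph metric. It is bounded, hence hyperbolic with $\partial X=\emptyset$.
\item Always $|\gpr{x,w}{y,z}|\le d(x,w)\le 2$, so the hypothesis holds vacuously with $R_0=3$, for every $\varepsilon$ and $C$.
\item Yet $\gpr{p_1}{p_3}_{p_4}=0$ and $\gpr{p_1}{p_2}_{p_4}=\gpr{p_2}{p_3}_{p_4}=1$. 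So \Cref{eq:defstronghyp} for $(x,y,z,w)=(p_1,p_2,p_3,p_4)$ reads $1\le 2e^{-\varepsilon}$, which is false for $\varepsilon>\log 2$.
\item Gluing a geodesic ray at $p_1$ gives an unbounded counterexample: there $\gpr{x,w}{y,z}\ge 3$ forces $\gpr{x,z}{y,w}=0$.
\end{itemize}
The bounded-gap configurations you flag as delicate are exactly where this fails, and no boundary limit can help, since the hypothesis says nothing about them. Your plan also marches along rough geodesics, which the statement does not provide.

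What does hold is the converse with a smaller parameter. It follows in a few lines from your own four-point form, using symmetric $d$ and points of $X$ only, with no Busemann limits or chain lemmas.
\begin{itemize}
\item Order the pair sums of a quadruple as $S_1\ge S_2\ge S_3$. Only $e^{\varepsilon' S_1/2}\le e^{\varepsilon' S_2/2}+e^{\varepsilon' S_3/2}$ needs proof.
\item If $S_1-S_3\ge 2R_0$, relabel so that $\gpr{x,w}{y,z}=\frac12(S_1-S_3)=:R$ and $\gpr{x,z}{y,w}=\frac12(S_1-S_2)$. The hypothesis gives $S_1-S_2\le 2Ce^{-\varepsilon R/2}$. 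Since $S_2-S_3\le 2R$, the inequality follows once $\varepsilon'\le\min\{\varepsilon/2,1/(2C)\}$.
\item If $S_1-S_3<2R_0$, the four-point form of $\delta$-hyperbolicity gives $S_1-S_2\le 2\delta$. The inequality follows once $e^{\varepsilon'\delta}\le 1+e^{-\varepsilon' R_0}$, which holds for small $\varepsilon'$.
\end{itemize}
So the correct equivalence is ``strongly hyperbolic for some parameter if and only if exponential decay at some rate.''

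Finally, your reformulation uses symmetry. For a non-symmetric pseudometric, dividing \Cref{eq:defstronghyp} by $e^{-\varepsilon\gpr{x}{z}_w}$ gives $1\le e^{\varepsilon\gpr{x,w}{y,z}}+e^{\varepsilon\gpr{x,y}{w,z}}$. This mixes orientations and does not reduce to three pair sums, so even your forward argument covers only the symmetric case.
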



\subsection{Hyperbolic groups and the space of symmetric pseudometrics}\label{subsec:hypgroups}

Recall that a non-elementary hyperbolic group is a finitely generated group $\G$ that is not virtually cyclic and such that any word metric $d_S$ on $\G$ associated to a finite, symmetric generating set $S$ is hyperbolic. Throughout this work we use $o$ to denote the identity element of $\G$ and $[\G]$ (resp. $[\G]'$) to denote the set of (resp. non-torsion) conjugacy classes of $\G$.

Given a hyperbolic group $\G$ we write $\Dc_\G$ for the collection of pseudometrics $d$ on $\G$ that are
\begin{itemize}
    \item symmetric;
    \item hyperbolic;
    \item quasi-isometric to a (equivalent any) word metric $d_S$ on $\G$; and,
    \item are (left) $\G$-invariant: $d(gx,gy) = d(x,y)$ for all $g, x, y \in \G$.
\end{itemize}

\noindent The pseudometrics in $\Dc_\G$ are roughly geodesic \cite[Theorem~2.2]{BHM.harmonic}.

We are interested in the space $\Dc_\G$ up to rough similarity (recall \Cref{subsec:quasiisometries}). Let $\scrD_\G$ denote the quotient of $\Dc_\G$ by the equivalence relation of rough similarity \cite{furman}. Given $d \in \Dc_\G$ we write $[d] \in \scrD_\G$ for its corresponding equivalence class. There is a natural metric $\D$ on $\scrD_\G$ which is the \emph{symmetrized Thurston metric}:
for $d_1,d_2 \in \Dc_\G$ let 
\[
\Dil(d_1,d_2) = \sup_{[g] \in [\G]'} \frac{\ell_{d_1}[g]}{\ell_{d_2}[g]}.
\]
Then given $[d_1], [d_2] \in \scrD_\G$, define its distance according to
\begin{equation*}
\D([d_1],[d_2]) = \log (\Dil(d_1,d_2) \, \Dil(d_2,d_1)).
\end{equation*}
Here $\ell_d$ denotes the \emph{stable translation length} of $d$:
\[\ell_d[g]:=\lim_{k\to \infty}\frac{d(o,g^k)}{k} \quad \text{ for }[g]\in [\G].\]
The metric properties of $(\scrD_\G, \D)$ were studied in \cite{oregonreyes.metric} and \cite{cantrell-reyes.manhattan}. 

For a hyperbolic group, its Gromov boundary can be defined using any metric $d\in \calD_\G$, and the resulting space $\pG$ is independent of this choice by the Morse lemma. Moreover, $\pG$ is compact and metrizable.


\subsection{Hyperbolic metric potentials}\label{sec:HMP}
As discussed in the introduction, we are interested in a natural non-symmetric extension of $\Dc_\G$. Let $\G$ be a non-elementary hyperbolic group with identity element $o \in \G$ as above.

\begin{definition}\label{def:HMP}
Let $\calH_\G$ denote the space of all functions
$\psi \colon \G \times \G \to \mathbb{R}$ satisfying:
\begin{enumerate}
\item $\G$-invariance: $\psi(gx,gy)=\psi(x,y)$ for all $x,y,g \in \G$; and,
\item for any $d_0 \in \calD_\G$ there exists $\lambda>0$ so that for every
$x,y,w \in \G$ we have
\[
\bigl|\gpr{x}{y}_w^\psi\bigr|
\leq \lambda \gpr{x}{y}_w^{d_0} + \lambda ,
\]
where
\[
\gpr{x}{y}_w^\psi \coloneqq
\frac{1}{2}\bigl(\psi(x,w)+\psi(w,y) - \psi(x,y)\bigr)
\]
denotes the $\psi$-\emph{Gromov product} of $x,y$ based at $w$.
\end{enumerate}
The elements of $\calH_\G$ are called \emph{hyperbolic metric potentials}.
We also let $\calH^{++}_\G$ be the set of proper maps $\psi \in \calH_\G$
such that for some $C>0$ we have $\psi(x,y)>-C$ for all $x,y \in \G$, which we refer to as \emph{properly positive hyperbolic metric potentials}.
\end{definition}

Hyperbolic metric potentials were introduced in \cite{CRS}.
In that work, the following characterization of hyperbolic metric potentials was provided \cite[Proposition~3.1]{CRS}.

\begin{proposition}\label{prop:criterionHMP}
    A function $\p: \Gamma \times \Gamma \to \R$ is a hyperbolic metric potential if and only if it is $\Gamma$-invariant and the following holds. Given $d\in \calD_\G$ and all large enough $C, R \ge 0$, there exists $C_0 \ge 0$ such that for all $g,h\in \G$ and any $(C,d)$-rough geodesic $\gamma$ between $g$ and $h$, we have
\begin{equation*}
|\gpr{g}{h}_w^\p| \le C_0
\end{equation*}
for all $w$ in the $R$-neighborhood of $\gamma$ for the metric $d$.
\end{proposition}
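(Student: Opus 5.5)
Both directions rest on comparing the $\p$-Gromov product with the $d$-Gromov product along rough geodesics, using that $d$ is roughly geodesic and hyperbolic (\cite[Theorem~2.2]{BHM.harmonic}).

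For the forward direction, let $\p\in\calH_\G$ and fix $d\in\calD_\G$. By \Cref{def:HMP}, with $d_0=d$, there is $\lambda>0$ such that $|\gpr{g}{h}_w^\p|\le \lambda\gpr{g}{h}_w^d+\lambda$. It therefore suffices to bound $\gpr{g}{h}_w^d$ uniformly for $w$ within $d$-distance $R$ of a $(C,d)$-rough geodesic $\gamma$ from $g$ to $h$.

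Choose a point $w'=\gamma(t)$ with $d(w,w')\le R$. The rough geodesic property gives $d(g,w')+d(w',h)\le d(g,h)+3C$, so $\gpr{g}{h}_{w'}^d\le 3C/2$. Changing the base point changes a Gromov product by at most the distance moved, by the triangle inequality. Hence $\gpr{g}{h}^d_w\le 3C/2+2R$ (symmetry of $d$ is used here), and we may take $C_0=\lambda(3C/2+2R+1)$.

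For the converse, assume $\p$ is $\G$-invariant and satisfies the rough-geodesic condition for some $d\in\calD_\G$ and some large $C,R$ with constant $C_0$. We must bound $|\gpr{x}{y}^\p_w|$ by $\lambda\gpr{x}{y}^{d_0}_w+\lambda$ for every $d_0\in\calD_\G$.

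Since any two elements of $\calD_\G$ are quasi-isometric, their Gromov products are comparable up to affine bounds. This follows from the Morse lemma: the Gromov product is, up to an additive constant, the distance from $w$ to a geodesic, and quasi-isometries distort that distance affinely. So it suffices to work with $d_0=d$.

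Fix $x,y,w$ and a $(C,d)$-rough geodesic $\gamma$ from $x$ to $y$, and let $p\in\gamma$ be a point nearly closest to $w$. By hyperbolicity, $d(w,p)=\gpr{x}{y}^d_w+O(1)$, and the concatenations $[x,p]\cup[p,w]$ and $[w,p]\cup[p,y]$ are uniform rough geodesics (possibly after enlarging $C$, which the hypothesis allows since it holds for all large $C$).

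We then use the cocycle-type identity
\[
\gpr{x}{y}^\p_w=\gpr{x}{y}^\p_p+\gpr{x}{w}^\p_p+\gpr{w}{y}^\p_p-\tfrac12\bigl(\p(p,w)+\p(w,p)\bigr),
\]
checked by direct expansion. The hypothesis bounds each of the first three terms by $C_0$, because $p$ lies on a rough geodesic joining the respective pair.

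For the last term, $\G$-invariance gives $|\p(p,w)|=|\p(o,p^{-1}w)|$. This is bounded by an affine function of $d(p,w)$: subdivide a rough geodesic from $o$ to $p^{-1}w$ into unit steps and apply the hypothesis at each subdivision point to get additivity $\p(a,c)=\p(a,b)+\p(b,c)+O(1)$ along it. Summing gives $|\p(o,z)|\le A\,d(o,z)+A$ for all $z\in\G$, with $A$ depending on $\max_{d(o,s)\le 1+C}|\p(o,s)|$ and $C_0$. Combining, $|\gpr{x}{y}^\p_w|\le 3C_0+A\,d(w,p)+A\le\lambda\gpr{x}{y}^d_w+\lambda$.

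The main obstacle is the converse: producing the linear bound on $\p$, and justifying that the concatenated paths through the projection $p$ are uniform rough geodesics in a possibly non-geodesic pseudometric. For the latter I would invoke the standard fact that in a $\del$-hyperbolic roughly geodesic space, paths through a nearest-point projection are $(C',d)$-rough geodesics with $C'$ depending only on $\del$ and $C$.
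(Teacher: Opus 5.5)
The paper does not prove this statement; it imports it from \cite[Proposition~3.1]{CRS}. Your argument is a sound, self-contained proof along standard lines. The forward direction is immediate from the definition. The converse combines coarse additivity of $\psi$ along rough geodesics, which gives $|\psi(o,z)|\le A\,d(o,z)+A$, with a decomposition of $(x\mid y)^\psi_w$ at a coarse center $p$ of the triangle $x,y,w$. There is one error to fix.

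\textbf{The displayed identity is false as written.} Test it with $\psi=d$ a tree metric and $p$ the projection of $w$ onto $[x,y]$. Then the three Gromov products at $p$ vanish, the left side equals $d(w,p)$, and your right side equals $-d(w,p)$. Direct expansion gives instead
\[
(x\mid y)^\psi_w=\tfrac12\bigl(\psi(p,w)+\psi(w,p)\bigr)+(x\mid y)^\psi_p-(x\mid w)^\psi_p-(w\mid y)^\psi_p .
\]
You only use the identity through the triangle inequality, so your final estimate $|(x\mid y)^\psi_w|\le 3C_0+A\,d(w,p)+A$ survives unchanged once the signs are corrected.

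\textbf{A simpler route past your main obstacle.} You do not need the concatenations through $p$ to be rough geodesics. Since $p$ lies within bounded distance of the center of the rough tripod for $x,y,w$, the products $(x\mid w)^d_p$ and $(w\mid y)^d_p$ are uniformly bounded. By hyperbolicity and rough geodesicity of $d$, $p$ therefore lies within a uniform distance $R_1$ of every $(C,d)$-rough geodesic from $x$ to $w$ and from $w$ to $y$. You can then apply the hypothesis with $R\ge R_1$, which it allows, instead of enlarging $C$.
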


Generalizing the situation for $\calD_\G$, given $\p\in \calH_\G$ we define its \emph{stable translation length function} $\ell_\p:[\G] \ra \R$ according to
\[\ell_{\p}[g]=\lim_{k\to \infty}{\frac{\p(o,g^k)}{k}}.\]
This function is well-defined and finite \cite[Lemma~3.4]{CRS}. 

Even though $\calD_\G$ and $\calH_\G^{++}$ play a prominent role as subspaces of $\calH_\G$, there are other subspaces of interest that will appear throughout this work. These are:
\begin{itemize}
    \item $\partial \calD_\G$: the subspace of $\calH_\G$ consisting of symmetric pseudo-metrics on $\G$ that are unbounded but do not belong to $\calD_\G$.  
    \item $\ov{\calD}_\G:=\calD_\G \cup \partial \calD_\G$.
     \item $\calH_\G^{+}$: the space of all $\p\in \calH_\G$ such that $\ell_\p\geq 0$.
\end{itemize}

Up to bounded error every $\p \in \Hc_\G^+$ is a pseudometric~\cite[Lemma~3.4]{CRS}.

\begin{lemma}\label{lem:potential_pseudometric} Every $\p \in \Hc_\G^+$ is equal to $d-b$ for $d$ a (non-necessarily symmetric) $\G$-invariant Gromov hyperbolic pseudometric, and $b$ a bounded function.
\end{lemma}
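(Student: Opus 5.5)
Given $\p\in\calH^+_\G$, the plan is to produce $d$ by ``coarsely closing up'' $\p$ along geodesics. Concretely, fix $d_0\in\calD_\G$, which is $C$-roughly geodesic by \cite[Theorem~2.2]{BHM.harmonic}, and consider chains $x=x_0,x_1,\dots,x_n=y$ in $\G$. Define
\[d(x,y):=\inf\Bigl\{\sum_{i=1}^n \p(x_{i-1},x_i)\Bigr\},\]
where the infimum is over all finite chains from $x$ to $y$. Then $d$ is automatically $\G$-invariant, satisfies the triangle inequality, and satisfies $d\le \p$ and $d(x,x)\le 0$. What remains is to show that $d\ge \p-b$ for a bounded $b$. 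Once that is done, $d$ is real-valued, and $d(x,x)=0$ can be forced by replacing $d$ with $\max(d,0)$ away from the diagonal if needed, at bounded cost. Since $\p$ and $d$ then differ by a bounded amount, their Gromov products differ boundedly, so hyperbolicity of $d$ follows from that of $\p$. (Hyperbolicity of $\p$ in the sense of \eqref{eq:gromovhyp} comes from \Cref{prop:criterionHMP} together with hyperbolicity of $d_0$, via the standard tree-approximation argument.)

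The key step, and the expected main obstacle, is the uniform lower bound: there should be $B$ such that for every chain
\[\sum_i \p(x_{i-1},x_i)\ \ge\ \p(x_0,x_n)-B.\]
The intended route is to relate $\p$ to a cocycle-like quantity. By \Cref{prop:criterionHMP}, Gromov products of $\p$ are uniformly bounded at points near $d_0$-rough geodesics, so $\p$ is roughly additive along rough geodesics:
\[\p(x,y)\approx \p(x,w)+\p(w,y)\quad\text{for } w \text{ near } [x,y].\]
For a general chain, consider the geodesic polygon with vertices $x_0,\dots,x_n$ and approximate it by a tree. Each edge $\p(x_{i-1},x_i)$ then decomposes roughly additively into contributions of tree segments. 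Segments traversed back and forth contribute $\p(u,v)+\p(v,u)$, and the net contribution reproduces $\p(x_0,x_n)$. The danger is twofold: the error terms grow with $n$ and with the number of branch points, and $\p(u,v)+\p(v,u)$ could be negative.

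For the sign issue the plan is to invoke $\ell_\p\ge0$. The symmetrization $\p(u,v)+\p(v,u)$ is comparable, up to bounded error, to a Gromov product computation. It should be bounded below by a constant, since otherwise one can build elements $g$ with very negative $\p(o,g)+\p(g,o)$. Concatenating powers, $\p(o,g^k)$ would then grow linearly negative along some loxodromic conjugate, giving $\ell_\p<0$ and contradicting membership in $\calH_\G^+$. I would make this precise by choosing $g$ loxodromic with axis near $[u,v]$ and using rough additivity along the quasi-axis of $g^k$.

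To avoid error accumulation I would not use arbitrary chains. Instead, I would define $d(x,y)$ as the infimum only over chains whose consecutive points have $d_0$-distance at most a fixed large $R$, or alternatively simply set
\[d(x,y)=\sup_k\bigl(\p(x,y)-\text{defect}\bigr).\]
Most cleanly, following the proof of \cite[Lemma~3.4]{CRS}, I would put
\[d(x,y):=\max\{\p(x,y)+K,\,0\}\ \text{ for } x\ne y, \qquad d(x,x):=0,\]
and verify the triangle inequality directly for a sufficiently large constant $K$. This reduces to showing $\gpr{x}{y}^\p_w\ge -K/2$ for all $x,y,w$. That lower bound on Gromov products is exactly what the $\ell_\p\ge0$ argument above delivers, combined with \Cref{prop:criterionHMP}: project $w$ to $[x,y]$, split via rough additivity, and reduce to the nonnegativity of a back-and-forth term $\p(w,p)+\p(p,w)$. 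This back-and-forth bound is the heart of the proof.
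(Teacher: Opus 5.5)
Your final construction ($d=\max\{\p+K,0\}$ off the diagonal, $0$ on it) is the natural one; the paper gives no argument of its own and simply invokes \cite[Lemma~3.4]{CRS}. Your reduction of the triangle inequality to $\gpr{x}{y}^\p_w\ge -K/2$ is also sound: project $w$ to $[x,y]$ and use the bound $\p(w,p)+\p(p,w)\ge -C$. The gap is that you never show $\p$ itself is bounded below, and the conclusion needs exactly this. Off the diagonal $d-\p=\max\{K,-\p\}$, which is bounded if and only if $\inf\p>-\infty$. The same unproved claim is hidden in your earlier phrase ``replacing $d$ with $\max(d,0)$ \dots at bounded cost''. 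It does not follow from what you establish. A nonzero homogeneous quasimorphism $\phi$, viewed as $(x,y)\mapsto\phi(x^{-1}y)$, lies in $\calH_\G$, has uniformly bounded Gromov products and identically zero symmetrization $\phi(g)+\phi(g^{-1})$, yet is unbounded below. Your argument only uses $\ell_\p[g]+\ell_\p[g^{-1}]\ge 0$, so it cannot distinguish $\phi$ from an element of $\calH_\G^+$. It would output $\max\{\phi+K,0\}$, a pseudometric at unbounded distance from $\phi$.

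The fix uses the tool you already invoke, but applied to $\p$ itself rather than to $\p+\hat\p$. There are a finite set $F\subset\G$ and uniform constants such that for every $g$ some $f\in F$ makes $o,gf,(gf)^2,\dots$ a uniform quasigeodesic. Rough additivity along it (\Cref{prop:criterionHMP}) gives $|\p(o,(gf)^k)-k\p(o,gf)|\le C_1k$, hence $\p(o,gf)\ge\ell_\p[gf]-C_1\ge -C_1$. Moreover $|\p(o,g)-\p(o,gf)|$ is bounded, since $\p(g,gf)=\p(o,f)$ and $|\gpr{o}{gf}^\p_g|\le\lambda d_0(o,f)+\lambda$. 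So $\p\ge -C'$; this makes your back-and-forth bound immediate and lets you take $d=\p+K$ off the diagonal with no truncation.

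Your parenthetical claim that hyperbolicity of $\p$ follows from \Cref{prop:criterionHMP} and hyperbolicity of $d_0$ alone is also not right as stated: it fails for $-d_0\in\calH_\G$. In the tree approximation, suppose the projection $m$ of $w$ to $[y,z]$ lies beyond the common projection $c$ of $w$ to $[x,y]$ and $[x,z]$. Then $2\gpr{y}{z}^\p_w\approx 2\gpr{x}{y}^\p_w+\p(c,m)+\p(m,c)$, so the four-point inequality again needs the lower bound on $\p$. The chain-infimum detour can be dropped entirely.
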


The following diagram summarizes the chain of containments among some of the spaces introduced above.

\begin{table}[h]
\centering
\renewcommand{\arraystretch}{1.3}
\setlength{\tabcolsep}{6pt}
\begin{tabular}{ccccccc}
$\Dc_\G$ & $\subset$ & $\Hc_\G^{++}$ &  $\subset$ & $\Hc_\G^+$ & $\subset$ & $\Hc_\G$ \\[6pt]
{\footnotesize \begin{tabular}{@{}c@{}}proper, symmetric \\pseudometrics\end{tabular}}
& 
& {\footnotesize \begin{tabular}{@{}c@{}}proper, \\
$\ell_\p > 0$\end{tabular}}
&  
& {\footnotesize $\ell_\p \geq 0$}
& 
& {\footnotesize \begin{tabular}{@{}c@{}}hyperbolic\\
metric potentials\end{tabular}}
\end{tabular}
\end{table}

Given $\p,\varphi \in \calH_\G^{++}$, and in analogy with metrics in $\Dc_\G$  we define 

\begin{equation}\label{eq:formulaDel}
   \Dil(\varphi,\p) = \sup_{[g] \in [\G]'} \frac{\ell_{\varphi}[g]}{\ell_\p[g]} \quad \text{ and } \quad \D(\varphi,\p) = \log \left(\Dil(\varphi,\p) \, \Dil(\p,\varphi)\right). 
\end{equation}
The next lemma follows immediately from \cite[Lemma~3.4]{CRS}.

\begin{lemma}\label{lem.comp}
    Given $\varphi,\p \in \calH_\G^{++}$ there exists $C >0$ such that
    \[
    \Dil(\p,\varphi)^{-1} \p(g,h) - C \le \varphi(g,h) \le \Dil(\varphi,\p)\p(g,h) + C \quad \text{ for all }g,h\in \G.
    \]
\end{lemma}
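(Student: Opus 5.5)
We need to prove Lemma \ref{lem.comp}: for $\varphi,\p\in\calH_\G^{++}$ there is $C>0$ with
\[
\Dil(\p,\varphi)^{-1}\p(g,h)-C\le \varphi(g,h)\le \Dil(\varphi,\p)\p(g,h)+C \quad \text{for all } g,h\in\G.
\]
The plan is to reduce to the comparison between the displacement $\p(o,g)$ and the stable length of a nearby loxodromic element. This is the content of \cite[Lemma~3.4]{CRS}, which we take as given: for $\p\in\calH_\G$ there is a constant $C_\p$ and a finite set $F\subset\G$ such that for every $g\in\G$ some $f\in F$ satisfies
\[
|\p(o,g)-\ell_\p[gf]|\le C_\p .
\]
We will use this same finite set $F$ simultaneously for $\varphi$ and $\p$. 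This is legitimate because the choice of $F$ in that argument depends only on the hyperbolic geometry of a fixed $d_0\in\calD_\G$ and not on the potential. Enlarging the constant, we take $C_0$ to work for both $\varphi$ and $\p$.

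\textbf{Step 1: reduction.} By $\G$-invariance it suffices to treat pairs $(o,g)$, since $\varphi(g,h)=\varphi(o,g^{-1}h)$ and likewise for $\p$.

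\textbf{Step 2: the upper bound.} Fix $g$ and choose $f\in F$ as in Step 1. If $gf$ has infinite order, the definition of $\Dil$ gives $\ell_\varphi[gf]\le \Dil(\varphi,\p)\,\ell_\p[gf]$, because $\ell_\p>0$ on non-torsion classes when $\p\in\calH^{++}_\G$. Hence
\[
\varphi(o,g)\le \ell_\varphi[gf]+C_0\le \Dil(\varphi,\p)\,\ell_\p[gf]+C_0\le \Dil(\varphi,\p)\bigl(\p(o,g)+C_0\bigr)+C_0 .
\]
If instead $gf$ is torsion, then both stable lengths vanish. In that case $|\varphi(o,g)|$ and $|\p(o,g)|$ are both at most $C_0$, and the inequality holds once the additive constant is enlarged.

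\textbf{Step 3: the lower bound.} Exchanging the roles of $\varphi$ and $\p$ in Step 2 yields
\[
\p(o,g)\le \Dil(\p,\varphi)\,\varphi(o,g)+C'.
\]
Dividing by $\Dil(\p,\varphi)$ gives the left inequality. Here $\Dil(\p,\varphi)>0$ is finite, since both potentials are quasi-isometric to $d_0$, so their stable lengths are comparable.

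\textbf{Main obstacle.} The only delicate point is the uniformity of the finite set $F$ in \cite[Lemma~3.4]{CRS}, that is, obtaining $f$ with $gf$ loxodromic and $\p(o,g)$ within bounded error of $\ell_\p[gf]$. If that lemma is stated with $F$ depending on $\p$, we would go back into its proof. There one picks $f$ from a finite set of loxodromics with pairwise distinct fixed points, so that $gf$ is "straight". The bounded-error estimate then follows from Proposition \ref{prop:criterionHMP} applied along a $d_0$-quasi-axis, and this construction depends only on $d_0$.
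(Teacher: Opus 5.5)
Your argument is correct and is essentially the paper's route: the paper says the lemma follows immediately from \cite[Lemma~3.4]{CRS}, i.e.\ from the approximation $|\p(o,g)-\ell_\p[gf]|\le C$ for some $f$ in a finite set, combined with the definition of $\Dil$, exactly as in your Steps~2--3. Your concern about a common finite set $F$ is harmless in any case: since $\p\in\calH_\G^{++}$ is within bounded distance of a $\G$-invariant pseudometric (Lemma~\ref{lem:potential_pseudometric}), one has $\ell_\p[gf]\le \p(o,g)+C'$ for every $f$ in any fixed finite set, so each inequality only needs the approximation for one of the two potentials.
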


The \emph{exponential growth rate} of $\p\in \calH_\G^{++}$ is the limit \begin{equation}\label{eq:EGR}
    v_\p = \limsup_{T\to\infty} \frac{1}{T} \log \#\{g \in \G: \p(o,g) < T\},
\end{equation}
which is finite and positive \cite[Lemma~3.16]{CRS}.

Given $\varphi\in \calH_\G$ and $\p \in \calH^{++}_\G$ we define their \emph{mean distortion} $\tau(\varphi/\p)$ to be
\begin{equation}\label{eq:meandistortion}
\tau(\varphi/\p)=\lim_{T\to\infty}\frac{1}{\#\{[g]\in [\G]':\ell_{\p}[g]<T\}}\sum_{[g]\in [\G]': \ell_{\p}[g]<T}{\frac{\ell_{\varphi}[g]}{\ell_\p[g]}}.
\end{equation}
This limit was shown to exist in \cite{cantrell-tanaka.manhattan} for pairs of metrics in $\Dc_\G$, and the same proof can be applied to pairs of elements in $\calH_\G$; see \cite[Proposition~5.9]{CRS}. Moreover, if $\varphi\in \calH_\G^{++}$ and $v_\p=v_{\varphi}=1$, then $\tau(\varphi/\p)\geq 1$.

We let $\scrH_\G$ and $\scrH_\G^{++}$ be the set of rough similarity equivalence classes in $\calH_\G$ and $\calH_\G^{++}$ respectively. That is, $\p$ and $\varphi$ belong to the same class if and only if $\sup_{g\in \G}|\p(o,g)-\lam \varphi(o,g)|<\infty$ for some $\lam>0$. As for $\scrD_\G$, we let $[\p]$ denote the equivalence class of the hyperbolic metric potential $\p\in \calH_\G$. Similarly, the function $\Del$ from \Cref{eq:formulaDel} defines a metric on $\scrH_\G^{++}$. 

By \Cref{lem:potential_pseudometric}, every properly positive metric potential admits a pseudometric representative in its rough similarity class. Accordingly, we will sometimes work with pseudometric representatives directly.


\subsection{Quasi-conformal measures}\label{subsec:QC}

Given $\p\in \calH_\G$, its \emph{Busemann function} $\beta^\p_o:\G \times \partial \G\ra \R$ can be defined as in \Cref{eq.defbusemann}. When $\p$ belongs to $\calH^{++}_\G$, a Borel probability measure
$\nu$ on $\partial \G$ is \emph{quasi-conformal} for $\p$ if there exists
a constant $C>1$ such that for every $g \in \G$ and for $\nu$-almost every
$a \in \partial \G$ we have
\begin{equation}\label{eq:defquasiconformal}
C^{-1} e^{-v_\p \beta_o^\p(g,a)}
\leq \frac{dg_*\nu}{d\nu}(a)
\leq C e^{-v_\p \beta_o^\p(g,a)}.
\end{equation}

 For $\p\in \calD_\G$, Coornaert \cite{coornaert} proved that quasi-conformal measures always exist, and any two such measures are absolutely continuous with uniformly bounded Radon--Nikodym derivatives. The same result holds for arbitrary $\p\in \calH_\G^{++}$.

\begin{proposition}[{\cite[Proposition 3.19]{CRS}}]
    Fix a reference metric $d \in \Dc_\G$. Then any $\p\in \calH_\G^{++}$ 
    admits a quasi-conformal measure $\nu$ and any two such quasi-conformal measures are absolutely continuous with respect to each other.  
\end{proposition}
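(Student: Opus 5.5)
The plan follows Coornaert's construction, adapted to the asymmetric potential $\p$. By \Cref{lem:potential_pseudometric} and since a bounded perturbation only changes the quasi-conformality constant, we may replace $\p$ by a $\G$-invariant hyperbolic pseudometric (still written $\p$) that is quasi-isometric to $d$, with positive growth rate $v=v_\p$ by \cite[Lemma~3.16]{CRS}. We then build a Patterson--Sullivan measure. Consider the Poincaré series $P(s)=\sum_{g}e^{-s\p(o,g)}$, which converges for $s>v$ and diverges for $s<v$. If it converges at $s=v$, we insert Patterson's slowly increasing weight $h$ so that $\sum_g h(\p(o,g))e^{-s\p(o,g)}$ diverges at $s=v$.

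Define the probability measures
\[
\n_s=\frac{1}{P_h(s)}\sum_g h(\p(o,g))e^{-s\p(o,g)}\del_g
\]
on the compactification $\ov\G=\G\cup\pG$ built from $d$. Take a weak-$*$ limit $\n$ as $s\downarrow v$. Divergence at $v$ forces $\n$ to be supported on $\pG$. For $g\in\G$, the pushforward $g_*\n_s$ has density at $x$ proportional to $e^{-s(\p(g^{-1}x\cdot)\dots)}$, more precisely to $e^{-s(\p(o,g^{-1}x)-\p(o,x))}$ up to the $h$-ratio, which tends to $1$.

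The key geometric input is this. For $x$ in a shadow-type neighborhood of $a\in\pG$, i.e. $\gpr{x}{a}^d_o$ large, the difference $\p(g,x)-\p(o,x)$ lies within a uniform constant of $\b^\p_o(g,a)$. This follows from \Cref{prop:criterionHMP}: the $\p$-Gromov products are uniformly bounded along $d$-rough geodesics, and $d$-geodesics from $o$ and from $g$ toward $a$ fellow travel. Passing to the limit as in Coornaert then gives \eqref{eq:defquasiconformal} with a uniform $C$. The main obstacle is carrying out this Busemann estimate carefully with $\p$ asymmetric, since one must use $\p(g,x)=\p(o,g^{-1}x)$ consistently with the orientation of the Busemann function.

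For uniqueness up to absolute continuity, we first prove a shadow lemma. Let $O(g)$ be the $d$-shadow of a ball around $g$ seen from $o$. Then any quasi-conformal $\n$ satisfies $\n(O(g))\asymp e^{-v\p(o,g)}$. The upper bound comes from applying quasi-conformality to $g^{-1}$ together with the Busemann estimate above. The lower bound uses that $\n$ has no atoms and the usual compactness argument, for which non-elementarity of $\G$ is needed. Shadows of elements at a given $d$-distance cover $\pG$ with bounded multiplicity, so a Vitali-type covering argument shows that two quasi-conformal measures $\n,\n'$ satisfy $\n\le K\n'$ on every Borel set. Hence they are mutually absolutely continuous, with bounded Radon--Nikodym derivatives.
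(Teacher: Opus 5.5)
Your proposal is correct and takes essentially the route behind the cited result. The paper gives no argument of its own: it invokes \cite[Proposition~3.19]{CRS}, which extends Coornaert's Patterson--Sullivan construction and shadow-lemma argument from $\calD_\G$ to $\calH_\G^{++}$, using \Cref{prop:criterionHMP} in place of thin-triangle estimates exactly as you do, and your orientation choices (the density $e^{-s(\p(g,x)-\p(o,x))}$ and the shadow mass $\asymp e^{-v_\p \p(o,g)}$) are the correct ones for the asymmetric setting.
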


If $\p\in \calH_\G^{++}$ and $\nu$ is a quasi-conformal measure for $\p$, then 
\begin{equation}\label{eq:defp_nu}
    \p_\nu(g,h):=\log \left\| \frac{dg_*\nu}{d\nu}\right\|
\end{equation}
defines a pseudometric roughly isometric to $v_\p \p$. This follows from \Cref{eq:defquasiconformal} and the fact that for any $g\in \G$ there exists a point $a\in \pG$ such that $g$ belongs to a uniform quasigeodesic ray from $o$ to $a$ (see e.g.~\cite[Equation~2.5]{CRS}), hence \Cref{prop:criterionHMP} implies that $-\p(o,g)$ is comparable to $\beta^\p_o(g,a)$.

Quasi-conformal measures can also be used to recover the mean distortion $\tau(\varphi/\p)$ for $\varphi, \p \in \Hc^{++}_\G$.
\begin{lemma}[{\cite[Corollary~5.8]{CRS}}]\label{lem:limittauQC}
    Let $\nu$ be a quasi-conformal measure associated to $\p\in \calH_\G^{++}$. Then for any $\varphi\in \calH_\G$ and $\nu$-almost every $a \in \partial \G$, we have that
    \[
\frac{\varphi(o,g_k)}{\p(o,g_k)} \to \tau(\varphi/\p)
   \quad \text{ as }k\to \infty \]
   for every quasigeodesic ray  $(g_k)_{k\geq 0}$ converging to $a$.
\end{lemma}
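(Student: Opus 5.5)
The plan is to reduce the statement to the symmetric case treated in \cite{CRS} and \cite{cantrell-tanaka.manhattan}, using the pseudometric $\p_\nu$ from \Cref{eq:defp_nu} as an intermediary. First, by \Cref{lem:potential_pseudometric} and the fact that adding a bounded function changes neither $\tau(\varphi/\p)$ nor the limit of the ratio along sequences with $\p(o,g_k)\to\infty$, we may assume $\p$ is a genuine pseudometric. Since $\p_\nu$ is roughly isometric to $v_\p\p$, we may moreover replace $\p$ by $\p_\nu/v_\p$. The multiplicative constant cancels in the ratio, and $\tau(\varphi/\p)=v_\p\tau(\varphi/\p_\nu)$, so everything is consistent.

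Second, fix a reference metric $d\in\calD_\G$ and a quasigeodesic ray $(g_k)$ converging to $a$. By \Cref{prop:criterionHMP}, applied to both $\varphi$ and $\p$, the values $\varphi(o,g_k)$ and $-\beta^\varphi_o(g_k,a)$ differ by a uniformly bounded amount, and likewise for $\p$. Indeed, $g_k$ lies within bounded $d$-distance of a rough geodesic from $o$ to $a$, so the relevant Gromov products are uniformly bounded. Hence it suffices to show that for $\nu$-almost every $a$,
\[
\frac{\beta^\varphi_o(g_k,a)}{\beta^\p_o(g_k,a)}\to\tau(\varphi/\p).
\]
The choice of ray then becomes irrelevant: two quasigeodesic rays to $a$ with uniform constants eventually fellow travel, and so they change the Busemann values only by an additive constant plus a linear reparameterization. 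Both of these disappear in the ratio once $\p(o,g_k)\to\infty$, which holds by properness.

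Third, for the almost-sure convergence itself I would use the ergodic theory of the $\G$-action on $\ppG$ with the Bowen--Margulis--Sullivan type measure built from $\nu\otimes\nu$ and the Gromov product of $\p$. The sums $\beta^\varphi_o(g_k,a)$ behave like Birkhoff sums of a cocycle along a typical orbit of the flow-type system. Ergodicity (quasi-conformal measures for $\p$ are ergodic and unique up to bounded density) gives that the ratio converges almost surely to a constant $c$, equal to the ratio of the integrals of the two cocycles. The remaining step is to identify $c=\tau(\varphi/\p)$. For this, equidistribution of closed geodesics weighted by $\p$-length toward this same measure, as in the counting arguments of \cite{cantrell-tanaka.manhattan} and \cite[Proposition~5.9]{CRS}, shows that the average of $\ell_\varphi/\ell_\p$ in \Cref{eq:meandistortion} converges to the same ratio of integrals.

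I expect this last identification to be the main obstacle, because $\varphi$ and $\p$ are only coarse objects without continuous cocycles. I would first try to handle it by a large-deviation argument: the proportion of conjugacy classes, and respectively the $\nu$-measure of shadows, on which the ratio deviates from $c$ by more than $\epsilon$ decays exponentially. This would give both statements, and hence force $c=\tau(\varphi/\p)$.
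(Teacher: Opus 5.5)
Your first two steps are fine: passing to a pseudometric, replacing $\varphi(o,g_k)$ by $-\beta^\varphi_o(g_k,a)$ up to $O(1)$, and ray-independence. The last only needs that two quasigeodesic rays to $a$ eventually fellow travel and that elements of $\calH_\G$ are coarsely Lipschitz. The ergodic step can also be made rigorous, e.g.\ on Mineyev's flow space with the BMS measure $\frakm_\p$. This measure should be built from $\hat\nu\otimes\nu$ rather than $\nu\otimes\nu$, since $\p$ need not be symmetric. The processes $t\mapsto \varphi(o,g_\calB(\vv,t))$ and $t\mapsto \p(o,g_\calB(\vv,t))$ are almost additive by \Cref{lem.almostcocycle}. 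So Kingman's theorem, ergodicity of $\Lam_\p$, and its product structure give convergence of the ratio, for $\nu$-a.e.\ $a$, to the ratio of drifts $c=L_\varphi(\frakm_\p)/L_\p(\frakm_\p)$.

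That is all you obtain. By \Cref{coro.ellL}, $c=\ell_\varphi(\Lam_\p)/\ell_\p(\Lam_\p)$, so the remaining identity $c=\tau(\varphi/\p)$ is exactly \Cref{thm.tauvsell}. The paper deduces that theorem \emph{from} the present lemma, through \Cref{lem.tauLBM}. Identifying the ergodic constant with the conjugacy-class average in \Cref{eq:meandistortion} is the whole content of the statement, and your proposal leaves it open.

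Neither route you suggest closes this gap as stated.
\begin{itemize}
\item Equidistribution of closed orbits, ordered by $\ell_\p$, toward $\frakm_{\BMS(\p)}$ is a Bowen-type theorem. It needs a thermodynamic framework: a H\"older reparameterization, or a coding with a H\"older roof realizing $\p$. A general $\p\in\calH^{++}_\G$ does not come with one; for a word metric, \Cref{ex:nonarithmetic} rules out any H\"older reparameterization with periods $\ell_\p$.
\item The exponential decay of deviations, for conjugacy classes and for shadows, is not a tool you can invoke. It is the statement that has to be proved, and you give no mechanism producing it.
\end{itemize}
In the literature the bridge is the Manhattan curve of the pair $(\varphi,\p)$. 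One shows it is differentiable at the relevant point, via a coding of the associated flow and thermodynamic formalism (\cite{cantrell-tanaka.manhattan}, adapted to $\calH_\G$ in \cite{CRS}). One then shows that this derivative is both the limit of the averages of $\ell_\varphi/\ell_\p$ over $[\G]'$ and the Patterson--Sullivan-typical ratio. Without such a differentiability input there is no reason for the two constants to agree. As written, your argument only shows that $\varphi(o,g_k)/\p(o,g_k)$ converges $\nu$-a.e.\ to \emph{some} constant.
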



\subsection{Geodesic currents}\label{subsec:currents}

The \emph{double boundary} is the set
$\partial^2\G$ of \emph{ordered} pairs of distinct points of $\partial \G$.
We equip $\partial^2\G$ with the topology induced by the inclusion
$\partial^2\G \subset \partial \G\times \pG$. The diagonal action
of $\G$ on $\partial^2\G$ is continuous and cocompact.

A \emph{geodesic current} on $\G$ is a $\G$-invariant Radon
measure on $\partial^2\G$.
We let $\calC_\G$ denote the space of all geodesic currents on $\G$
equipped with the weak$^*$-topology. We do not assume geodesic currents to be \emph{symmetric} (i.e. invariant under the flip $\iota(a,b)=(b,a)$), as our metric potentials naturally yield non-symmetric currents. 

Geodesic currents were introduced by Bonahon \cite{bonahon.annals,bonahon.currentshpygroups} as a completion of the space of weighted conjugacy classes in $\G$. Indeed, if $g$ is a primitive non-torsion element, i.e.\ if $g=h^m$ for some $h\in \G$ then $m=\pm1$, let $g^-,g^+$ be its repelling and attracting fixed points
in $\partial \G$, respectively. Let $\eta_{[g]}$ be the sum of Dirac measures
supported on the $\G$-translates of $(g^-,g^+)$. If $g=h^m$ for $h$ primitive and $m\geq 1$, we let $\eta_{[g]} \coloneqq m \cdot \eta_{[h]}$.
Since $\G \cdot (g^-,g^+)$ is countable and discrete, $\eta_{[g]}$ is a locally finite Borel measure on $\partial^2\G$, hence Radon,
and it is clearly $\G$-invariant.
We call $\eta_{[g]}$ the \emph{rational current} associated to $g$, which actually only depends on the conjugacy class $[g]\in [\G]'$. Bonahon also showed that positive real multiples of rational currents
are dense in $\calC_\G$ \cite[Theorem~7]{bonahon.currentshpygroups}. 

Let
\[
\mathbb{P}\calC_\G \coloneqq (\calC_\G \setminus \{0\})/\mathbb{R}^+
\]
denote the space of \emph{projective geodesic currents} equipped
with the quotient topology. The space $\mathbb{P}\calC_\G$
is compact and metrizable \cite{bonahon.annals}.  We let $[\Lam]$ denote the projective class of the non-zero current $\Lam$.

In \cite{furman}, Furman noted that pseudometrics in $\calD_\G$ induce geodesic currents in a natural way, a fact further extended in \cite[Section~3.3]{CRS} to properly positive hyperbolic metric potentials. 
Given $\psi \in \Hc^{++}_\G$, let $\hat{\psi} \in \Hc^{++}_\G$ be defined as $\hat{\psi}(g,h) \coloneqq \psi(h,g)$. We have that $\hat\p$ also belongs to $\calH_\G^{++}$ and $v_\psi=v_{\hat{\psi}}$. Consider quasi-conformal measures $\nu, \hat{\nu}$ for $\psi$ and $\hat{\psi}$, respectively. By \cite[Equation~3.9]{CRS} there exists a geodesic current $\Lambda_\psi \in \calC_\G$ satisfying
\[
d\Lambda_\psi(a,b) = \alpha(a,b)\exp(2v_\psi\gpr{a}{b}^\psi_o)d\hat{\nu}(a)d\nu(b),
\]
where $\alpha:\ppG\ra \R$ is a measurable function essentially bounded and bounded away from zero. The geodesic current obtained above is ergodic. Also, since $|\gpr{\cdot}{\cdot}_o^\p-\gpr{\cdot}{\cdot}_o^{\hat\p}|$ is a bounded function \cite[Lemma~3.10]{CRS}, we have that $[\Lam_{\hat{\psi}}]=[\iota_*\Lambda_\psi]$. In general, if $[\psi] \neq [\hat{\psi}]$ then $[\Lambda_\psi] \neq [\Lambda_{\hat{\psi}}]$, so the resulting geodesic current is not necessarily symmetric. We call $\Lam_\p$ a \emph{Bowen--Margulis--Sullivan (BMS)} geodesic current for $\p$ or $[\p]$.
This induces a map \[\BMS:\scrH^{++}_\G \ra \bbP\calC_\G\] that sends $[\psi]$ to the projective class $[\Lam_\psi]$ of $\Lam_\psi$. We also call $\BMS([\p])$ the \emph{BMS (projective) current} of $\p$ or $[\p]$. By abuse of notation, we also denote $\BMS([\p])$ by $\BMS(\p)$.



\section{The completion $\MC(\scrH_\G^{++})$}\label{sec:metriccompletion}

In this section we describe points in the metric completion $\MC(\scrH_\G^{++})$ of $\scrH_\G^{++}$ as equivalence classes of certain left-invariant pseudometrics on $\G$. As we will see in \Cref{subsec:noncomplete}, this completion is strictly bigger than $\scrH_\G^{++}$. The following is the main result of the section.

\begin{proposition}\label{prop:MC(H)}
For any point $\vx\in \MC(\scrH_\G^{++})$ there exists a $\G$-invariant pseudometric $\p$ on $\G$ with exponential growth rate 1 (in the sense of \Cref{eq:EGR}) and satisfying the following.
\begin{enumerate}
    \item For any sequence $(\vx_m)_m$ in $\scrH_\G^{++}$ converging to $\vx$ and representatives $\p_m$ of $\vx_m$ with exponential growth rate 1, we have that 
    $$\log \ell_{\p_m} \to\log \ell_{\p} \quad  \text{ uniformly on }[\G]'.$$
    \item $\p$ is quasi-isometric to any metric in $\calD_\G$. 
\end{enumerate}  
\end{proposition}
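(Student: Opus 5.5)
The plan is to build $\p$ as a limit of pseudometric representatives of a Cauchy sequence, using \Cref{lem.comp} for uniform multiplicative control. Let $(\vx_m)_m$ be a Cauchy sequence in $(\scrH_\G^{++},\Del)$ representing $\vx$. By \Cref{lem:potential_pseudometric} we may pick representatives $\p_m$ that are genuine $\G$-invariant pseudometrics, and we rescale each so that $v_{\p_m}=1$.

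\textbf{Step 1: normalization.} With $v_{\p_m}=v_{\p_n}=1$, neither ratio $\ell_{\p_m}/\ell_{\p_n}$ can be uniformly less than $1$ on $[\G]'$. Indeed, by \Cref{lem.comp} a uniform inequality $\ell_{\p_m}\le c\,\ell_{\p_n}$ gives $\p_m\le c\,\p_n+C$, and this forces $v_{\p_n}\le c\,v_{\p_m}$. Hence $\Dil(\p_m,\p_n)\ge1$ and $\Dil(\p_n,\p_m)\ge 1$, so both lie in $[1,e^{\Del(\vx_m,\vx_n)}]$. It follows that
\[\sup_{[G]'}|\log\ell_{\p_m}-\log\ell_{\p_n}|\le \Del(\vx_m,\vx_n),\]
so $\log\ell_{\p_m}$ is uniformly Cauchy and converges uniformly to a limit $\log\ell$.

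\textbf{Step 2: constructing $\p$.} Passing to a subsequence with $\Del(\vx_m,\vx_{m+1})\le 2^{-m}$, the products $\lambda_m=\prod_{j\ge m}e^{2^{-j}}$ tend to $1$. I would like to define $\p=\lim\p_m$, but the additive constants $C_m$ from \Cref{lem.comp} are not summable, so pointwise convergence fails. To get around this, I would use the pseudometrics $\p_{\nu_m}$ from \Cref{eq:defp_nu}, which lie at bounded distance from $\p_m$ and are defined by Radon--Nikodym derivatives of quasi-conformal measures. Alternatively, take an ultralimit or a $\limsup$: set $\p(g,h)=\limsup_m \p_m(g,h)-\text{(corrections)}$. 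The cleaner option is to define $\p$ through the lengths directly, for example $\p(g,h)=\sup_n\bigl(\p_n(g,h)\,e^{-\epsilon_n}-C_n'\bigr)$ with $\epsilon_n\to0$ chosen from the dilation bounds. Any supremum of $\G$-invariant pseudometrics minus constants still satisfies the triangle inequality up to constants, and one then corrects it to an honest pseudometric. Two-sided comparison with each $\p_m$ gives $(1-\eta_m)\p_m-C\le\p\le(1+\eta_m)\p_m+C$ with $\eta_m\to0$. Stable lengths then satisfy $\ell_\p=\lim\ell_{\p_m}=\ell$, and the exponential growth rate is squeezed to $1$.

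\textbf{Step 3: the two items.} Item (2) follows because $\p$ is bi-Lipschitz up to additive constants to $\p_1$, and $\p_1\in\calH_\G^{++}$ is quasi-isometric to a word metric. Item (1) needs independence from the chosen sequence. If $(\vx'_m)$ is another sequence converging to $\vx$, interleaving it with $(\vx_m)$ produces a Cauchy sequence. Step 1 then shows the $\log$-lengths of the interleaved sequence converge uniformly to a single limit, which must equal $\log\ell_\p$. This also makes item (1) hold for arbitrary normalized representatives, since the normalized length function depends only on the class.

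\textbf{Main obstacle.} The hard part is Step 2: producing a single pseudometric with controlled multiplicative comparison to every $\p_m$ when the additive errors $C_m$ are uncontrolled. I would first try the supremum construction above and verify its triangle inequality directly.
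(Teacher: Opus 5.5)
Your Steps 1 and 3 are fine. Normalizing $v_{\p_m}=v_{\p_n}=1$ does force both dilations to be $\ge 1$, which gives $\sup_{[\G]'}|\log\ell_{\p_m}-\log\ell_{\p_n}|\le\Del(\vx_m,\vx_n)$, and interleaving two sequences gives independence of the limit.

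The gap is Step 2, which is the actual content of the proposition: producing a single genuine pseudometric $\p$ with $\ell_\p=\lim\ell_{\p_m}$. You list several options but carry none of them out, and the one you favour does not work as stated. For $f=\sup_n(e^{-\epsilon_n}\p_n-C_n')$ the triangle inequality only holds up to an additive error $\sup_n C_n'$. To obtain $f\le(1+\eta_m)\p_m+C$ with $C$ independent of $n$, the $C_n'$ must absorb the additive constants $C_{n,m}$ in $\p_n\le\Dil(\p_n,\p_m)\p_m+C_{n,m}$ for all $m\le n$. For arbitrary pseudometric representatives these constants are unbounded in $n$, so $f$ need not even be a rough pseudometric.

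Your ``correction to an honest pseudometric'' is then exactly the difficulty. For instance, the chain (path) pseudometric of $f^+$ only satisfies $\sum_i f^+(x_i,x_{i+1})\ge e^{-\epsilon_m}\p_m(g,h)-NC_m'$ along a chain with $N$ steps. Long chains of short steps are not controlled, so there is no reason for the multiplicative lower bound to survive. The ultralimit and limsup options fail for the same reason: $\p_n(g,h)$ need not be bounded in $n$, e.g.\ take $\p_n=\p+K_n$ off the diagonal with $K_n\to\infty$. Passing to $\p_{\nu_m}$ for an arbitrary quasi-conformal $\nu_m$ only swaps $\p_m$ for another representative at bounded distance, so it does not help either.

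The missing idea is a way to choose representatives with \emph{exact} multiplicative comparisons, with no additive constants at all. The paper gets this from a careful choice of quasi-conformal measure. Suppose $\p\le\p_0+C$ with $\p_0$ a pseudometric and $v_\p=1$. Replace a quasi-conformal $\tilde\nu$ by $F\tilde\nu$, where $F(a)=\inf_{g}\frac{dg^{-1}_*\tilde\nu}{d\tilde\nu}(a)e^{\p_0(o,g)}$. Since $D^{-1}\le F\le 1$, the new measure $\nu$ is still quasi-conformal, and the triangle inequality for $\p_0$ gives $\log\|dg_*\nu/d\nu\|\le\p_0(o,g)$ with no error term.

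Apply this recursively along a subsequence with $\Del(\vx_{r_{k+1}},\vx_{r_k})\le 2^{-k}$. Then take a maximum with $\Dil(\p_{r_k},\p_{r_{k+1}})^{-1}\hat\p_k$ to get the lower bound; a max of pseudometrics is a pseudometric. This yields pseudometrics $\hat\p_k$, roughly isometric to $\p_{r_k}$, with $\Dil(\p_{r_k},\p_{r_{k+1}})^{-1}\hat\p_k\le\hat\p_{k+1}\le\Dil(\p_{r_{k+1}},\p_{r_k})\hat\p_k$. These are all bounded by $2\hat\p_1$, so a diagonal subsequence converges pointwise on $\G\times\G$ to a pseudometric $\p$. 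The exact inequalities pass to the limit, giving $2^{-1/2^{m-1}}\hat\p_m\le\p\le 2^{1/2^{m-1}}\hat\p_m$. From there your Steps 1 and 3 finish the argument.
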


\begin{definition}\label{def:MCHandD}
Let $\MC(\calH_\G^{++})$ be the space of left-invariant functions $\p':\G\times \G \ra \R$ such that $|\p'-\lam \p|$ is sublinear in $\G$ for $\p$ a pseudometric as in the above proposition and some $\lam\in \R^+$. That is, for some fixed $d\in \calD_\G$, we have that $|\p'(o,g)-\lam \p(o,g)|/d(o,g)$ tends to zero as $d(o,g) \to \infty$. We let $\MC(\calD_\G)\subset \MC(\calH_\G^{++})$ be the subset of symmetric pseudometrics. 
\end{definition}

\begin{remark}\label{rmk:badmetrics}
Not every left-invariant pseudometric on $\G$ quasi-isometric to a metric in $\calD_\G$ belongs to $\MC(\calH_\G^{++})$. See for instance \Cref{rmk:extensioninMC}.
\end{remark}

Before proving \Cref{prop:MC(H)}, we note as a consequence that the projective class of the stable length $\ell_{\p}$ completely determines the point $\vx=[\p] \in \MC(\scrH_\Gamma^{+})$. In addition, the definition of dilation can be extended to pairs in $\MC(\calH_\G^{++})$, and hence we can use translation lengths to compute the distance $\Del$ in $\MC(\scrH_\G^{++})$ as in \Cref{eq:formulaDel}.

\begin{corollary}\label{cor:DeltaformulaMC}
For any $\vx=[\p]$ and $\vx'=[\p']$ in $\MC(\scrH_\G^{++})$ we have 
\[\Del(\vx,\vx')=\log(\Dil(\p,\p')\cdot \Dil(\p',\p)). \qedhere\]
\end{corollary}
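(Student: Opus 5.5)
The plan is to realize both sides of the identity through Cauchy sequences in $\scrH_\G^{++}$ and pass to the limit using the uniform convergence of logarithms of translation lengths from \Cref{prop:MC(H)}(1). Write $\vx=\lim_m \vx_m$ and $\vx'=\lim_m\vx'_m$ with $\vx_m,\vx'_m\in\scrH_\G^{++}$. Choose representatives $\p_m,\p'_m$ normalized to have exponential growth rate $1$, and let $\p,\p'$ be the pseudometrics attached to $\vx,\vx'$ by \Cref{prop:MC(H)}. Item (1) of that proposition gives
\[
\epsilon_m:=\sup_{[g]\in[\G]'}\bigl|\log\ell_{\p_m}[g]-\log\ell_\p[g]\bigr|\to 0,
\]
and the analogous quantity $\epsilon'_m$ for $\p'_m,\p'$ also tends to $0$. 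Since $\Del$ is continuous on the completion, $\Del(\vx,\vx')=\lim_m\Del(\vx_m,\vx'_m)=\lim_m\log(\Dil(\p_m,\p'_m)\Dil(\p'_m,\p))$ by \Cref{eq:formulaDel}.

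Next I compare the dilations termwise. For every $[g]\in[\G]'$,
\[
\Bigl|\log\frac{\ell_{\p_m}[g]}{\ell_{\p'_m}[g]}-\log\frac{\ell_{\p}[g]}{\ell_{\p'}[g]}\Bigr|\le\epsilon_m+\epsilon'_m .
\]
Taking the supremum over $[g]$ gives $|\log\Dil(\p_m,\p'_m)-\log\Dil(\p,\p')|\le\epsilon_m+\epsilon'_m$, and symmetrically for the reversed dilation. Letting $m\to\infty$ yields the formula.

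Some care is needed to make sure the quantities make sense. By item (2), $\p$ and $\p'$ are quasi-isometric to a word metric, so $\ell_\p$ and $\ell_{\p'}$ are positive and finite on $[\G]'$. Their logarithms are therefore finite, and the dilations lie in $(0,\infty)$. This finiteness follows from the uniform estimate above, since $\Dil(\p_m,\p'_m)$ is finite.

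The identity is stated for arbitrary representatives in $\MC(\calH_\G^{++})$. By \Cref{def:MCHandD}, such representatives differ from $\lam\p$ by a sublinear error, so their stable lengths equal $\lam\ell_\p$. The dilation product is invariant under such rescaling, because the factors $\lam$ and $\lam^{-1}$ cancel.

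The only subtle point I expect is this well-definedness: that $\p$ depends only on $\vx$ up to the relevant equivalence. This also follows from item (1), since two Cauchy sequences converging to $\vx$ give uniformly close $\log\ell$, and hence the same limit $\ell_\p$.
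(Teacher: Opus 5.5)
Your proposal is correct and follows essentially the paper's route: the corollary is recorded as an immediate consequence of the uniform convergence $\log\ell_{\p_m}\to\log\ell_\p$ from \Cref{prop:MC(H)}~(1), and passing to the limit in the dilation product (which is insensitive to rescaling and to sublinear perturbations of representatives) is exactly what is intended. One small typo: in your first paragraph, $\Dil(\p'_m,\p)$ should read $\Dil(\p'_m,\p_m)$.
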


To prove \Cref{prop:MC(H)}, we recursively modify elements in a sequence in $\calH_\G^{++}$ to extract convergent subsequences. In order to do this, we require the next lemma and its subsequent corollary. Given a quasi-conformal measure $\nu$ on $\partial \G$, we recall the pseudometric $\p_\n\in \calH_\G^{++}$ defined in \Cref{eq:defp_nu}.

\begin{lemma}\label{lem:improvemeasure}
Let $\p_0,\p\in \calH_\G^{++}$ be such that $\p_0$ is a pseudometric and $v_\p=1$. 
    Suppose that $\p\leq \p_0+C$ for some $C\geq 0$. Then there exists a probability measure $\nu$ on $\pG$ that is quasi-conformal to $\p$, and such that  
$$\p_\nu(o,g)=\log\left\|\frac{dg_\ast\nu}{d\nu}\right\|\leq \p_0(o,g) \quad \text{ for all }g\in \G.$$
\end{lemma}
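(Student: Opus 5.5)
Take any quasi-conformal measure $\nu_0$ for $\p$ and average its $\G$-translates with weights tuned to $\p_0$, so that all Radon--Nikodym derivatives are controlled by $e^{\p_0}$ exactly rather than up to constants. This is a Patterson-type construction adapted to the pseudometric $\p_0$.

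\textbf{Step 1: a sublinear-cost reduction.} Fix a quasi-conformal measure $\nu_0$ for $\p$. By the discussion after \Cref{eq:defp_nu}, and using $v_\p=1$, the pseudometric $\p_{\nu_0}$ is roughly isometric to $\p$. Hence
\[
\left\|\frac{dg_*\nu_0}{d\nu_0}\right\|\leq e^{\p(o,g)+C_1}\leq e^{\p_0(o,g)+C+C_1}.
\]
Set $K=e^{C+C_1}$. Thus $\nu_0$ satisfies the desired inequality only up to the multiplicative constant $K$, and the rest of the argument removes $K$.

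\textbf{Step 2: the averaged measure.} Define
\[
\nu'=\sum_{h\in\G} e^{-s\p_0(o,h)}\,h_*\nu_0
\]
for a suitable $s$. The first attempt is $s>v_{\p_0}$, so that the series converges. To check convergence, note that $\p\le\p_0+C$ forces $v_{\p_0}\le v_\p=1$. Hence for any $s>1$ the sum is finite, since $\p_0$ is proper (being in $\calH^{++}_\G$).

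\textbf{Step 3: the derivative bound.} We have
\[
g_*\nu'=\sum_h e^{-s\p_0(o,h)}(gh)_*\nu_0=\sum_{k} e^{-s\p_0(o,g^{-1}k)}k_*\nu_0.
\]
By left-invariance and the triangle inequality, $\p_0(o,g^{-1}k)=\p_0(g,k)\ge \p_0(o,k)-\p_0(o,g)$. Therefore $g_*\nu'\le e^{s\p_0(o,g)}\nu'$ termwise, which gives $\|dg_*\nu'/d\nu'\|\le e^{s\p_0(o,g)}$, with no additive constant. Quasi-conformality of $\nu'$ with respect to $\p$ needs separate justification, since an infinite average of quasi-conformal measures need not be quasi-conformal. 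Termwise, $h_*\nu_0$ has derivative against $\nu_0$ comparable to $e^{-\beta_o^{\p}(h,\cdot)}$, which is unbounded in $h$, so this is not automatic.

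\textbf{Main obstacle: both issues, and the intended fix.} The real difficulties are quasi-conformality and the exponent $s$ instead of $1$. Both suggest using the Patterson construction itself rather than averaging $\nu_0$. Concretely, take
\[
\nu=\lim_{s\downarrow1}\frac{\sum_h e^{-s\p(o,h)}\delta_h}{\sum_h e^{-s\p(o,h)}},
\]
a weak-$*$ limit on $\ov\G$, inserting a Patterson slowly varying function if the Poincaré series converges at $s=1$. This measure is quasi-conformal for $\p$ by the proof of \cite[Proposition~3.19]{CRS}.

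On the orbit, the exact inequality has to come from the weights. A term-by-term comparison of $g_*$ of the weighted sum with the sum itself gives weights $e^{-s\p(o,g^{-1}k)}$. These can be bounded by $e^{s\p(g,o)}e^{-s\p(o,k)}$ only in terms of $\p$, not $\p_0$. So replace the weights $e^{-s\p}$ by
\[
w_s(h)=\sup_{k}e^{-s\p(o,k)-s\p_0(k,h)},
\]
an inf-convolution of $\p$ with $\p_0$. This weight is comparable to $e^{-s\p(o,h)}$ up to the factor $e^{sC}$, because $\p\le\p_0+C$ and $\p$ is roughly a pseudometric. At the same time it satisfies exactly $w_s(g^{-1}k)\le e^{s\p_0(o,g)}w_s(k)$, by the triangle inequality for $\p_0$ together with left-invariance.

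The resulting limit measure is therefore still quasi-conformal for $\p$, which is the bounded-distortion check to verify. Its derivatives are bounded by $e^{s\p_0(o,g)}$. Letting $s\to1$ along the Patterson subsequence gives $\p_\nu(o,g)\le\p_0(o,g)$, provided derivative bounds pass to weak-$*$ limits. They do, because $g_*\mu_s\le e^{s\p_0(o,g)}\mu_s$ is a closed condition on pairs of measures.
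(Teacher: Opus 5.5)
\textbf{The gap.} The argument hinges on the exact inequality $w_s(g^{-1}k)\le e^{s\p_0(o,g)}w_s(k)$, and for your weights this inequality is false. By left-invariance, $w_s(g^{-1}k)=\sup_j e^{-s(\p(o,j)+\p_0(gj,k))}$. To compare this with $w_s(k)$ you have two natural choices, and neither works:
\begin{itemize}
\item Taking $j'=gj$ requires $\p(o,gj)\le\p(o,j)+\p_0(o,g)$. This is an exact triangle inequality for $\p$, which you do not have.
\item Taking $j'=j$ requires $\p_0(j,gj)\le\p_0(o,g)$. But $\p_0(j,gj)=\p_0(o,j^{-1}gj)$.
\end{itemize}

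\textbf{A counterexample.} Let $\G=F(x,y)$ and $\p_0=d_S/\log 3$, where $d_S$ is the standard word metric. Define $\p(u,v)=\p_0(u,v)-\epsilon$ if $u^{-1}v=y$, and $\p(u,v)=\p_0(u,v)$ otherwise, with $0<\epsilon<2/\log 3$. All hypotheses hold with $C=0$.

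Write $w_s=e^{-s\phi}$, where $\phi(h)=\inf_k(\p(o,k)+\p_0(k,h))$. In the tree, $\p_0(o,k)+\p_0(k,h)=\p_0(o,h)+2\p_0(k,[o,h])$. Hence $\phi(h)=\p_0(o,h)-\epsilon$ if $y\in[o,h]$, and $\phi(h)=\p_0(o,h)$ otherwise. So for every reduced word $k=xyw$ we get
\[
w_s(x^{-1}k)/w_s(k)=e^{s(\p_0(o,x)+\epsilon)}.
\]
Therefore any weak$^*$ limit satisfies $dx_*\nu/d\nu=e^{\p_0(o,x)+\epsilon}$ on the clopen cylinder of $xy$, which has positive $\nu$-measure. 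This contradicts the conclusion, so the construction as written genuinely fails. The underlying issue is that the inf-convolution has $\p_0$ in the wrong slot.

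\textbf{The fix.} Take $\phi(h)=\inf_k(\p_0(o,k)+\p(k,h))$ instead.
\begin{itemize}
\item By left-invariance, $\phi(g^{-1}h)=\inf_k(\p_0(g,k)+\p(k,h))$.
\item The triangle inequality $\p_0(o,k)\le\p_0(o,g)+\p_0(g,k)$ then gives exactly $\phi(h)\le\p_0(o,g)+\phi(g^{-1}h)$.
\item You still have $\p(o,h)-C-c\le\phi(h)\le\p(o,h)$. Take $k=o$ for the upper bound; for the lower bound, $c$ comes from $\p$ differing from a pseudometric by a bounded function.
\end{itemize}
With this change your Patterson approach can be completed. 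You would still need to verify quasi-conformality of the limit under the perturbed weights. You would also need to handle the slowly varying factor, which satisfies the ratio bound only asymptotically, and check that the limit lives on $\pG$.

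\textbf{Comparison with the paper.} The paper avoids all of this by performing the same (correctly ordered) inf-convolution directly at the level of boundary densities. It starts from an arbitrary quasi-conformal $\widetilde\nu$ and sets $d\nu\propto F\,d\widetilde\nu$, where
\[
F(a)=\inf_{g}\frac{dg^{-1}_*\widetilde\nu}{d\widetilde\nu}(a)\,e^{\p_0(o,g)}.
\]
The bounds $D^{-1}\le F\le 1$ give quasi-conformality for free. The exact bound $\frac{dg_*\nu}{d\nu}\le e^{\p_0(o,g)}$ then follows in one line from the triangle inequality for $\p_0$. No limit in $s$ and no divergence-type assumption are needed.
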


\begin{proof}
Let $\wtilde\nu$ be an arbitrary  quasi-conformal measure for $\p$. Since $\p$ has exponential growth rate 1, $\p_\nu$ is roughly isometric to $\p$ and we can find $D>1$ such that 
\begin{equation*}
    \left\|\frac{dg_\ast\wtilde\nu}{d\wtilde\nu}\right\|\leq De^{\p_0(o,g)} \quad \text{ for any }g\in \G.
\end{equation*}

Given $a\in \pG$, we define \begin{equation*}\label{eq.defdensity}
    F(a):=\inf_{g\in \G}{\frac{dg^{-1}_\ast\wtilde\nu}{d\wtilde\nu}(a)e^{\p_0(o,g)}}.
\end{equation*} This function is well-defined $\wtilde\nu$-almost everywhere, and for $\wtilde\nu$-almost every $a\in \pG$ we have $D^{-1}\leq F(a)\leq 1$. Hence, the probability measure $\nu$ with density $d\nu:=\frac{1}{\int{F}{d\wtilde\nu}}Fd\wtilde\nu$ is also quasi-conformal for $\p$.

We claim that for a given $g\in \G$ and for $\nu$-almost every $a$ we have $$\frac{dg_\ast\nu}{d\nu}(a)= \frac{F(g^{-1}a)}{F(a)}\frac{dg_\ast\wtilde\nu}{d\wtilde\nu}(a)\leq e^{\p_0(o,g)}.$$
Indeed, for any $h\in \G$ we have
\begin{align*}
    \frac{F(g^{-1}a)}{\frac{dh^{-1}_\ast\wtilde\nu}{d\wtilde\nu}(a)e^{\p_0(o,h)}}\leq \frac{\frac{d(hg)^{-1}_\ast\wtilde\nu}{d\wtilde\nu}(g^{-1}a)e^{\p_0(o,hg)}}{\frac{dh^{-1}_\ast\wtilde\nu}{d\wtilde\nu}(a)e^{\p_0(o,h)}}=\frac{e^{\p_0(o,hg)-\p_0(o,h)}}{\frac{dg_\ast\wtilde\nu}{d\wtilde\nu}(a)} \leq \frac{e^{\p_0(o,g)}}{\frac{dg_\ast\wtilde\nu}{d\wtilde\nu}(a)},
\end{align*}
where in the last inequality we use that $\p_0$ satisfies the triangle inequality.
Since the inequality above holds for every $g\in \G$ and $\wtilde\nu$-almost every $a\in \partial \G$, the lemma follows.
\end{proof}

\begin{corollary}\label{coro:improvemetric}
    Let $\p_0,\p\in \calH_\G^{++}$ with $\p_0$ a pseudometric. If $\p\leq \p_0+C$ for some constant $C>0$, then there exists a pseudometric $\p'$ that is roughly isometric to $\p$ and satisfies $\p'\leq \p_0$.
\end{corollary}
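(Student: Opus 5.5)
The plan is to reduce to \Cref{lem:improvemeasure} by normalizing the growth rate. Set $v:=v_\p>0$, which is finite and positive by \cite[Lemma~3.16]{CRS}, and put $\tilde\p:=v\p\in\calH_\G^{++}$, so that $v_{\tilde\p}=1$. The hypothesis $\p\le \p_0+C$ becomes $\tilde\p\le v\p_0+vC$. Since $v\p_0$ is again a pseudometric (positive multiples preserve nonnegativity, $\p(x,x)=0$ and the triangle inequality) and lies in $\calH_\G^{++}$, \Cref{lem:improvemeasure} applies to the pair $(v\p_0,\tilde\p)$. It gives a quasi-conformal probability measure $\nu$ for $\tilde\p$ with
\[\tilde\p_\nu(o,g)=\log\Bigl\|\frac{dg_\ast\nu}{d\nu}\Bigr\|\le v\,\p_0(o,g)\quad\text{for all } g\in\G.\]

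Next I define $\p'(g,h):=v^{-1}\,\tilde\p_\nu(g,h)$. Three things need checking.
\begin{enumerate}
\item $\p'$ is a pseudometric. This is the remark after \Cref{eq:defp_nu}: $\log\|dg_\ast\nu/d\nu\|$ is left-invariant, vanishes at $g=o$, and is subadditive by the chain rule, $\frac{d(gh)_\ast\nu}{d\nu}=\bigl(\frac{dh_\ast\nu}{d\nu}\circ g^{-1}\bigr)\frac{dg_\ast\nu}{d\nu}$. Nonnegativity holds because a Radon--Nikodym derivative between probability measures cannot have sup-norm below $1$.
\item $\p'$ is roughly isometric to $\p$. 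The same remark says $\tilde\p_\nu$ is roughly isometric to $v_{\tilde\p}\tilde\p=\tilde\p=v\p$. Dividing by $v$ shows $\p'$ is roughly isometric to $\p$, with the additive constant scaled by $v^{-1}$.
\item $\p'\le\p_0$. By left-invariance of both functions it suffices to compare $\p'(o,g)$ with $\p_0(o,g)$, and dividing the displayed inequality by $v$ gives exactly this.
\end{enumerate}

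No step here is a real obstacle, since the analytic work was already done in \Cref{lem:improvemeasure}. The one point needing care is the normalization: the quasi-conformal measure for $\p$ is the one for $v\p$, so one must consistently scale by $v$ and then scale back. The rough-isometry claim also relies on the standard fact, recalled after \Cref{eq:defp_nu}, that $\p_\nu$ is roughly isometric to $v_\p\p$.
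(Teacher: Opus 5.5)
Your proposal is correct and follows the paper's proof: normalize to exponential growth rate $1$, apply \Cref{lem:improvemeasure}, and take $\p'$ to be (a rescaling of) $\p_\nu$. You are slightly more careful than the paper, which only says ``after rescaling'' and leaves implicit two things you spell out: $\p_0$ and $C$ must be multiplied by the same factor $v_\p$, and the resulting $\p_\nu$ must be divided by $v_\p$ at the end.
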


\begin{proof}
    After rescaling we can assume that $\p$ has exponential growth rate 1. Then we can take $\p'=\p_\nu$ for $\nu$ the quasi-conformal measure given by \Cref{lem:improvemeasure}.
\end{proof}

\begin{proof}[Proof of \Cref{prop:MC(H)}]
    Let $(\vx_m)_m$ be any sequence in $\scrH_\G^{++}$ converging to $\vx$, and suppose $\vx_m=[\p_m]$ with $\p_m\in \calH_\G^{++}$ with exponential growth rate 1. 
    
    First, we find a subsequence $(\vx_{r_k})_k$ such that for each $k$ we have $\Del(\vx_{r_{k+1}},\vx_{r_k})\leq 2^{-k}$ for all $k\geq 1$. Then inductively, we construct a sequence $\hat\p_{1},\hat\p_2,\dots$ such that:
    \begin{itemize}
        \item each $\hat\p_k$ is a pseudometric roughly isometric to $\p_{r_k}$; and
        \item $\Dil(\p_{r_k},\p_{r_{k+1}})^{-1}\hat\p_{k}\leq \hat\p_{k+1}\leq  \Dil(\p_{r_{k+1}},\p_{r_k})\hat\p_k$ for all $k$.
    \end{itemize}
    We set $\hat\p_{1}$ to be any pseudometric roughly isometric to $\p_{r_1}$, which exists by \Cref{lem:potential_pseudometric}. Assuming we have found $\hat\p_{1},\hat\p_{2},\dots \hat\p_{k}$, we use \Cref{coro:improvemetric} to find a pseudometric $\hat \p'_{k+1}$ that is roughly isometric to $\p_{r_{k+1}}$ and such that $\p'_{k+1}\leq \Dil(\p_{r_{k+1}},\p_{r_k})\hat\p_{k}$. We define $\hat\p_{k+1}:=\max\{\p'_{k+1},\Dil(\p_{r_{k}},\p_{r_{k+1}})^{-1}\p'_{k}\}.$

    By construction and a simple induction, our choice of subsequence $(\vx_{r_k})_k$ implies that $\hat\p_{k+1}\leq 2\hat\p_1$ for each $k$. In particular, up to taking a further subsequence and reindexing, we can assume that $(\hat\p_k)_k$ pointwise converges to a pseudometric $\p$, which we claim satisfies all the required properties.

    First, from the construction of $(\hat\p_k)_k$ we have
    \begin{equation}\label{eq:QIhatpsi}
        2^{-1/2^{m-1}}\hat\p_{m}\leq \hat\p_{m+k+1}\leq 2^{1/2^{m-1}}\hat\p_m \quad \text{ for all }m,k,
    \end{equation}
 and the same inequalities hold if we replace $\hat\p_{m+k+1}$ by $\p$. In particular, we have 
    \[2^{-1/2^{m-1}}\ell_{\p_{r_m}}\leq \ell_\p\leq 2^{1/2^{m-1}}\ell_{\p_{r_m}}\]
    for all $m$, implying that $\log\ell_{\p_m}$  uniformly converges to $\log\ell_\p$. It is not hard to see that this convergence holds for any sequence in $\scrH_\G^{++}$ converging to $\vx$, thus we have proved item (1). Item (2) then follows from \Cref{eq:QIhatpsi}. 
\end{proof}

We can also use \Cref{prop:MC(H)} to define a ``completion'' of $\calH_\G$. In virtue of \cite[Proposition~3.8]{CRS}, $\calH_\G$ is spanned as a real vector space by $\calH_\G^{++}$. Hence we make the following definitions.

\begin{definition}\label{def:MCH}
    We define $\MC(\calH_\G)$ as the set of real linear combinations of elements in $\MC(\calH_\G^{++})$. If $\p=\p^+-\p^-$ for $\p^+,\p^-\in \MC(\calH_\G^{++})$, we define its stable translation length function as $\ell_\p:=\ell_{\p^+}-\ell_{\p^-}$. We also let $\MC(\calH_\G^{+})$ be the subset of elements of $\p$ in $\MC(\calH_\G)$ such that $\ell_\p\geq 0$.
\end{definition}

It turns out that the translation function $\ell_\p$ is independent of the decomposition $\p=\p^+-\p^-$ as above. Also, in virtue of \Cref{prop:MC(H)}, we define $\MC(\scrH_\G)$ as the sets of equivalence classes in $\MC(\calH_\G)$, in which we identify $\p$ and $\p'$ if $\ell_\p=\lam\ell_{\p'}$ for some $\lam>0$. 

\section{Mineyev's flow space}\label{sec:mineyevflow}
For this section we fix a non-elementary, torsion-free hyperbolic group $\G$ and a strongly hyperbolic metric $\hat d\in \calD_\G$. Our goal of is to describe the construction of Mineyev's flow space \cite{mineyev.flow} and establish some technical properties that will be needed in the sequel. In Mineyev's original construction, the input metric $\hat{d}$ is defined in terms of a geometric action of $\G$ on a graph. However, any strongly hyperbolic metric in $\calD_\G$ suffices, see e.g. \cite[Section~3]{tanaka.topflows} and \cite[Section~5]{dilsavor}.

 As a continuous dynamical system, the flow space $\UG$ goes back to Gromov's seminal paper \cite{gromov.hypgroups} (see also \cite{matheus,champetier}) and is orbit equivalent to the geodesic flow on $\msf{T}^1 M$ when $M$ is a closed negatively curved manifold and $\G=\pi_1(M)$. For arbitrary $\G$, Mineyev \cite{mineyev.flow} was the first to introduce a metric on $\UG$ (and an appropriate parameterization of the flow) resembling hyperbolic behavior (see e.g.~\Cref{thm:mineyev}).

\subsection{Construction of the flow}\label{subsec:construction}

We consider the space $\wt\UG:=\ppG \times \R$ equipped with the product topology induced by the inclusion into $\pG \times \pG \times \R$. A \emph{line} is a set of form $[a,b]:=\{(a,b)\}\times \R$ for $a,b\in \ppG$. There are two natural actions on $\wt\UG$:
\begin{enumerate}
    \item The $\R$-action: for $t\in \R$ we define $\wt{\msf{g}}_t:\wt\UG \ra \wt\UG$ according to
    \[ \wt{\msf{g}}_t(a,b,s):=(a,b,s+t) \quad \text{ for }(a,b,s)\in \wt\UG.\]
    The action $\wt{\msf{g}}=(\wt{\msf{g}}_t)_{t\in \R}$ is the \emph{geodesic flow} on $\wt\UG$.  
    \item The $\Z/2\Z$-action: we define an involution $\wt\iota: \wt\UG \ra \wt\UG$ according to \[\wt\iota(a,b,s):=(b,a,-s) \quad \text{ for }(a,b,s)\in \wt\UG.\]
    \end{enumerate}
 Additionally, the metric $\hat d$ allows us to define a third action:
    \begin{enumerate}    
    \item[(3)] The $\G$-action: for $g\in \G$ and $(a,b,s)\in \wt\UG$ we define
    \[g(a,b,s):=(ga,gb,s-\frac{1}{2}(\hat \beta(g^{-1},a)-\hat\beta(g^{-1},b))),\]
    where $\hat\beta=\beta^{\hat d}_o$ is the Busemann function associated to $\hat d$.
    \end{enumerate}

All these actions are continuous on $\wt\UG$. Moreover, they satisfy the following properties \cite[Lemma~6]{mineyev.flow}:

\begin{itemize}
\item[(a)] The $\G$ and $\R$-actions commute.
\item[(b)] The $\G$ and $\Z/2\Z$-actions commute.
\item[(c)] The $\Z/2\Z$-action anti-commutes with the $\R$-action:
$\wt\iota (\wt{\msf{g}_t}(\wt\vv)) = \wt{\msf{g}}_{-t}(\wt\iota(\wt\vv))$ for $\wt\vv \in \wt\UG$ and $t\in \R$.
\item[(d)] The three actions map lines onto lines. 
\item[(e)] The $\G$-action is free, properly discontinuous, and cocompact.
\end{itemize}

Using the double difference $\gpr{\cdot,\cdot}{\cdot,\cdot}$ in $(\G,\hat d)$, we define a \emph{Busemann cocycle} $\beta^\times: \wt\UG \times \wt\UG\times \G\ra \R$ according to
\[\beta^\times(\wt\vv,\wt\vw;g):=\gpr{a}{b}_g+|s-\gpr{a,b}{g,o}|-\gpr{a'}{b'}_g-|s'-\gpr{a',b'}{g,o}|\]
for $g\in \G$ and $\wt\vv=(a,b,s), \wt\vw=(a',b',s')\in \wt\UG$; see \cite[Definition~10 \& Section~8.4]{mineyev.flow}. This cocycle satisfies the following properties \cite[Theorem~11]{mineyev.flow}:
\begin{itemize}
    \item[(a)] For $\wt\vv,\wt\vw\in \wt\UG$, the map $g\mapsto \beta^\times(\wt\vv,\wt\vw;g)$ is Lipschitz on $(\G,\hat d)$.
\item[(b)] $\beta^\times(\cdot,\cdot;g)$ satisfies the cocycle condition: $$\beta^\times(\wt\vvu,\wt\vw;g)=\beta^\times(\wt\vvu,\wt\vv;g)+\beta^\times(\wt\vv,\wt\vw;g)
\quad \text{ for }g\in \G \text{ and }\wt\vvu,\wt\vv,\wt\vw\in \wt\UG.$$
\item[(c)] $\beta^\times(\cdot,\cdot;g)$ is $\Z/2\Z$-invariant in each variable: $$\beta^\times(\wt\vv, \wt\vw;g) = \beta^\times(\wt\vv, \wt\iota(\wt\vw);g)=\beta^\times(\wt\iota(\wt\vv), \wt\vw;g) \quad \text{ for } \wt\vv,\wt\vw\in \wt\UG.$$
\item[(d)] $\beta^\times$ is $\G$-invariant: $$\beta^\times(g\wt\vv, g\wt\vw;gh) = \beta^\times(\wt\vv,\wt\vw;h) \quad \text{ for }g,h \in \G \text{ and }\wt\vv,\wt\vw\in \wt\UG.$$
\end{itemize}
Indeed, the cocycle $\beta^\times$ continuously extends to $\wt\UG \times \wt\UG \times \ov\G$ while still satisfying properties (a)-(d) above \cite[Theorem~55~(a')]{mineyev.flow}. For example, if $a,b,b'\in \partial \G$ are pairwise distinct and $s,s'\in \R$, then
\begin{equation}\label{eq:betahoro}
\beta^\times((a,b,s),(a,b',s');a)=s-s'+\gpr{b,b'}{a,o}.
\end{equation}
Similarly, for $a,a',b\in \partial \G$ pairwise distinct and $s,s'\in \R$, we have
\begin{equation*}
\beta^\times((a,b,s),(a',b,s');b)=s'-s+\gpr{a,a'}{b,o}.
\end{equation*}

We use the cocycle $\b^\times$ to define metrics $d^\times$ and $d_{\wt\UG}$ on $\wt\UG$ as follows. First, we define
\begin{equation*}
d^\times(\wt\vv,\wt\vw) := \sup_{g \in \G} |\beta^\times(\wt\vv,\wt\vw;g)|.
\end{equation*}

This metric satisfies that $t\mapsto \wt{\msf{g}}_t(\wt\vv)$ is a unit speed isometric embedding into $(\wt\UG,d^\times)$ for each $\wt\vv\in \wt\UG$. Next, we fix a strong hyperbolicity parameter $\lam$  for $\hat d$ (see \Cref{eq:defstronghyp}) and a number $\lam'\in (0,\lam/2)$. Then we set
\begin{equation}\label{eq:defd_UG}
   \qquad
d_{\wt\UG}(\wt\vv,\wt\vw)
:= 1/C_{\lam'}\int_{-\infty}^{\infty}
d^\times(\wt{\msf{g}}_t(\wt\vv),\wt{\msf{g}_t}(\wt\vw))e^{-\lam'|t|}dt, 
\end{equation}
where $C_{\lam'}$ is chosen so that $t\mapsto \wt{\msf{g}}_t(\wt\vv)$ is again a unit speed isometric embedding into $(\wt\UG,d_{\wt\UG})$. We call $\lam'$ the \emph{expansion/contraction} rate for $d_{\wt\UG}$.

The metric $d_{\wt\UG}$ satisfies the following properties.

\begin{theorem}[{\cite[Theorem~60]{mineyev.flow}}]\label{thm:mineyev} The pair $(\wt\UG,d_{\wt\UG})$ equipped with the geodesic flow $\wt{\msf{g}}=(\wt{\msf{g}}_t)_{t\in \R}$, the involution $\wt\iota$ and the $\G$-action satisfies the following properties.
\begin{enumerate}
\item $d_{\wt\UG}$ is $\G$-invariant, $\wt\iota$-invariant, and compatible with the topology on $\wt\UG$. 
\item Any orbit map induces a $\G$-equivariant rough isometry $(\G,\hat d)\ra(\wt\UG, d_{\wt\UG})$.
\item For each $\wt\vv \in \wt\UG$ the orbit map $\R\ra (\wt\UG, d_{\wt\UG})$ given by $t \mapsto \wt{\msf{g}}_t(\wt\vv)$ is an isometric embedding.
\item There exists a constant $C>0$ such that for each $t\in \R$, the translation map $\wt{\msf{g}}_t:(\wt\UG,d_{\wt\UG})\ra (\wt\UG,d_{\wt\UG})$ is a biLipschitz homeomorphism with constant $Ce^{\lam'|t|}$.
\item For any non-trivial $g \in \G$ and $\wt\vz=(g^-, g^+,s)$ for some $s\in \R$, we have $d_{\wt\UG}(\wt\vz, g\wt\vz) =\ell_{\hat d}[g]$.
\item There exists  $M>0$ such that for any triplet $a,b,c\in \partial \G$ of pairwise distinct points and $t\in \R$ we have
\[d_{\wt\UG}((a,c,t+\gpr{a,c}{b,o}),(b,c,t+\gpr{b,c}{a,o}))\leq Me^{-\lam' t}.\]
\end{enumerate}
\end{theorem}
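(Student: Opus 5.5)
Since this is Mineyev's theorem, the plan is to derive each item from the properties (a)--(d) of the cocycle $\beta^\times$ listed above, together with the definition \eqref{eq:defd_UG} of $d_{\wt\UG}$ as an exponentially weighted average of $d^\times$ along the flow. The guiding principle is the following. Items (1), (3) and (4) are formal consequences of the averaging, once the corresponding statements are known for $d^\times$. Items (2) and (5) reduce to coarse computations with Gromov products in $(\G,\hat d)$. Item (6) is the only place where strong hyperbolicity of $\hat d$ is used in an essential way.

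\textbf{Formal items.}
\begin{itemize}
\item For (1): $\G$-invariance and $\wt\iota$-invariance of $d^\times$ follow from properties (c) and (d) of $\beta^\times$, after reindexing the supremum via $g\mapsto hg$. These pass to $d_{\wt\UG}$ because the $\G$-action commutes with $\wt{\msf g}_t$, and $\wt\iota$ conjugates $\wt{\msf g}_t$ to $\wt{\msf g}_{-t}$ while the weight $e^{-\lam'|t|}$ is even. Compatibility with the topology follows from the continuous extension of $\beta^\times$ to $\ov\G$ (uniformly on compacta), together with dominated convergence in the integral.
\item For (3): for $\wt\vv=(a,b,s)$ and $\wt\vw=(a,b,s')$ on the same line, $\beta^\times(\wt\vv,\wt\vw;g)=|s-\gpr{a,b}{g,o}|-|s'-\gpr{a,b}{g,o}|$. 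This has absolute value at most $|s-s'|$, with equality in the limit $g\to a$ or $g\to b$ (using the extension to $\ov\G$). Hence $d^\times$ restricted to a line is $|s-s'|$. Since $\wt{\msf g}_u$ preserves this, the integrand in \eqref{eq:defd_UG} is constant, and the normalization $C_{\lam'}$ gives an isometric embedding.
\item For (4): substituting $u\mapsto u-t$ in the integral gives
\[
d_{\wt\UG}(\wt{\msf g}_t\wt\vv,\wt{\msf g}_t\wt\vw)=\frac{1}{C_{\lam'}}\int d^\times(\wt{\msf g}_u\wt\vv,\wt{\msf g}_u\wt\vw)\,e^{-\lam'|u-t|}\,du\le e^{\lam'|t|}\,d_{\wt\UG}(\wt\vv,\wt\vw),
\]
since $e^{-\lam'|u-t|}\le e^{\lam'|t|}e^{-\lam'|u|}$. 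The same bound for $\wt{\msf g}_{-t}$ gives the reverse inequality.
\end{itemize}

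\textbf{Items (2) and (5).}
\begin{itemize}
\item For (2): fix $\wt\vv=(a,b,s)$ and estimate $d^\times(\wt\vv,h\wt\vv)$ up to a bounded error. By $\G$-invariance, the supremum over $g$ compares distances from $g$ to the "projections" of the two points onto their lines. Since $\gpr{a}{b}_g+|s-\gpr{a,b}{g,o}|$ is, up to bounded error, $\hat d(g,p_{\wt\vv})$ for a point $p_{\wt\vv}$ on a quasi-geodesic from $a$ to $b$, the sup equals $\hat d(p_{\wt\vv},p_{h\wt\vv})\approx\hat d(o,h)$ up to an additive constant. Then I would use (3) and (4) to control the averaged integrand: for small $|u|$ this is immediate, and for large $|u|$ I would use the linear bound $d^\times(\wt{\msf g}_u\wt\vv,\wt{\msf g}_u\wt\vw)\le d^\times(\wt\vv,\wt\vw)+2|u|$, which integrates to a bounded error. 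This yields the rough isometry.
\item For (5): take $\wt\vz$ on the axis $[g^-,g^+]$. Using \eqref{eq:betahoro}, the action of $g$ on the axis is the shift by $\ell_{\hat d}[g]$; here I would use that for strongly hyperbolic $\hat d$, the Busemann difference along the axis equals the stable length exactly. Then (3) gives $d_{\wt\UG}(\wt\vz,g\wt\vz)=\ell_{\hat d}[g]$.
\end{itemize}

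\textbf{Item (6), the main obstacle.} The two points in (6) lie on lines $[a,c]$ and $[b,c]$ that are forward asymptotic at $c$. I would bound $|\beta^\times|$ uniformly in $g\in\G$ by splitting according to the size of $\gpr{a,b}{c,g}$.
\begin{itemize}
\item When $g$ lies deep toward $c$ beyond time $t$, \Cref{lem:nicaspacula} makes the relevant double differences at most $Ce^{-\lam R/2}$ with $R\approx t$.
\item When $g$ is elsewhere, the two terms of $\beta^\times$ cancel only up to an error that grows linearly in the distance from $g$ to the point at time $t$.
\end{itemize}
Combining the two cases should give $d^\times\lesssim e^{-\lam t/2}$ after flowing forward. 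Feeding this into the average with weight $e^{-\lam'|u|}$, where $\lam'<\lam/2$, then produces $Me^{-\lam' t}$: the near-future part of the integral gives $e^{-\lam t/2}$, and the far-past part, where the linear bound is used, gives $e^{-\lam' t}$. Getting the uniform-in-$g$ estimate for $d^\times$, which is a supremum over all of $\G$ rather than an average, is the delicate step.
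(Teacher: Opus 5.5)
The paper does not reprove this theorem. It cites Mineyev's Theorem~60, and Dilsavor for the $\lam'$-normalization, so your sketch has to stand on its own. Items (3), (4), (5) and your coarse argument for (2) are fine.

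\textbf{Item (6).} There is a genuine gap, exactly at the step you flag. Your second case cannot work as stated. If, for $g$ not deep toward $c$, the two halves of $\beta^\times$ only cancelled up to an error growing linearly in the distance from $g$ to the time-$t$ point, then the supremum over $g\in\G$ defining $d^\times$ would be infinite. It would already be of order $t$ for $g$ near the centre of the triangle, so no decay could come out.

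The missing idea is that the cancellation is \emph{exact}. Put $\wt\vv_t=(a,c,t+\gpr{a,c}{b,o})$ and $\wt\vw_t=(b,c,t+\gpr{b,c}{a,o})$, and for $g\in\G$ set $X=\gpr{a,c}{g,b}$ and $Y=\gpr{b,c}{g,a}$. Additivity of double differences gives
\[
t+\gpr{a,c}{b,o}-\gpr{a,c}{g,o}=t-X,\qquad t+\gpr{b,c}{a,o}-\gpr{b,c}{g,o}=t-Y,
\]
and also $\gpr{a}{c}_g-\gpr{b}{c}_g=\gpr{a,b}{g,c}=X-Y$. Hence
\[
\beta^\times(\wt\vv_t,\wt\vw_t;g)=(X-Y)+|t-X|-|t-Y|.
\]
This vanishes identically when $\max\{X,Y\}\le t$. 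Otherwise its absolute value is at most $2\min\{|X-Y|,\max\{X,Y\}-t\}$.

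If $X>t\ge R_0$, then \Cref{lem:nicaspacula} applied to $(x,y,z,w)=(a,g,b,c)$ gives $|X-Y|=|\gpr{a,b}{g,c}|\le Ce^{-\lam t/2}$. The case $Y>t$ is symmetric, using $(b,g,a,c)$. For $0\le t\le R_0$ the same two bounds give a uniform constant. Hence $d^\times(\wt\vv_t,\wt\vw_t)\le M_1e^{-\lam t/2}$ for all $t\ge 0$.

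Only at this point does your averaging step produce $Me^{-\lam' t}$. There, $\lam'<\lam/2$ handles $u\in[-t,0]$, the linear bound handles $u<-t$, and a trivial linear bound covers $t<0$.

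\textbf{Item (1).} This is also incomplete. Compatibility with the topology requires that $d_{\wt\UG}$ separates points and that $d_{\wt\UG}$-convergence implies convergence in $\ppG\times\R$. Your argument only proves that $d_{\wt\UG}$ is continuous.

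This cannot be a formal consequence of statements about $d^\times$. Property (c) of $\beta^\times$ gives $d^\times(\wt\vv,\wt\iota\wt\vv)=0$, so $d^\times$ is only a pseudometric on $\wt\UG$. It is the averaging that separates points; for example,
\[
d_{\wt\UG}(\wt\vv,\wt\iota\wt\vv)=\frac{1}{C_{\lam'}}\int_\R 2|u|e^{-\lam'|u|}\,du>0.
\]

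The repair uses your own ingredients:
\begin{itemize}
\item \emph{Separation.} If $d_{\wt\UG}(\wt\vv,\wt\vw)=0$, then $d^\times(\wt\vg_u\wt\vv,\wt\vg_u\wt\vw)=0$ for all $u$. By your coarse description $d^\times\approx\hat d(p_{\cdot},p_{\cdot})$, the two oriented lines fellow-travel with synchronized parameters. Letting $u\to\pm\infty$, they have the same endpoints in the same order, and then (3) forces $\wt\vv=\wt\vw$.
\item \emph{Properness.} Item (2) together with cocompactness shows that $d_{\wt\UG}$-bounded sets are relatively compact in $\wt\UG$.
\item \emph{Conclusion.} Continuity, separation and properness together give equivalence of the two topologies.
\end{itemize}
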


Given a triplet of pairwise distinct points $a,b,c\in \pG$, we define the \emph{projection} of $a$ on the flow line $[b,c]$ according to \begin{equation}
\label{eq:proj_flowline}
p_{[b,c]}(a):=(b,c,\gpr{b,c}{a,o}),
\end{equation} see \cite[Section~2.9]{mineyev.flow}.

\begin{remark}\label{rmk:updated}
 \begin{enumerate}
     \item Our definition of $d_{\wt\UG}$ in \Cref{eq:defd_UG} differs from Mineyev's definition \cite[Section~8.6]{mineyev.flow} in the use of the auxiliary constant $\lam'$. This change is so that the bounds from items (4) and (6) above are in terms of the same exponential rates, as in \cite[Proposition~5.1.9]{dilsavor}. The rest of the properties can be proven as in Mineyev's original definition, see also \cite[Section~5]{dilsavor}.
     \item By the definition of projection $p_{[b,c]}(a)$~in \Cref{eq:proj_flowline}, item (6) translates to \begin{equation}\label{eq:approxstable}
         d_{\wt\UG}(\wt\vg_t(p_{[a,c]}(b)),\wt\vg_t(p_{[b,c]}(a)))\leq Me^{-\lam't} \quad \text{ for all }t\geq 0, 
     \end{equation}
     and similarly
\begin{equation}\label{eq:approxunstable}
         d_{\wt\UG}(\wt\vg_{-t}(p_{[a,b]}(c)),\wt\vg_{-t}(p_{[a,c]}(b)))\leq Me^{-\lam't} \quad \text{ for all }t\geq 0. 
     \end{equation}
     In particular, we have $d_{\wt\UG}(p_{[a,c]}(b),p_{[b,c]}(a))\leq M$ for all pairwise distinct $a,b,c\in \pG$, as expected by Gromov hyperbolicity.
 \end{enumerate}
\end{remark}


We define $\UG=\msf{U}_{\hat d}\G$ as the quotient of $\wt\UG$ under the $\G$-action, which is a compact space. Let $\pi:\wt\UG \ra \UG$ be the quotient map. Since the $\R$-action via $\wt{\msf{g}}$ commutes with the $\G$-action, we let $\msf{g}=(\msf{g}_t)_{t\in \G}$ be the induced flow on $\UG$. Similarly, the involution $\wt\iota$ on $\wt\UG$ descends to an involution $\iota$ on $\UG$ that anti-commutes with $\vg$. We also define the metric $d_{\UG}$ on $\UG$ by the formula
\[d_{\UG}(\vv,\vw)=\inf\{d_{\wt\UG}(\wt\vv,\wt\vw):\pi(\wt\vv)=\vv,\pi(\wt\vw)=\vw\} \quad \text{ for }\vv,\vw\in \UG.\]
This metric is compatible with the quotient topology on $\UG$.

\begin{definition}\label{def:mineyevflow}
    The \emph{Mineyev's flow space} associated to $\G$ (with respect to $\hat d$) is the tuple $\UG=(\UG,\msf{g},\iota,d_{\UG})$. We refer to $\vg$ as the \emph{geodesic flow} on $\UG$. 
\end{definition}


\subsection[Properties of $(\wt{\UG},\wt{\vg},d_{\wt{\UG}})$]
           {Properties of $(\wt{\UG},\wt{\vg},d_{\wt{\UG}})$}
In this subsection we record some further properties about the geometry of the space $(\wt\UG,d_{\wt\UG})$. These are well-known for the usual geodesic flow on the universal cover of a closed negatively curved manifold.

\begin{lemma}\label{lem.nicecompact}
    There exists a compact set $K\subset \wt\UG$ such that for any $g\in \G$ we can find $\wt\vv\in K$ and $t\in \R$ with $\wt{\msf{g}}_t(\wt\vv)\in gK$.
\end{lemma}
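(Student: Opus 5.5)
The plan is to let $K$ be a compact fundamental domain for the $\G$-action, enlarged by a bounded neighbourhood, and to use the rough isometry of orbit maps (\Cref{thm:mineyev}~(2)) to realize an element $g$ along a single flow line.

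\emph{Step 1: a compact set meeting every orbit.} Since the $\G$-action on $\wt\UG$ is free, properly discontinuous and cocompact, there is a compact set $K_0\subset\wt\UG$ with $\G K_0=\wt\UG$. Fix $\wt\vz_0\in K_0$.

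\emph{Step 2: a flow line passing near both $\wt\vz_0$ and $g\wt\vz_0$.} Fix $g\in\G$; for $g=o$ take $t=0$, so assume $g\neq o$. Choose a bi-infinite $\hat d$-rough geodesic in $\G$ through $o$ and $g$, uniformly in $g$. This uses a standard fact about non-elementary hyperbolic groups: every pair of points lies within bounded distance of a bi-infinite uniform quasigeodesic, obtained by extending a geodesic segment at both ends along suitable rays, as in \cite[Equation~2.5]{CRS}. Call its endpoints $a,b\in\pG$, with $a\neq b$. Take the line $[a,b]\subset\wt\UG$.

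By \Cref{thm:mineyev}~(2),(3), the orbit map $h\mapsto h\wt\vz_0$ is a rough isometry, and lines are geodesics. The image of the rough geodesic from $a$ to $b$ is therefore a rough geodesic in $\wt\UG$ with the same endpoints. By stability of quasigeodesics it stays within a uniform distance $R$ of $[a,b]$. Gromov hyperbolicity of $\wt\UG$ follows from its being roughly isometric to $(\G,\hat d)$. Alternatively, \Cref{rmk:updated} together with the projection $p_{[a,b]}$ gives this uniform closeness directly.

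Hence there are $s_1,s_2\in\R$ with
\[ d_{\wt\UG}\big((a,b,s_1),\wt\vz_0\big)\le R \quad\text{and}\quad d_{\wt\UG}\big((a,b,s_2),g\wt\vz_0\big)\le R. \]

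\emph{Step 3: conclusion.} Let $K$ be the closed $R$-neighbourhood of $K_0$. It is compact because $d_{\wt\UG}$ is proper: $\wt\UG$ is locally compact, the action is cocompact and isometric, and a bounded set meets only finitely many translates of $K_0$. Set $\wt\vv=(a,b,s_1)\in K$ and $t=s_2-s_1$. Then $\wt{\msf{g}}_t(\wt\vv)=(a,b,s_2)$ lies within $R$ of $g\wt\vz_0$. Since $\G$ acts isometrically, this point lies in $gK$.

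\emph{Main obstacle.} The key step is the uniformity of $R$ in Step 2. This needs two things: the extension of $o,g$ to a bi-infinite rough geodesic with constants independent of $g$, and the Morse-type stability in $\wt\UG$. I would handle both via the rough isometry in \Cref{thm:mineyev}~(2) and the $M$-closeness estimate in \Cref{rmk:updated}. These reduce everything to Gromov-product estimates in $(\G,\hat d)$, namely bounding $\gpr{a}{b}_o$ and $\gpr{a}{b}_g$ uniformly.
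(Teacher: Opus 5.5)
Your proof is correct, but it takes a more metric route than the paper's.

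\emph{The paper's route.} It never estimates distances in $\wt\UG$. It sets $K=K_0\times\{0\}$ with $K_0=\{(a,b)\in\ppG:\gpr{a}{b}_o\le 3\al/2\}$. This set is compact because the Gromov product of the strongly hyperbolic metric $\hat d$ is continuous and blows up at the diagonal. Now take $(a,b)$ to be the endpoints of a uniform bi-infinite $\al$-rough geodesic through $o$ and $g^{-1}$. Then $(a,b)\in K_0$, and since $\gpr{ga}{gb}_o=\gpr{a}{b}_{g^{-1}}$, also $(ga,gb)\in K_0$. The explicit formula for the $\G$-action gives $g(a,b,0)=\wt{\msf{g}}_t(ga,gb,0)$ with $t=-\frac{1}{2}(\hat\beta(g^{-1},a)-\hat\beta(g^{-1},b))$. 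So the point $(ga,gb,0)\in K$ flows into $gK$. The only coarse input is the one you also use: uniform bi-infinite rough geodesics through two given points.

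\emph{Your route.} You replace the action formula by coarse geometry of $(\wt\UG,d_{\wt\UG})$. This needs hyperbolicity and rough geodesicity transferred through the rough isometry of \Cref{thm:mineyev}~(2), and the Morse lemma for bi-infinite quasigeodesics with common endpoints. It also needs properness of $d_{\wt\UG}$, so that the $R$-neighbourhood of $K_0$ is compact.

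\emph{The one unjustified step.} You implicitly identify the two ends of the flow line $[a,b]$ with the points $a,b$ under the boundary homeomorphism induced by the orbit map. This is true and standard (Mineyev), and the paper also uses it tacitly in the proof of \Cref{lem:timecontrol}, but you should cite it.

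\emph{Trade-offs.} Your argument is robust: it would work for any proper cocompact flow space whose flow lines are uniform quasigeodesics with the correct endpoints. It also produces a $K$ with $\G K=\wt\UG$ from the start; the paper has to enlarge $K$ afterwards to get this. The paper's argument is shorter and self-contained, using only the definitions of the action and of $K_0$.
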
 

\begin{proof}
For the space $(\G,\hat d)$ we use the following well-known fact: there exists $\al>0$ such that 
any two points $g,h\in \G$ belong to a bi-infinite $(\al,\hat d)$-rough geodesic path. This is true for word metrics on $\G$, and the result for an arbitrary $\hat d$ follows from the Morse lemma.

Now, let $K_0\subset \ppG$ be the set of pairs $(a,b)$ such that $\gpr{a}{b}_o\leq 3\al/2$. We claim that for any $g\in \G$ the intersection $K_0 \cap g^{-1}K_0$ is non-empty. Indeed, let $a,b$ be the endpoints of a bi-infinite $(\al,\hat d)$-rough geodesic in $\G$ containing $o$ and $g^{-1}$. Since $\hat d$ is strongly hyperbolic, we have $\gpr{a}{b}_o\leq 3\al/2$ and $\gpr{ga}{gb}_o=\gpr{a}{b}_{g^{-1}}\leq 3\al/2$, so that $(a,b)$ and $(ga,gb)$ belong to $K_0$. 
To finish the proof, let $K=K_0\times \{0\}\subset \ppG\times \R=\wt\UG$. Given $g\in \G$, by our previous claim there exists $(a,b)\in K_0\cap g^{-1}K_0$. We define $\wt\vv:=(ga,gb,0)\in K$ so for $t=-\frac{1}{2}(\hat\beta(g^{-1},a)-\hat\beta(g^{-1},b))$ we have $$\wt{\msf{g}}_t(\wt\vv)=(ga,gb,-\frac{1}{2}(\hat\beta(g^{-1},a)-\hat\beta(g^{-1},b)))=g\wt\vv\in gK. \qedhere$$
\end{proof}

Our next result states that if two flow-line segments are close at their endpoints, then they are exponentially close in their interiors.

\begin{proposition}\label{prop.geodesicsclose} Let $\lam'$ the expansion/contraction rate for $d_{\wt\UG}$. Then for any $\ep>0$ there exist $\del,T_0>0$ such that if $\wt\vv,\wt\vw\in \wt\UG$ and $T>T_0$ satisfy
\begin{equation}\label{eq:asdelT}
d_{\wt\UG}(\wt\vv,\wt\vw)<\del \quad \text{ and } \quad d_{\wt\UG}(\wtg_T(\wt\vv),\wtg_T(\wt\vw))<\del,
\end{equation}
then there exists $t_0\in (-\ep,\ep)$ such that
\begin{equation*}\label{eq:exponentialdecay}
    d_{\wt\UG}(\wtg_t(\wt\vv),\wtg_{t+t_0}(\wt\vw))< \ep e^{-\lam'\min\{t,T-t\}}\quad \text{ for all }t_0\leq t \leq T-t_0.
\end{equation*}
\end{proposition}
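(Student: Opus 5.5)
Write $\wt\vv=(a,b,s)$ and $\wt\vw=(a',b',s')$. The idea is to replace $\wt\vw$ by an intermediate point that shares an endpoint with $\wt\vv$. We then control the two "half" comparisons separately with the stable/unstable estimates \eqref{eq:approxstable} and \eqref{eq:approxunstable}, and glue them using \Cref{thm:mineyev}~(4).

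\textbf{Step 1: endpoint closeness.} We first show that closeness at times $0$ and $T$ forces the endpoints to be close in a Gromov-product sense. Since $d_{\wt\UG}$ is $\G$-invariant and compatible with the topology, and $\G$ acts cocompactly, we may translate so that $\wt\vv$ lies in a fixed compact set $K$. Flow lines are isometric embeddings and $(\wt\UG,d_{\wt\UG})$ is roughly isometric to $(\G,\hat d)$ by \Cref{thm:mineyev}~(2). The segments $\wtg_{[0,T]}\wt\vv$ and $\wtg_{[0,T]}\wt\vw$ are therefore uniform quasigeodesics in $\G$ with endpoints $\del$-close. By hyperbolicity (the Morse lemma), they fellow travel.

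From this we want quantitative statements on the boundary. The lines $[a,b]$ and $[a,b']$ should diverge only after time $\approx T$, i.e.\ the Gromov product of $b,b'$ seen from $\wt\vv$ is at least $T-O(1)$. Symmetrically, $a,a'$ should be close to order $O(1)$ seen from $\wt\vv$, and the same holds for $b,b'$ seen from $\wtg_T\wt\vv$.

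This only gives coarse control, and coarse control is insufficient for the small-$\ep$ statement. For the fine control we use compactness instead. If $\del\to0$, then by a contradiction/compactness argument over $K$, the pairs $(a',b')$ converge to $(a,b)$ and $s'-s\to0$. The same holds after flowing by $T$, applied to a translate $g\wtg_T\wt\vv\in K$.

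\textbf{Step 2: intermediate point.} Let $\wt\vvu=(a,b',\sigma)$, with $\sigma$ chosen so that $\wt\vvu$ is the projection-type point matching $\wt\vw$ along the common forward endpoint direction. By \eqref{eq:approxunstable} with shared backward endpoint $a$, we get
\[
d(\wtg_t\wt\vv,\wtg_{t}\wt\vvu)\lesssim e^{-\lam'(T-t)}\, d(\wtg_T\wt\vv,\wtg_T\wt\vvu),
\]
obtained by flowing backward from time $T$. Similarly, by \eqref{eq:approxstable} with shared forward endpoint $b'$, we get
\[
d(\wtg_t\wt\vvu,\wtg_{t+t_0}\wt\vw)\lesssim e^{-\lam' t}.
\]
These estimates give the contraction only for points positioned via the exact projection formula and up to the constant $M$. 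To get constant $\ep$ rather than $M$, we combine them with \Cref{thm:mineyev}~(4): the map $\wtg_t$ is $Ce^{\lam'|t|}$-biLipschitz. Concretely, we apply (6) at a shifted time $t+R$, with $R$ large enough that $Me^{-\lam'R}<\ep/4$. The leftover initial segment of length $R$ is then handled by the biLipschitz bound together with the smallness from Step 1, where $\del$ is chosen depending on $R$.

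The time shift $t_0$ arises as the difference $s'-\sigma$ together with the Gromov-product corrections $\gpr{a,b}{b',o}$ terms appearing in (6). Step 1 makes $t_0$ small, so $|t_0|<\ep$ after shrinking $\del$.

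\textbf{Step 3: assemble.} By the triangle inequality, for $t_0\le t\le T-t_0$ the distance $d(\wtg_t\wt\vv,\wtg_{t+t_0}\wt\vw)$ is bounded by the two contributions. One is $\lesssim e^{-\lam'(T-t)}$ and the other is $\lesssim e^{-\lam't}$. Their sum is at most $\ep e^{-\lam'\min\{t,T-t\}}$ after adjusting constants, which requires $T_0$ large.

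\textbf{Main obstacle.} The hardest step is turning the closeness assumption \eqref{eq:asdelT} into quantitatively small constants in front of the exponentials. Item (6) alone only gives the constant $M$. I would first try the shift-by-$R$ trick described above, with uniformity coming from compactness of $K$ and of the fundamental domain near $\wtg_T\wt\vv$. This requires some care, because the point $\wtg_T\wt\vv$ is translated back into $K$ by a different group element than $\wt\vv$ is.
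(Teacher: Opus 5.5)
\textbf{Overall.} Your architecture is the paper's. You pass through an intermediate line carrying one endpoint of each vector (you use $[a,b']$, the paper uses $[a',b]$, a harmless mirror image). You compare along a stable horosphere forward from time $0$ and along an unstable horosphere backward from time $T$. You absorb the mismatch into a small shift $t_0$, which is small because $s-s'$ and the Gromov-product corrections vanish as the endpoints converge; the paper quantifies this with \Cref{lem:nicaspacula}. Then you add. You do need two intermediate points on $[a,b']$, the unstable partner of $\wt\vv$ and the stable partner of $\wt\vw$, with $t_0$ the flow time between them. A single $\wt\vvu$ cannot play both roles.

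\textbf{The gap.} The problem is the step you flag as the main obstacle: getting $\ep$ rather than $M$ in front of the exponential. The shift-by-$R$ device does not do this. For stable partners $\wt\vx,\wt\vy$ (the unstable case is symmetric), \Cref{thm:mineyev}~(6) only gives $d_{\wt\UG}(\wtg_t\wt\vx,\wtg_t\wt\vy)\le Me^{-\lam'(\tau+t)}$. Here $\tau$ is the flow time from the projection points of the ideal triangle to $\wt\vx,\wt\vy$, and it is dictated by the configuration, not chosen. The target $\ep e^{-\lam' t}$ decays at the same rate, so no time shift converts $M$ into $\ep$. For $t\ge R$ you still need $Me^{-\lam'\tau}\le\ep$, i.e.\ $\tau\ge\frac{1}{\lam'}\log(M/\ep)$, and ``applying (6) at time $t+R$'' silently assumes $\tau\ge R$. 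Once $\tau$ is known to be that large, (6) covers every $t\ge 0$ and the biLipschitz bound of \Cref{thm:mineyev}~(4) is superfluous. Without it, (4) only controls a bounded window $[0,R]$. Likewise, your multiplicative estimate $\lesssim e^{-\lam'(T-t)}d_{\wt\UG}(\wtg_T\wt\vv,\wtg_T\wt\vvu)$ does not follow from (4) or (6).

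\textbf{The missing idea and how to close it.} You need the fact that \emph{closeness of horosphere partners forces $\tau$ to be large}. The paper proves this by compactness in \Cref{lem:timecontrol}: partners lying at most time $T$ past the projections are at distance at least $\del_0(T)$. It packages this as \Cref{cor:closehorospheres}: $\del_0$-close partners satisfy $d_{\wt\UG}(\wtg_t\wt\vx,\wtg_t\wt\vy)<\ep e^{-\lam'|t|}$ in the contracting direction.

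Your Step~1 already contains an equivalent input once you connect it to $\tau$. For $\wt\vw=(a',b',s')$ one has $\wt\vw=\wtg_\tau\bigl(p_{[a',b']}(a)\bigr)$ with $\tau=s'+\gpr{a}{a'}_o-\gpr{a}{b'}_o$. This tends to $+\infty$ uniformly as $\del\to 0$ with $\wt\vv\in K$, since $\gpr{a}{a'}_o\to\infty$ while the other two terms stay bounded.

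For the unstable pair at time $T$, argue as the paper does rather than redoing Step~1 after a different translation. The distance between $\wtg_T\wt\vv$ and its unstable partner on $[a,b']$ is at most $\del+\frac{\ep}{2}e^{-\lam'T}+|t_0|$, using the hypothesis at time $T$, the forward decay of the stable pair, and the shift. This is below $\del_0$ once $\del$ is small and $T_0$ is large, and then \Cref{cor:closehorospheres} applies backward from time $T$.
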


\begin{lemma}\label{lem:timecontrol}
Let $\lam'$ be the expansion/contraction constant for $d_{\wt\UG}$. Then for any $T>0$ there exists $\del_0$ satisfying the following. Let $a,b,c\in \pG$ be pairwise distinct, and let $\wt\vv=p_{[b,c]}(a)$ and $\wt\vw=p_{[a,c]}(b)$. Then for any $t\leq T$ we have $d_{\wt\UG}(\wt\vg_t(\wt\vv),\wt\vg_t(\wt\vw))\geq \del_0$.
\end{lemma}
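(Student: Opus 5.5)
Since $a,b,c$ are pairwise distinct, $a\neq b$, so $\wt\vv=(b,c,\gpr{b,c}{a,o})$ and $\wt\vw=(a,c,\gpr{a,c}{b,o})$ lie on different flow lines. Hence $d_{\wt\UG}(\wt\vg_t(\wt\vv),\wt\vg_t(\wt\vw))>0$ for every $t$, and the content of the lemma is a uniform lower bound over all triples and all $t\leq T$. The plan is to reduce, by the $\G$-action and a compactness argument, to a compact family of configurations on which this distance is a continuous positive function.

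First I would show monotonicity in time, so that only $t\in[T_1,T]$ for a bounded window needs attention. The point $\wt\vg_t(\wt\vv)$ moves away along the flow lines in the past. Using the cocycle $\beta^\times$ evaluated at $g=a$ (via \Cref{eq:betahoro} and its companion formula), I would bound
\[
d^\times(\wt\vg_t\wt\vv,\wt\vg_t\wt\vw)\geq |\beta^\times(\wt\vg_t\wt\vv,\wt\vg_t\wt\vw;a)|,
\]
and compute this quantity explicitly. For $t\to-\infty$ it should grow roughly like $|t|$, since the lines $[b,c]$ and $[a,c]$ diverge in backward time. For $d_{\wt\UG}$, which is an integral average of $d^\times$ along the flow, I would use \Cref{thm:mineyev}(4): the distance at time $t$ is at least $C^{-1}e^{-\lam'|t-s|}$ times the distance at time $s$. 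This bounds the distance at times $t\leq T$ from below by a constant depending only on $T$, times the distance at a fixed reference time, say $t=0$ or $t=T$. So it suffices to bound $d_{\wt\UG}(\wt\vg_{s}\wt\vv,\wt\vg_s\wt\vw)$ from below uniformly at the single time $s=T$. In fact I would not need the explicit computation: \Cref{thm:mineyev}(4) with the constant $Ce^{\lam'|T-t|}$ is unbounded as $t\to-\infty$, so the backward regime must be handled separately, and I would do that with the $\beta^\times(\cdot,\cdot;a)$ estimate, which grows linearly there.

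Next I would obtain a uniform lower bound at a fixed time $s$. By $\G$-invariance of $d_{\wt\UG}$, the triple can be translated by any $g\in\G$, and $g\,p_{[b,c]}(a)=p_{[gb,gc]}(ga)$ up to the same time shift on both points, because both are shifted by the same Busemann quantity in $c$, modulo checking the formula. I would choose $g$ so that the point $\wt\vg_s(\wt\vv)$ lies in a fixed compact fundamental domain $K$ (cocompactness, \Cref{thm:mineyev}(e)). The danger is degeneration: $a$ approaching $b$, or $a$ approaching $c$. I would rule out $a\to c$ by noting that $p_{[b,c]}(a)$ sits near the center of the tripod, so normalizing the tripod center into $K$ keeps all three points pairwise separated by a uniform amount. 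However, the normalization is at time $s$, which may be far from the center. I would therefore split into two cases. If $\wt\vg_s(\wt\vv)$ lies within bounded distance of the tripod center, which holds for $s\le T$ bounded above, then normalizing gives the three points in a compact subset of triples of distinct points. Continuity of $(a,b,c)\mapsto d_{\wt\UG}(\wt\vg_s p_{[b,c]}(a),\wt\vg_s p_{[a,c]}(b))$, together with positivity and the compactness of $s$ in the window, gives the bound $\del_0$. If instead $s$ is very negative, the linear growth from the first step applies.

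I expect the main obstacle to be the backward-time regime. There I must show that the cocycle lower bound $|\beta^\times(\cdot,\cdot;g)|$ at a suitable $g$ (an approximant of $a$, or $b$) really forces the distance up. I would first try to evaluate it at $g=a$ and at $g=b$ using \Cref{eq:betahoro}, and to check that the double-difference terms stay bounded in the normalized configuration. I would then convert this $d^\times$ bound into a $d_{\wt\UG}$ bound by integrating over a unit time window, using the Lipschitz dependence on $t$ from \Cref{thm:mineyev}(3).
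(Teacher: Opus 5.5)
Your argument is correct. On a bounded window $-T_1\le t\le T$ it is the paper's argument: you normalize the triple by $\G$ and use continuity and positivity on a compact set of pairwise distinct triples. (The equivariance $g\,p_{[b,c]}(a)=p_{[gb,gc]}(ga)$ is exact, with no time shift. The window has to be bounded on both sides, not merely $t\le T$, which your case split effectively arranges.)

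The routes differ in the backward regime. The paper argues by contradiction and disposes of $t_k\to-\infty$ softly: $\wt\vg_{t_k}(\wt\vv_k)\to b$ and $\wt\vg_{t_k}(\wt\vw_k)\to a\neq b$ in the boundary, which is incompatible with their distance tending to $0$. You instead evaluate the cocycle at the boundary point $a$.

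One correction here: \eqref{eq:betahoro} does not cover this pair, because $\wt\vg_t\wt\vv$ lies on $[b,c]$ and not on a line issuing from $a$. You must instead let $g\to a$ in the defining formula of $\beta^\times$. There the divergent terms combine as $\gpr{b}{c}_g-\gpr{a}{c}_g+\gpr{a,c}{g,o}=\gpr{b,c}{g,o}+\gpr{a,b}{c,o}$, which yields the exact identity
\[
\beta^\times(\wt\vg_t\wt\vv,\wt\vg_t\wt\vw;a)=\gpr{b,c}{a,o}+\gpr{a,b}{c,o}-\gpr{a,c}{b,o}+|t|-t=|t|-t .
\]
The double differences cancel because $\hat d$ is symmetric, so no normalization is needed in this regime. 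Since $d^\times\geq|\beta^\times(\cdot,\cdot;a)|$ (by continuity of $\beta^\times$ on $\ov\G$), inserting this into \eqref{eq:defd_UG} gives
\[
d_{\wt\UG}(\wt\vg_t\wt\vv,\wt\vg_t\wt\vw)\geq \frac{2}{C_{\lam'}\lam'^{2}}\,e^{-\lam'\max\{t,0\}} \quad \text{ for all } t\in \R .
\]

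So your approach, pushed through, proves the whole lemma with an explicit $\del_0$ and no compactness step at all. It also shows that the upper bound in \Cref{thm:mineyev}~(6) is sharp up to constants. The paper's proof is shorter and purely qualitative, but it gives no control on $\del_0$.
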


\begin{proof}
    Suppose the result is not true and let $(a_k)_k,(b_k)_k,(c_k)_k$ be sequences of pairwise distinct points in $\pG$, and a sequence of real numbers $(t_k)_k$ such that $t_k\leq T$ for all $k$, but
    $d_{\wt\UG}(\wt\vg_{t_k}(\wt\vv_k),\wt\vg_{t_k}(\wt\vv_k))\to 0$ as $k\to \infty$ for $\wt\vv_k=p_{[b_k,c_k]}(a_k)$ and $\wt\vw_k=p_{[a_k,c_k]}(b_k)$. 

    Since $\G$ acts cocompactly on the space of distinct triples of $\pG$, after translating elements of this sequence by elements of $\G$ and taking a subsequence, we can assume that $a_k\to a$, $b_k\ra b$ and $c_k\ra c$ for some pairwise distinct points $a,b,c\in \pG$.  Therefore, by the definition of the projection (\Cref{eq:proj_flowline}) and the continuity of $\gpr{\cdot,\cdot}{\cdot,\cdot}$ we have that $\wt\vv_k \ra \wt\vv=p_{[b,c]}(a)$ and $\wt\vw_k \ra \wt\vw=p_{[a,c]}(b)$. 
    
    If the sequence $(t_k)_k$ is bounded, using \Cref{thm:mineyev}~(1) and (3), up to taking a subsequence we can assume that $t_k \to t\in (-\infty,T]$ and that $\wt\vg_{t}(\wt\vv)=\wt\vg_t(\wt\vw)$. This is a contradiction since $a\neq b$.

    On the other hand, if up to a subsequence we have $t_k\to -\infty$, then $\wt\vg_{t_k}(\wt\vv_k)\to b$ and $\wt\vg_{t_k}(\wt\vw_k)\to a$, again a contradiction.
\end{proof}

Given $\wt\vv=(a,b,s)\in \wt\UG$, its \emph{stable horosphere} is defined as the set 
\[H^s(\wt\vv)=\{\wt\vv'=(a',b,s-\gpr{a,a'}{b,x_0})|a'\in \partial \G\bs \{b\}\}.\] 
Similarly, its \emph{unstable horosphere} is $$H^u(\wt\vv)=\{\wt\vv'=(a,b',s+\gpr{b,b'}{a,x_0})|b'\in \partial \G \bs \{a\}\}.$$
A \emph{stable} (resp. \emph{unstable}) \emph{horosphere} is a set of form $H^s(\wt\vv)$ (resp. $H^u(\wt\vv)$) for some $\wt\vv\in \wt\UG$.

From \Cref{lem:timecontrol} we immediately deduce the following.

\begin{corollary}\label{cor:closehorospheres}
    For any $\ep > 0$ there exists $\del_0 > 0$ satisfying the following. Let $\wt\vv,\wt\vw\in \wt\UG$ be points on the same stable (resp. unstable) horosphere. If $d_{\wt\UG}(\wt\vv,\wt\vw)<\del_0$, then $d_{\wt\UG}(\wt\vg_t(\wt\vv),\wt\vg_t(\wt\vw))<\ep e^{-\lam'|t|}$ for any $t\geq 0$ (resp. $t\leq 0$).
\end{corollary}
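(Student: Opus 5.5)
Two points $\wt\vv,\wt\vw$ on the same stable horosphere can be written, using the definition of $H^s$, as $\wt\vv=(a,c,s)$ and $\wt\vw=(b,c,s')$ with
\[
s'=s-\gpr{a,b}{c,x_0}.
\]
Relabelling the time parameter, this is exactly the configuration of \Cref{thm:mineyev}~(6) and \Cref{rmk:updated}. There is some $r\in\R$ with $\wt\vv=\wt\vg_r(p_{[a,c]}(b))$ and $\wt\vw=\wt\vg_r(p_{[b,c]}(a))$. Indeed, the difference of the $\R$-coordinates of $p_{[a,c]}(b)$ and $p_{[b,c]}(a)$ is $\gpr{a,c}{b,o}-\gpr{b,c}{a,o}$, and by the double difference identities this equals the horospherical shift $\gpr{a,b}{c,o}$, up to sign conventions. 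I will take $x_0=o$ as the base point in the definition of the horospheres. The plan has three steps: reduce to the flow-translate of a projection pair, show that closeness forces $r$ to be large, and then apply the exponential contraction \eqref{eq:approxstable}.

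Suppose $d_{\wt\UG}(\wt\vv,\wt\vw)<\del_0$. Fix a large $T$ and let $\del_0'$ be the constant given by \Cref{lem:timecontrol} for this $T$. Apply \Cref{lem:timecontrol} at time $r$ to the projection pair: if $r\le T$ the lemma gives $d_{\wt\UG}(\wt\vv,\wt\vw)\ge\del_0'$. So choosing $\del_0<\del_0'$ forces $r>T$. Then \eqref{eq:approxstable}, applied at time $r+t$ for $t\ge0$, gives
\[
d_{\wt\UG}(\wt\vg_t\wt\vv,\wt\vg_t\wt\vw)\le Me^{-\lam'(r+t)}\le Me^{-\lam' T}e^{-\lam' t}.
\]
Taking $T$ with $Me^{-\lam'T}<\ep$ completes the stable case. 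The unstable case is symmetric: use \eqref{eq:approxunstable}, the analogous version of \Cref{lem:timecontrol} for negative times, or reduce to the stable case via the involution $\wt\iota$. The involution swaps the endpoints, sends $t\mapsto -t$ and is an isometry by \Cref{thm:mineyev}~(1).

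The main obstacle is the bookkeeping in the first step: checking that the pair on a common horosphere is exactly a $\wt\vg_r$-translate of the pair $(p_{[a,c]}(b),p_{[b,c]}(a))$ with the same $r$ for both points. This requires the identity
\[
\gpr{a,c}{b,o}-\gpr{b,c}{a,o}=\pm\gpr{a,b}{c,o}
\]
for boundary points. For points of $\G$ it follows by expanding the four-term definitions. It extends to the boundary by the continuity of the double difference, which uses strong hyperbolicity of $\hat d$. A secondary point is that \Cref{lem:timecontrol} bounds distance from below at time $t\le T$, and here it is applied at time $t=r$, which is what forces $r$ to be large.
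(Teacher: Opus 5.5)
Your proposal is correct and follows the paper's argument. You pick $T$ with $Me^{-\lam' T}\le \ep$, take $\del_0$ from \Cref{lem:timecontrol} for this $T$ so that closeness forces the flow-time $r$ of the projection pair to exceed $T$, conclude with \Cref{thm:mineyev}~(6) in the form \Cref{eq:approxstable}, and treat the unstable case symmetrically. You also spell out the identification the paper leaves implicit: the sign is in fact $+$, since $\gpr{a,c}{b,o}-\gpr{b,c}{a,o}=\gpr{a,b}{c,o}$ matches the paper's definition of $H^s$ exactly, and the excluded case $a=b$ is trivial because then $\wt\vv=\wt\vw$.
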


\begin{proof}
    Let $M$ be the constant from \Cref{thm:mineyev}~(6) and $T>0$ such that $Me^{-\lam'T}\leq \ep$. Then the lemma for the stable case follows by choosing $\del_0$ as in \Cref{lem:timecontrol} for $T$ and applying \Cref{thm:mineyev}~(6) (see \Cref{eq:approxstable} and \Cref{eq:approxunstable}). The unstable case follows by the same argument.
\end{proof}

\begin{proof}[Proof of \Cref{prop.geodesicsclose}]
    For the proof we fix $\ep>0$. Let $\gpr{\cdot}{\cdot}$ be the Gromov product for $\hat  d$ based at the identity, and let $C$ and $R_0$ be the constants from \Cref{lem:nicaspacula} applied to $\hat d$ (recall that $\lam'>0$ is less than half the strong hyperbolicity parameter of $\hat d$). Let $R_1>R_0$ be large enough so that $Ce^{-\lam' R_1}\leq \ep/3$. We also let $K$ be a compact subset of $\wt\UG$ whose $\G$-translates cover $\wt\UG$. Then there exists $A>0$ such that $\gpr{a}{b}\leq A$ for any $(a,b,s)\in K$.

    By \Cref{cor:closehorospheres}, let $\del_0\in (0,3\ep)$ be such that for $\wt\vv,\wt\vw\in \wt\UG$ in the same stable horosphere, $d_{\wt\UG}(\wt\vv,\wt\vw)<\del_0$ implies $d_{\wt\UG}(\wt\vg_t(\wt\vv),\wt\vg_t(\wt\vw))<\ep/2e^{-\lam'(\ep+t)}$ for all $t\geq 0$.

    Using \Cref{thm:mineyev}~(1), let $\k_0\in (0,\del_0/9)$ be small enough so that if $(a,b,s)\in K$ and $(a',b',s')\in \wt\UG$ satisfies $|s-s'|<\k_0$ and $\gpr{a}{a'},\gpr{b}{b'}>\k_0^{-1}$, then $d_{\wt\UG}((a,b,s),(a',b',s'))<\del_0$. We also consider $R_2>\max\{R_1,\k_0^{-1}\}$ so that $Ce^{-\lam'R_2}<\k_0$.

    Finally, we let $T_0>\ep$ be so that $\ep/2 e^{-\lam'T_0}<\del_0/3$, and $\del\in (0,\del_0/3)$ so that $d_{\wt\UG}((a,b,s),(a',b',s'))<\del$ for $(a,b,s)\in K$ implies
    \begin{itemize}
        \item $|s-s'|<\k_0$.
        \item $\gpr{a}{a'}, \gpr{b}{b'}>R_2+2A$.
        \item $\gpr{a'}{b},\gpr{a}{b'},\gpr{a'}{b'}\leq 2A$.
    \end{itemize}
    For these values of $\del$ and $T_0$, let $\wt\vv=(a,b,s), \wt\vw=(a',b',s')\in \wt\UG$ satisfy \Cref{eq:asdelT}, and without loss of generality assume that $\wt\vv\in K$. Then we have 
    \[\gpr{a,b}{o,a'}=\gpr{a}{a'}-\gpr{b}{a'}\geq R_2 \quad \text{ and }\quad \gpr{b',a'}{o,b}=\gpr{b}{b'}-\gpr{a'}{b}\geq R_2. \]
    Consequently, \Cref{lem:nicaspacula} implies  \begin{equation}\label{eq:smallCR}
   \gpr{a,a'}{o,b}, \gpr{b,b'}{o,a'}\leq C^{-\lam'R_2}<\k_0<\del_0/9<\ep/3.
    \end{equation}

   Let $\wt\vv'$ be the point in $[a',b]$ in the same stable horosphere as $\wt\vv$, and $\wt\vw'$ be the point in $[a',b]$ in the same unstable horosphere as $\wt\vw$. Then we have $\wt\vv'=(a',b,s+\gpr{a,a'}{o,b})$ and $\wt\vw'=(a',b,s'+\gpr{b,b'}{o,a'})$. 
    Let $t_0$ be such that $\wt\vv'=\wt\vg_{t_0}(\wt\vw')$, and note from \Cref{eq:smallCR} that 
    \begin{equation}\label{eq:t_0}
        |t_0|=d_{\wt\UG}(\wt\vv',\wt\vw')=|s-s'+\gpr{a,a'}{o,b}-\gpr{b,b'}{o,a'}| <3\k_0<\del_0/3<\ep. 
    \end{equation}

     We are left to bound the distance $d_{\wt\UG}(\wt\vg_t(\wt\vv),\wt\vg_{t+t_0}(\wt\vw))$ for $0\leq t\leq T-t_0$. In order to do this, we note that for any such value $t$, the identity $\wt\vv'=\wt\vg_{t_0}(\wt\vw')$ implies
     \[d_{\wt\UG}(\wt\vg_t(\wt\vv),\wt\vg_{t+t_0}(\wt\vw))\leq d_{\wt\UG}(\wt\vg_t(\wt\vv),\wt\vg_{t}(\wt\vv'))+d_{\wt\UG}(\wt\vg_{t+t_0}(\wt\vw'),\wt\vg_{t+t_0}(\wt\vw)).\]
In addition, our choice of $\k_0$, \Cref{eq:smallCR}, and the fact that $R_2>\k_0^{-1}$ imply that $d_{\wt\UG}(\wt\vv,\wt\vv')<\del_0$. Therefore, by \Cref{cor:closehorospheres}
     we have \begin{equation}\label{eq:ep/2lam}
         d_{\wt\UG}(\wt\vg_t(\wt\vv),\wt\vg_t(\wt\vv'))<\ep/2e^{-\lam't} \quad \text{ for all }t\geq 0.
     \end{equation}

     Also, since $T>T_0$ and $\del< \del_0/3$, \Cref{eq:ep/2lam},  \Cref{thm:mineyev}~(3) and \Cref{eq:t_0} give us
     \begin{align*}
         d_{\wt\UG}(\wt\vg_T(\wt\vw),\wt\vg_T(\wt\vw')) &  \leq  d_{\wt\UG}(\wt\vg_T(\wt\vw),\wt\vg_T(\wt\vv))+d_{\wt\UG}(\wt\vg_T(\wt\vv),\wt\vg_T(\wt\vv'))+d_{\wt\UG}(\wt\vg_T(\wt\vv'),\wt\vg_{T-t_0}(\wt\vw'))\\
         & <\del + \ep/2e^{\lam' T_0}+|t_0|\\
         & <\del_0/3+\del_0/3+\del_0/3=\del_0.
     \end{align*}
    Hence, by \Cref{cor:closehorospheres} and \Cref{eq:t_0} we also have 
    \[d_{\wt\UG}(\wt\vg_{t+t_0}(\wt\vw'),\wt\vg_{t+t_0}(\wt\vw))<\ep/2e^{-\lam'(\ep+T-(t+t_0))}\leq \ep/2e^{-\lam'(T-t)} \quad \text{ for }t\leq T-t_0.\]
    Combining this inequality with \Cref{eq:ep/2lam}, we deduce \Cref{eq:exponentialdecay}, concluding the proof of the proposition. 
\end{proof}

By the same argument, we can also prove the following related result. We leave the details to the reader.

\begin{proposition}\label{prop.geodesicsclose2} For any $\ep_1,A>0$ there exists $T_1>0$ such that if $\wt\vv,\wt\vw\in \wt\UG$ and $T>T_1$ satisfy
\begin{equation*}
d_{\wt\UG}(\wt\vv,\wt\vw) \leq A \quad \text{ and } \quad d_{\wt\UG}(\wtg_T(\wt\vv),\wtg_T(\wt\vw)) \leq A,
\end{equation*}
then there exists $t_0\in (-T_1,T_1)$ such that
\begin{equation*}
    d_{\wt\UG}(\wtg_t(\wt\vv),\wtg_{t+t_0}(\wt\vw))< \ep_1 e^{-\lam'\min\{t,T-t\}}\quad \text{ for all }T_1\leq t \leq T-T_1. \qed
\end{equation*}
\end{proposition}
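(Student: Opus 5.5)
The plan is to repeat the proof of \Cref{prop.geodesicsclose}, but now working at a large but fixed scale $A$ instead of a small scale $\del$. In that proof smallness of $\del$ was used in two places: to place the endpoints of the two lines close in $\pG$, and to make the time shift $t_0$ small. Here both controls become ``bounded in terms of $A$'', and the extra time $T_1$ is spent waiting until the exponential contraction brings things below $\ep_1$. We may translate by $\G$ so that $\wt\vv\in K$ for a fixed compact fundamental set $K$. Write $\wt\vv=(a,b,s)$ and $\wt\vw=(a',b',s')$.

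The first step is to show that the four endpoints stay in a bounded configuration. Since $d_{\wt\UG}(\wt\vv,\wt\vw)\le A$, the points $\wt\vv,\wt\vw$ lie in a compact set $K_A$ determined by $A$. Hence the Gromov products $\gpr{a}{b},\gpr{a'}{b'}$, and also $\gpr{a'}{b},\gpr{a}{b'}$, are bounded by a constant $A_1(A)$, and $|s-s'|\le A$. The hypothesis $d_{\wt\UG}(\wtg_T\wt\vv,\wtg_T\wt\vw)\le A$ says that the points at time $T$ are $A$-close. By \Cref{thm:mineyev}~(2) (the orbit map is a rough isometry to $(\G,\hat d)$) together with hyperbolicity of $\hat d$, this forces
\[
\gpr{b}{b'}\ge T-A_2 .
\]
Symmetrically, starting from time $T$ and flowing backward gives
\[
\gpr{a}{a'}_{g}\ge T-A_2,
\]
where $g\in\G$ lies near the time-$T$ point; equivalently, after translating by $g^{-1}$, the backward endpoints are $(T-A_2)$-close as seen from that point.

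The second step is the horosphere construction. Let $\wt\vv'$ be the point of $[a',b]$ on the stable horosphere of $\wt\vv$, and let $\wt\vw'$ be the point of $[a',b]$ on the unstable horosphere of $\wt\vw$. Then $\wt\vv'=\wtg_{t_0}(\wt\vw')$, where
\[
t_0=s-s'+\gpr{a,a'}{o,b}-\gpr{b,b'}{o,a'}.
\]
Every term is bounded by a constant depending only on $A$ (using $A_1$ and the triangle inequality), so $|t_0|<T_1$ once $T_1$ is large. We also get $d_{\wt\UG}(\wt\vv,\wt\vv')\le A_3$ and $d_{\wt\UG}(\wtg_T\wt\vw,\wtg_T\wt\vw')\le A_3$.

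The third step replaces \Cref{cor:closehorospheres}. Two points on the same stable horosphere at distance at most $A_3$ satisfy
\[
d_{\wt\UG}(\wtg_t\cdot,\wtg_t\cdot)\le M' e^{-\lam' t}\quad\text{for } t\ge 0.
\]
This follows from \Cref{thm:mineyev}~(6) / \Cref{eq:approxstable}, applied after shifting the base time by the bounded amount $\gpr{\cdot}{\cdot}$, with $M'=Me^{\lam' A_3'}$. The analogous bound holds backward in time for unstable horospheres. Then, for $t_0\le t\le T-t_0$,
\[
d(\wtg_t\wt\vv,\wtg_{t+t_0}\wt\vw)\le M'e^{-\lam' t}+M'e^{-\lam'(T-t)}\le 2M'e^{-\lam'\min\{t,T-t\}}.
\]
To reach the factor $\ep_1$, I would first bound $2M'e^{-\lam'\min\{t,T-t\}}$ by $2M'e^{-\lam' T_1/2}\,e^{-\lam'\min\{t,T-t\}/2}$, and then choose $T_1$ so that $2M'e^{-\lam' T_1/2}<\ep_1$. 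This only yields the rate $\lam'/2$, not $\lam'$. To get the full rate $\lam'$, I would instead apply \Cref{prop.geodesicsclose} to the pair $(\wtg_{T_1'}\wt\vv,\wtg_{T_1'+t_0}\wt\vw)$ with $T_1'$ large. By the bounds above these points are $\del$-close at both ends of the shortened interval, so \Cref{prop.geodesicsclose} gives the $\ep_1e^{-\lam'\min}$ bound, with a further shift smaller than $\ep_1$ absorbed into $t_0$.

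I expect the main obstacle to be the first step: turning the coarse hypothesis ``$A$-close at times $0$ and $T$'' into the quantitative estimate $\gpr{b}{b'}\ge T-A_2$ at the correct base point. This is the point where the fine metric $d_{\wt\UG}$ has to be compared with $\hat d$ and with Gromov products. A secondary concern is making sure the $t_0$ produced in the third step still lies in $(-T_1,T_1)$.
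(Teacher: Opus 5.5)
Your Steps 1--3 are the argument the paper intends; it only says ``by the same argument'' as \Cref{prop.geodesicsclose}. Once the slip noted in the second paragraph is fixed, they give $d_{\wt\UG}(\wtg_t\wt\vv,\wtg_{t+t_0}\wt\vw)\le C_A e^{-\lam'\min\{t,T-t\}}$ with $|t_0|\le C_A$, for a constant $C_A$ depending on $A$.

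\textbf{The gap is your last step.} \Cref{prop.geodesicsclose}, applied to $\wtg_{T_1'}\wt\vv$ and $\wtg_{T_1'+t_0}\wt\vw$ on an interval of length $T-2T_1'$, measures its exponent from the new endpoints. Translated back to the original time, it gives $\ep_1e^{\lam'T_1'}e^{-\lam'\min\{t,T-t\}}$. Since $T_1'$ must satisfy $2M'e^{-\lam'T_1'}<\del<\ep_1$, this prefactor exceeds $2M'$, so nothing is gained.

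No other argument can do better either. At the fixed rate $\lam'$, the prefactor is set by the branching time of the two lines, and waiting does not shrink it. Here is a concrete case. Let $\G$ be free and $\hat d$ the word metric of a free basis; it is $0$-hyperbolic, hence strongly hyperbolic. Then $d^\times$ is the distance in the Cayley tree between the corresponding points. Let $\wt\vv=(a,c,s)$ and $\wt\vw=(b,c,s')$ both sit at the tripod centre of $a,b,c$ at time $0$. Then $d^\times(\wtg_u\wt\vv,\wtg_u\wt\vw)=2\max\{0,-u\}$, and integrating gives $d_{\wt\UG}(\wtg_t\wt\vv,\wtg_t\wt\vw)=e^{-\lam' t}/\lam'$ for $t\ge 0$. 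So the hypotheses hold with $A=1/\lam'$ for every $T$. Now take $\ep_1<1/\lam'$:
\begin{itemize}
\item the conclusion at $t=T/2$ forces $|t_0|<(2/\lam')e^{-\lam'T/2}$;
\item the conclusion at $t=T_1$ then fails once $T$ is large, because $d_{\wt\UG}(\wtg_{T_1}\wt\vv,\wtg_{T_1+t_0}\wt\vw)\ge e^{-\lam'T_1}/\lam'-|t_0|$.
\end{itemize}
So the proposition as stated is too strong. What your argument actually proves is the $C_A$ version. That version implies your rate-$\lam'/2$ variant, and also the bound $d<\ep_1$ on $[T_1,T-T_1]$. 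The latter is all that \Cref{lem.liftBowen}, the only place the proposition is used, requires.

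Separately, in Step 1 the bounds on $\gpr{a'}{b}$ and $\gpr{a}{b'}$ do not follow from compactness at time $0$. Because $\beta^\times$ is $\wt\iota$-invariant in each variable, $d_{\wt\UG}(\wt\vv,\wt\iota(\wt\vv))=2/\lam'$, and $\wt\vw=\wt\iota(\wt\vv)$ has $a'=b$. You need the time-$T$ hypothesis with $T$ large. It gives $\gpr{b}{b'}\ge T-A_2$, and hyperbolicity together with the bound on $\gpr{a'}{b'}$ then bounds $\gpr{a'}{b}$. This is what guarantees $a'\neq b$, so that $[a',b]$ is a genuine line, and that $t_0$ is bounded.
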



\subsection{Invariant measures and currents}\label{subsec:invariantmeasures}

We continue with the notation from the previous subsection and denote the space of real-valued continuous functions on $\UG$ by $C(\UG)$. Sometimes, we will refer to functions in $C(\UG)$ as \emph{potentials}.

A Borel measure $\frakm$ on $\UG$ is $\msf{g}$-\emph{invariant} if $(\msf{g}_t)_\ast\frakm=\frakm$ for any $t\in \R$. Similarly, we can talk of $\wt{\msf{g}}$-invariant measures on $\wt\UG$. We denote by $\calM_{\msf{g}}(\UG)$ the space of $\msf{g}$-invariant Borel probability measures on $\UG$ equipped with the weak$^\ast$ topology. Note that $\calM_{\msf{g}}(\UG)$ is compact and metrizable.

Invariant measures supported on periodic orbits correspond to conjugacy classes in $\G$. Namely, for a non-trivial conjugacy class $[g]\in [\G]'$ there is a corresponding periodic orbit $\gam_{[g]}$ on $\UG$, given by the image of the curve $t \mapsto (g^-,g^+,t)$ in $\wt \UG$. By \Cref{thm:mineyev}~(5), the length of the periodic orbit $\gam_{[g]}$ is equal to $\ell_{\hat d}[g]$. Then $[g]$ determines a $\msf{g}$-invariant Borel probability measure $\frakm_{[g]}$ on $\UG$ given by the normalized integral along $\gam_{[g]}$:
\begin{equation}\label{eq:def.m_g}
  \int F d\frakm_{[g]}=\frac{1}{\ell_{\hat d}[g]}\int_{\gam_{[g]}}{F}:=\frac{1}{\ell_{\hat d}[g]}\int_{0}^{\ell_{\hat d}[g]}F(\msf{g}_t(\vv))dt\quad \text{ for all }F\in C(\msf{U}\G),  
\end{equation} where $\vv$ is any point on $\gam_{[g]}$. Invariant measures supported on periodic orbits are dense in $\calM_{\vg}(\UG)$. This follows, for instance, from \Cref{lem:homeoCM} below and the density of rational currents on $\calC_\G$ \cite{bonahon.currentshpygroups}.

The correspondence above extends to a natural bijection between geodesic currents and Borel $\msf{g}$-invariant measures on $\UG$ \cite[Section~3.2]{tanaka.topflows}. Given a geodesic current $\Lam \in \calC_\G$, let $\wt\frakm_\Lam:=\Lam\otimes \Leb$ be the product measure on $\wt\UG=\ppG \times \R$, where $\Leb$ is the (normalized) Lebesgue measure on $\R$. Then $\wt\frakm_{\Lam}$ is a Radon  $\wt{\msf{g}}$-invariant measure. Moreover, $\wt\frakm_{\Lam}$ is also invariant under the $\G$-action on $\wt\UG$, hence it descends to a $\msf{g}$-invariant measure $\frakm_{\Lam}$ on $\UG$. We have the following.

\begin{lemma}[{\cite[Lemma~3.4]{tanaka.topflows}}]\label{lem:tanaka}
    The measure $\frakm_{\Lam}$ is the unique $\msf{g}$-invariant Borel measure on $\UG$ such that if $\wh F:\wt\UG \ra \R$ is a compactly supported continuous function and
    \begin{equation}\label{eq:relFhatF}
        F(\vv):=\sum_{\pi(\wt\vv)=\vv}{\wh F(\wt\vv)} \quad \text{ for }\vv\in \UG,
    \end{equation}
    then 
    \begin{equation}\label{eq:intFhatF}
        \int_{\wt\UG}{\wh F}{d\wt\frakm_\Lam}=\int_{\UG}{F}{d\frakm_\Lam}.
    \end{equation}
\end{lemma}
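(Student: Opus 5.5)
This is a cited result, but I would prove it by a lift-and-fold argument.

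\emph{Step 1: existence and $\vg$-invariance.} Because $\Lam$ is $\G$-invariant and the $\G$-action is a homeomorphism, $\wt\frakm_\Lam=\Lam\otimes\Leb$ is $\G$-invariant. The reason is that $g$ acts on $\ppG\times\R$ as $(a,b,s)\mapsto(ga,gb,s+c_g(a,b))$ for a continuous shift $c_g(a,b)=-\tfrac12(\hat\beta(g^{-1},a)-\hat\beta(g^{-1},b))$. This is a skew product over a $\Lam$-preserving map whose fibre maps are translations, so Fubini gives invariance. It is $\wtg$-invariant because Lebesgue measure is translation invariant.

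\emph{Step 2: defining $\frakm_\Lam$.} Since the $\G$-action is free, properly discontinuous and cocompact (property (e)), there is a nonnegative compactly supported continuous cutoff $\chi$ on $\wt\UG$ with $\sum_{g\in\G}\chi(g\wt\vv)=1$ for every $\wt\vv$. For $F\in C(\UG)$ set
\[
\int F\,d\frakm_\Lam:=\int_{\wt\UG}\chi\,(F\circ\pi)\,d\wt\frakm_\Lam .
\]
This is a positive linear functional on $C(\UG)$. It is finite because $\wt\frakm_\Lam$ is Radon and $\supp\chi$ is compact. By Riesz it is a finite Borel measure.

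\emph{Step 3: the identity \eqref{eq:intFhatF}.} Take $\wh F$ with compact support and $F$ as in \eqref{eq:relFhatF}; the sum is locally finite by proper discontinuity. Then
\[
\int\chi\,(F\circ\pi)\,d\wt\frakm_\Lam=\sum_{g}\int\chi(\wt\vv)\wh F(g\wt\vv)\,d\wt\frakm_\Lam .
\]
Change variables $\wt\vv\mapsto g^{-1}\wt\vv$, using $\G$-invariance of $\wt\frakm_\Lam$, and then sum $\sum_g\chi(g^{-1}\cdot)=1$. The result is $\int\wh F\,d\wt\frakm_\Lam$. The same computation shows the definition is independent of $\chi$.

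$\vg$-invariance of $\frakm_\Lam$ then follows from \eqref{eq:intFhatF}. Note that $F\circ\vg_t$ corresponds to $\wh F\circ\wtg_t$, because $\wtg$ commutes with $\G$, and then use $\wtg$-invariance of $\wt\frakm_\Lam$.

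\emph{Step 4: uniqueness.} Every $F\in C(\UG)$ has the form \eqref{eq:relFhatF}: take $\wh F=\chi\,(F\circ\pi)$. Hence \eqref{eq:intFhatF} determines $\int F\,d\frakm$ for all continuous $F$, and Riesz uniqueness applies. So invariance is not even needed for uniqueness.

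The only mildly delicate point is constructing $\chi$ and justifying the exchange of sum and integral. Local finiteness of the $\G$-orbits on compacta gives this.
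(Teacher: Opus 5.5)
Your proposal is correct. The paper gives no argument of its own for this lemma; it simply cites Tanaka's Lemma~3.4, so there is no competing route to compare against. Your argument is a complete, self-contained proof of the standard kind:
\begin{itemize}
\item Fubini on the skew product gives the $\G$-invariance of $\Lam\otimes\Leb$.
\item A cutoff $\chi$ with $\sum_{g}\chi(g\,\cdot)=1$ lets you define the measure via Riesz.
\item The change of variables $\wt\vv\mapsto g^{-1}\wt\vv$ gives the unfolding identity.
\item Riesz uniqueness gives uniqueness of the measure.
\end{itemize}
Proper discontinuity makes every sum you write down finite, so no interchange of sum and integral needs justification.

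Your argument also makes explicit two facts the paper uses silently. First, every $F\in C(\UG)$ has the form \eqref{eq:relFhatF}, with $\wh F=\chi\,(F\circ\pi)$. This is exactly what the paper invokes just before Lemma~\ref{lem:homeoCM} to deduce that $\Lam_k\to\Lam$ implies $\frakm_{\Lam_k}\to\frakm_\Lam$. Second, uniqueness holds among all Borel measures on $\UG$, not only $\vg$-invariant ones: taking $\wh F=\chi$ already forces finiteness.

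One point is worth adding. The paper describes $\frakm_\Lam$ only as the measure to which $\wt\frakm_\Lam$ ``descends''. If you make that precise as $E\mapsto\wt\frakm_\Lam(\pi^{-1}(E)\cap\calB)$ for a Borel fundamental domain $\calB$, run your Step~3 computation with $\mathbf{1}_\calB$ in place of $\chi$. Use Tonelli for $\wh F\ge 0$ and then linearity. This shows that the descended measure also satisfies \eqref{eq:intFhatF}, so by your Step~4 it coincides with your Riesz-defined measure.
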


Since for any $F\in C(\UG)$ there exists a compactly supported continuous function $\wh F:\wt\UG \ra \R$ satisfying \Cref{eq:relFhatF}, the identity from \Cref{eq:intFhatF} gives us that the convergence $\Lam_k \xrightharpoonup{\ast} \Lam$
in $\calC_\G$ implies the weak$^\ast$ convergence $\frakm_{\Lam_k} \xrightharpoonup{\ast} \frakm_{\Lam}$ in the space of $\msf{g}$-invariant measures on $\UG$. 

 Given a projective geodesic current $[\Lam]\in \bbP\calC_\G$, we let $\frakm_{[\Lam]}\in \calM_{\msf{g}}(\UG)$ be the unique probability measure in the projective class of the measure $\frakm_\Lam$. It is not hard to see that for the rational current $\eta_{[g]}$ associated to $[g]\in [\G]'$, the measure $\frakm_{[\eta_{[g]}]}$ is equal to the measure $\frakm_{[g]}$ from \Cref{eq:def.m_g}.

\begin{lemma}\label{lem:homeoCM}
The map $\msf{m}: \bbP\calC_\G \ra \calM_{\msf{g}}(\UG)$ that maps $[\Lam]$ to $\frakm_{[\Lam]}$ is a homeomorphism.
\end{lemma}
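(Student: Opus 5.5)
We prove that $\msf{m}$ is a well-defined, continuous bijection between compact metrizable spaces. Since $\bbP\calC_\G$ is compact and $\calM_{\msf{g}}(\UG)$ is Hausdorff, continuity and bijectivity are then enough to conclude that $\msf{m}$ is a homeomorphism.

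\emph{Well-definedness and continuity.} For $\Lam\neq 0$ the measure $\frakm_\Lam$ is nonzero and finite. It is finite because $\UG$ is compact: take a compactly supported $\wh F\geq 0$ with $\sum_{\pi(\wt\vv)=\vv}\wh F(\wt\vv)=1$, and apply \Cref{lem:tanaka}, which gives total mass $\int \wh F\, d(\Lam\otimes\Leb)<\infty$. It is nonzero since $\Lam\otimes\Leb\neq 0$. Hence the normalization $\frakm_{[\Lam]}$ makes sense, and it is scale invariant. For continuity, suppose $[\Lam_k]\to[\Lam]$. Fix the cutoff $\wh F_1$ above, with $F_1\equiv 1$. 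Choose representatives normalized by $\int \wh F_1\, d\wt\frakm_{\Lam_k}=1$. This normalization depends continuously on the current, since $\wh F_1$ is compactly supported continuous and the mass of $\Lam\otimes\Leb$ on compact sets of $\ppG\times\R$ is controlled by the mass of $\Lam$ on compacts. By precompactness of normalized sets of currents, these representatives converge to a representative of $[\Lam]$. The remark after \Cref{lem:tanaka} then gives $\frakm_{\Lam_k}\to\frakm_\Lam$ weak$^\ast$, and the total masses are all equal to $1$, so $\frakm_{[\Lam_k]}\to\frakm_{[\Lam]}$.

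\emph{Injectivity.} If $\frakm_{\Lam}=c\,\frakm_{\Lam'}$, then by the uniqueness clause of \Cref{lem:tanaka}, $\int\wh F\,d(\Lam\otimes\Leb)=c\int\wh F\,d(\Lam'\otimes\Leb)$ for every compactly supported continuous $\wh F$. Taking $\wh F(a,b,s)=f(a,b)\chi(s)$ with $\int\chi=1$ yields $\Lam=c\Lam'$.

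\emph{Surjectivity.} This is the main step. Given $\frakm\in\calM_{\msf{g}}(\UG)$, lift it to a $\G$-invariant Radon measure $\wt\frakm$ on $\wt\UG$, defined via the formula \eqref{eq:intFhatF} read backwards: set $\int\wh F\,d\wt\frakm:=\int F\,d\frakm$ with $F$ as in \eqref{eq:relFhatF}. This functional is positive and linear, hence a Radon measure, and it is $\G$-invariant because $\wh F\circ g^{-1}$ gives the same $F$. It is also $\wt{\msf{g}}$-invariant, because the $\G$ and $\R$-actions commute and $\frakm$ is $\msf{g}$-invariant. A Radon measure on $\ppG\times\R$ invariant under translation in the $\R$-factor disintegrates as $\Lam\otimes\Leb$. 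To see this, define $\Lam(f):=\int f(a,b)\chi(s)\,d\wt\frakm$ for a fixed $\chi\in C_c(\R)$ with $\int\chi=1$. Translation invariance shows this is independent of $\chi$: for fixed $f$, the functional $\chi\mapsto\int f\chi\,d\wt\frakm$ is a translation-invariant Radon measure on $\R$, hence a multiple of $\Leb$. The $\G$-action twists the $s$-coordinate by a function of $(a,b)$, but $\Leb$ is translation invariant, so $\G$-invariance of $\wt\frakm$ implies $\G$-invariance of $\Lam$. Thus $\Lam\in\calC_\G$, it is nonzero, and $\frakm_{\Lam}=\frakm$ by uniqueness in \Cref{lem:tanaka}. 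This disintegration step is where care is needed; the rest is formal.
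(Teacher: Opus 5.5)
Your proof is correct and follows the same route as the paper. Continuity comes from the convergence $\frakm_{\Lam_k}\to\frakm_\Lam$, compactness reduces the problem to bijectivity, and surjectivity comes from lifting $\frakm$ to a $\G$- and $\wt{\msf{g}}$-invariant Radon measure on $\wt\UG$ and disintegrating it as $\Lam\otimes\Leb$ along flow lines, as in Kaimanovich's construction; you just make explicit what the paper leaves to the reader (injectivity via product test functions $f\otimes\chi$, and the normalization in the continuity step, whose continuity is best justified by $\int\wh F_1\,d(\Lam\otimes\Leb)=\int\phi\,d\Lam$ with $\phi(a,b)=\int_{\R}\wh F_1(a,b,s)\,ds$ continuous and compactly supported on $\ppG$).
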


\begin{proof}
    From our discussion above, it follows that $\msf{m}$ is continuous. Since both $\bbP\calC_\G$ and $\calM_{\msf{g}}(\UG)$ are compact and metrizable, it is enough to prove that $\msf{m}$ is a bijection. 

   To produce an inverse of $\msf{m}$, we follow the construction by Kaimanovich \cite[Thm.~2.2]{kaimanovich}. Let $\frakm\in \calM_{\msf{g}}(\UG)$, and lift it to the unique $\wt{\msf{g}}$-invariant Radon measure $\wt\frakm$ on $\wt\UG$ satisfying \Cref{eq:intFhatF}. If $p:\wt\UG=\ppG \times \R \ra \ppG$ denotes the projection map, then there exists a unique probability measure $\Lam$ on $\ppG$ such that for every Borel subset $\calA\subset \wt\UG$ we have 
\begin{equation*}
    \wt\frakm(\calA)=\int_{p(\calA)}{\mathrm{Leb}_{(a,b)}(p^{-1}((a,b)))}{d\Lam(a,b)},
\end{equation*}
where $\mathrm{Leb}_{(a,b)}$ is the normalized Lebesgue measure along the flow line $[a,b]=\{(a,b)\}\times \R=\R$.
$\Lam$ is clearly Radon, and $\G$-invariance of $\wt\frakm$ implies that $\Lam$ is $\G$-invariant. Hence $\Lam$ is a geodesic current, and as in the proof of \cite[Theorem~2.2]{kaimanovich}, it can be checked that $[\Lam]=\msf{m}^{-1}(\frakm)$. We leave the details to the reader.
\end{proof}


\subsection{Metric-Anosov property and Ledrappier correspondence}\label{subsec:metricanosovledrappier}

A crucial property that we will use is that Mineyev's flow $(\UG,\vg,d_{\UG})$ is \emph{metric-Anosov}---a broad extension of the notion of Anosov flow---, as was proven recently in Disalvor's thesis \cite{dilsavor}.

For $\CAT(-1)$ groups, this was first studied in \cite{CLT.strong,CLT.weak} (see also \cite{dilsavor-thompson} for non-necessarily convex cocompact $\CAT(-1)$ groups). In particular, \cite{CLT.strong} proves that Mineyev's flow for $\CAT(-1)$ groups is metric-Anosov, which was also proven for groups admitting Anosov representations in \cite{BCLS.pressure} (up to H\"older orbit equivalence). A strong coding for Mineyev's flow space was proven in \cite{cantrell-tanaka.invariant} without relying on the metric-Anosov property (see also \cite{tanaka.topflows,cantrell-tanaka.manhattan}). 
In \Cref{prop:weakspecification} we show that Mineyev's flow space for any non-elementary torsion-free  hyperbolic group satisfies the specification property.

\begin{theorem}[{\cite[Theorem~D]{dilsavor}}]\label{thm:mineyevanosov}
    For any strongly hyperbolic metric $\hat d\in \calD_\G$, then Mineyev's flow space $\UG=(\UG,\vg,d_{\UG})$ is metric-Anosov. 
\end{theorem}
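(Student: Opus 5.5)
The statement to prove is \Cref{thm:mineyevanosov}: Mineyev's flow space built from a strongly hyperbolic $\hat d\in\calD_\G$ is metric-Anosov. I would verify directly the two ingredients that make up the metric-Anosov property (in the sense of Pollicott and of \cite{BCLS.pressure}). The first is the existence of local stable and unstable laminations with uniform exponential contraction. The second is a local product structure: nearby points can be joined through a bracket $\langle\vv,\vw\rangle$ that depends continuously on $(\vv,\vw)$. The natural candidates come from the horospheres already introduced. The local stable leaf of $\vv$ is the image in $\UG$ of a small neighborhood of $\wt\vv$ in $H^s(\wt\vv)$, and similarly for the unstable leaf. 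The weak stable leaf is $\bigcup_{|t|<\ep}\vg_t$ applied to the local stable leaf.

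The first step is contraction. \Cref{cor:closehorospheres} already gives the required estimate: for $\wt\vv,\wt\vw$ on the same stable horosphere with $d_{\wt\UG}(\wt\vv,\wt\vw)<\del_0$, we get $d_{\wt\UG}(\wt\vg_t\wt\vv,\wt\vg_t\wt\vw)\le \ep e^{-\lam' t}$ for $t\ge0$. The unstable case is symmetric. Since the $\G$-action is isometric and cocompact (\Cref{thm:mineyev}), these bounds descend to $\UG$ once $\del_0$ is taken below the injectivity scale of $\pi$. I would also check the converse characterization: if two points stay $\del$-close for all $t\ge0$, they lie on the same weak stable leaf. \Cref{prop.geodesicsclose} with $T\to\infty$ gives this, since forward asymptotic flow lines must share the endpoint $b$.

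The second step is the bracket. For $\wt\vv=(a,b,s)$ and $\wt\vw=(a',b',s')$ close, I define
\[
\langle \wt\vv,\wt\vw\rangle=(a',b,\,s-\gpr{a,a'}{b,o}).
\]
This is the point of $H^s(\wt\vv)$ lying on the line $[a',b]$. It lies on the weak unstable leaf of $\wt\vw$ because it shares the backward endpoint $a'$. Continuity follows from the continuity of the extended double difference for the strongly hyperbolic metric $\hat d$. Smallness, meaning that the bracket is close to both points, uses \Cref{lem:nicaspacula} exactly as in the estimate \eqref{eq:smallCR} from the proof of \Cref{prop.geodesicsclose}. The relevant time shift is bounded by $Ce^{-\lam' R}$ whenever $\gpr{a}{a'},\gpr{b}{b'}>R$. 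Equivariance under $\G$ follows from property (d) of $\beta^\times$ and from the definition of the $\G$-action, so the bracket descends to $\UG$.

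The main obstacle is the uniformity of the topology comparison. One needs that $d_{\wt\UG}$-smallness is equivalent to large Gromov products $\gpr{a}{a'}$, $\gpr{b}{b'}$ together with a small time difference, uniformly over a compact fundamental domain $K$. This is required both for the bracket and for showing that the local leaves are genuinely local. I would handle it through a compactness argument over $K$, using \Cref{thm:mineyev}(1) and the argument of \Cref{lem:timecontrol}. This is the same mechanism that was used to pick $\k_0$ and $\del$ in the proof of \Cref{prop.geodesicsclose}.
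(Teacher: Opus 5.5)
The paper gives no proof of this statement; it is imported from \cite[Theorem~D]{dilsavor}. Your direct verification is therefore a genuinely different route, and most of it is sound. Your bracket is the unique point of $H^s(\wt\vv)$ on $[a',b]$, and its smallness is exactly the estimate \eqref{eq:smallCR}. Also, \Cref{prop.geodesicsclose} with $T\to\infty$ does place dynamically defined local stable sets inside strong stable horospheres: the limiting shift $t_0$ must vanish once the distances tend to $0$, because flow lines are isometrically embedded.

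\textbf{The contraction step is not right as stated.} In Pollicott's formulation of metric-Anosov (see also \cite[Section~3]{CLT.strong}), for $\vw\in W^s_\epsilon(\vv)$ and $t\geq 0$ you need a bound $d_{\UG}(\vg_t(\vv),\vg_t(\vw))\leq Ce^{-\lambda t}d_{\UG}(\vv,\vw)$, linear in the initial distance. \Cref{cor:closehorospheres} only gives $\ep e^{-\lam' t}$ once the distance is below $\del_0(\ep)$, with no proportionality.

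\textbf{The missing ingredient is \Cref{thm:mineyev}~(4), which your proposal never uses.} \Cref{rmk:updated}~(1) says the metric is modified by $\lam'$ precisely so that (4) and (6) share the same rate. The repair is short:
\begin{itemize}
\item Let $\wt\vv,\wt\vw$ lie on a common stable horosphere, with backward endpoints $a,b$ and forward endpoint $c$. Then $\wt\vv=\wt\vg_\tau(p_{[a,c]}(b))$ and $\wt\vw=\wt\vg_\tau(p_{[b,c]}(a))$ for a common $\tau\in\R$.
\item By \Cref{lem:timecontrol} there is $\del_0>0$ with $d_{\wt\UG}(\wt\vg_u(p_{[a,c]}(b)),\wt\vg_u(p_{[b,c]}(a)))\geq\del_0$ for all $u\leq 0$. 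Hence $d_{\wt\UG}(\wt\vv,\wt\vw)<\del_0$ forces $\tau>0$.
\item Applying (4) to $\wt\vg_{-\tau}$ gives $\del_0\leq Ce^{\lam'\tau}d_{\wt\UG}(\wt\vv,\wt\vw)$.
\item Combining with (6), for $t\geq 0$ you get $d_{\wt\UG}(\wt\vg_t(\wt\vv),\wt\vg_t(\wt\vw))\leq Me^{-\lam'(t+\tau)}\leq (MC/\del_0)\,e^{-\lam' t}\,d_{\wt\UG}(\wt\vv,\wt\vw)$. This descends to $\UG$ at scales below the injectivity radius.
\end{itemize}
If (4) held only with a larger expansion rate $\mu>\lam'$, the same computation would give just the H\"older factor $d_{\wt\UG}(\wt\vv,\wt\vw)^{\lam'/\mu}$. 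That is why the rates must match.

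If you only target the weaker form of the axiom, without the factor $d(\vv,\vw)$ (as it is sometimes stated), your argument suffices as written.
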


The metric-Anosov property will not be used directly, so we refer the reader to \cite{pollicott},  \cite[Section~2]{sambarino.report}, \cite[Section~3]{CLT.strong}. However, this property has some remarkable consequences. For instance, it will allow us to establish versions of Ledrappier correspondence between potentials on $\UG$ and cocycles on $\G$ \cite[Th\'eor\`eme~3]{ledrappier}, as was proven by Sambarino \cite{sambarino.report}.

On the side of $\G$, a \emph{cocycle} on $\G$ is a continuous map $\msf{c}:\G \times \pG\ra \R$ such that
\[\msf{c}(gh,a)=\msf{c}(g,ha)+\msf{c}(h,a) \quad \text{ for }g,h\in \G, a\in \pG.\]
Such a cocycle is \emph{H\"older} if each function $a\mapsto \msf{c}(g,a)$ is H\"older with the same exponent for all $g\in \G$, when $\pG$ is equipped with some (hence, any) visual metric.

For a cocycle $\msf{c}$, its \emph{period map} is the function  $\msf{L}_{\msf{c}}:[\G]'\ra \R$ such that
\[\msf{L}_{\msf{c}}[g]=\msf{c}(g,g^+) \quad \text{ for }[g]\in [\G]'.\]
We say that $\msf{c}$ is \emph{properly positive} if there exists $d\in \calD_\G$ such that $\msf{L}_{\msf{c}}\geq \ell_d$.
\begin{example}\label{ex:cocyclestrongly}
Let $d\in \calD_\G$ be a Green metric with respect to a symmetric finitely supported probability measure on $\G$ whose support generates $\G$. Then the corresponding Busemann function $\beta=\beta^{d}_o: \G \times \pG \ra \R$ can be defined as a true limit and the cocycle  $\msf{c}=\msf{c}_{d}: \G \times \pG \ra \R$ given by
\[\msf{c}(g,a)=\beta(g^{-1},a)\]
is H\"older. This follows by combining \cite[Theorem~3.3]{INO} and \cite[Theorem~1.7]{BHM.harmonic}. Moreover, we also have $\msf{L}_{\msf{c}}[g]=\ell_{d}[g]$ for all $[g]\in [\G]'$, so that $\msf{c}$ is properly positive. The same is true for any $d\in \calD_\G$ that is strongly hyperbolic, which holds for Green metrics \cite[Theorem~1.3]{nica-spakula}.
\end{example}


Two cocycles $\msf{c},\msf{c}'$ on $\G$ are \emph{cohomologous} if there exists a continuous function $\al:\pG \ra \R$ such that \[\msf{c}(g,a)-\msf{c}'(g,a)=\al(a)-\al(ga) \quad \text{ for all }g\in \G,a\in \pG.\]

On the side of $\UG$, we say that two potentials $F,F'\in C(\UG)$ are \emph{cohomologous} if there exists a continuous potential $U:\UG \ra \R$ that is $C^1$ in the direction of the flow (that is, $t\mapsto U(\vg_t(\vv))$ is $C^1$ for any $\vv\in \UG$), and such that
\[F(\vv)-F'(\vv)=\left.\frac{\partial}{\partial t}\right|_{t=0}U(\vg_t(\vv)) \quad \text{ for }\vv\in \UG.\]

It follows that if $F$ and $F'$ are cohomologous then they are \emph{weakly cohomologous}, in the sense that
\[\int{F}d\frakm=\int{F'}d\frakm \quad \text{ for any }\frakm\in \calM_{\vg}(\UG).\]

On the other hand, and using that the flow $(\UG,\vg)$ is transitive (see e.g.~\cite[Proposition~5.1]{BCLS.pressure}) and metric-Anosov, we have that these two notions coincide among H\"older continuous potentials. Using the closing lemma (available in this setting by the metric-Anosov property) and \Cref{prop.geodesicsclose}, one can prove \emph{Liv\v{s}ic's theorem} in this context \cite{livsic}.

\begin{theorem}[Sambarino, {\cite[Theorem~2.2.2]{sambarino.report}}]\label{thm:livsic}
If $F,F':\UG \ra \R$ are H\"older continuous, then they are cohomologous if and only if they are weakly cohomologous.
\end{theorem}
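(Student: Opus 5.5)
The direction ``cohomologous $\Rightarrow$ weakly cohomologous'' has already been observed above: if $F-F'=\partial_t U(\vg_t\cdot)|_{t=0}$, then for every $\frakm\in\calM_{\vg}(\UG)$, flow-invariance gives $\int (F-F')\,d\frakm=\frac{1}{T}\int (U\circ\vg_T-U)\,d\frakm=0$. So the plan concerns the converse. Set $H=F-F'$, which is H\"older and integrates to zero against every invariant probability measure. In particular $\int H\,d\frakm_{[g]}=0$ for every $[g]\in[\G]'$, i.e.\ $H$ has zero integral along every periodic orbit $\gam_{[g]}$. I will construct $U$ by the classical Livšic argument: integrate $H$ along a dense orbit and show the result extends continuously.

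\textbf{Step 1 (dense orbit).} Using transitivity of $(\UG,\vg)$, fix $\vv_0$ with dense forward orbit. Define $U(\vg_t\vv_0):=\int_0^t H(\vg_s\vv_0)\,ds$. By construction $U$ is differentiable along this orbit with derivative $H$.

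\textbf{Step 2 (uniform continuity on the orbit).} This is the heart of the argument. Suppose $\vg_{t_1}\vv_0$ and $\vg_{t_2}\vv_0$, with $t_2-t_1=T$ large, are $\del$-close. The closing lemma, available from the metric-Anosov property (\Cref{thm:mineyevanosov}), produces a periodic orbit of period $T'\approx T$ through a point $\vw$ that shadows the segment $\vg_{[t_1,t_2]}\vv_0$. Lifting to $\wt\UG$ and applying \Cref{prop.geodesicsclose}, the two segments are then exponentially close, $\ep e^{-\lam'\min\{t,T-t\}}$, after a time shift $|t_0|<\ep$. Since $H$ is $\alpha$-H\"older, this gives
\[
\Bigl|\int_{t_1}^{t_2}H(\vg_s\vv_0)\,ds-\int_0^{T'}H(\vg_s\vw)\,ds\Bigr|\le C\ep^\alpha\int_0^\infty e^{-\alpha\lam' s}\,ds+C\|H\|_\infty\ep,
\]
and the periodic integral vanishes. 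Hence $|U(\vg_{t_2}\vv_0)-U(\vg_{t_1}\vv_0)|\to0$ as $\del\to0$; for short time gaps the bound is trivial since $H$ is bounded. I expect the main obstacle here to be quantifying the closing lemma carefully in Mineyev's metric, in particular making the period discrepancy $|T'-T|$ small and handling the time reparameterization $t_0$. I would control these through the lift to $\wt\UG$ and \Cref{prop.geodesicsclose}.

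\textbf{Step 3 (extension).} Uniform continuity along the dense orbit lets $U$ extend to a continuous function on $\UG$. For any $\vv$, write $U(\vg_t\vv)-U(\vv)$ as a limit of integrals of $H$ along approximating orbit segments; this gives $U(\vg_t\vv)-U(\vv)=\int_0^tH(\vg_s\vv)\,ds$. Since $H$ is continuous, $t\mapsto U(\vg_t\vv)$ is $C^1$ with derivative $H(\vv)$ at $t=0$, so $F$ and $F'$ are cohomologous.
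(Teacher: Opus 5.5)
Your argument is the classical Liv\v{s}ic proof, and it follows the route the paper indicates: the paper gives no proof of its own, citing Sambarino and pointing only to transitivity, the closing lemma from the metric-Anosov property, and \Cref{prop.geodesicsclose}. One step needs a small repair: close pairs $\vg_{t_1}\vv_0,\vg_{t_2}\vv_0$ whose time gap $s=t_2-t_1$ is bounded but not small (this happens near short closed orbits) are not controlled by $\|H\|_\infty s$, so handle $s\in[\eta,T_0]$ by compactness, since $(\vv,s)\mapsto\int_0^sH(\vg_r\vv)\,dr$ is continuous on $\UG\times[\eta,T_0]$ and vanishes wherever $\vg_s\vv=\vv$ (because $H$ integrates to zero over every closed orbit), hence is uniformly small once $d_{\UG}(\vv,\vg_s\vv)$ is small enough.
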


Using the theorem above, Sambarino proved in \cite{sambarino.report} the following version of Ledrappier correspondence for metric-Anosov Mineyev's flow spaces. Combining this result with \Cref{thm:mineyevanosov}, we have the following.

\begin{proposition}[{Sambarino, \cite[Proposition~3.1.1]{sambarino.report}}]\label{prop:ledrappier}
    If $\msf{c}:\G \times \pG \ra \R$ is a H\"older cocycle then there exists a H\"older continuous function $F=F_{\msf{c}}:\UG \ra \R$ such that
    \[\msf{L}_{\msf{c}}[g]=\int_{\gam_{[g]}} F\quad \text{ for }[g]\in [\G]'.\]
    Moreover, if $\msf{c}$ and $\msf{c}'$ are cohomologous, then $F_{\msf{c}}$ and $F_{\msf{c}'}$ are cohomologous.
\end{proposition}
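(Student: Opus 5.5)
The plan is to build, directly on $\wt\UG$, a $\G$-equivariant ``time function'' $\Phi$ that realizes the cocycle $\msf{c}$ along flow lines. The potential $F$ will then be obtained by differentiating $\Phi$ in the flow direction after a small averaging.

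\textbf{Step 1: a Hölder partition of unity.} Fix a compact set $K\subset \wt\UG$ whose $\G$-translates cover $\wt\UG$; this exists by \Cref{thm:mineyev}~(2) and cocompactness. Choose a compactly supported Lipschitz function $\chi_0\geq 0$ on $(\wt\UG,d_{\wt\UG})$ that is positive on $K$, and set
\[\chi(\wt\vv)=\frac{\chi_0(\wt\vv)}{\sum_{g}\chi_0(g^{-1}\wt\vv)}.\]
Then $\sum_{g\in\G}\chi(g^{-1}\wt\vv)=1$. The sum is locally finite because the $\G$-action is properly discontinuous, so $\chi$ is Lipschitz.

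\textbf{Step 2: the equivariant function $\Phi$.} For $\wt\vv=(a,b,s)$, define
\[\Phi(\wt\vv)=\sum_{k\in\G}\chi(k^{-1}\wt\vv)\,\msf{c}(k,k^{-1}b).\]
Substituting $g=hk$ and using the cocycle identity $\msf{c}(hk,k^{-1}b)=\msf{c}(h,b)+\msf{c}(k,k^{-1}b)$ gives
\[\Phi(h\wt\vv)=\Phi(\wt\vv)+\msf{c}(h,b)\quad\text{for all }h\in \G.\]
Next, set $\Phi_1(\wt\vv)=\int_0^1\Phi(\wtg_t\wt\vv)\,dt$ and $\wt F(\wt\vv)=\Phi(\wtg_1\wt\vv)-\Phi(\wt\vv)$. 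The function $\wt F$ is $\G$-invariant, because $\msf{c}(h,b)$ depends only on the line and therefore cancels. Hence $\wt F$ descends to $F:\UG\ra\R$, and $\wt F$ is exactly the flow derivative of $\Phi_1$.

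\textbf{Step 3: periods.} For $[g]\in[\G]'$, take $\wt\vv=(g^-,g^+,s)$. By \Cref{thm:mineyev}~(5) and the definition of the $\G$-action, we have $g\wt\vv=\wtg_{\ell}\wt\vv$ with $\ell=\ell_{\hat d}[g]$. Therefore
\[\int_{\gam_{[g]}}F=\Phi_1(\wtg_\ell\wt\vv)-\Phi_1(\wt\vv)=\Phi_1(g\wt\vv)-\Phi_1(\wt\vv)=\msf{c}(g,g^+)=\msf{L}_{\msf{c}}[g].\]

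\textbf{Step 4: cohomology.} Suppose $\msf{c}-\msf{c}'=\al(\cdot)-\al(g\,\cdot)$. Then
\[\Phi-\Phi'=\sum_k\chi(k^{-1}\wt\vv)\bigl(\al(k^{-1}b)-\al(b)\bigr)=U(\wt\vv)-\al(b),\]
where $U(\wt\vv)=\sum_k\chi(k^{-1}\wt\vv)\al(k^{-1}b)$ is continuous and $\G$-invariant. Since $\al(b)$ is constant along flow lines, $F-F'$ is the flow derivative of the continuous $\G$-invariant function $\int_0^1U(\wtg_t\,\cdot)\,dt$, which is $C^1$ along the flow. Hence $F_{\msf{c}}$ and $F_{\msf{c}'}$ are cohomologous.

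\textbf{Main obstacle: Hölder regularity of $F$.} Locally only finitely many $k$ contribute to the sum defining $\Phi$, and $\chi$ is Lipschitz. Hölder continuity of $\Phi$ on compacts therefore reduces to Hölder continuity of the maps $\wt\vv=(a,b,s)\mapsto b\in\pG$ from $(\wt\UG,d_{\wt\UG})$ to $\pG$ with a visual metric, combined with the uniform Hölder exponent of the $\msf{c}(k,\cdot)$. I would derive the Hölder property of this coordinate map from Mineyev's construction: $d^\times$ controls Gromov products through $\beta^\times$, and the strong hyperbolicity of $\hat d$ then compares them with visual distances. The map $\wtg_1$ is biLipschitz by \Cref{thm:mineyev}~(4), so $\wt F$ is Hölder. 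Finally, $\G$-invariance and cocompactness upgrade this local Hölder bound to a global Hölder bound for $F$ with respect to $d_{\UG}$. I expect this comparison between $d_{\wt\UG}$ and the visual metric to be the only genuinely technical point.
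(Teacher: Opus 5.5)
Your argument is correct, and it takes a different route from the paper. The paper gives no proof of this proposition. It imports the result from Sambarino's report, where Ledrappier's correspondence is established for metric-Anosov Mineyev flows (with Liv\v{s}ic's theorem, \Cref{thm:livsic}, in the background), and then uses Dilsavor's \Cref{thm:mineyevanosov} to verify that hypothesis.

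Your partition-of-unity construction of an equivariant primitive $\Phi$ with $\Phi(h\wt\vv)=\Phi(\wt\vv)+\msf{c}(h,b)$ proves exactly the stated direction. It uses only proper discontinuity and cocompactness of the $\G$-action, the structure in \Cref{thm:mineyev}, and H\"older regularity. Your Step 4 even writes down an explicit $\G$-invariant transfer function, so the ``moreover'' part needs neither Liv\v{s}ic's theorem nor the metric-Anosov property.

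What the citation buys in exchange is brevity and the full correspondence: that $\msf{c}\mapsto F_{\msf{c}}$ is a bijection between cohomology classes. Its converse direction does require Liv\v{s}ic and hence the metric-Anosov machinery; your construction does not address it, and the statement does not ask for it.

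For the point you leave open, you do not need to go through $d^\times$ and $\beta^\times$.
\begin{itemize}
\item Suppose $\wt\vv=(a,b,s)$ and $\wt\vw=(a',b',s')$ lie in a fixed compact set with $d_{\wt\UG}(\wt\vv,\wt\vw)=\epsilon$.
\item By \Cref{thm:mineyev}~(4), $d_{\wt\UG}(\wtg_t\wt\vv,\wtg_t\wt\vw)\leq Ce^{\lambda' t}\epsilon\leq 1$ for $0\leq t\leq T:=\lambda'^{-1}\log(1/(C\epsilon))$.
\item Under the rough isometry of \Cref{thm:mineyev}~(2) and (3), forward flow lines are uniform rough geodesic rays toward their forward endpoints. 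Two such rays starting in a fixed compact set and staying $1$-close up to time $T$ satisfy $\gpr{b}{b'}_o\geq T-C'$.
\item Hence the visual distance between $b$ and $b'$ is at most $C''\epsilon^{\theta}$ for some $\theta>0$.
\end{itemize}
Combine this with local finiteness of the sum defining $\Phi$: on a compact set only finitely many functions $\msf{c}(k,k^{-1}\,\cdot\,)$ intervene, each Hölder since $k^{-1}$ acts bi-Lipschitz on $\pG$. This gives the local H\"older bound for $\Phi$, hence for $\wt F$ by \Cref{thm:mineyev}~(4), and hence for $F$ by cocompactness, exactly as you outline.
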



\section{Construction of the dictionary}\label{sec:dictionary}

In this section we let $\G$ be a non-elementary torsion-free hyperbolic group equipped with a Mineyev's flow space $\UG=(\msf{U}\G,\msf{g},d_{\UG})$ constructed from the strongly hyperbolic metric $\hat d\in \calD_\G$, as in \Cref{sec:mineyevflow}. 
Our goal is to prove \Cref{thm:dictionary}, which implies \Cref{mainthm:dictionary_manifold}  when $\G$ is the fundamental group of a closed negatively curved manifold. The proof relies on \Cref{mainthm.greendense}, whose proof is deferred to \Cref{sec:proofgreen}.

We establish a dictionary between the (appropriate equivalence classes) of the following objects:
\begin{enumerate}
    \item Properly positive cocycles $\msf{c}:\G \times \partial^2 \G\ra \R$ (\Cref{subsec:metricanosovledrappier}).
    \item Positive continuous functions $F:\UG \ra \R$.
    \item Reparameterizations of Mineyev's flow space $(\msf{U}\G,\msf{g})$ (\Cref{def:reparameterizations}).
    \item Properly positive hyperbolic metric potentials in  $\MC(\calH_\G^{++})$ (\Cref{def:MCH}).
\end{enumerate}


\subsection{The space of reparameterizations and dictionary $\Par(\UG) \Ra C(\UG)$}\label{subsec:reparameterizations}

In this section we define the space of reparameterizations of the geodesic flow.

\begin{definition}[Flow reparameterizations]\label{def:reparameterizations}
    A \emph{reparameterization} of Mineyev's flow space $(\UG,\vg)$ is a continuous flow $\sfP=(\sfP_t)_{t\in \R}$ on $\UG$ whose flow lines coincide with the flow lines for $\vg$ and respect the orientation. The reparameterization $\sfP$ is \emph{H\"older} if the map $\UG\times \R \ra \UG \times \R$ that sends $(\vv,t)$ to $(\sfP_t(\vv),t)$ is bi-H\"older for $\UG\times \R$ equipped with the metric $d_{\UG}+d_\R$ for $d_\R$ the usual distance on $\R$. We denote the pair $(\UG,\sfP)$ by $\UG^\sfP$.
\end{definition}

If $\UG^\sfP$ is a reparameterization and $[g]\in [\G]'$, then $\gam_{[g]}$ is also a periodic orbit for $\UG^\sfP$. We denote its \emph{period} by $\msf{L}_{\sfP}[g]$. Equivalently, if $[g]$ is primitive then $\msf{L}_{\sfP}[g]$ is the minimal positive number such that $\sfP_{\msf{L}_{\sfP}[g]}(\vv)=\vv$ for any $\vv\in \gam_{[g]}$. 

For any reparameterization $\UG^\sfP$ there exists a continuous function $\k=\k_\sfP:\UG \times \R$ satisfying
\begin{equation*}
    \vg_t(\vv)=\sfP_{\k(\vv,t)}(\vv) \quad \text{ for }\vv\in \UG, t\in \R.  
\end{equation*}
Such a function $\k$ is a \emph{translation cocycle} as it satisfies
\[\k(\vv,s+t)=\k(\vg_t(\vv),s)+\k(\vv,t) \quad \text{ for }\vv\in \UG \text{ and } s,t\in \R.\]
Moreover, if $\sfP$ is H\"older, then $\vv\mapsto \k(\vv,t)$ is H\"older with exponent independent of $t$ and multiplicative constant that is bounded for $t$ in a bounded subset of $\R$. We call $\k$ the \emph{translation cocycle} of $\UG^\sfP$. Using the translation cocycle and standard arguments relying on Liv\v{s}ic's \Cref{thm:livsic}, any reparameterization determines a potential in $C(\UG).$

\begin{proposition}[cf.~{\cite[Proposition~1.11]{tholozan},\cite[Corollary~2.2.3]{sambarino.report}}]\label{prop:par->C}
If $\UG^\sfP$ is a continuous reparameterization of $\UG$, then there exists a positive continuous function $F=F_\sfP:\UG \ra \R$ such that
\begin{equation*}
\int_{\gam_{[g]}}{F}=\msf{L}_{\sfP}[g] \quad \text{ for any primitive }[g]\in [\G]'.
\end{equation*}
Moreover, if $\sfP$ is H\"older then we can choose $F$ to be H\"older.
\end{proposition}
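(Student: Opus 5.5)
The plan is the standard averaging trick for translation cocycles. Let $\k=\k_\sfP$ be the translation cocycle of $\UG^\sfP$, so that $\vg_t(\vv)=\sfP_{\k(\vv,t)}(\vv)$. Since $\sfP$ and $\vg$ have the same oriented flow lines, each $t\mapsto \k(\vv,t)$ is a strictly increasing homeomorphism of $\R$ with $\k(\vv,0)=0$. This holds on periodic orbits as well, because both flows traverse them in the same direction. Joint continuity of $\k$ follows from continuity of $\sfP$ and $\vg$ and compactness of $\UG$. I would fix $T>0$ and define
\[
F(\vv):=\frac{1}{T}\,\k(\vv,T).
\]
This is continuous, and it is positive since $\k(\vv,T)>0$ by monotonicity. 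Compactness of $\UG$ then gives a uniform positive lower bound for $F$.

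The key step is checking the periods. Let $[g]$ be primitive, $\vv\in\gam_{[g]}$ and $\ell=\ell_{\hat d}[g]$, so that $\vg_\ell(\vv)=\vv$. The cocycle identity shows that $\k(\vv,\ell)$ is the $\sfP$-time needed to go once around the orbit, which is $\msf{L}_\sfP[g]$. For general $s$ it gives $\k(\vg_s\vv,T)=\k(\vv,s+T)-\k(\vv,s)$. The function $h(s)=\k(\vv,s)$ satisfies $h(s+\ell)=h(s)+\msf{L}_\sfP[g]$ by the cocycle identity and periodicity. Therefore
\[
\int_0^\ell F(\vg_s\vv)\,ds=\frac1T\int_0^\ell \big(h(s+T)-h(s)\big)\,ds=\frac1T\int_0^T\big(h(u+\ell)-h(u)\big)\,du=\msf{L}_\sfP[g],
\]
where the middle equality is the usual telescoping of integrals over shifted windows. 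Hence $\int_{\gam_{[g]}}F=\msf{L}_\sfP[g]$.

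For the Hölder statement, the paper already records that when $\sfP$ is Hölder, $\vv\mapsto\k(\vv,T)$ is Hölder for fixed $T$, with constants uniform for $T$ in bounded sets. It follows directly that $F=\k(\cdot,T)/T$ is Hölder.

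The step that needs the most care is continuity and the precise definition of $\k$ near periodic orbits, where $\k$ is only determined modulo periods. The fix is to define $\k(\vv,t)$ by continuity in $t$ starting from $\k(\vv,0)=0$. Joint continuity in $\vv$ then follows from the fact that $\sfP$ and $\vg$ depend continuously on initial points uniformly on compact time intervals, so nearby flow segments have nearby reparameterization times. Liv\v{s}ic's theorem (\Cref{thm:livsic}) is not needed for existence. It would only be needed to compare different choices of $T$ up to cohomology in the Hölder case.
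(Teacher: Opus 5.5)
Your proof is correct, and it is the standard averaging of the translation cocycle, $F=\k(\cdot,T)/T$, which underlies the results of Tholozan and Sambarino that the paper cites instead of giving its own proof; the period computation via $h(s+\ell)=h(s)+\msf{L}_\sfP[g]$ and the H\"older claim drawn from the stated regularity of $\k(\cdot,T)$ both hold. You are also right that Liv\v{s}ic's theorem, which the paper mentions in passing, is not needed for existence: it only bears on uniqueness of $F$ up to cohomology among H\"older potentials.
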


Two reparameterizations $\UG^\sfP$ and $\UG^{\sfP'}$ are \emph{conjugate} if there exists a flow-line preserving homeomorphism $\Phi:\UG^\sfP\ra \UG^{\sfP'}$ such that $\Phi \circ \sfP_t=\sfP'_t \circ \Phi$ for all $t$. Such a conjugating homeomorphism must be of the form $\vv \mapsto \Phi(\vv)=\Phi_F(\vv)=\sfP'_{F(\vv)}(\vv)$ for some $F \in C(\UG)$, and in that case $\sfP$ and $\sfP'$ are related by the identity 
\begin{equation*}
\sfP_t(\vv)=\sfP'_{t+F(\vv)-F(\sfP_t(\vv))}(\vv).
\end{equation*}

We say that $\sfP$ and $\sfP'$ are \emph{weakly conjugate} if $\sfP'$ is the uniform limit of reparameterizations conjugate to $\sfP$. That is, there exists a sequence $(\sfP_m)_m$ of flow reparameterizations conjugate to $\sfP$ such that the maps $(\vv,t)\mapsto (\sfP_m)_t(\vv)$ converge to $(\vv,t)\mapsto \sfP'_t(\vv)$ uniformly on compact subsets of $\UG \times \R$. It turns out that the correspondence from \Cref{prop:par->C} maps weak conjugacy classes of reparameterizations to weak conjugacy classes of potentials.  

Given a reparameterization $\UG^\sfP$, its \emph{rescaling} by $\lam>0$ is the reparameterization $\UG^{\lam\sfP}$ such that $\lam\sfP_{t}(\vv)=\sfP_{\lam t}(\vv)$ for all $\vv\in \UG$. Two reparameterizations are (\emph{weakly}) \emph{equivalent} if they have (\emph{weak}) conjugate rescalings. 

A reparameterization $\UG^{\sfP}$ is \emph{reversible} if it anti-commutes with the involution $\iota$ on $\UG$ in the sense that $\sfP_t\circ \iota=\iota \circ \sfP_{-t}$ for all $t$. 

\begin{definition}\label{def:PAR}
    We define $\Par(\UG)=\Par(\UG,\msf{g})$ as the set of continuous reparameterizations of $(\UG,\vg)$, modulo weak equivalence. This is the \emph{space of reparameterizations} of Mineyev's flow space $\UG$. The equivalence class of $\UG^\sfP$ in $\Par(\UG)$ is denoted by $[\sfP]$.
    
    We let $\Par^s(\UG)$ be the subspace of $\Par(\UG)$ of points represented by reversible reparameterizations, and $\Par^h(\UG)\subset \Par(\UG)$ be the subspace of points represented by H\"older reparameterizations.
\end{definition}

It turns out that points in $\Par(\UG)$ are completely determined by the homothety classes of their periods. The next lemma follows from the fact that two reparameterizations are conjugate if and only if their corresponding translation cocycles are cohomologous \cite[Proposition~1.17]{tholozan}.

\begin{lemma}[{cf.~\cite[Corollary~1.19]{tholozan}}]\label{lemm:weak.conj}
    Two reparameterizations $\UG^\sfP$ and $\UG^{\sfP'}$ are weakly conjugate if and only if $\msf{L}_{\sfP}[g]=\msf{L}_{\sfP'}[g]$ for all primitive $[g]\in [\G]'$. \qed 
\end{lemma}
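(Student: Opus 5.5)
The plan is to pass through translation cocycles, using the fact quoted just before the statement (\cite[Proposition~1.17]{tholozan}): $\UG^\sfP$ and $\UG^{\sfP'}$ are conjugate if and only if $\k_\sfP$ and $\k_{\sfP'}$ are cohomologous. Equivalently, by \Cref{prop:par->C}, the associated positive potentials $F_\sfP$ and $F_{\sfP'}$ (which generate the inverse cocycles by integration along flow lines) differ by a flow-derivative. So the proof reduces to two statements about potentials: (i) uniform limits of potentials cohomologous to $F$ have the same integrals over every closed orbit as $F$; and (ii) two positive continuous potentials with the same integrals over all periodic orbits can be uniformly approximated, each by potentials cohomologous to the other, in a way that translates back to uniform convergence of the flows on compact sets.

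For the forward direction, assume $\sfP'$ is a uniform-on-compacta limit of $\sfP_m$, each conjugate to $\sfP$. Conjugacy preserves periods: if $\Phi$ is flow-line preserving and $\Phi\circ\sfP_t=(\sfP_m)_t\circ\Phi$, then $\Phi$ maps $\gam_{[g]}$ to itself, so $\msf{L}_{\sfP_m}=\msf{L}_{\sfP}$. For a primitive $[g]$, fix $\vv\in\gam_{[g]}$ and $L=\msf{L}_\sfP[g]$. Then $(\sfP_m)_L(\vv)=\vv$ for all $m$, and passing to the limit gives $\sfP'_L(\vv)=\vv$, so $\msf{L}_{\sfP'}[g]$ divides $L$. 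Running the same argument in reverse, using that the minimal period depends semicontinuously on the flow, shows that $L$ is the minimal period. Concretely, the translation cocycles $\k_{\sfP_m}(\vv,\cdot)$ converge to $\k_{\sfP'}(\vv,\cdot)$ because the flows are homeomorphisms along each orbit circle, and the period equals the time needed to traverse the circle once.

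For the converse, assume the periods agree. By \Cref{prop:par->C}, $\int_{\gam_{[g]}}F_\sfP=\int_{\gam_{[g]}}F_{\sfP'}$ for all primitive $[g]$. Since periodic-orbit measures are dense in $\calM_\vg(\UG)$ (via \Cref{lem:homeoCM}), the difference $H=F_\sfP-F_{\sfP'}$ has zero integral against every invariant measure, that is, they are weakly cohomologous. The standard argument (Tholozan's Corollary~1.19) then shows that $H$ lies in the uniform closure of coboundaries $\partial_t U$ with $U$ continuous and $C^1$ along the flow. This is a Hahn--Banach statement: the annihilator of the closure of coboundaries is exactly the set of invariant measures. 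Choose coboundaries $\partial_t U_m\to H$ uniformly. Each potential $F_m:=F_{\sfP'}+\partial_t U_m$ is cohomologous to $F_{\sfP'}$ and positive for large $m$, so it defines a reparameterization $\sfP_m$ conjugate to $\sfP'$, and $F_m\to F_\sfP$ uniformly. Because time changes depend continuously on the potential (the time-$t$ map solves $\int_0^{s}F(\vg_r\vv)\,dr=t$), $\sfP_m\to\sfP$ uniformly on compact sets. Hence $\sfP$ is a uniform limit of reparameterizations conjugate to $\sfP'$. Symmetry of the relation, together with the fact that this is the notion used in \Cref{def:PAR}, completes the proof.

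The main obstacle is the converse step. Passing from weak cohomology to approximation by genuine coboundaries requires the Hahn--Banach duality between $C(\UG)$ modulo the closure of coboundaries and the invariant measures, and this needs care because coboundaries are only required to be $C^1$ along the flow. I would handle it by smoothing any continuous $U$ along flow lines, $U\mapsto\frac1\epsilon\int_0^\epsilon U\circ\vg_r\,dr$, which shows that any functional vanishing on these coboundaries is $\vg$-invariant. I would also check that the positivity and normalization of the $F$'s are compatible with \Cref{prop:par->C}, where $F_\sfP$ is only determined up to cohomology.
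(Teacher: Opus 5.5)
\textbf{Overall.} Your route is the standard argument behind Tholozan's Corollary~1.19, which is all the paper invokes; its only justification is the pointer to \cite[Proposition~1.17]{tholozan}. For the forward direction you use that conjugacy preserves periods and a lifting argument on the orbit circle. For the converse you use density of periodic-orbit measures to get weak cohomology, Hahn--Banach to approximate $F_\sfP-F_{\sfP'}$ by flow-derivatives, and continuity of time changes in the potential.

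The forward direction is fine once made concrete. The lifts of $t\mapsto(\sfP_m)_t(\vv)$ to the universal cover of the orbit circle converge, so the limit flow also takes exactly time $L$ to go around once; the ``semicontinuity'' remark is not needed. In the Hahn--Banach step, the annihilator consists of \emph{signed} invariant measures. You should add that their Jordan parts are again $\vg$-invariant, so that weak cohomology can be applied.

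\textbf{The gap.} There is one genuine gap in the converse. The flows $\sfP_m$ you build converge to the flow $\sfP^{F_\sfP}$ whose translation cocycle is $t\mapsto\int_0^tF_\sfP(\vg_s\vv)\,ds$, not to $\sfP$.

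These two flows differ in general. A continuous reparameterization need not be generated by any continuous potential, because $t\mapsto\k_\sfP(\vv,t)$ need not be differentiable. For example, add $U(\vg_t\vv)-U(\vv)$ to $t$ with $U$ continuous, $\tfrac12$-Lipschitz along orbits, but not $C^1$ along the flow. So your parenthetical ``which generate the inverse cocycles'' is false as stated.

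Moreover, \Cref{prop:par->C} as stated in the paper only guarantees that $F_\sfP$ has the right periods. If that is all you know, then relating $\sfP^{F_\sfP}$ to $\sfP$ up to weak conjugacy is exactly the lemma being proved, so the argument would be circular.

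\textbf{The fix.} Take the specific choice $F_\sfP:=\epsilon^{-1}\k_\sfP(\cdot,\epsilon)$, which is positive and continuous. Using the cocycle identity one checks that
\[\int_0^tF_\sfP(\vg_s\vv)\,ds=\k_\sfP(\vv,t)+U(\vg_t\vv)-U(\vv),\qquad U(\vv):=\epsilon^{-1}\int_0^\epsilon\k_\sfP(\vv,s)\,ds.\]
Hence $\Phi(\vv):=\sfP_{U(\vv)}(\vv)$ satisfies $\Phi\circ\sfP^{F_\sfP}_t=\sfP_t\circ\Phi$. It is also a flow-line preserving homeomorphism, since it is a continuous bijection of the compact space $\UG$ that is strictly increasing along each orbit.

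Do the same for $\sfP'$. Then your $\sfP_m$, which are conjugate to $\sfP^{F_{\sfP'}}$, are conjugate to $\sfP'$. The flows $\Phi\circ(\sfP_m)_t\circ\Phi^{-1}$ are still conjugate to $\sfP'$, and they converge to $\sfP$ uniformly on compacta by uniform continuity of $\Phi^{\pm1}$. With this choice of $F_\sfP$, your opening reduction (``conjugate iff $F_\sfP-F_{\sfP'}$ is a flow-derivative'') also becomes correct.
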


The following is one of the main results of this paper.

\begin{theorem}[Dictionary metrics-reparameterizations]\label{thm:dictionary}
    Let $\G$ be the a non-elementary torsion-free hyperbolic group and consider a Mineyev's flow space $(\UG,\msf{g})$. Then there exists an injection \begin{equation*}
        \Thet:\Par(\UG) \ra \MC({\scrH^{++}_\G})
    \end{equation*} satisfying the following:
    \begin{enumerate}
    \item If $[\sfP] \in \Par(\UG)$ is represented by a reparameterization $\UG^\sfP$, then $\Thet([\sfP])=[\psi]$ for some $\psi\in \MC(\calH^{++}_\G)$ such that 
    \[\msf{L}_{\sfP}[g]=\ell_{\p}[g] \quad \text{ for any primitive }[g]\in [\G]'.\]
    \item $\Thet(\Par^s(\UG))=\MC(\scrD_\G)$.
    \item $\Thet(\Par^h(\UG))\subset \scrH^{++}_\G$. 
    \end{enumerate}
\end{theorem}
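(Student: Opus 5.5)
The construction will pass through potentials. Given a reparameterization $\UG^\sfP$, \Cref{prop:par->C} gives a positive $F=F_\sfP\in C(\UG)$ with $\int_{\gam_{[g]}}F=\msf{L}_\sfP[g]$. We will then attach to $F$ a pseudometric $\p_F$ on $\G$ with $\ell_{\p_F}[g]=\int_{\gam_{[g]}}F$. Granting this, setting $\Thet([\sfP])=[\p_F]$ is well defined and injective. Indeed, by \Cref{lemm:weak.conj} together with the rescaling convention, the class $[\sfP]$ is determined exactly by the homothety class of $\msf{L}_\sfP$ on primitive classes. By \Cref{prop:MC(H)} and \Cref{cor:DeltaformulaMC}, a point of $\MC(\scrH_\G^{++})$ is likewise determined by the homothety class of its length function. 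Since lengths of non-primitive classes are multiples of primitive ones, item (1) then follows.

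The core step is to realize an arbitrary positive $F$ by a point of the completion, and we proceed by approximation. First suppose $F$ is H\"older. Lift $F$ to $\wt F$ on $\wt\UG$. Fix a compact set $K$ as in \Cref{lem.nicecompact}, and for $g,h\in\G$ define $\p_F(g,h)$ as the integral of $\wt F$ along a flow-line segment from a point of $gK$ to a point of $hK$ (such segments exist by \Cref{lem.nicecompact}); to obtain a genuine pseudometric, take the infimum over such segments. We must show $\p_F\in\calH_\G^{++}$ up to bounded error. Positivity and properness follow from $\min F>0$ and \Cref{thm:mineyev}(2). For the Gromov product bound we use \Cref{prop:criterionHMP}: for $w$ near a rough geodesic from $g$ to $h$, the flow segments from $g$ to $w$, from $w$ to $h$ and from $g$ to $h$ fellow travel exponentially closely by \Cref{prop.geodesicsclose2}. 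Since $F$ is H\"older, the integrals differ by a bounded amount; this exponential closeness plus H\"older summability is exactly a Bowen-type condition on $F$. Finally, the stable length equals $\int_{\gam_{[g]}}F$ by \Cref{thm:mineyev}(5) applied to segments along the axis of $g$. This gives (3).

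For a general continuous $F>0$, we approximate it uniformly by H\"older positive $F_m$. Then
\[
\sup_{[g]}\left|\log \frac{\int_{\gam_{[g]}}F_m}{\int_{\gam_{[g]}}F}\right|\le \|\log F_m-\log F\|_\infty\to0,
\]
so $([\p_{F_m}])_m$ is $\Del$-Cauchy. By \Cref{prop:MC(H)} it converges to a point $[\p]\in\MC(\scrH_\G^{++})$ with $\ell_\p=\lim \ell_{\p_{F_m}}=\int_{\gam_{[\cdot]}}F$. This defines $\Thet$ in general.

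For (2), the inclusion $\subset$ is direct. If $\sfP$ is reversible we may replace $F$ by $\frac12(F+F\circ\iota)$, which has the same periods, and then symmetrize the approximants; this yields symmetric pseudometrics and hence a point of $\MC(\scrD_\G)$. The reverse inclusion is the main obstacle, since we must produce a continuous reparameterization from an arbitrary point of $\MC(\scrD_\G)$. Here we use \Cref{mainthm.greendense}. Points of $\scrD_\G$, and hence of $\MC(\scrD_\G)$, are limits of classes of Green metrics $d_m$ of symmetric finitely supported measures. By \Cref{ex:cocyclestrongly} and \Cref{prop:ledrappier}, each $d_m$ yields a H\"older potential $F_m$, symmetric under $\iota$ after averaging, with periods $\ell_{d_m}$. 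The difficulty is to pass to a continuous limit: periods converging uniformly in ratio does not give uniform convergence of the $F_m$. We plan to normalize each $F_m$ within its cohomology class, for instance by a Bowen-type averaging over long flow segments $\frac1T\int_0^T F_m\circ\vg_t$, adjusted along the lines of \Cref{lem:improvemeasure}. The aim is an equicontinuous family with $\log F_m$ Cauchy in sup norm up to coboundaries, from which a uniform limit $F>0$ with periods $\ell_d$ can be extracted. A positive continuous $F$ then defines a reversible reparameterization by inverting $t\mapsto\int_0^tF(\vg_s\vv)\,ds$, which completes (2).
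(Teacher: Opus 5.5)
\textbf{There is a genuine gap in the reverse inclusion of item (2).} Your construction of $\Thet$, items (1) and (3), and the inclusion $\Thet(\Par^s(\UG))\subset\MC(\scrD_\G)$ follow the paper's route. One minor point: the infimum over flow segments is not a genuine pseudometric, since a segment from $gK$ to $hK$ followed by one from $hK$ to $kK$ is not a flow segment. This is harmless, because membership in $\calH^{++}_\G$ up to bounded error is all you need.

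The gap is $\MC(\scrD_\G)\subset\Thet(\Par^s(\UG))$. You identify this step but leave it as a plan, and the plan as stated would not work.
\begin{itemize}
\item Normalizing each Ledrappier potential $F_m$ separately cannot make the representatives close in sup norm. Independently chosen representatives of nearby classes differ by arbitrary coboundaries.
\item The averages $\frac1T\int_0^T F_m\circ\vg_t\,dt$ give no equicontinuity. The maps $\vg_t$ are only $Ce^{\lam'|t|}$-biLipschitz by \Cref{thm:mineyev}~(4), and the $F_m$ carry no uniform H\"older constants.
\item You give no mechanism guaranteeing that the limit is positive. Positivity is needed to invert $t\mapsto\int_0^tF(\vg_s\vv)\,ds$.
\end{itemize}

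Two ingredients are missing. First, you need to convert $\Del$-convergence into the measure-level bound
\[
\sup_{\frakm\in\calM_\vg(\UG)}\left|\int F_m\,d\frakm-\int F_k\,d\frakm\right|\le c\,\Del([d_m],[d_k]).
\]
This follows from duality, after normalizing the $d_m$ to exponential growth rate $1$ so that $c^{-1}\le \ell_{d_m}/\ell_{\hat d}\le c$, together with density of periodic-orbit measures. Second, you need the approximate Ma\~n\'e lemma: any $G\in C(\UG)$ is cohomologous to some $G'$ with $\inf G'\ge \inf_{\frakm}\int G\,d\frakm-\eta$ and $\sup G'\le\sup_{\frakm}\int G\,d\frakm+\eta$. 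Your averaging is exactly how this is proved, via weak$^*$ limits of empirical measures, but it must be applied to the right functions.

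The paper then proceeds as follows.
\begin{itemize}
\item Pass to a subsequence with $\Del([d_{m+1}],[d_m])$ summable.
\item Pick $F'_1$ cohomologous to $F_1$ with $F'_1\ge 3c^{-1}/4$. This is possible because $\int F_1\,d\frakm\ge c^{-1}$ for every $\frakm$.
\item Inductively set $F'_{m+1}:=F'_m+G'_m$, where $G'_m$ is a small-sup-norm representative of the cohomology class of $F_{m+1}-F'_m$.
\end{itemize}
This couples the choices. Each $F'_m$ is cohomologous to $F_m$, $(F'_m)$ is Cauchy in sup norm by telescoping, and the limit stays above $c^{-1}/4$. \Cref{lem:convF} and averaging with $F\circ\iota$ then give the positive $\iota$-invariant potential dual to $d$.
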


The rest of the section is devoted to prove this result.


\subsection{Dictionary ($C(\UG) \Ra \calH_\G$)}\label{subsec:dictionaryC->H}

We begin by relating the vector spaces $\MC(\calH_\G)$ and $C(\msf{U}\G)$. Recall that for $[g]\in [\G]'$, $\frakm_{[g]}\in \calM_{\msf{g}}(\UG)$ is the $\msf{g}$-invariant probability measure supported on $\gam_{[g]}$ as in \Cref{eq:def.m_g}.

\begin{definition}\label{def:dualC-H}
    Let $F\in C(\msf{U}\G)$ and $\psi\in \MC(\calH_\G)$. We say that $F$ and $\psi$ are \emph{dual} if
\begin{equation}\label{eq:def.dualC-H}
    \int{F}{d\frakm_{[g]}}=\frac{\ell_{\psi}[g]}{\ell_{\hat d}[g]}  \quad \text{ for all } [g]\in [\G]'.
    \end{equation}
\end{definition}
Equivalently, $\p$ and $F$ are dual if $\int_{\gam_{[g]}}F=\ell_{\p}[g]$ for any primitive conjugacy class $[g]\in [\G]'$. By \cite[Corollary~3.5]{CRS}, it follows that for two dual pairs $(F,\p), (F',\p')\in C(\UG)\times \calH_\G$, we have that $\p$ and $\p'$ are roughly isometric if and only if $F$ and $F'$ are weakly cohomologous.

We can find potentials in $\calH_\G$ dual to functions $F\in C(\UG)$ satisfying the following dynamical property.

\begin{definition}\label{def:bowen}
    A continuous function $F\in C(\UG)$ has the \emph{Bowen property} if there exist $C_0,\ep_0>0$ such that for any $\vv,\vw\in \UG$ and $T>0$:
    \begin{equation*}
        d_{\UG}(\msf{g}_t(\vv),\msf{g}_t(\vw)) \leq \ep_0 \text{ for }0\leq t \leq T \ \text{ implies } \ \left|\int_{0}^{T}{(F(\msf{g}_t(\vv))-F(\msf{g}_t(\vw)))}{dt}\right|\leq C_0. 
    \end{equation*}
We let $C^B(\UG)$ denote the vector space of continuous potentials with the Bowen property.
\end{definition}

From a generic point of view, this condition is not that restrictive. By a standard argument using \Cref{prop.geodesicsclose} (see e.g. the proof of \cite[Proposition~4.3]{CLT.weak}), it follows that functions with the Bowen property are dense.

\begin{lemma}\label{lem:bowendense}
Any H\"older potential $F:(\UG,d_{\UG})\ra \R$ has the Bowen property. In particular, $C^B(\UG)$ is dense in $C(\UG)$ for the uniform topology. \qed
\end{lemma}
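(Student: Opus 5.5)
The argument has two parts. First, every Hölder potential on $\UG$ has the Bowen property; this rests on the exponential closeness of nearby orbit segments from \Cref{prop.geodesicsclose}. Second, density of $C^B(\UG)$ follows because Hölder (in fact Lipschitz) functions are uniformly dense in $C(\UG)$ on the compact metric space $(\UG,d_{\UG})$, by Stone--Weierstrass or by McShane-type Lipschitz approximations.

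For the first part, let $F$ be $\al$-Hölder with constant $H$. Fix $\ep>0$ small and let $\del,T_0$ be as in \Cref{prop.geodesicsclose}. Choose $\ep_0<\del$ so small that $d_{\UG}$-closeness at most $\ep_0$ lifts to $d_{\wt\UG}$-closeness of suitable lifts. Concretely, $\pi$ is a local isometry at a uniform scale, because the $\G$-action is free, properly discontinuous and cocompact. Hence, if $d_{\UG}(\vg_t\vv,\vg_t\vw)\le\ep_0$ for all $t\in[0,T]$, then by continuity along $t$ we can choose lifts $\wt\vv,\wt\vw$ with $d_{\wt\UG}(\wtg_t\wt\vv,\wtg_t\wt\vw)\le \ep_0$ for all $t\in[0,T]$.

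We may assume $T>T_0$, since otherwise the integral is trivially bounded by $2T_0\|F\|_\infty$. \Cref{prop.geodesicsclose} then gives $t_0\in(-\ep,\ep)$ with
\[d_{\wt\UG}(\wtg_t\wt\vv,\wtg_{t+t_0}\wt\vw)<\ep e^{-\lam'\min\{t,T-t\}}\quad\text{for } t_0\le t\le T-t_0.\]
We split the integrand as
\[F(\vg_t\vv)-F(\vg_t\vw)=\bigl(F(\vg_t\vv)-F(\vg_{t+t_0}\vw)\bigr)+\bigl(F(\vg_{t+t_0}\vw)-F(\vg_t\vw)\bigr).\]
The first term is bounded by $H\ep^\al e^{-\al\lam'\min\{t,T-t\}}$, whose integral over $[0,T]$ is at most $2H\ep^\al/(\al\lam')$, plus $O(\ep\|F\|_\infty)$ from the endpoint intervals of length $|t_0|$. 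The second term integrates to a difference of integrals over two windows of length $|t_0|<\ep$ at the ends, because $\int_0^T F(\vg_{t+t_0}\vw)\,dt-\int_0^TF(\vg_t\vw)\,dt$ telescopes. It is therefore bounded by $2\ep\|F\|_\infty$. The total is a constant $C_0$ independent of $T$, $\vv$ and $\vw$.

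The main obstacle is the lifting step, that is, passing from closeness in $\UG$ along the whole segment to a single pair of lifts that stay close in $\wt\UG$. I would handle it by fixing a uniform injectivity scale $r>0$ such that $\pi$ restricted to $r$-balls is an isometry onto its image. Then, taking $\ep_0<r/2$, the lift of $\vg_t\vw$ closest to $\wtg_t\wt\vv$ is unique and varies continuously in $t$, so it equals $\wtg_t\wt\vw$ for a fixed lift $\wt\vw$. The existence of such an $r$ follows from the proper discontinuity and cocompactness of the action (\Cref{thm:mineyev}~(1) and item (e)). The density statement then follows immediately from the first part and the uniform density of Lipschitz functions.
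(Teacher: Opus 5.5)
Your proposal is correct and is essentially the argument the paper has in mind. The paper omits the proof and cites a standard argument based on \Cref{prop.geodesicsclose}, as in the proof of \cite[Proposition~4.3]{CLT.weak}; your steps are exactly that argument. You lift the orbit segments to $\wt\UG$ using a uniform injectivity scale (as in \Cref{lem:timechange}), apply exponential closeness with a small time shift $t_0$, and absorb the shift into boundary windows (as in \Cref{lem.liftBowen}), with density then following from uniform density of Lipschitz functions on the compact metric space $(\UG,d_{\UG})$.
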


The following is the main result of this subsection.
Recall the subspaces $\ov\calD_\G,\calH_\G^+$ of $\calH_\G$ from \Cref{sec:HMP}.

\begin{proposition}\label{prop.dictionaryBowen}
For any $F\in C^B(\calF,\R)$ there exists $\psi_F\in \calH_\G$ dual to $F$. Moreover, we have:
\begin{enumerate}
    \item if $F\geq 0$ (resp. $F>0$) then $\psi_F\in \calH_\G^{+}$ (resp. $\psi_F\in \calH_\G^{++}$); and,
    \item if $F \circ \iota=F \geq 0$ then we can choose $\psi_F \in \ov\calD_\G$.
\end{enumerate}
\end{proposition}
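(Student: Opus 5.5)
We define $\psi_F$ by integrating $F$ along flow-line segments that shadow $\hat d$-geodesics in $\G$. Fix a compact fundamental domain $K\subset\wt\UG$ as in \Cref{lem.nicecompact}, and lift $F$ to the $\G$-invariant function $\wt F=F\circ\pi$ on $\wt\UG$. For $g,h\in\G$, \Cref{lem.nicecompact} (applied to $g^{-1}h$ and translated by $g$) provides $\wt\vv\in gK$ and $t=t(g,h)\in\R$ with $\wtg_t(\wt\vv)\in hK$. We set
\[
\psi_F(g,h):=\int_0^{t(g,h)}\wt F(\wtg_s(\wt\vv))\,ds ,
\]
choosing the data for the pair $(o,g^{-1}h)$ and translating by $g$, so that $\psi_F$ is $\G$-invariant by construction. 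By \Cref{thm:mineyev}~(2),(3), $|t(g,h)|$ equals $\hat d(g,h)$ up to a uniform additive constant.

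\textbf{Step 1: independence of choices up to bounded error.} Take two admissible segments from $gK$ to $hK$. Their endpoints are within a uniform distance $A$, so \Cref{prop.geodesicsclose2} applies: after a time shift $|t_0|<T_1$, the segments are $\ep_1$-close except on initial and final pieces of length $\le T_1$. On the middle portion we use the Bowen property, after passing to finitely many subsegments where the distance is below $\ep_0$ (choose $\ep_1\le\ep_0$). The ends contribute at most $2T_1\|F\|_\infty$. Hence the two integrals differ by a uniform constant. This is the same argument that shows $\psi_F$ is well defined up to bounded functions.

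\textbf{Step 2: hyperbolic metric potential.} We verify the criterion of \Cref{prop:criterionHMP} with $d=\hat d$. Let $\gamma$ be a rough geodesic from $g$ to $h$ and $w$ a point in its $R$-neighbourhood. The flow segment from $gK$ to $hK$ passes within bounded distance of $wK$, because orbit maps are rough isometries and $\wt\UG$ shadows rough geodesics. Splitting that segment at the corresponding time and applying Step 1 to the two pieces gives $\psi_F(g,w)+\psi_F(w,h)=\psi_F(g,h)+O(1)$. Therefore $|\gpr{g}{h}^{\psi_F}_w|\le C_0$.

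\textbf{Step 3: duality.} For primitive $[g]$, the axis $\wt\vz=(g^-,g^+,s)$ satisfies $g\wt\vz=\wtg_{\ell_{\hat d}[g]}(\wt\vz)$ by \Cref{thm:mineyev}~(5). Concatenating $k$ periods, Step 1 gives $\psi_F(o,g^k)=k\int_{\gam_{[g]}}F+O(1)$. Dividing by $k$ and letting $k\to\infty$ yields $\ell_{\psi_F}[g]=\int_{\gam_{[g]}}F$, which is exactly \Cref{eq:def.m_g}-duality.

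\textbf{Step 4: items (1) and (2).} If $F\ge0$, then $\ell_{\psi_F}\ge0$ and $\psi_F\in\calH^+_\G$. If $F>0$, then $F\ge c>0$ by compactness, so $\psi_F(g,h)\ge c\,\hat d(g,h)-C$. This makes $\psi_F$ proper and bounded below, hence $\psi_F\in\calH^{++}_\G$. For item (2), the involution $\wt\iota$ reverses segments by property (c) and preserves $\wt F$. So $\psi_F(h,g)=\psi_F(g,h)+O(1)$, and we replace $\psi_F$ by its symmetrization, which changes it by a bounded amount. By \Cref{lem:potential_pseudometric}, we can then modify it by a bounded function to get a symmetric pseudometric (taking a symmetric max/min as needed). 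This pseudometric lies in $\calD_\G$ when it is quasi-isometric to $\hat d$, and otherwise it is unbounded (when $F\not\equiv0$) and lies in $\partial\calD_\G$. If $F\equiv0$, we use the trivial case.

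The main obstacle is Step 1. Applying the Bowen property requires the segments to be uniformly close along their whole length, but \Cref{prop.geodesicsclose2} only gives closeness after a shift and away from the ends. The bookkeeping of the time shift $t_0$ and of the end pieces has to be uniform in $g,h$.
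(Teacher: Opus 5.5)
Your proposal is essentially the paper's proof. Your Step~1 is its \Cref{lem.liftBowen}: \Cref{prop.geodesicsclose2} is applied at the common time $\min(T,T')$ with constant $3A$, and the Bowen property is then used once on the middle piece, so no subdivision is needed. Your Step~2 is its splitting of a flow segment at a time where it passes near the intermediate translate, via the Morse lemma in $\wt\UG$. Your Step~3 is its computation along the axis $(g^-,g^+,s)$. The differences are cosmetic. The paper takes the supremum of the integrals over all admissible segments instead of making an equivariant choice. It obtains $\calH^{++}_\G$ from $\ell_{\psi_F}\ge c\,\ell_{\hat d}$ rather than from your direct bound. It deduces (2) by symmetrizing and citing \cite[Lemma~3.6]{CRS}, whereas you use a reversal argument with $\wt\iota$.

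One point you must fix: only forward segments, $t(g,h)\ge 0$, may be used. As written you allow $t\in\R$. Step~1 would then compare a forward segment with a backward segment between the same translates, and that comparison fails: for $F\equiv 1$ the two integrals are roughly $\pm\hat d(g,h)$. Your bound $\psi_F\ge c\,\hat d-C$ in Step~4 also tacitly uses $t\ge 0$. This is easy to arrange. The set $K_0\times\{0\}$ built in the proof of \Cref{lem.nicecompact} is $\wt\iota$-invariant, and $\wt\iota$ reverses flow segments, so a backward segment can always be traded for a forward one.

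In (2), two smaller points. Your claim that the symmetrized pseudometric is unbounded when $F\not\equiv 0$ needs a line of justification: periodic orbits are dense, so some $\gam_{[g]}$ meets $\{F>0\}$, giving $\ell_{\psi_F}[g]>0$. For $F\equiv 0$ there is no ``trivial case'' to fall back on. Every dual potential is then bounded and hence not in $\ov\calD_\G$; the statement itself overlooks this edge case.
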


Before proceeding with the proof, we note the following consequence, which also relies on \Cref{lem:convF} below.

\begin{corollary}\label{coro:F->H}
Any $F\in C(\UG)$ is dual to some $\p\in \MC(\calH_\G)$, so that if $F$ is positive then $\p\in \MC(\calH_\G^{++})$.
\end{corollary}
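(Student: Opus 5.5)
The plan is to handle positive potentials first by approximating them with Bowen potentials, and then to reduce the general case to the positive one by adding a constant.

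\emph{Positive case.} Let $F\in C(\UG)$ be positive. Since $\UG$ is compact, there is $c>0$ with $F\geq 2c$.

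First, by \Cref{lem:bowendense} we choose $F_m\in C^B(\UG)$ with $\|F_m-F\|_\infty\to 0$. Discarding finitely many terms, we may assume $F_m\geq c$ for all $m$.

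Next, \Cref{prop.dictionaryBowen}~(1) gives $\p_m:=\p_{F_m}\in\calH_\G^{++}$ dual to $F_m$. Duality converts ratios of translation lengths into ratios of integrals over periodic orbits. For every $[g]\in[\G]'$,
\[
\frac{\ell_{\p_m}[g]}{\ell_{\p_k}[g]}=\frac{\int F_m\,d\frakm_{[g]}}{\int F_k\,d\frakm_{[g]}}\in\Big[1-\tfrac{\|F_m-F_k\|_\infty}{c},\,1+\tfrac{\|F_m-F_k\|_\infty}{c}\Big].
\]
Hence $\Dil(\p_m,\p_k)$ and $\Dil(\p_k,\p_m)$ both tend to $1$, so $([\p_m])_m$ is $\Del$-Cauchy. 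It therefore converges to some $\vx\in\MC(\scrH_\G^{++})$.

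\Cref{prop:MC(H)} then provides a pseudometric $\p$ with exponential growth rate $1$. For the normalized representatives $v_{\p_m}\p_m$ it gives $\log(v_{\p_m}\ell_{\p_m})\to\log\ell_\p$ uniformly on $[\G]'$.

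\emph{Main obstacle: the scaling factors.} Duality pins down $\ell_{\p_m}$ itself, not just its projective class. We therefore need the growth rates $v_{\p_m}$ to converge to some $v\in(0,\infty)$. I would get this from \Cref{lem.comp}, which gives
\[
\p_m\leq \Dil(\p_m,\p_k)\,\p_k+C .
\]
Counting elements in balls yields $v_{\p_k}\leq \Dil(\p_m,\p_k)\,v_{\p_m}$, and symmetrically with $m,k$ swapped. So $(\log v_{\p_m})_m$ is Cauchy and $v_{\p_m}\to v>0$; this is presumably the content of \Cref{lem:convF}.

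Consequently $\ell_{\p_m}\to v^{-1}\ell_\p$ pointwise on $[\G]'$. Since $\int_{\gam_{[g]}}F_m\to\int_{\gam_{[g]}}F$, the function $v^{-1}\p\in\MC(\calH_\G^{++})$ (it is a positive multiple of $\p$, as allowed in \Cref{def:MCHandD}) satisfies $\ell_{v^{-1}\p}[g]=\int_{\gam_{[g]}}F$ for every primitive $[g]$. That is, $v^{-1}\p$ is dual to $F$.

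\emph{General case.} For arbitrary $F\in C(\UG)$, pick a constant $c>0$ with $F+c>0$. By the positive case, $F+c$ is dual to some $\p^+\in\MC(\calH_\G^{++})$. The constant function $c$ is dual to $c\hat d\in\calD_\G\subset\MC(\calH_\G^{++})$, because $\int_{\gam_{[g]}}c=c\,\ell_{\hat d}[g]$ by \Cref{thm:mineyev}~(5). Setting $\p:=\p^+-c\hat d\in\MC(\calH_\G)$, \Cref{def:MCH} gives $\ell_\p=\ell_{\p^+}-c\,\ell_{\hat d}$, which equals $\int_{\gam_{[g]}}F$ on primitive classes. Hence $\p$ is dual to $F$, and when $F$ is positive the positive case already supplies a dual in $\MC(\calH_\G^{++})$.
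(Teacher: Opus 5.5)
Your proposal is correct and follows the paper's proof. You reduce to $F>0$ by adding a multiple of $\hat d$, approximate by Bowen potentials via \Cref{lem:bowendense}, apply \Cref{prop.dictionaryBowen}, and pass to the limit as in \Cref{lem:convF}. Your argument that the growth rates $v_{\p_m}$ converge (via \Cref{lem.comp} and counting) is a valid way to fill in the rescaling step, which the paper leaves to the reader in \Cref{lem:convF}.
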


\begin{proof}
By \Cref{thm:mineyev}~(2), if $\psi_F\in \MC(\calH_\G)$ is dual to $F$ then $\psi_F+a\hat d$ is dual $F+a \mathsf{1}_{\UG}$ for all $a\in \R$. Therefore, without loss of generality we can assume that $F>0$. By \Cref{lem:bowendense}, let $(F_m)_m$ be a sequence in $C^B(\UG)$ converging uniformly to $F$. We can assume that each $F_m$ is positive and dual to $\p_m\in \calH_\G^{++}$ by \Cref{prop.dictionaryBowen}. Then the lemma follows by applying \Cref{lem:convF} below. 
\end{proof}

\begin{lemma}\label{lem:convF}
    Let $(F_m)_m$ be a sequence in $C(\UG)$ uniformly converging to $F$. Suppose that each $F_m$ is positive and dual to $\p_m\in \MC(\calH_\G^{++})$. Then $([\p_m])_m$ converges in $\MC(\scrH_\G^{++})$ to some $[\p]$ so that $\p\in \MC(\calH_\G^{++})$ is dual to $F$.
\end{lemma}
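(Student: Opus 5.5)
The plan is to show that $([\p_m])_m$ is Cauchy for $\Del$ in $\MC(\scrH_\G^{++})$, take its limit $\vx=[\p]$ given by \Cref{prop:MC(H)}, and then identify $\ell_\p$ through duality. Since $\MC(\scrH_\G^{++})$ is complete, a Cauchy sequence has a limit $\vx$. By \Cref{prop:MC(H)}~(1) we can pick a pseudometric representative $\p$ of growth rate $1$ with $\log\ell_{\p_m}-\log\ell_\p\to 0$ uniformly on $[\G]'$, after rescaling the $\p_m$ to growth rate $1$ by positive constants $c_m$. The remaining work is to control these constants.

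\textbf{Step 1: Cauchy property.} By \Cref{cor:DeltaformulaMC} and duality,
\[\Del([\p_m],[\p_n])=\log\Big(\sup_{[g]}\frac{\int F_m d\frakm_{[g]}}{\int F_n d\frakm_{[g]}}\cdot\sup_{[g]}\frac{\int F_n d\frakm_{[g]}}{\int F_m d\frakm_{[g]}}\Big).\]
Since $F>0$ on the compact space $\UG$, we have $\min F=:c>0$, and eventually $\min F_m\geq c/2$. For such $m,n$ each ratio is at most $1+2\|F_m-F_n\|_\infty/c$. Hence $\Del([\p_m],[\p_n])\to0$, and the sequence converges to some $\vx=[\p]$ with $\p\in\MC(\calH_\G^{++})$.

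\textbf{Step 2: identify the limit and fix the scale.} From Step 1 we get more than projective convergence: we have
\[\sup_{[g]}\Big|\log\ell_{\p_m}[g]-\log\Big(\ell_{\hat d}[g]\int F\,d\frakm_{[g]}\Big)\Big|\leq\log\Big(1+\frac{2\|F_m-F\|_\infty}{c}\Big)\to0.\]
So $\log\ell_{\p_m}$ converges uniformly to $\log L_F$, where $L_F[g]:=\int_{\gam_{[g]}}F$ (extended to non-primitive classes by multiplicativity). On the other hand, $\log(c_m\ell_{\p_m})\to\log\ell_\p$ uniformly. Taking differences, $\log c_m$ converges uniformly to the function $\log\ell_\p-\log L_F$. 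This function is therefore a constant $\log c$, and so $\ell_\p=c\,L_F$. Replacing $\p$ by $c^{-1}\p$, which stays in $\MC(\calH_\G^{++})$ and in the class $\vx$ by \Cref{def:MCHandD}, we get $\ell_\p=L_F$. By \Cref{def:dualC-H}, this means exactly that $\p$ is dual to $F$.

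\textbf{Main obstacle.} The delicate point is Step 2: passing from convergence of homothety classes to an actual representative with the correct normalization. This is why I would use the uniform statement of \Cref{prop:MC(H)}~(1) rather than only the metric $\Del$; the argument then reduces to the observation that uniform convergence of a sequence of constants forces the limit to be constant. One must also check that $\ell_\p$ for $\p\in\MC(\calH_\G^{++})$ means the stable length of the representative given in \Cref{def:MCHandD}, and that sublinear perturbations do not change stable lengths. Both hold since $\ell$ is a limit of $\p(o,g^k)/k$.
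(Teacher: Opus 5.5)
Your argument is correct and follows the paper's route. Both prove $([\p_m])_m$ is Cauchy via duality and the uniform closeness of the $F_m$, then take the limit in the completion and rescale a representative so that it is dual to $F$; the paper leaves this last step to the reader, and your Step~2 supplies it. Two minor points. First, positivity of $F$ is not in the literal statement, but it is necessary, since the conclusion fails if $F$ vanishes on a periodic orbit. The paper's proof assumes it just as you do, by taking $c^{-1}\le F_m\le c$ for all $m$. Second, \Cref{prop:MC(H)}~(1) is stated only for sequences in $\scrH_\G^{++}$, so when the $\p_m$ lie merely in $\MC(\calH_\G^{++})$ it is cleaner to use \Cref{cor:DeltaformulaMC} directly. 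That corollary makes $\Del([\p_m],[\p])$ the oscillation of $\log\ell_{\p_m}-\log\ell_\p$ over $[\G]'$. The oscillation therefore passes to the uniform limit $\log L_F-\log\ell_\p$, which must then be constant.
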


\begin{proof}
    Let $c>0$ be such that $c^{-1}\leq F_m\leq c$ for all $m$, so that $c^{-1}\leq F\leq c$ as well. Then for all $m,k$ and $\frakm\in \calM_{\vg}(\UG)$ we have
    \[\left|\log\left( \int{F_m}d\frakm\right)-\log\left( \int{F_k}d\frakm\right)\right|\leq c^{-1}\left|\int{F_m}{d\frakm}-\int{F_k}{\frakm}\right|\leq c\|F_m-F_k\|.\]
    In particular, after applying this to $\frakm=\frakm_{[g]}$ for each $[g]\in [\G]$ and the fact that each $\p_m$ is dual to $F_m$, we obtain
    \begin{align*}
        \Del([\p_m],[\p_k])& =\log\left(\sup_{[g]\in [\G]'}\frac{\ell_{\p_m}[g]}{\ell_{\p_k}[g]}\cdot \sup_{[g]\in [\G]'}\frac{\ell_{\p_k}[g]}{\ell_{\p_m}[g]}\right)\\
        & \leq \sup_{[g]\in [\G]'}\left(\log\ell_{\p_m}[g]-\log\ell_{\p_k}[g]\right)^2\\
        & \leq \sup_{[g]\in [\G]'}\left|\log\left( \int{F_m}d\frakm_{[g]}\right)-\log\left( \int{F_k}d\frakm_{[g]}\right)\right|^2\leq c^{2}\|F_m-F_k\|^2.
    \end{align*}
As this last term tends to zero as $m,k\to \infty$, the sequence $([\p_m])_m$ is Cauchy, and hence convergent to some $[\p]$ in $\MC(\scrH_\G^{++})$. Since $c^{-1}\leq F_m\leq c$ for each $m$, after rescaling we can find a representative $\p$ of $[\p]$ that is dual to $F$. We leave the details to the reader.
\end{proof}


We continue with the proof of \Cref{prop.dictionaryBowen}, for which we require the following slight upgrade to the Bowen property.

\begin{lemma}\label{lem.liftBowen}
    Given $F\in C^B(\UG)$, let $\wt F:\wt\UG \ra \R$ be the $\G$-invariant lift of $F$. Then for any $A>0$ there exists $B>0$ such that if $\wt\vv, \wt\vw \in \wt\UG$ and $T,T'>0$ satisfy 
    \[ \max \left(d_{\wt\UG}(\wt\vv, \wt\vw), d_{\wt\UG}(\wt{\msf{g}}_T(\wt\vv), \wt{\msf{g}}_{T'}(\wt\vw)) \right)\leq A \]
    then \[\left|\int_{0}^{T}{\wt F(\wt{\msf{g}}_t(\wt \vv))}{dt}-\int_{0}^{T'}{\wt F(\wt{\msf{g}}_t(\wt \vw))}{dt}\right|\leq B.\]
\end{lemma}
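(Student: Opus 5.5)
The plan is to reduce to the Bowen property on the quotient by using \Cref{prop.geodesicsclose2} to synchronize the two orbit segments. Fix $A>0$. The function $\wt F$ is $\G$-invariant and bounded, since $F$ is continuous on the compact space $\UG$; write $\|F\|_\infty$ for its sup norm. First we compare $T$ and $T'$. By \Cref{thm:mineyev}~(3) flow lines are isometrically embedded, so the triangle inequality gives
\[
|T-T'| = \bigl|d_{\wt\UG}(\wt\vv,\wtg_T(\wt\vv)) - d_{\wt\UG}(\wt\vw,\wtg_{T'}(\wt\vw))\bigr| \le 2A .
\]
Hence replacing $T'$ by $T$ costs at most $2A\|F\|_\infty$ in the integral. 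It also moves the endpoint $\wtg_{T}(\wt\vw)$ by at most $2A$. After this replacement we may assume $T'=T$ and
\[
\max\bigl(d_{\wt\UG}(\wt\vv,\wt\vw),\, d_{\wt\UG}(\wtg_T(\wt\vv),\wtg_T(\wt\vw))\bigr)\le 3A .
\]

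Let $C_0,\ep_0$ be the Bowen constants of $F$. Apply \Cref{prop.geodesicsclose2} with $A$ replaced by $3A$ and with $\ep_1=\ep_0$; this produces a constant $T_1>0$. If $T\le 2T_1+1$, the two integrals are each bounded by $(2T_1+1)\|F\|_\infty$, so their difference is bounded by a constant depending only on $A$.

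If $T>2T_1+1$, then \Cref{prop.geodesicsclose2} provides some $t_0\in(-T_1,T_1)$ such that
\[
d_{\wt\UG}(\wtg_t(\wt\vv),\wtg_{t+t_0}(\wt\vw))<\ep_0 \quad \text{ for all } T_1\le t\le T-T_1 .
\]
Project to $\UG$, using that $d_{\UG}\le d_{\wt\UG}$ after applying $\pi$. Set $\vv'=\pi(\wtg_{T_1}(\wt\vv))$ and $\vw'=\pi(\wtg_{T_1+t_0}(\wt\vw))$. Then
\[
d_{\UG}(\vg_s(\vv'),\vg_s(\vw'))\le\ep_0 \quad \text{ for } 0\le s\le T-2T_1 ,
\]
so the Bowen property bounds the difference of the integrals of $F$ over these two segments by $C_0$.

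These middle segments are exactly $\int_{T_1}^{T-T_1}\wt F(\wtg_t(\wt\vv))\,dt$ and $\int_{T_1+t_0}^{T-T_1+t_0}\wt F(\wtg_t(\wt\vw))\,dt$, because $\wt F$ is the $\G$-invariant lift of $F$. The leftover pieces of the full integrals over $[0,T]$ have total length at most $2T_1+2|t_0|\le 4T_1$. So they contribute at most $4T_1\|F\|_\infty$. Collecting the three contributions, we may take
\[
B = C_0 + (4T_1+2A+1)\|F\|_\infty + (2T_1+1)\|F\|_\infty ,
\]
which depends only on $A$ and $F$.

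The only delicate point is that \Cref{prop.geodesicsclose2} gives exponential closeness only in the interior and only up to the time shift $t_0$. Both effects are absorbed because the shift and the discarded end intervals have lengths bounded in terms of $A$ alone.
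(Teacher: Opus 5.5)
Your proof is correct and follows essentially the same route as the paper: bound $|T-T'|\le 2A$ using that flow lines are isometrically embedded, handle short $T$ trivially, and for long $T$ apply \Cref{prop.geodesicsclose2} (with constant $3A$ and $\ep_1$ the Bowen scale) to synchronize the middle segments up to the shift $t_0$, then use the Bowen property and absorb the end pieces. One small bookkeeping slip: the discarded pieces have length exactly $2T_1$ for each integral, so $4T_1$ in total, not $2T_1+2|t_0|$; your final bound of $4T_1$ is still right.
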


\begin{proof}
    Let $\ep,C>0$ be the Bowen constants for $F$, and $T_1>0$ the constants given by \Cref{prop.geodesicsclose2} for $\del=3A$ and $\ep_1=\ep$.
    Assume that $T\leq T'$, so from \Cref{thm:mineyev}~(3) we get $T'\leq T+2A$. 
    
    Suppose first that $T\leq 4T_1$.
    Then 
    \begin{align*}
        \left|\int_{0}^{T}{\wt F(\wt{\msf{g}}_t(\wt \vv))}{dt}-\int_{0}^{T'}{\wt F(\wt{\msf{g}}_t(\wt \vw))}{dt}\right|\leq (T+T')\|\wt F\|\leq 2(8T_1+A)\|F\|.
    \end{align*}
   If $T>4T_1$, note that \[ \max \left(d_{\wt\UG}(\wt\vv, \wt\vw), d_{\wt\UG}(\wt{\msf{g}}_T(\wt\vv), \wt{\msf{g}}_{T}(\wt\vw)) \right)\leq 3A=\del, \]
   and from \Cref{prop.geodesicsclose2} we find $t_0\in (-T_1,T_1)$ such that \[d_{\wt\UG}(\wt\vg_t(\wt\vv),\wt\vg_{t+t_0}(\wt\vw))\leq \ep \quad \text{  for all } T_1\leq t\leq T-T_1.\] 
   In consequence, from the Bowen property we get the bounds
     \begin{align*}
        & \left|\int_{0}^{T}{\wt F(\wt{\msf{g}}_t(\wt \vv))}{dt}-\int_{0}^{T'}{\wt F(\wt{\msf{g}}_t(\wt \vw))}{dt}\right| \\
        & \leq \left|\int_{T_1}^{T-T_1}{\wt F(\wt{\msf{g}}_t(\wt \vv))}{dt}-\int_{T_1}^{T-T_1}{\wt F(\wt{\msf{g}}_{t+t_0}(\wt \vw))}{dt}\right|+(2T_1+(|T'-T|+2T_1+2|t_0|))\|\wt F\|\\
        & \leq C +2(3T_1+A)\|F\|.
    \end{align*}
    The lemma then follows by setting $B=C+2(3T_1+A)\|F\|$.
\end{proof}

\begin{proof}[Proof of \Cref{prop.dictionaryBowen}]
Our argument is similar to that of \cite[Section~3.1]{connell-muchnik.gibbs} (see also \cite[Lemma~4.2.7]{dilsavor}). As in the proof of \Cref{coro:F->H}, we can assume that $F \geq 0$. Let $\wt F: \wt\UG \ra \R$ be the $\G$-equivariant lift of $F$ and $K\subset \wt\UG$ a compact subset from \Cref{lem.nicecompact}, with diameter $D$ for the metric $d_{\wt\UG}$. After possibly enlarging $K$, we can assume that $\G\cdot K=\wt\UG$.
    
    We define $\p=\p_F: \G \times \G \ra \R$ according to
    \[\p(g,h):=\sup \left\{\int_{0}^{T}{\wt F(\wt{\msf{g}}_t (\wt\vv))dt} : \wt\vv \in gK, \wt{\msf{g}}_T(\wt\vv)\in hK, T \geq 0\right\}.\]

    We first note that $\p$ is well-defined and finite. Given $g,h\in \G$, by \Cref{lem.nicecompact} we know that we can find $\wt\vv\in gK$ and $T>0$ such that $\wt{\msf{g}}_T(\wt\vv)\in hK$. For any other $\wt\vv'\in gK$ and $T'>0$ satisfying the same property, \Cref{lem.liftBowen} implies that 
    \[\left|\int_{0}^{T}{\wt F(\wt{\msf{g}}_t(\wt\vv))}{dt}-\int_{0}^{T'}{\wt F(\wt{\msf{g}}_t(\wt\vv'))}{dt}\right|\leq B\]
    for $B=B(D)$, and hence $\p(g,h)$ is finite. 

    Clearly $\p$ is nonnegative, and the $\G$-invariance of $\wt F$ and $\G$-equivariance of $\wt{\msf{g}}$ imply that $\p$ is $\G$-invariant. To show that $\p\in \calH_\G$, 
    we fix a word metric $d\in \calD_\G$. By \Cref{prop:criterionHMP} it is enough to show the following: 
    
    \textbf{Claim:} There exists $k_1>0$ such that if $g,h\in \G$ and $g$ lies on a $d$-geodesic $\gam$ from $o$ to $gh$, then
    \begin{equation}\label{eq.BBTd'}
        |\p(o,gh)-\p(o,g)-\p(o,h)| \leq k_1.
\end{equation}
To show this claim, let $\wt\vv_g,\wt\vv_h,\wt\vv_{gh}\in K$ and $T_g,T_h,T_{gh}>0$ satisfy
\begin{equation}\label{eq.conditionu_ghgh}
    \wt{\msf{g}}_{T_g}(\wt\vv_g)\in gK, \qquad  \wt{\msf{g}}_{T_h}(\wt\vv_h)\in hK, \qquad  \wt{\msf{g}}_{T_{gh}}(\wt\vv_{gh})\in ghK,
\end{equation}
which exist by our choice of $K$. For any fixed $\wt\vw_0\in K$, by \Cref{thm:mineyev}~(2) we get that the orbit $\gam \cdot \wt\vw_0$ is within bounded distance from a uniform quasigeodesic from $\wt\vw_0$ to $gh \wt\vw_0$. But $d_{\wt\UG}(\wt\vw_0,\wt\vv_{gh})\leq D$ and $d_{\wt\UG}(gh\wt\vw_0,\wt{\msf{g}}_{T_{gh}}(\wt\vv_{gh}))\leq D$, and hence \Cref{thm:mineyev}~(2) and the Morse lemma give us a constant $R>0$ independent of $g,h$, and a time $0\leq T'\leq T_{gh}$ such that 
$d_{\wt\UG}(g\wt\vw_0,\wt{\msf{g}}_{T'}(\wt\vv_{gh}))\leq R$. This implies that 
\[\max\left(d_{\wt\UG}(\wt\vv_g,\wt\vv_{gh}), d_{\wt\UG}(\wt{\msf{g}}_{T_g}(\wt\vv_g),\wt{\msf{g}}_{T'}(\wt\vv_{gh}))\right)\leq D+R\]
and 
\[\max\left(d_{\wt\UG}(g\wt\vv_h,\wt{\msf{g}}_{T'}(\wt\vv_{gh})), d_{\wt\UG}(\wt{\msf{g}}_{T_h}(g\wt\vv_h),\wt{\msf{g}}_{T_{gh}}(\wt\vv_{gh}))\right)\leq D+R.\]
Hence, from \Cref{lem.liftBowen} we get
\[ \left| \int_{0}^{T_g}{\wt F(\wt{\msf{g}}_t(\wt\vv_g))}{dt}-\int_{0}^{T'}{\wt F(\wt{\msf{g}}_t(\wt\vv_{gh}))}{dt} \right| \leq B\]
and 
\begin{align*} &\left| \int_{0}^{T_h}{\wt F(\wt{\msf{g}}_t(\wt\vv_h))}{dt}-\int_{T'}^{T_{gh}}{\wt F(\wt{\msf{g}}_t(\wt\vv_{gh}))}{dt} \right| \\&=\left| \int_{0}^{T_h}{\wt F(\wt{\msf{g}}_t(g\wt\vv_h))}{dt}-\int_{0}^{T_{gh}-T'}{\wt F(\wt{\msf{g}}_t(\wt{\msf{g}}_{T'}(\wt\vv_{gh})))}{dt} \right| \leq B\end{align*}
for $B=B(D+R)$. Adding these two inequalities we get
\[\left|\int_{0}^{T_{gh}}{\wt F(\wt{\msf{g}}_t(\wt\vv_{gh}))}{dt} -\int_{0}^{T_g}{\wt F(\wt{\msf{g}}_t(\wt\vv_{g}))}{dt}-\int_{0}^{T_{h}}{\wt F(\wt{\msf{g}}_t(\wt\vv_{h}))}{dt}\right| \leq 2B,\]
and after taking supremum over all $\wt\vv_g,\wt\vv_h,\wt\vv_{gh}$ and $T_g,T_h,T_{gh}$ satisfying \Cref{eq.conditionu_ghgh}, we deduce \Cref{eq.BBTd'} for $k_1=2B(D+R)$. This concludes the proof of the claim and hence implies $\p \in \calH_\G$.

Next, we prove that $\psi$ is dual to $F$. Given $g\in \G$ we consider the point
$\wt\vv=(g^{-},g^{+},0)\in \wt\UG$ and take $h\in \G$ such that $ \wt\vv\in hK$. Given $m>0$ we consider $\wt\vv_m\in K$ and $T_m>0$ such that $\wt{\msf{g}}_{T_m}(\wt\vv_m)\in g^{m}K$. By definition of the action of $\G$ on $\wt\UG$ we have $g^m \tilde v=\wt{\msf{g}}_{m\ell_{\hat d}[g]}(\wt \vv) \in g^mhK$, and hence 
\[\max \{d_{\wt\UG}(\wt\vv,\wt\vv_m),d_{\wt\UG}(\wt{\msf{g}}_{m\ell_{\hat d}[g]}(\wt\vv),\wt{\msf{g}}_{T_m}(\wt\vv_m))\} \leq A:=\max_{u,u'\in K}{d_{\wt\UG}(\wt\vvu,h\wt\vvu')}.\]
Then \Cref{lem.liftBowen} implies that 
\[\left| \int_{0}^{m\ell_{\hat d}[g]}{\wt F(\wt{\msf{g}}_t(\wt\vv))}{dt}  -\int_{0}^{T_m}{\wt F(\wt{\msf{g}}_t(\wt\vv_m))}{dt} \right|\leq B(A),\]
and after taking supremum over all possible $\wt\vv_m$ and $T_m$ (but fixing $m$) we deduce that 
\[\left| \int_{0}^{m\ell_{\hat d}[g]}{\wt F(\wt{\msf{g}}_t(\wt\vv))}{dt}- \p(o,g^m)\right|\leq B(A)\]
for all $m$. If $\vv$ is the image of $\wt\vv$ under the projection $\pi:\wt\UG \ra \UG$, then
\begin{align*}
    \int{F}{d\frakm_{[g]}}& =\frac{1}{\ell_{\hat d}[g]}\int_{0}^{\ell_{\hat d}[g]}{F(\msf{g}_t(\vv))}{dt}\\
    & =\lim_{m\to \infty}{\frac{1}{m\ell_{\hat d}[g]}\int_{0}^{m\ell_{\hat d}[g]}{F(\msf{g}_t(\vv))}{dt}} \\
    & = \frac{1}{\ell_{\hat d}[g]}\lim_{m\to \infty}{\frac{1}{m}\int_{0}^{m\ell_{\hat d}[g]}{\wt F(\wt{\msf{g}}_t(\wt\vv))}{dt}} \\
    & = \frac{1}{\ell_{\hat d}[g]}\lim_{m\to \infty}{\frac{\p(o,g^m)}{m}}\\
    & =\frac{\ell_{\psi}[g]}{\ell_{\hat d}[g]}.
\end{align*}
This proves that $\p$ and $F$ are dual. 

We already proved the first claim of item (1) by showing that we can choose $\p_F\geq 0$ (hence in $\calH_\G^+$) for $F\geq 0$. For the second item, assume that $F>0$. Then continuity of $F$ and compactness of $\UG$ gives us $F\geq c$ for some $c>0$. This implies $\ell_{\psi_F}[g]\geq c \ell_{\hat d}[g]$ for all $g\in \G$, and hence $\p_F\in \calH_\G^{++}$.
This completes the proof of item (1), and item (2) follows after replacing $\psi_F$ by $\frac{1}{2}(\psi_F+\psi_{F \circ \iota})$ and using \cite[Lemma~3.6]{CRS}. 
\end{proof}


\subsection{Dictionary ($\MC(\calD_\G)\Ra C(\UG)$)}\label{subsec:DCoc}

In this subsection we generalize \Cref{ex:cocyclestrongly} and relate positive functions in $C(\UG)$ to metrics in $\MC(\calD_\G)$. This provides a partial converse to \Cref{prop.dictionaryBowen} in the symmetric case. 

\begin{proposition}\label{prop:D_G->C}
Any $d\in \MC(\calD_\G)$ is dual to some continuous function $F\in C(\UG)$. Such an $F$ can be chosen to be positive and $\iota$-invariant: $F\circ \iota=F$. 
\end{proposition}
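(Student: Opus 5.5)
We would prove \Cref{prop:D_G->C} in three stages: first for Green metrics, then for all of $\calD_\G$ via density, and finally for $\MC(\calD_\G)$.

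\textbf{Step 1: Green metrics.} Let $d=d_\mu$ be the Green metric of a symmetric, admissible, finitely supported probability measure $\mu$. By \Cref{ex:cocyclestrongly}, the Busemann cocycle $\msf{c}_d(g,a)=\beta^d_o(g^{-1},a)$ is H\"older and satisfies $\msf{L}_{\msf{c}_d}=\ell_d$. Applying \Cref{prop:ledrappier} gives a H\"older function $F_d$ with $\int_{\gam_{[g]}}F_d=\ell_d[g]$ for every primitive $[g]$; in other words, $F_d$ is dual to $d$.

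Two adjustments are then needed.

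\emph{Symmetry.} Since $d$ is symmetric, $\ell_d[g]=\ell_d[g^{-1}]$. The flip $\iota$ sends $\gam_{[g]}$ to $\gam_{[g^{-1}]}$ with reversed orientation, so $F_d\circ\iota$ is also dual to $d$. We may therefore replace $F_d$ by $\frac12(F_d+F_d\circ\iota)$.

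\emph{Positivity.} Since $\ell_d\geq c\,\ell_{\hat d}$ for some $c>0$, we have $\int F_d\,d\frakm\geq c$ on periodic measures. By density of periodic measures (\Cref{lem:homeoCM}), this lower bound holds for all $\frakm\in\calM_{\vg}(\UG)$. The standard averaging trick then replaces $F_d$ by the cohomologous function $\frac1T\int_0^T F_d\circ\vg_t\,dt$. For $T$ large, uniform convergence of ergodic averages from below (by compactness of $\calM_{\vg}$) makes this function $\geq c/2>0$. Averaging over $t\in[-T/2,T/2]$ instead keeps it $\iota$-invariant.

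\textbf{Step 2: all of $\calD_\G$.} Take $d\in\calD_\G$. By \Cref{mainthm.greendense}, there are symmetric finitely supported $\mu_l$ with $[d_{\mu_l}]\to[d]$ in $\scrD_\G$. Rescale $d_l=\lambda_l d_{\mu_l}$ so that $\Dil(d_l,d)\to1$ and $\Dil(d,d_l)\to1$; equivalently, $\sup_{[g]}|\log\ell_{d_l}[g]-\log\ell_d[g]|\to0$.

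The key point is to produce dual potentials $F_l$ (positive, $\iota$-invariant) that converge uniformly. Cauchy behaviour of the ratios $\ell_{d_l}/\ell_{d_k}$ only controls the integrals $\int F_l\,d\frakm$, so the $F_l$ themselves need not converge. I would fix this with the following weak-cohomology normalization.

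Consider the linear functional $\frakm\mapsto\int F_l\,d\frakm$ on $\calM_{\vg}(\UG)$. By the above, these functionals converge uniformly on $\calM_{\vg}$, since $\int F_l\,d\frakm_{[g]}=\ell_{d_l}[g]/\ell_{\hat d}[g]$, periodic measures are dense, and the functionals are continuous. Hence the closed weak-cohomology classes $[F_l]$ are Cauchy in the quotient norm
\[
\|F\|_{\sim}=\sup_{\frakm\in\calM_{\vg}}\Bigl|\int F\,d\frakm\Bigr|.
\]
By Hahn--Banach/duality, this quotient norm equals the quotient norm of $C(\UG)$ modulo the closure of coboundaries $\{\partial_t U\}$. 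So we can successively pick representatives $F_l'=F_l+\text{(limit of coboundaries)}$ with $\|F_{l+1}'-F_l'\|_\infty\leq 2\|[F_{l+1}-F_l]\|_\sim$, after passing to a subsequence with summable differences.

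We must still check that these modified $F_l'$ remain dual to $d_l$. Integrals along periodic orbits are unchanged under adding limits of coboundaries, so this holds. The $F_l'$ then converge uniformly to some $F$, and periodic integrals pass to the limit, so $F$ is dual to $d$.

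Positivity is restored at the end as in Step 1: $\int F\,d\frakm\geq c\ell$ bounds carry over, so averaging along the flow gives a positive function. $\iota$-invariance is restored by symmetrizing, since $\iota$ preserves the set of coboundary limits.

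\textbf{Step 3: $\MC(\calD_\G)$.} The same limiting argument applies verbatim. A point of $\MC(\calD_\G)$ is a limit of $[d_m]\in\scrD_\G$, and by \Cref{prop:MC(H)} the functions $\log\ell_{d_m}$ converge uniformly to $\log\ell_d$. This is exactly the input used in Step 2.

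The main obstacle is the uniform-convergence step. It requires identifying the closure of coboundaries in $C(\UG)$ with the annihilator of $\calM_{\vg}(\UG)$, which is a standard Hahn--Banach fact for flows on compact metric spaces. It also requires checking that weakly cohomologous continuous functions have equal periodic integrals, which holds immediately.
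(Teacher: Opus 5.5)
Your proof is correct and has the same architecture as the paper's. Green metrics have H\"older Busemann cocycles, \Cref{prop:ledrappier} turns these into H\"older dual potentials, and \Cref{mainthm.greendense} supplies an approximating sequence. The dual potentials are then modified within their classes so that they converge uniformly, and one symmetrizes at the end.

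Where you differ is the key step: upgrading $\sup_{\frakm\in\calM_{\vg}(\UG)}|\int (F_m-F_k)\,d\frakm|\to 0$ to uniform convergence of suitable representatives.

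\emph{The paper's route.} It uses the approximate Ma\~{n}\'e lemma. The flow averages $\frac1T\int_0^T G\circ\vg_t\,dt$ are genuinely cohomologous to $G$ (and remain H\"older), and their sup norm is at most $\sup_{\frakm}|\int G\,d\frakm|+\epsilon$ for $T$ large. The paper then builds $F'_{m+1}=F'_m+G'_m$, starting from $F_1'\geq 3c^{-1}/4$ and adding summable corrections, so positivity is maintained throughout.

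\emph{Your route.} You use Hahn--Banach to identify the distance from $G$ to the closure of the coboundaries with $\sup_{\frakm}|\int G\,d\frakm|$. This is valid: the annihilator of the coboundaries is the space of $\vg$-invariant signed measures. You should state explicitly that the Jordan parts of an invariant signed measure are again invariant, since that is what reduces the supremum to invariant probability measures. The cost is that your $F'_l$ differ from $F_l$ only by limits of coboundaries, so you lose that they are cohomologous to $F_l$ (or H\"older). That costs nothing here, because duality only sees periodic integrals, and you restore positivity once at the end.

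Note that the averaging you use for positivity in Step~1 is exactly the approximate Ma\~{n}\'e lemma. So the same tool would also have handled the Cauchy step, without any duality argument.

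One advantage of your three-stage organization: it makes explicit that the limiting step only needs continuous dual potentials as input. Step~3 can therefore take the potentials produced in Step~2. The paper instead goes directly, implicitly via a diagonal argument, to Green metrics converging to a point of $\MC(\scrD_\G)$.
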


\begin{proof}
    Without loss of generality assume that $d$ has exponential growth rate 1. Then by \Cref{mainthm.greendense}, we can find a sequence $(d_m)$ of Green metrics in $\calD_\G$ such that $[d_m]\to [d]$ in $\MC(\scrD_\G)$. Each $d_m$ is dual to a H\"older cocycle $\vc_m=\vc_{d_m}$ as in \Cref{ex:cocyclestrongly}, and hence \Cref{prop:ledrappier} gives us a H\"older continuous function $F_m:\UG \ra \R$ dual to $d_m$. 

    We claim that after taking a subsequence and changing each $F_m$ by a cohomologous H\"older continuous function $F_m'$, we can guarantee that $F_m'$ uniformly converges to a positive continuous function $F$ as $m$ tends to infinity. Then after possibly replacing $F$ by $(F+F\circ \iota)/2$, \Cref{lem:convF} tells us that $F$ is positive, $\iota$-invariant, and dual to $d$. 

    To prove the claim, let $c>1$ be such that $c^{-1}\leq \ell_{\p_m}[g]/\ell_{\hat d}[g]\leq c$ for all $m$ and $[g]\in [\G]'$, which exists since $[d_m]\to [d]$ and each $d_m$ has exponential growth rate 1. From this, and using \Cref{eq:def.dualC-H} for all $m,k$ and the fact that measures supported on periodic orbits are dense in $\calM_{\vg}(\UG)$, we have 
    \begin{equation}
    \begin{aligned}
\sup_{\frakm\in \calM_\vg(\UG)}\left|\int{F_m}{d\frakm}-\int{F_k}{d\frakm}\right| &=\sup_{[g]\in [\G]'}\frac{|\ell_{\p_m}[g]-\ell_{\p_k}[g]|}{\ell_{\hat d}[g]}\\
            &\leq c\sup_{[g]\in [\G]'}\left| \log \left(\frac{\ell_{\p_m}[g]}{\ell_{\p_k}[g]}\right)\right|\\
        &\leq c\Del([d_m],[d_k]). 
    \end{aligned}
    \label{eq:ineq.D}
    \end{equation}

Since $[d_m]_m\to [d]$, we extract a subsequence and reindex so that $\Del([d_{m+1}],[d_{m}])\leq c^{-1}/2^{m+2}$ for all $m\geq 1$. Inductively, we define a sequence $F_1',\dots,F_m',\dots$ in $C(\UG)$ such that
\begin{itemize}
    \item each $F'_m$ is cohomologous to $F_m$;
    \item $F'_1\geq 3c^{-1}/4$; and,
    \item $\|F'_{m+1}-F'_m\|\leq c^{-1}/2^{m+1}$ for all $m\geq 1$.
\end{itemize} 

First, we note that $\inf_{\frakm\in \calM_{\vg}(\UG)}{\int{F}d\frakm}\geq c^{-1}$. Then we apply the approximate Ma\~{n}\'e lemma (see e.g.~\cite[Lemma~3.1]{pavez-molina}), to find $F'_1\in C(\UG)$ cohomologous to $F_1$ and such that $F_1\geq 3c^{-1}/4$. Assuming we have found $F_1',\dots,F_m'$ satisfying the above, we define $G_m=F_{m+1}-F_{m}'$. By \Cref{eq:ineq.D} and our choice of subsequence $(F_m)_m$, we have that $\sup_{\frakm\in \calM_{\vg}{\UG}}{\int |G_m|d\frakm}\leq c^{-1}/2^{m+2}$. Then again applying the approximate Ma\~{n}\'e lemma, we find $G'_m$ cohomologous to $G_m$ and such that $\|G'_m\|\leq c^{-1}/2^{m+1}$. We set $F'_{m+1}:=F_m'+G_m'$, which by construction also satisfies the desired properties.  

To end the proposition we check that $(F_m')_m$ is Cauchy and converges to a positive continuous function. Indeed, for $k,m\geq 1$ we have
\begin{align*}
    \|F'_{m+k+1}-F'_{m}\|\leq \|F'_{m+k+1}-F'_{m+k}\|+\cdots +\|F'_{m+1}-F_m'\|\leq c^{-1}/2^{m},
\end{align*}
which tends to 0 as $m\to \infty$. Hence $(F'_m)_m$ is Cauchy, and its limit $F$ must be positive since for all $m\geq 1$ we have
\[F_{m+1}'=F'_1+\cdots+(F_{m+1}'-F_m')\geq 3c^{-1}/4-c^{-1}(1/2^3+1/2^4+\cdots)\geq c^{-1}/4. \qedhere\]   
\end{proof}


\subsection{Proof of \Cref{thm:dictionary}}

We have all the ingredients to prove \Cref{thm:dictionary}.

\begin{proof}
    We define $\Thet :\Par(\UG)\ra \MC(\scrH_\G^{++})$ as follows. Given a reparameterization $\sfP$ of $\vg$, let $F_{\sfP}\in C(\UG)$ be the positive continuous function given by \Cref{prop:par->C}. Then we apply \Cref{coro:F->H} to find $\p_\sfP=\psi_{F_{\sfP}}\in \MC(\calH_\G^{++})$ dual to $F_\sfP$. We define $\Thet([\sfP])=[\p_\sfP]$, which is well-defined and injective by \Cref{lemm:weak.conj} and \Cref{cor:DeltaformulaMC}. The construction of $\Thet$ using \Cref{prop:par->C} and  \Cref{coro:F->H} implies (1).

    To prove (2), suppose first that $\sfP$ is a reversible reparameterization. Then it follows that $\msf{L}_{\sfP}[g]=\msf{L}_{\sfP}[g^{-1}]$ for all primitive $[g]\in [\G]'$. As a consequence, if $\Theta([\sfP])=[\p]$, then $[\p]=[(\p+\hat\p)/2]$ and $(\p+\hat\p)/2\in \MC(\calD_\G)$. This gives $\Thet(\Par^s(\UG))\subset \MC(\calD_\G)$. To prove the reverse inclusion, we let $d\in \MC(\calD_\G)$ and use \Cref{prop:D_G->C} to find a continuous, positive and $\iota$-invariant function $F\in C(\UG)$ that is dual to $d$. From this we construct the reparameterization $\sfP$ of $\msf{g}$ according to 
    \[\vg_{t}(\vv)=\sfP_{\k(\vv,t)}(\vv),\]
    for $\k(\vv,t)$ equal to $\int_0^t{F(\vg_s(\vv))ds}$ if $t\geq 0$ and equal to $-\int_{-t}^0{F(\vg_s(\vv))ds}$ if $t<0$.
    This reparameterization is well-defined since $F$ is positive and is reversible since $F$ is $\iota$-invariant. We conclude the proof of (2) by checking that $\Thet([\sfP])=[d]$: for $[g]\in [\G]'$ primitive and $\vv\in \gam_{[g]}$ we have
    \begin{align*}
    \msf{L}_{\sfP}[g]=\k(\vv,\ell_{\hat d}[d])=\int_{0}^{\ell_{\hat d}[g]}{F(\vg_s(\vv))ds}=\int_{\gam_{[g]}}F=\ell_{d}[g].    
    \end{align*}

    Finally, item (3) follows from the moreover statement in \Cref{prop:par->C}, \Cref{prop.dictionaryBowen}, and \Cref{lem:bowendense}.
\end{proof}


\section{Mean distortion from length functions}
\label{sec:meandistortion_length}
In this section we assume $\G$ is a non-elementary torsion-free hyperbolic group. 
Here we prove the following theorem, which describes the mean distortion $\tau(\varphi/\p)$ for $\varphi\in \calH_\G$ and $\p\in \calH_\G^{++}$ (see \Cref{eq:meandistortion}) as the length function for $\p$ evaluated at the appropriately normalized BMS current for $\varphi$. This result, and in particular its \Cref{cor:dualintegral}, is used in the proof of \Cref{thmmain.H_Gnotcomplete} in the next section.

\begin{theorem}\label{thm.tauvsell}
    Let $\varphi \in \calH_\G$ and $\p\in \calH^{++}_\G$. If $\Lam_\p\in \calC_\G$ is a BMS current for $\p$ then 
    \begin{equation*}
\tau(\varphi/\p)=\frac{\ell_{\varphi}(\Lam_\p)}{\ell_\p(\Lam_\p)}.
    \end{equation*}
\end{theorem}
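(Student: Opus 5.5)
The plan is to compare two limits along a single typical geodesic: the ratio of lengths of closed orbits that shadow it, and the ratio $\varphi(o,g_k)/\p(o,g_k)$ along it. I will use the continuous, positively homogeneous extension of the length functions to currents, $\Lam\mapsto \ell_\varphi(\Lam)$, normalized by $\ell_\varphi(\eta_{[g]})=\ell_\varphi[g]$. I assume this extension is available for every $\varphi\in\calH_\G$ (as in \cite{CRS}); this is the first point I would double-check. Both sides of the identity are then invariant under scaling $\Lam_\p$ and under changing $\varphi,\p$ by bounded amounts. So I may assume $v_\p=1$, that $\p$ is a pseudometric (by \Cref{lem:potential_pseudometric}), and that $\Lam_\p$ is built from quasi-conformal measures $\hat\nu,\nu$ as in \Cref{subsec:currents}.

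\emph{Step 1 (generic points).} Pass to Mineyev's flow space. The current $\Lam_\p$ is ergodic, so $\frakm_{[\Lam_\p]}$ is an ergodic $\vg$-invariant probability measure (\Cref{lem:homeoCM}). By the Birkhoff theorem, for $\frakm_{[\Lam_\p]}$-a.e.\ $\vv$ the empirical measures $\frac1T\int_0^T\delta_{\vg_t\vv}\,dt$ converge weak$^\ast$ to $\frakm_{[\Lam_\p]}$. Since $d\Lam_\p$ is equivalent to $\hat\nu\otimes\nu$, I can also assume that a lift $\wt\vv=(a,b,s)$ has forward endpoint $b$ in the full-$\nu$-measure set of \Cref{lem:limittauQC}. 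Fix a $d$-quasigeodesic ray $(g_k)$ in $\G$ shadowing the flow line $\wt\vg_t(\wt\vv)$, $t\ge0$, which is possible by \Cref{thm:mineyev}~(2). \Cref{lem:limittauQC}, applied both to $\varphi$ and to $\hat d$, then gives
\[
\frac{\varphi(o,g_k)}{\p(o,g_k)}\to\tau(\varphi/\p),\qquad \frac{\hat d(o,g_k)}{\p(o,g_k)}\to\tau(\hat d/\p).
\]

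\emph{Step 2 (closing).} Use the recurrence of $\vv$ and the closing lemma, which is available because the flow is metric-Anosov (\Cref{thm:mineyevanosov}), together with \Cref{prop.geodesicsclose}. This produces times $T_k\to\infty$ and primitive classes $[h_k]$ whose closed orbits $\gam_{[h_k]}$ shadow $\vg_{[0,T_k]}\vv$ up to a bounded error. Equivalently, $h_k$ is a group element at bounded $d$-distance from $g_{n_k}$, for suitable $n_k\to\infty$, with axis passing near $o$. Shadowing has two consequences. First, $\frakm_{[h_k]}$ converges to $\frakm_{[\Lam_\p]}$, so $[\eta_{[h_k]}]\to[\Lam_\p]$ in $\bbP\calC_\G$ by \Cref{lem:homeoCM}. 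Second, by \Cref{prop:criterionHMP} applied to the axis of $h_k$ through a bounded neighbourhood of $o$, we have $\ell_\varphi[h_k]=\varphi(o,h_k)+O(1)=\varphi(o,g_{n_k})+O(1)$, and the same holds for $\p$.

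\emph{Step 3 (conclusion).} The extended length functions are continuous and homogeneous, so
\[
\frac{\ell_\varphi[h_k]}{\ell_\p[h_k]}=\frac{\ell_\varphi(\eta_{[h_k]})}{\ell_\p(\eta_{[h_k]})}\to \frac{\ell_\varphi(\Lam_\p)}{\ell_\p(\Lam_\p)},
\]
where $\ell_\p(\Lam_\p)>0$ because $\p\in\calH_\G^{++}$. Since $\p(o,g_{n_k})\to\infty$, Step 2 shows that the left-hand side also equals $\varphi(o,g_{n_k})/\p(o,g_{n_k})+o(1)$, which tends to $\tau(\varphi/\p)$ by Step 1. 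Comparing the two limits gives the theorem.

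The main obstacle is Step 2: making the closing argument quantitative. I need the closed orbits to approximate the Birkhoff-generic segment both in measure, to get weak$^\ast$ convergence, and at the level of group elements with only $O(1)$ error. If the direct closing approach is awkward, I would instead use the specification property (\Cref{prop:weakspecification}) to concatenate the segment with a bounded connecting piece. A secondary point is confirming the continuous extension of $\ell_\varphi$ to all of $\calC_\G$ for arbitrary $\varphi\in\calH_\G$. If only a Bowen-type dual potential is available, I would instead write $\ell_\varphi(\Lam)$ as a multiple of $\int F_\varphi\,d\frakm_\Lam$ and approximate $F_\varphi$ uniformly, as in \Cref{coro:F->H}.
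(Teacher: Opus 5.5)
Your argument is correct, but it takes a different route from the paper.

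\textbf{The paper's route.} The paper never closes up orbits. It builds the almost-cocycle $\P_\calB(\vv,t)=\varphi(o,g_\calB(\vv,t))$ on $\UG$ and defines $L_\varphi(\frakm)=\lim_{t\to\infty}\frac1t\int\P_\calB(\cdot,t)\,d\frakm$. Using transversals with $\frakm$-null boundary (after Erlandsson--Parlier--Souto), it shows that $L_\varphi$ is continuous and equals $\ell_\varphi/\ell_{\hat d}$ on periodic measures. Hence $\ell_\varphi(\Lam)=L_\varphi(\frakm_\Lam)$ (\Cref{coro.ellL}). Kingman's subadditive ergodic theorem, applied to the ergodic measure $\frakm_{[\Lam_\p]}$, then identifies this integral limit with the a.e.\ pointwise limit of $\P_\calB(\vv,t)/t$. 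Since $t\mapsto g_\calB(\vv,t)$ is a uniform quasigeodesic ray, \Cref{lem:limittauQC} evaluates that limit as $\tau(\varphi/\p)/\tau(\hat d/\p)$. Dividing by the same identity for $\p$ finishes the proof.

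\textbf{Your route.} You take the continuous extension as an input. You bridge ``pointwise along a generic ray'' and ``value at $\Lam_\p$'' by approximating $[\Lam_\p]$ with rational currents $[\eta_{[h_k]}]$, obtained by closing up recurrent, Birkhoff-generic segments. The step you flag as the main obstacle is fine. Recurrence gives returns at arbitrarily small scale, so closing at scales $\epsilon_k\to0$ gives $\frakm_{[h_k]}\to\frakm_{[\Lam_\p]}$. Alternatively, \Cref{prop.geodesicsclose} at a fixed scale suffices, since exponential closeness in the interior makes the time-averaged discrepancy $o(1)$ for every continuous test function. The comparisons $\ell_\varphi[h_k]=\varphi(o,h_k)+O(1)=\varphi(o,g_{n_k})+O(1)$ need only a fixed scale. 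They follow from \Cref{prop:criterionHMP} applied along the powers of $h_k$, together with \cite[Lemma~3.3(2)]{CRS}.

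\textbf{What each buys.} Yours is the classical equidistribution-along-generic-orbits argument. It uses nothing about $\ell_\varphi$ on currents beyond continuity and homogeneity. However, it needs a closing lemma (from the metric-Anosov property, or a direct closing argument in $\G$), and it imports the extension. The paper's proof is self-contained and needs no closing, because Kingman's theorem does the linking. It also produces the continuous extension from the same almost-cocycle; that extension is \Cref{coro.ellL} here, reproving \cite{kapovich-martinezgranado}, so you can cite it directly rather than looking for it in \cite{CRS}.
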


Throughout the section we fix $\varphi \in \calH_\G$ and $\p\in \calH^{++}_\G$. We also fix a Mineyev's flow space $\UG$ for $\G$ associated to the strongly hyperbolic metric $\hat d$ in $\calD_\G$, as in \Cref{sec:mineyevflow}. We keep the notation from that section, so that $\pi:\wt\UG \ra \UG$ denotes the associated covering map, and $\msf{g}=(\msf{g}_t)_{t\in \R}$ and $\wt{\msf{g}}=(\wt{\msf{g}}_t)_{t\in \R}$ denote the flows on $\UG$ and $\wt\UG$ respectively. We also let $d_{\UG}$ and $d_{\wt\UG}$ denote the corresponding metrics on $\UG$ and $\wt\UG$.

The core of the proof is to describe $\ell_{\varphi}$ as a drift function for an almost cocycle defined on $\UG$. This is inspired by the construction of the continuous extension of $\ell_d$ to $\calC_\G$ when $d$ is a word metric by Erlandsson--Parlier--Souto \cite[Theorem~1.5]{EPS}. Incidentally, this reproves the fact that $\ell_{\varphi}$ admits a (unique) continuous extension to $\calC_\G$, recovering the result of Kapovich and Martinez-Granado \cite{kapovich-martinezgranado}.

To begin the description of $\ell_{\varphi}$, we consider a precompact Borel fundamental domain $\calB \subset \wt\UG$ for the action of $\G$, and for $\vv\in \UG$ we let $\wh\vv$ denote its unique lift in $\calB$. For each $t\in \R$ and $\vv\in \UG$, we let $g(\vv,t)=g_\calB(\vv,t)\in \G$ be the unique group element such that $\wt{\msf{g}}_t(\wh \vv)\in g_\calB(\vv,t)\calB$. It follows from its definition that $g_\calB$ satisfies the cocycle relation
\begin{equation}\label{eq.cocycle}
g_\calB(\vv,t+s)=g_\calB(\vv,t)g_\calB(\msf{g}_t(\vv),s) \quad \text{ for }\vv\in \UG \text{ and }s,t>0.
\end{equation}

\begin{definition}\label{def:Psi_B}
    We define $\P=\P_\calB:\UG\times \R \ra \R$ according to $\P(\vv,t)=\varphi(o,g_\calB(\vv,t))$.
\end{definition}

Since $\calB$ is Borel, $\P$ is a Borel function, and bounded backtracking of $\varphi$ implies that $\P$ is an almost cocycle: 
\begin{lemma}\label{lem.almostcocycle}
    There exists $C>0$ depending only on $\calB$ such that for $\P=\P_\calB$ we have
    \[|\P(\vv,t+s)-\P(\vv,t)-\P(\msf{g}_t(\vv),s)|\leq C \quad \text{ for }\vv\in \UG\text{ and }s,t>0.\]
   
\end{lemma}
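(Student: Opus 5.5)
Set $g=g_\calB(\vv,t)$ and $h=g_\calB(\msf{g}_t(\vv),s)$. By the cocycle relation \Cref{eq.cocycle} we have $g_\calB(\vv,t+s)=gh$, so the quantity to bound is
\[
\varphi(o,gh)-\varphi(o,g)-\varphi(o,h)=\varphi(o,gh)-\varphi(o,g)-\varphi(g,gh)=-2\gpr{o}{gh}^\varphi_g,
\]
where we used $\G$-invariance of $\varphi$ to rewrite $\varphi(o,h)=\varphi(g,gh)$. Thus it suffices to show that $|\gpr{o}{gh}^\varphi_g|$ is uniformly bounded. By \Cref{prop:criterionHMP} (applied with a fixed $d\in\calD_\G$, say $d=\hat d$), this holds as soon as $g$ lies within a uniform $R$-neighborhood of some $(C,d)$-rough geodesic from $o$ to $gh$, with $C,R$ independent of $\vv,t,s$.

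To produce this rough geodesic, fix $\wt\vw_0\in\calB$ and use the orbit map $\G\ra\wt\UG$, $x\mapsto x\wt\vw_0$, which is a $\G$-equivariant rough isometry $(\G,\hat d)\ra(\wt\UG,d_{\wt\UG})$ by \Cref{thm:mineyev}~(2). Since $\calB$ is precompact, let $D$ be the $d_{\wt\UG}$-diameter of $\ov\calB$. By construction $\wh\vv\in\calB$, $\wt\vg_t(\wh\vv)\in g\calB$, and $\wt\vg_{t+s}(\wh\vv)\in gh\calB$. Hence $o\wt\vw_0$, $g\wt\vw_0$, $gh\wt\vw_0$ are within distance $D$ of the points at times $0$, $t$, $t+s$ of the flow line through $\wh\vv$. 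By \Cref{thm:mineyev}~(3), this flow line is an isometrically embedded geodesic in $\wt\UG$.

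Now let $\sigma:[0,t+s]\ra\G$ send $r$ to an element $x$ with $\wt\vg_r(\wh\vv)\in x\calB$, i.e. $x=g_\calB(\vv,r)$. Since $x\wt\vw_0$ lies within $D$ of $\wt\vg_r(\wh\vv)$, and the orbit map is a rough isometry with additive constant $C_1$, $\sigma$ is a $(2D+C_1,\hat d)$-rough geodesic from $o$ to $gh$ passing through $g$ at $r=t$. It can be restricted to integer times, with the endpoints adjusted, to obtain a discrete rough geodesic; this only worsens the constant by a bounded amount. Therefore $g$ lies on a uniform rough geodesic, and \Cref{prop:criterionHMP} gives $|\gpr{o}{gh}^\varphi_g|\le C_0$, so $C=2C_0$ works.

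The main care point is checking that $\sigma$ is a genuine rough geodesic with constants depending only on $\calB$. This follows from the isometric flow lines combined with the rough-isometry constants, and \Cref{prop:criterionHMP} allows any sufficiently large rough-geodesic constant.
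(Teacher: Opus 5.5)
Your proof is correct and follows essentially the paper's route. You reduce, via the cocycle relation and $\G$-invariance, to bounding $\gpr{o}{gh}^\varphi_g$, and then use that the orbit points of $o$, $g$, $gh$ stay within the diameter of $\calB$ of three ordered points on the isometric flow line $r\mapsto\wt\vg_r(\wh\vv)$. The only difference is that the paper does not build the whole rough geodesic $\sigma$ or invoke \Cref{prop:criterionHMP}. Instead it applies the defining inequality of $\calH_\G$ directly with the pulled-back orbit metric $(x,y)\mapsto d_{\wt\UG}(x\wh\vp,y\wh\vp)\in\calD_\G$, whose Gromov product at $g$ is bounded by a multiple of that diameter; this needs only the three points rather than the full path.
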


\begin{proof}
    Let $C_1>0$ be the diameter of $\calB$ in the metric $d_{\wt\UG}$. Fixed a point $\vp\in \UG$, the metric $d^{\wh\vp}_{\wt \UG}$ on $\G$ given by $(g,h)\mapsto d_{\wt\UG}(g\wh\vp,h\wh\vp)$ belongs to $\calD_\G$ by \Cref{thm:mineyev}~(3). Also, let $R>0$ be such that 
    \begin{equation}\label{eq.BBTd_F}
   |\gpr{g}{h}^{\varphi}_{o}|\leq R \gpr{g}{h}_o^{d_{\wt\UG}^{\wh\vp}}+R      
    \end{equation}for all $g,h\in \G$. 

Now let $\vv\in \UG$, and for $s,t>0$ note that the distances $$d_{\wt \UG}(\wh\vp,\wh \vv),\qquad d_{\wt \UG}(g_\calB(\vv,t)\wh\vp,\wt{\msf{g}}_t(\wh \vv)), \qquad  d_{\wt \UG}(g_\calB(\vv,t+s)\wh\vp, \wt{\msf{g}}_{t+s}(\wh \vv))$$ are bounded above by $C_1$. Since $r \mapsto \wt{\msf{g}}_{r}(\wh \vv)$ is a geodesic in $(\wt\UG,d_{\wt\UG})$ by \Cref{thm:mineyev}~(3), \Cref{eq.BBTd_F} implies that
\[|\varphi(o,g_\calB(\vv,t))+\varphi(g_\calB(\vv,t),g_\calB(\vv,t+s))-\varphi(o,g_\calB(\vv,t+s))|\leq C\]
for $C=3C_1R+2R$. The conclusion then follows from the cocycle relation in \Cref{eq.cocycle} and the definition of $\P$. 
\end{proof}

\begin{definition}
    Given a measure $\frakm\in \calM_{\msf{g}}(\UG)$, we define
    \[L_\calB(\frakm)=L_{\varphi,\calB}(\frakm):=\lim_{t\to +\infty}\frac{1}{t}\int{\P_\calB(\vv,t)}{d\frakm(\vv)}.\]
\end{definition}
    The limit above exists by \Cref{lem.almostcocycle} and Kingman's subadditive ergodic theorem. In fact, from the almost cocycle relation we have the following stronger property. If $\frakm\in \calM_{\msf{g}}(\UG)$, then $\msf{g}$-invariance implies that 
    \[\left|\int{\P_\calB(\vv,s+t)}{d\frakm}-\int{\P_\calB(\vv,t)}{d\frakm}-\int{\P_\calB(\vv,s)}{d\frakm}\right|\leq C\frakm(\UG)\] for all $s,t>0$,
where $C>0$ is the constant from \Cref{lem.almostcocycle}. In particular, Fekete's lemma implies that
    \begin{equation}\label{eq.L_B}
    \left|L_\calB(\frakm)-\frac{1}{n}\int\P_\calB(\vv,n)d\frakm \right|\leq \frac{C\frakm(\UG)}{n}
    \end{equation}
    for all natural numbers $n\geq 1$.

In addition, the quantity $L_{\calB}(\frakm)$ is independent of the precompact Borel fundamental domain $\calB$. To show this, let $\calB'$ be another precompact Borel fundamental domains, so that there exists a finite set $A\subset \G$ such that $\calB\subset A\calB'$ and $\calB'\subset A\calB$. Then for all $\vv\in \UG$ and $t\in \R$ we have $g_{\calB'}(\vv,t)=ag_{\calB}(\vv,t)a'$ for $a,a'\in A$ (depending on $t$ and $\vv$). From \cite[Lemma~3.3(2)]{CRS}, this implies that 
$|\P_{\calB'}(\vv,t)-\P_{\calB}(\vv,t)|\leq C'$, for $C'$ independent of $\vv$ and $t$, which implies that $L_{\calB'}(\frakm)=L_{\calB}(\frakm)$ for all $\frakm\in \calM_{\msf{g}}(\UG)$.
    
\begin{definition}
    Let $L=L_{\varphi}:\calM_{\msf{g}}(\UG)\ra \R$ be the function such that $L(\frakm)=L_\calB(\frakm)$ for any precompact Borel fundamental domain $\calB\subset \wt\UG$. 
\end{definition}

Our next goal is to prove the continuity of $L$. To do this, we fix $\frakm\in \calM_{\msf{g}}(\UG)$, for which we want to construct the appropriate fundamental domain $\calB$ so that $L_{\calB}$ is continuous at $\frakm$. This amounts to understanding the discontinuity points of $\P_\calB$. 

To find the convenient domain $\calB$ we require the notion of \emph{transversals} from \cite[Section~3.2]{EPS}. For the rest of the section we let $\Phi:\wt\UG \times \R \ra\wt\UG$ denote the flow map $\Phi(\wt\vv,t)=\wt{\msf{g}}_t(\wt\vv)$.

\begin{definition}
   A \emph{transversal} is a compact set $\tau\subset \wt\UG$ satisfying:
    \begin{itemize}
        \item The restriction $\Phi:\tau \times [-\ep,\ep] \ra \wt\UG$ is an embedding for some $\ep>0$.
        \item The restriction of the covering map $\wt\UG \ra \UG$ to $\tau$ is an embedding.
        \item For every $\wt\vv\in \wt\UG$ there exist $t>0$ and $g\in \G$ with $\wt{\msf{g}}_t(\wt\vv)\in g\tau$. 
    \end{itemize} 
\end{definition}

As discussed in \cite[Section~3.2]{EPS}, transversals always exist. For such a transversal $\tau$, its \emph{interior} (denoted $\dot\tau$) is defined as the set of all $\wt\vw \in \tau$ such that, for all $\ep>0$ sufficiently small, $\wt\vw$ is an interior point of $\Phi(\tau \times [-\ep,\ep])$. Non-interior points of $\tau$ are called \emph{boundary points}, and the set of all such points is denoted by $\partial\tau$. 

The \emph{first return time} of $\tau$ is the function $T_\tau:\tau \ra \R^+$ such that $\wt{\msf{g}}_{T(\wt\vv)}(\wt\vv)\in g\tau$ for some $g\in \G$ (necessarily non-trivial) and $\wt{\msf{g}}_{t}(\wt\vv)\notin  \G\tau$ for any $0<t<T_\tau(\wt\vv)$. It follows that $T_\tau$ is Borel, and in fact continuous inside the set $T_\tau^{-1}(\G\dot \tau)$. We also set $R_\tau(\wt\vv):=\wt{\msf{g}}_{T_\tau(\wt\vv)}(\wt\vv)$ for $\wt\vv\in \tau$.

Given a transversal $\tau$ and $\ep>0$ small enough, $\tau_\ep:=\wt{\msf{g}}_\ep(\tau)$ is also a transversal and $\partial \tau_\ep=\wt {\msf{g}}_\ep(\partial\tau)$. Moreover, if we choose $\epsilon$ small enough so that the image of $\Phi(\tau\times [-\ep,\ep])$ under  $\pi:\wt\UG \ra \UG$ is an embedding, we have that \begin{equation}\label{eq.tauemptyintersection}
    (\tau_{\ep_1} \cup R_{\tau_{\ep_1}}(\tau_{\ep_1}))\cap (\tau_{\ep_2} \cup R_{\tau_{\ep_2}}(\tau_{\ep_2})) =\emptyset \quad \text{ for }\ep_1\neq \ep_2 \text{ in }[0,\ep].
\end{equation}

The advantage of using transversals is justified by the next definition.

\begin{definition}
 For a transversal $\tau$ we define
\[\calB_\tau:=\{\wt{\msf{g}}_t(\wt\vv):\wt\vv\in \tau,0\leq t<T_\tau(\wt\vv)\}\subset \wt\UG.\]   
\end{definition}

The definition of transversal easily implies:
\begin{lemma}
   For any transversal $\tau$, the set $\calB_\tau$ is a Borel fundamental domain for the action of $\G$ on $\wt\UG$.
\end{lemma}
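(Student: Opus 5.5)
To show that $\calB_\tau$ is a Borel fundamental domain, we need three things: every $\G$-orbit meets $\calB_\tau$ (covering), no orbit meets it twice (disjointness), and $\calB_\tau$ is Borel. The key tool is the first return structure of $\G\tau$ along flow lines. Each such line is a copy of $\R$, and we must show it meets $\G\tau$ in a discrete set, unbounded in both directions.

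\emph{Discreteness.} By the first two transversal axioms, $\Phi(\tau\times[-\ep,\ep])$ embeds, and its image under $\pi$ embeds in $\UG$. Hence two distinct times $s<s'$ with $\wt{\vg}_s(\wt\vv),\wt{\vg}_{s'}(\wt\vv)\in \G\tau$ satisfy $s'-s\geq \ep$. Indeed, otherwise projecting to $\UG$ gives two points of $\pi(\tau)$ joined by a flow segment of length $<\ep$, contradicting injectivity of $\pi\circ\Phi$ on $\tau\times[-\ep,\ep]$. It follows that $T_\tau\geq\ep$ is well defined, and it is finite by the third axiom.

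\emph{Unboundedness in the past.} Here I would use compactness. The sets $\wt{\vg}_{[-S,0]}(\G\tau)$ increase and, by the third axiom (for every point some forward time hits $\G\tau$), their union is all of $\wt\UG$. A compact $K$ with $\G K=\wt\UG$ is then covered by finitely many of these sets, so there is a uniform $S$ such that every point hits $\G\tau$ within forward time $S$. Consequently, on every flow line the hitting times form a bi-infinite discrete sequence with gaps in $[\ep,S]$.

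\emph{Covering and disjointness.} Given $\wt\vw$, take the last hitting time $t\leq 0$ with $\wt{\vg}_{t}(\wt\vw)=g\wt\vv$, where $\wt\vv\in\tau$. Then $g^{-1}\wt\vw=\wt{\vg}_{-t}(\wt\vv)$ with $0\leq -t<T_\tau(\wt\vv)$, using that the $\G$-action commutes with the flow and preserves $\G\tau$. For disjointness, suppose $\wt{\vg}_{t}(\wt\vv)=g\wt{\vg}_{t'}(\wt\vv')$ with $\wt\vv,\wt\vv'\in\tau$. Then $\wt\vv$ and $g\wt\vv'$ lie on the same flow line and both in $\G\tau$, with neither strictly between the other and the common point, by the first-return condition. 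So they coincide. Since $\pi|_\tau$ is injective, $\pi(\wt\vv)=\pi(\wt\vv')$ forces $\wt\vv=\wt\vv'$, and freeness of the action gives $g=o$.

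\emph{Borel measurability.} $\calB_\tau$ is the image of the Borel set $\{(\wt\vv,t):0\leq t<T_\tau(\wt\vv)\}$ under $\Phi$. This map is injective by the disjointness argument, so the image is Borel by Lusin--Souslin; here $T_\tau$ is Borel, as already noted. The main obstacle is the uniform bound $S$, which is needed to guarantee that a past hitting time exists; the compactness argument above is what I would use to obtain it.
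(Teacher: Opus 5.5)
The paper gives no argument beyond ``the definition of transversal easily implies''. Your discreteness, disjointness and Borel steps are a sound way to fill this in. One detail: injectivity of $\pi\circ\Phi$ on $\tau\times[-\ep,\ep]$ is not itself an axiom, but it holds after shrinking $\ep$, using compactness of $\tau$, proper discontinuity, injectivity of $\pi|_\tau$ and freeness of the action.

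The genuine gap is the step that produces hits in the past. The sets $\wt\vg_{[-S,0]}(\G\tau)$ are closed, not open, and an increasing sequence of closed sets covering a compact $K$ need not have a finite subcover. Openness is only available near hits at interior points: if $\wt\vg_t(\wt\vw)\in g\dot\tau$, nearby flow lines hit $g\tau$ at nearby times. But if a flow line meets $\G\tau$ only in $\G\,\partial\tau$, nearby flow lines may avoid $\G\tau$ for a very long time. The third axiom says nothing about which points of $\tau$ get hit.

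In fact this step cannot be closed from the three listed conditions, since they do not even guarantee one past hit. Here is a counterexample.
\begin{itemize}
\item Take a transversal $\tau_0$ with bounded return times and fix a primitive $g$. Let $U$ be a small open neighbourhood in $\tau_0$ of the finitely many points of $\tau_0\cap\G\cdot[g^-,g^+]$.
\item Let $\tau_1$ consist of $\tau_0\setminus U$ together with the points of $\overline U$ whose projected forward orbit stays $\eta$-close to $\gam_{[g]}$. This is a compact subset of $\tau_0$, so the first two conditions hold.
\item The third condition also holds. A forward orbit either meets $\G(\tau_0\setminus U)$ at arbitrarily large times, or from some time on meets $\G\tau_0$ only in $\G U$. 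In the latter case (bounded return times, $U$ small) it stays $\eta$-close to $\gam_{[g]}$ forever, so those later hits belong to $\G\tau_1$.
\item Now take a suitable $b\neq g^+$, e.g.\ $b=h^+$ with $h$ not commensurable with $g$ and $\gam_{[h]}$ not contained in the $\eta$-neighbourhood of $\gam_{[g]}$. The line $[g^-,b]$ is backward asymptotic to $[g^-,g^+]$ but does not stay in that neighbourhood.
\item All its hits of $\G\tau_0$ in the far past lie in $\G U$, and none of them is kept in $\tau_1$. So its points before the first hit of $\G\tau_1$ lie in no translate of $\calB_{\tau_1}$.
\end{itemize}

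So the lemma needs an additional hypothesis on $\tau$. Either of the following suffices:
\begin{itemize}
\item Every $\wt\vw$ satisfies $\wt\vg_t(\wt\vw)\in\G\tau$ for some $t\le 0$. Then the last such hit exists by your $\ep$-separation, and your covering argument works with no uniform bound at all.
\item The third condition holds with $\dot\tau$ in place of $\tau$. Then the analogous sets built from the open set $\mathrm{int}\,\Phi(\tau\times[-\ep,\ep])$ are open, and your compactness argument does give a uniform $S$.
\end{itemize}
Under such a hypothesis the rest of your proof is correct.
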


In addition, the interior of $\calB_\tau$ in $\wt\UG$ contains the points of form $\wt{\msf{g}}_t(\wt\vv)$ for $\wt\vv$ in $T_{\tau}^{-1}(\G\dot\tau)$ and $0<t<T_\tau(\wt\vv)$. This is equivalent to saying that the boundary $\partial\calB_\tau$ is contained in the union
\[\tau \cup R_\tau(\tau)\cup \{\wt{\msf{g}}_t(\wt\vv):\wt\vv\in T^{-1}_\tau(\G\cdot\partial\tau),t\in \R\}.\]

For our fixed measure $\frakm\in \calM_{\msf{g}}(\UG)$, we let $\wt\frakm$ denote its $\G$-invariant lift to $\wt\UG$. The conclusion of \cite[Lemma~3.1]{EPS} implies that there exists a transversal $\tau$ such that $\wt\frakm(\G\cdot\{\wt{\msf{g}}_t(\wt\vv):\wt\vv\in \partial\tau,t\in \R\})=0$. 
For such a $\tau$, \Cref{eq.tauemptyintersection} implies that the (uncountably many) sets $(\tau_{\ep} \cup R_{\tau_{\ep}}(\tau_{\ep}))$ are pairwise disjoint among all $\ep\geq 0$ small enough. Therefore, we can find such $\ep$ satisfying $\wt\frakm((\tau_{\ep} \cup R_{\tau_{\ep}}(\tau_{\ep})))=0$. Since $$\{\wt{\msf{g}}_t(\wt\vv):\wt\vv\in T^{-1}_\tau(\G\cdot\partial\tau),t\in \R\} \subset \G\cdot\{\wt{\msf{g}}_t(\wt\vv):\wt\vv\in \partial\tau,t\in \R\},$$
from this discussion we deduce the following lemma.

\begin{lemma}\label{lem.0measure}
    There exists a transversal $\tau$ such that $\frakm(\pi(\partial \calB_\tau))=0$.
\end{lemma}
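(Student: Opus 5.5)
The lemma is essentially assembled from the discussion preceding it; I will organize it as follows. Fix $\frakm\in\calM_{\msf g}(\UG)$ with $\G$-invariant lift $\wt\frakm$. First I invoke the conclusion of \cite[Lemma~3.1]{EPS} in our setting: there is a transversal $\tau$ with
\[\wt\frakm\bigl(\G\cdot\{\wt{\msf g}_t(\wt\vv):\wt\vv\in\partial\tau,\ t\in\R\}\bigr)=0.\]
The argument of \cite{EPS} only uses local product structure of flow boxes and the fact that boundaries of small balls can be chosen of measure zero for the transverse measure. The transverse measure here comes from the current $\Lam$ attached to $\frakm$ via \Cref{lem:homeoCM}. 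For each $(a,b)$, among the concentric neighborhoods in $\ppG$ only countably many radii can carry boundary mass, so the standard argument transfers. Translating by small flow times preserves this property, since the set above is $\wt{\msf g}$-invariant and $\partial\tau_\ep=\wt{\msf g}_\ep(\partial\tau)$.

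Next I handle the two remaining pieces of $\partial\calB_{\tau_\ep}$, namely $\tau_\ep$ and $R_{\tau_\ep}(\tau_\ep)$. For $\ep\in[0,\ep_0]$ small, \Cref{eq.tauemptyintersection} says the sets $S_\ep:=\tau_\ep\cup R_{\tau_\ep}(\tau_\ep)$ are pairwise disjoint. Their images in $\UG$ are also pairwise disjoint, by choosing $\ep_0$ so that $\pi$ embeds $\Phi(\tau\times[-\ep_0,\ep_0])$ and using that $R_{\tau_\ep}(\tau_\ep)\subset\G\tau_\ep$. Since $\frakm$ is a finite measure, only countably many $\ep$ can have $\frakm(\pi(S_\ep))>0$, so I pick $\ep$ with $\frakm(\pi(S_\ep))=0$.

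Finally I use the containment stated before the lemma:
\[\partial\calB_{\tau_\ep}\subset\tau_\ep\cup R_{\tau_\ep}(\tau_\ep)\cup\{\wt{\msf g}_t(\wt\vv):\wt\vv\in T_{\tau_\ep}^{-1}(\G\cdot\partial\tau_\ep),\ t\in\R\}.\]
The last set lies in $\G\cdot\{\wt{\msf g}_t(\wt\vv):\wt\vv\in\partial\tau,\ t\in\R\}$, which is $\wt\frakm$-null. Projecting by $\pi$, a $\G$-invariant $\wt\frakm$-null set has $\frakm$-null image, because $\wt\frakm$ restricted to a fundamental domain pushes forward to $\frakm$ (\Cref{lem:tanaka}). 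This gives $\frakm(\pi(\partial\calB_{\tau_\ep}))=0$, and taking $\tau_\ep$ as the transversal proves the lemma.

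The main obstacle is the first step, adapting \cite[Lemma~3.1]{EPS} to Mineyev's flow space. If needed, I would build $\tau$ explicitly as the image of $\overline{U}\times\{0\}$ for a small product neighborhood $\overline U=\overline{V_1}\times\overline{V_2}\subset\ppG$ of visual balls, with radii chosen so that $\Lam(\partial V_1\times\pG)=\Lam(\pG\times\partial V_2)=0$, and finitely many such pieces made disjoint modulo $\G$. The boundary of such a $\tau$ then consists of flow lines through $\partial U$, which are $\wt\frakm$-null because $\wt\frakm=\Lam\otimes\Leb$.
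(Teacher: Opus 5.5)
Your proposal is correct and follows the paper's argument: use \cite[Lemma~3.1]{EPS} to get a transversal $\tau$ whose $\G$-translated, flow-saturated boundary is $\wt\frakm$-null; use the pairwise disjointness in \Cref{eq.tauemptyintersection} and a countability argument to choose $\ep$ with $\tau_\ep\cup R_{\tau_\ep}(\tau_\ep)$ null; then conclude from the stated containment for $\partial\calB_{\tau_\ep}$. The only difference is cosmetic: you run the countability step in $\UG$ with the finite measure $\frakm$ (after correctly checking that the images $\pi(S_\ep)$ are disjoint), whereas the paper runs it with the $\sigma$-finite measure $\wt\frakm$ on $\wt\UG$, and you spell out a few details the paper leaves implicit.
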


Our efforts in the construction of $\calB_\tau$ are justified by the next lemma.

\begin{lemma}\label{lem.continuityfrakm}
 Let $\calB$ be a precompact Borel fundamental domain for $\wt\UG$. If $\frakm(\pi(\partial \calB))=0$ then $L_\calB$ is continuous at $\frakm$. 
\end{lemma}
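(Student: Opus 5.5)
We use the uniform finite-time approximation \Cref{eq.L_B}: for every $n\geq 1$ and every $\frakm'\in \calM_{\vg}(\UG)$,
\[
\Bigl|L_\calB(\frakm')-\tfrac{1}{n}\textstyle\int \P_\calB(\vv,n)\,d\frakm'\Bigr|\leq \frac{C}{n},
\]
since all measures are probability measures and $C$ depends only on $\calB$. Thus $L_\calB$ is a uniform limit of the functionals $\frakm'\mapsto \frac1n\int \P_\calB(\cdot,n)\,d\frakm'$. It therefore suffices to show that for each fixed $n$ this functional is continuous at $\frakm$: given $\frakm_k\to\frakm$ weak$^\ast$ and $\eta>0$, we first pick $n$ with $2C/n<\eta/2$, and then continuity of the $n$-th functional at $\frakm$ gives $|L_\calB(\frakm_k)-L_\calB(\frakm)|<\eta$ for $k$ large.

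For fixed $n$, the key properties of $f_n(\vv):=\P_\calB(\vv,n)=\varphi(o,g_\calB(\vv,n))$ are the following.

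\emph{Boundedness.} Since $\calB$ is precompact, $\wt\vg_n(\wh\vv)$ lies within distance $n$ of $\calB$. Hence $g_\calB(\vv,n)$ ranges over a finite subset of $\G$, because $\G$ acts properly (\Cref{thm:mineyev}~(2)). So $f_n$ is bounded and Borel, taking finitely many values.

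\emph{Continuity off a null set.} We claim $f_n$ is continuous at every $\vv$ with $\vv\notin\pi(\partial\calB)$ and $\vg_n(\vv)\notin\pi(\partial\calB)$. Indeed, if $\vv\notin \pi(\partial\calB)$ then $\wh\vv$ lies in the interior of $\calB$, so nearby $\vv'$ have lifts $\wh{\vv'}$ near $\wh\vv$, and these lifts remain in the interior of $\calB$. Similarly, $\wt\vg_n(\wh\vv)$ lies in the interior of $g\calB$ for $g=g_\calB(\vv,n)$. By continuity of $\wt\vg_n$ (\Cref{thm:mineyev}~(4)), $\wt\vg_n(\wh{\vv'})$ stays in the interior of $g\calB$, so $g_\calB(\vv',n)=g$ locally. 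Thus $f_n$ is locally constant there. The discontinuity set $D_n$ is therefore contained in $\pi(\partial\calB)\cup\vg_{-n}(\pi(\partial\calB))$, which has $\frakm$-measure zero by hypothesis and $\vg$-invariance of $\frakm$.

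\emph{Conclusion.} A bounded Borel function whose discontinuity set is $\frakm$-null is $\frakm$-Riemann integrable. By the portmanteau theorem for weak$^\ast$ convergence, $\int f_n\,d\frakm_k\to\int f_n\,d\frakm$. This follows from approximating $f_n$ above and below by its upper and lower semicontinuous envelopes, which agree $\frakm$-a.e., and then applying the portmanteau theorem.

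The main obstacle is the local-constancy claim. It requires some care to identify $\pi(\partial\calB)$ with the set of points whose $\calB$-lift fails to be an interior point, and to ensure the boundary condition is also imposed at time $n$. Invariance of $\frakm$ handles the latter.
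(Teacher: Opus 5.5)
Your proposal is correct and follows the paper's proof: $g_\calB(\cdot,n)$ is locally constant off an $\frakm$-null closed set, so $\frakn\mapsto\int\P_\calB(\cdot,n)\,d\frakn$ is continuous at $\frakm$, and the uniform bound \Cref{eq.L_B} transfers this to $L_\calB$. Your version is slightly more careful than the paper's, which excludes only $\vg_{-t}(\pi(\partial\calB))$ even though $g_\calB(\cdot,t)$ can also jump on $\pi(\partial\calB)$ itself (this set is $\frakm$-null by hypothesis). You also spell out the portmanteau step, which the paper leaves implicit.
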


\begin{proof}
First, we note that for each $t$, the function $\vv \mapsto g_\calB(\vv,t)$ is continuous (in fact, locally constant) on the open set $\UG \bs \msf{g}_{-t}(\pi(\partial \calB))$. Hence, $\msf{g}$-invariance of $\frakm$ and the assumption $\frakm(\pi(\partial \calB))=0$ imply that 
\[\frakn \mapsto \int{\P_\calB(\vv,t)}{d\frakn(\vv)}\]
is continuous at $\frakm$. This fact combined with \Cref{eq.L_B} gives us the continuity of $L_\calB$ at $\frakm$.
\end{proof}

As a consequence of \Cref{lem.0measure} and \Cref{lem.continuityfrakm}, we deduce the following.

\begin{corollary}\label{cor.Lcontinuous}
    $L:\calM_{\msf{g}}(\UG)\ra \R$ is continuous.
\end{corollary}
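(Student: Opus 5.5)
Fix an arbitrary $\frakm\in \calM_{\msf{g}}(\UG)$; continuity of $L$ at $\frakm$ is all that is needed. By \Cref{lem.0measure}, choose a transversal $\tau$ with $\frakm(\pi(\partial\calB_\tau))=0$, and set $\calB=\calB_\tau$.

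First, $\calB_\tau$ is admissible in the definition of $L$. It is a Borel fundamental domain by the lemma following the definition of $\calB_\tau$. It is precompact because $\tau$ is compact and the first return time $T_\tau$ is bounded above. The bound on $T_\tau$ follows from the third transversal property together with compactness of $\UG$: the open cover argument via the embedding $\Phi:\tau\times[-\ep,\ep]\ra\wt\UG$ gives a uniform upper bound for the time needed to hit $\G\tau$. Consequently $\calB_\tau\subset \wt{\msf{g}}_{[0,T_{\max}]}(\tau)$, which is compact.

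Next, since $L$ does not depend on the choice of precompact Borel fundamental domain, $L=L_{\calB_\tau}$ on all of $\calM_{\msf{g}}(\UG)$, and not only at $\frakm$. \Cref{lem.continuityfrakm} then says that $L_{\calB_\tau}$ is continuous at $\frakm$, so $L$ is continuous at $\frakm$. Since $\frakm$ was arbitrary, $L$ is continuous.

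The main point needing care is that independence of $\calB$ holds globally, so that a domain adapted to $\frakm$ computes $L$ at every nearby $\frakn$. This is exactly the content of the independence discussion before the definition of $L$, via the uniform bound $|\P_{\calB'}-\P_\calB|\leq C'$, which is what allows continuity at $\frakm$ to transfer from $L_{\calB_\tau}$ to $L$. The only other ingredient, precompactness of $\calB_\tau$, is routine.
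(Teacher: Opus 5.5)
Your main line is the paper's argument. You fix $\frakm$, take the transversal $\tau$ of \Cref{lem.0measure}, apply \Cref{lem.continuityfrakm} to $\calB_\tau$, and use that $L=L_{\calB}$ for \emph{every} precompact Borel fundamental domain, so continuity of $L_{\calB_\tau}$ at $\frakm$ is continuity of $L$ at $\frakm$. That is correct; the paper's proof consists of exactly these two citations.

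The step you add, precompactness of $\calB_\tau$, is one the paper uses tacitly, and the reason you give for it does not work. An open-cover argument on $\UG$ needs, for each $\wt\vv$, a neighbourhood all of whose points reach $\G\tau$ within a bounded time. This holds if the forward orbit of $\wt\vv$ meets $\G\dot\tau$, since a point of $g\dot\tau$ is interior to $g\Phi(\tau\times[-\ep,\ep])$. It fails if the orbit meets $\G\tau$ only in $\G\partial\tau$, and the third transversal axiom does not exclude this.

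In fact the three listed axioms alone need not bound $T_\tau$. Here is a heuristic example, using the local product structure. Start from a good transversal and a point $p$ where a periodic orbit crosses it. Delete a thin open cusp along the local unstable set of $p$, keeping $p$ and its local stable set. This preserves all three axioms, since forward orbits staying near the periodic orbit cross the section on the local stable set of $p$. Yet points of $\tau$ near $p$ shadow the periodic orbit for arbitrarily many periods before returning to $\G\tau$.

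The fix is to work with transversals such that every orbit meets $\G\dot\tau$, as happens for transversals built from finitely many small flow boxes. This property passes from $\tau$ to $\tau_\ep$ because $\partial\tau_\ep=\wt{\msf{g}}_\ep(\partial\tau)$. With it, your compactness argument on $\UG$ gives $T_{\max}:=\sup T_\tau<\infty$, and $\calB_\tau\subset\wt{\msf{g}}_{[0,T_{\max}]}(\tau)$ is compact.
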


Our next goal is to show that $L$ is a continuous extension of $\ell_{\varphi}$.

\begin{lemma}\label{lem.Lrational}
    Let $[g]\in [\G]'$ be a non-trivial conjugacy class and $\frakm_{[g]}\in \calM_{\msf{g}}(\UG)$ supported on $\gam_{[g]}$ as in \Cref{eq:def.m_g}. Then 
    \begin{equation*}
L(\frakm_{[g]})=\frac{\ell_{\varphi}[g]}{\ell_{\hat d}[g]}.
    \end{equation*}
\end{lemma}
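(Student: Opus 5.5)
Since $L$ does not depend on the choice of precompact Borel fundamental domain, we are free to pick a convenient $\calB$ (any one will do, since the measure $\frakm_{[g]}$ is explicit). The plan is to compute $L(\frakm_{[g]})$ through the estimate \Cref{eq.L_B} with the times $n=m\ell_{\hat d}[g]$, or more directly via the limit definition along the sequence $t_m=m\ell_{\hat d}[g]$.

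Fix the lift $\wt\vv_0=(g^-,g^+,0)$ of a point of $\gam_{[g]}$. By the definition of the $\G$-action on $\wt\UG$ (see the proof of duality in \Cref{prop.dictionaryBowen}, where $g^m\wt\vv_0=\wt{\msf{g}}_{m\ell_{\hat d}[g]}(\wt\vv_0)$), flowing for time $m\ell_{\hat d}[g]$ along the axis corresponds to translating by $g^m$. Take any $\vv\in\gam_{[g]}$, say $\vv=\pi(\wt{\msf{g}}_s(\wt\vv_0))$ with $s\in[0,\ell_{\hat d}[g])$, and let $\wh\vv=h_s\wt{\msf{g}}_s(\wt\vv_0)\in\calB$ for some $h_s\in\G$. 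Then
\[\wt{\msf{g}}_{m\ell_{\hat d}[g]}(\wh\vv)=h_s g^m\wt{\msf{g}}_s(\wt\vv_0)=h_sg^mh_s^{-1}\wh\vv\in h_sg^mh_s^{-1}\calB,\]
so $g_\calB(\vv,m\ell_{\hat d}[g])=h_sg^mh_s^{-1}$ exactly. The key point is that the elements $h_s$ range over a finite set $A\subset\G$. This holds because $\calB$ is precompact and $s$ ranges over a compact interval, so the points $h_s$ only need to bring a compact piece of the axis into a precompact set, and proper discontinuity (\Cref{thm:mineyev}) gives finiteness.

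By \cite[Lemma~3.3(2)]{CRS} (already used above to compare fundamental domains), we get $|\varphi(o,h_sg^mh_s^{-1})-\varphi(o,g^m)|\le C'$ uniformly in $s,m$. Hence $\P_\calB(\vv,m\ell_{\hat d}[g])=\varphi(o,g^m)+O(1)$ uniformly on $\gam_{[g]}$, and integrating against the probability measure $\frakm_{[g]}$ gives
\[\frac{1}{m\ell_{\hat d}[g]}\int\P_\calB(\vv,m\ell_{\hat d}[g])\,d\frakm_{[g]}=\frac{\varphi(o,g^m)}{m\ell_{\hat d}[g]}+O(1/m)\longrightarrow\frac{\ell_\varphi[g]}{\ell_{\hat d}[g]}.\]
Since the limit defining $L$ exists along all $t\to\infty$, it agrees with this subsequential limit, which proves the claim.

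The only step requiring care is the finiteness of $A$ together with the use of the bounded-conjugation estimate. If one prefers to avoid \cite[Lemma~3.3(2)]{CRS}, an alternative is to use the Gromov-product bound \Cref{eq.BBTd_F} along the geodesic $t\mapsto\wt{\msf{g}}_t(\wh\vv)$, exactly as in the proof of \Cref{lem.almostcocycle}. That gives $\varphi(o,h_sg^mh_s^{-1})=\varphi(o,h_s)+\varphi(h_s,h_sg^m)+\varphi(h_sg^m,h_sg^mh_s^{-1})+O(1)$, where the outer terms are bounded because $h_s\in A$, and the middle term equals $\varphi(o,g^m)$ by invariance.
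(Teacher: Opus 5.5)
Your proof is correct and follows the paper's route: compute $g_\calB(\vv,m\ell_{\hat d}[g])$ along the closed orbit, integrate against $\frakm_{[g]}$, and pass to the limit along $t=m\ell_{\hat d}[g]$. The only difference is the choice of fundamental domain. The paper first reduces to primitive $g$ and picks $\calB$ containing $\{(g^-,g^+)\}\times[0,\ell_{\hat d}[g])$, so that $g_\calB(\vv,k\ell_{\hat d}[g])=g^k$ exactly. You keep an arbitrary $\calB$ and absorb the conjugating elements $h_s$, which lie in a finite set by proper discontinuity, using the bounded-perturbation estimate of \cite[Lemma~3.3(2)]{CRS}. This costs you an $O(1/m)$ error, but it avoids both the primitivity reduction and the construction of a tailored fundamental domain.
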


\begin{proof}
Recall that $\hat d$ is the strongly hyperbolic metric used to define $\UG$. Without loss of generality suppose that $g$ is primitive, and consider a precompact Borel fundamental domain $\calB$ containing $\{(g^-,g^+)\}\times [0,\ell_{\hat d}[g])$. Then for $k\in \Z$ and $\vv\in \UG$ with lift $\wh \vv= (g^-,g^+,s)$ and $0\leq s<\ell_{\hat d}[g]$, we get $g_\calB(\vv,k\ell_{\hat d}[g])=g^k$. This observation and \Cref{eq:def.m_g} imply 
$$\int{\P_\calB(\vv,k\ell_{\hat d}[g])}{d\frakm_{[g]}}(\vv)=\frac{1}{\ell_{\hat d}[g]}\int_{\gam_{[g]}}{\varphi(o,g^k)dt}=\varphi(o,g^k),$$ and hence
\[L(\frakm_{[g]})=\lim_{k\to \infty}{\frac{1}{k\ell_{\hat d}[g]}}{\int{\P_\calB(\vv,k\ell_{\hat d}[g])}{d\frakm_{[g]}(\vv)}}=\lim_{k\to \infty}{\frac{\varphi(o,g^k)}{k\ell_{\hat d}[g]}}=\frac{\ell_{\varphi}[g]}{\ell_{\hat d}[g]}. \qedhere\]
\end{proof}

Since real multiples of rational currents are dense in $\calC_\G$, $\ell_{\varphi}$ has at most one continuous extension from conjugacy classes in $\G$ to $\calC_\G$. Therefore, from \Cref{cor.Lcontinuous} and \Cref{lem.Lrational} we get the following, reproving a result of Kapovich and Martinez-Granado \cite{kapovich-martinezgranado}.

\begin{corollary}\label{coro.ellL}
For any $\varphi\in \calH_\G$ there exists a unique continuous extension of $\ell_\p$ from $[\G]'$ to $\calC_\G$. Such an extension satisfies
    \[\ell_{\varphi}(\Lam)=L(\frakm_\Lam)\]
for any $\Lam \in \calC_\G$.
\end{corollary}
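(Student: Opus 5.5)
The statement to prove is \Cref{coro.ellL}: $\ell_\varphi$ (the $\ell_\p$ in the statement should read $\ell_\varphi$) extends uniquely and continuously from $[\G]'$ to $\calC_\G$, and the extension is $\Lam\mapsto L(\frakm_\Lam)$. Here $\frakm_\Lam$ is the unnormalized invariant measure $\Lam\otimes\Leb$ pushed down to $\UG$. I will first extend $L$ homogeneously to all finite $\vg$-invariant measures. For $\frakm\neq 0$ set $L(\frakm):=\frakm(\UG)\,L(\frakm/\frakm(\UG))$, and put $L(0)=0$. This agrees with the defining formula $\lim_t \frac1t\int\P_\calB(\vv,t)\,d\frakm$, which is linear in $\frakm$. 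I then define $\ell_\varphi(\Lam):=L(\frakm_\Lam)$ for $\Lam\in\calC_\G$.

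\textbf{Agreement on rational currents.} For $[g]\in[\G]'$ primitive, the rational current $\eta_{[g]}$ is the sum of Dirac masses on the translates of $(g^-,g^+)$. Its measure $\frakm_{\eta_{[g]}}$ is Lebesgue measure along the closed orbit $\gam_{[g]}$, so it has total mass $\ell_{\hat d}[g]$ and equals $\ell_{\hat d}[g]\,\frakm_{[g]}$. \Cref{lem.Lrational} then gives $L(\frakm_{\eta_{[g]}})=\ell_{\hat d}[g]\cdot \ell_\varphi[g]/\ell_{\hat d}[g]=\ell_\varphi[g]$. Non-primitive classes follow by homogeneity, since both $\eta_{[h^m]}=m\,\eta_{[h]}$ and $\ell_\varphi[h^m]=m\,\ell_\varphi[h]$. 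So the extension restricts to $\ell_\varphi$ under $[g]\mapsto \eta_{[g]}$.

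\textbf{Continuity.} Let $\Lam_k\to\Lam$ weakly$^*$ in $\calC_\G$. By the remark after \Cref{lem:tanaka}, $\frakm_{\Lam_k}\to\frakm_\Lam$ weakly$^*$, and in particular the total masses converge, since $1\in C(\UG)$. If $\Lam\neq 0$, then the normalized measures converge in $\calM_\vg(\UG)$. \Cref{cor.Lcontinuous} gives convergence of $L$ on these normalized measures, and multiplying by the convergent masses gives $L(\frakm_{\Lam_k})\to L(\frakm_\Lam)$. If $\Lam=0$, then $L$ is bounded on $\calM_\vg(\UG)$ because it is continuous on a compact set. Hence $|L(\frakm_{\Lam_k})|\le \|L\|_\infty\,\frakm_{\Lam_k}(\UG)\to 0$. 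This last case is the one to be careful about, and boundedness settles it.

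\textbf{Uniqueness.} Any two continuous extensions that are compatible with scaling agree on positive multiples of rational currents. Both the map $\Lam\mapsto L(\frakm_\Lam)$ and the intended extension are homogeneous. By Bonahon's density theorem these multiples are dense in $\calC_\G$, so the two extensions coincide. The formula $\ell_\varphi(\Lam)=L(\frakm_\Lam)$ follows from the construction.
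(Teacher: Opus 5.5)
Your proposal is correct and follows the paper's argument. Like the paper, you identify $\frakm_{\eta_{[g]}}=\ell_{\hat d}[g]\,\frakm_{[g]}$ and use \Cref{lem.Lrational} for agreement on rational currents, \Cref{cor.Lcontinuous} for continuity, and density of positive multiples of rational currents for uniqueness. You also correctly make explicit three points the paper leaves implicit: extending $L$ homogeneously to finite invariant measures (needed because $\frakm_\Lam$ is unnormalized), treating the case $\Lam=0$, and noting that uniqueness holds among homogeneous extensions.
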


\begin{proof}
    From \Cref{eq:intFhatF} it follows that $\frakm_{\eta_{[g]}}=\ell_{\hat d}[g]\frakm_{[g]}$ for any $[g]\in [\G]'$. Then the result follows immediately from \Cref{lem.Lrational} and \Cref{cor.Lcontinuous}. 
\end{proof}

\begin{remark}\label{rmk:extensioninMC} In virtue of \Cref{prop:MC(H)}, the conclusion of the above corollary also holds for $\varphi \in \MC(\calH_\G)$, so that $\ell_\varphi$ also extends continuously (and $\R^+$-linearly) to $\calC_\G$. This is not necessarily true for a left-invariant pseudometric on $\G$ that is quasi-isometric to metrics in $\calD_\G$. For example, take $\p=d+\hat d$ for $d$ a metric in $\calD_\G$ and $\hat d$ a left-invariant metric on $\G$ whose length function $\ell_{\hat d}$ does not admit a continuous extension to $\calC_\G$, see \cite[Proposition~11]{bonahon.currentshpygroups}.
\end{remark}

Recall that we have fixed $\p\in \calH_\G^{++}$. We now recover the mean distortion $\tau(\varphi/\p)$ from $L_{\varphi}$.

\begin{lemma}\label{lem.tauLBM}
      Let $\Lam_{\p}\in \calC_\G$ be a BMS current for $\p$, normalized so that $\frakm_{\p}:=\frakm_{\Lam_{\p}} \in \calM_{\msf{g}}(\UG)$. Then 
    \begin{equation}\label{eq:lemtau}
        L_{\varphi}(\frakm_\p)=\tau(\varphi/\p)/\tau(\hat d/\p).
    \end{equation}
\end{lemma}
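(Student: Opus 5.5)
The plan is to compute $L_{\varphi}(\frakm_\p)$ by Birkhoff/Kingman along $\frakm_\p$-typical orbits, and then use \Cref{lem:limittauQC} to identify the resulting pointwise limit with a mean distortion. First, $\frakm_\p$ is ergodic, since $\Lam_\p$ is an ergodic current and the correspondence $\Lam\mapsto \frakm_\Lam=\Lam\otimes\Leb$ preserves ergodicity. By \Cref{lem.almostcocycle}, $\P_\calB$ is a subadditive cocycle up to a bounded error; adding the constant $C$ gives a genuinely subadditive one. Kingman's theorem then gives, for $\frakm_\p$-a.e.\ $\vv$,
\[\lim_{t\to\infty}\frac{1}{t}\varphi(o,g_\calB(\vv,t)) = L_\varphi(\frakm_\p).\]
Applying the same argument with $\varphi$ replaced by $\hat d$ gives $L_{\hat d}(\frakm_\p)$. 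By \Cref{lem.Lrational} (or directly, since $t\mapsto g_\calB(\vv,t)\wh\vp$ is within bounded distance of a $d_{\wt\UG}$-geodesic and the orbit map is a rough isometry by \Cref{thm:mineyev}~(2)), we have $\hat d(o,g_\calB(\vv,t))=t+O(1)$. Hence $L_{\hat d}\equiv 1$ on probability measures, and
\[L_\varphi(\frakm_\p)=\lim_{t\to\infty}\frac{\varphi(o,g_\calB(\vv,t))}{\hat d(o,g_\calB(\vv,t))}.\]

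Second, I would identify the endpoints. If $\wh\vv=(a,b,s)$, then $g_\calB(\vv,t)$, $t\ge 0$, is a uniform quasigeodesic ray in $\G$ converging to $b$. The set of good $\vv$ has full $\frakm_\p$-measure, so its lifts have full $\Lam_\p\otimes\Leb$-measure. Since $d\Lam_\p\asymp d\hat\nu\, d\nu$ with bounded positive density on $\ppG$, the set of forward endpoints $b$ that occur has full $\nu$-measure, where $\nu$ is the quasi-conformal measure of $\p$. Now \Cref{lem:limittauQC} applies along these rays. It gives $\varphi(o,g_k)/\p(o,g_k)\to\tau(\varphi/\p)$ and $\hat d(o,g_k)/\p(o,g_k)\to\tau(\hat d/\p)$ for $\nu$-a.e.\ $b$ and every quasigeodesic ray $(g_k)$ toward $b$. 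Taking the quotient,
\[\frac{\varphi(o,g_\calB(\vv,t))}{\hat d(o,g_\calB(\vv,t))}\to\frac{\tau(\varphi/\p)}{\tau(\hat d/\p)}.\]
Note that $\tau(\hat d/\p)>0$ because $\hat d$ and $\p$ are both in $\calH_\G^{++}$. Combining this with the first step yields \Cref{eq:lemtau}. Passing from the continuous parameter $t$ to a sequence $(g_k)$ is harmless, because consecutive elements are at bounded distance and both $\varphi$ and $\p$ are coarsely Lipschitz.

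The main obstacle is the measure-theoretic transfer: a full $\frakm_\p$-measure set of $\vv$ must yield forward endpoints forming a full $\nu$-measure set. I would handle this by Fubini. Suppose a Borel set $E\subset\pG$ has $\nu(E)=0$. Then $\Lam_\p(\pG\times E)=0$, because $\alpha$ is essentially bounded and $e^{2v_\p\gpr{a}{b}^\p_o}$ is locally bounded off the diagonal; the last point requires a local argument on compact subsets of $\ppG$. It follows that the $\G$-saturated set of $\wt\vv$ whose forward endpoint lies in $E$ is $\wt\frakm_\p$-null. A further subtlety is that the statement of \Cref{lem:limittauQC} holds for every quasigeodesic ray, which covers the uniformity of the quasigeodesic constants of $g_\calB(\vv,\cdot)$. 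These constants depend only on $\calB$, by \Cref{thm:mineyev}~(2),(3).
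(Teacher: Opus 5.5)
Your proposal is correct and follows essentially the paper's argument. Like the paper, you show that $g_\calB(\vv,t)$ is a uniform quasigeodesic ray toward the forward endpoint $b$, use that $\Lam_\p$ lies in the measure class of $\hat\nu\otimes\nu$ to apply \Cref{lem:limittauQC} to both $\varphi$ and $\hat d$ for $\frakm_\p$-a.e.\ $\vv$, and conclude via \Cref{lem.almostcocycle} and Kingman's theorem. The only differences are minor: you identify the Kingman limit with $L_\varphi(\frakm_\p)$ through ergodicity, whereas the paper integrates the a.e.\ constant limit against the probability measure $\frakm_\p$ and so never needs ergodicity, and you spell out the transfer from $\nu$-null endpoint sets to $\frakm_\p$-null sets, which the paper leaves implicit.
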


\begin{proof}
    Let $\calB\subset \wt\UG$ be any precompact fundamental domain and fix $\vp\in \UG$ with lift $\wh\vp\in \calB$. Then by \Cref{thm:mineyev}~(3), there is a constant $C'>0$ such that for any $\vv\in \UG$ we have 
    $|d_{\wt\UG}(\wh\vp,g_{\calB}(\vv,t)\wt\vp)-t|\leq C'$. The same happens if we replace $d_{\wt\UG}(\wh\vp,g_{\calB}(\vv,t)\wt\vp)$ with $\hat d(o,g_\calB(\vv,t))$ by \Cref{thm:mineyev}~(3). This implies that for $\wt \vv=(a,b,s)$ with $a\neq b$ in $\partial \G$, the map $t \mapsto g_\calB(\vv,t)$ is a uniform quasigeodesic ray converging in $\G$ to $b$. 

    In addition, $\Lam_\p$ is in the measure class of a product of quasi-conformal measures, and hence by \Cref{lem:limittauQC}, for $\Lam_\p$-almost every $(a,b)\in \partial^2\G$ and $\vv\in \UG$ with lift $\wh\vv=(a,b,s)$ we have 
    \[\frac{\varphi(o,g_{\calB}(\vv,t))}{\p(o,g_\calB(\vv,t))} \to \tau(\varphi/\p) \ \ \text{ and } \ \ \frac{\p(o,g_\calB(\vv,t))}{t} \to \tau(\hat d/\p)^{-1} \quad \text{ as }t\to \infty.\]
    In consequence, we have that $$\P_\calB(\vv,t)/t \to \tau(\varphi/\p)/\tau(\hat d/\p) \quad \text{as }t\to \infty$$ for $\frakm_\p$-almost every $\vv$. Then by \Cref{lem.almostcocycle}, Kingman's subadditive ergodic theorem and the fact that $\frakm_\p$ is a probability measure, we deduce \Cref{eq:lemtau}, as desired.
\end{proof}

\begin{proof}[Proof of \Cref{thm.tauvsell}]
    After rescaling $\Lam_\p$, we can assume without loss of generality that $\frakm_\p:=\frakm_{\Lam_\p}$ is a probability measure on $\UG$. Then, applying \Cref{lem.tauLBM} to both $L_{\varphi}$ and $L_\p$ we get
    \begin{equation}\label{eq.proofthmtauvsell}
        \tau(\varphi/\p)=\frac{\tau(\varphi/\p)/\tau(\hat d/\p)}{\tau(\p/\p)/\tau(\hat d/\p)}=\frac{L_{\varphi}(\frakm_\p)}{L_\p(\frakm_\p)}.
    \end{equation}
    By \Cref{coro.ellL}, the right hand side of \Cref{eq.proofthmtauvsell} equals $\ell_{\varphi}(\Lam_\p)/\ell_\p(\Lam_\p)$, and the conclusion follows.
\end{proof}

For the proof of \Cref{thmmain.H_Gnotcomplete} in the next section we require the following consequence of \Cref{thm.tauvsell}, for which we recall the notion of duality from \Cref{def:dualC-H}.

\begin{corollary}\label{cor:dualintegral}
    Let $\p\in \calH_\G$ be dual to $F\in C(\UG)$. Then for any geodesic current $\Lam\in \calC_\G$ we have
\begin{equation*}
    \int{F}{d\frakm_{[\Lam]}=\frac{\ell_{\p}(\Lam)}{\ell_{\hat d}(\Lam)}}.
\end{equation*}
In particular, if $\BMS(\varphi)\in \bbP\calC_\G$ is the BMS current for $\varphi\in \calH_\G$, then we have
\begin{equation*}
    \int{F}{d\frakm_{\BMS(\varphi)}}=\frac{\tau(\p/\varphi)}{\tau(\hat{d}/\varphi)}.
\end{equation*}
\end{corollary}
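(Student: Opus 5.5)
The first identity should follow by comparing two continuous functions on $\calC_\G$ that agree on rational currents. Consider the maps
\[
\Lam \mapsto \ell_{\hat d}(\Lam)\int F\,d\frakm_{[\Lam]} \quad\text{and}\quad \Lam\mapsto \ell_{\p}(\Lam),
\]
defined on $\calC_\G\setminus\{0\}$. The first map can be rewritten as $\ell_{\hat d}(\Lam)\int F\,d\frakm_{\Lam}/\frakm_\Lam(\UG)$. By \Cref{coro.ellL} we have $\ell_{\hat d}(\Lam)=L_{\hat d}(\frakm_\Lam)$.

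I expect $L_{\hat d}(\frakm)=\frakm(\UG)$. The reason is that $\P_\calB(\vv,t)=\hat d(o,g_\calB(\vv,t))=t+O(1)$, as shown in the proof of \Cref{lem.tauLBM} via \Cref{thm:mineyev}~(2),(3). The first map therefore equals $\int F\,d\frakm_\Lam$. This expression is continuous in $\Lam$, because $\Lam\mapsto\frakm_\Lam$ is weak$^\ast$ continuous by \Cref{lem:tanaka} and the remark following it.

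The map $\Lam\mapsto\ell_\p(\Lam)$ is continuous by \Cref{coro.ellL}, and both maps are $\R^+$-homogeneous. On rational currents they agree. Indeed, $\frakm_{\eta_{[g]}}=\ell_{\hat d}[g]\frakm_{[g]}$, so
\[
\int F\,d\frakm_{\eta_{[g]}}=\ell_{\hat d}[g]\int F\,d\frakm_{[g]}=\ell_\p[g]
\]
by duality (\Cref{def:dualC-H}). Here I also use that $\ell_\p(\eta_{[g]})=\ell_\p[g]$, which is consistent with \Cref{lem.Lrational}. Positive multiples of rational currents are dense (Bonahon), so the two maps agree everywhere. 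Dividing by $\ell_{\hat d}(\Lam)=\frakm_\Lam(\UG)>0$ then gives the first formula.

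For the second formula, I take $\Lam=\Lam_\varphi$, a BMS current for $\varphi$ (here $\varphi\in\calH_\G^{++}$ is implicit). Applying the first formula,
\[
\int F\,d\frakm_{\BMS(\varphi)}=\frac{\ell_\p(\Lam_\varphi)}{\ell_{\hat d}(\Lam_\varphi)}.
\]
Next I apply \Cref{thm.tauvsell} twice, once with numerator $\p$ and once with numerator $\hat d$, both against $\varphi$. This gives $\ell_\p(\Lam_\varphi)=\tau(\p/\varphi)\,\ell_\varphi(\Lam_\varphi)$ and $\ell_{\hat d}(\Lam_\varphi)=\tau(\hat d/\varphi)\,\ell_\varphi(\Lam_\varphi)$. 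Taking the quotient cancels $\ell_\varphi(\Lam_\varphi)$, which is positive since $\varphi$ is properly positive, and yields $\tau(\p/\varphi)/\tau(\hat d/\varphi)$.

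The main obstacle is the normalization step $L_{\hat d}(\frakm)=\frakm(\UG)$, or equivalently $\ell_{\hat d}(\Lam)=\frakm_\Lam(\UG)$. This requires the bounded error $|\hat d(o,g_\calB(\vv,t))-t|\le C'$ to hold uniformly in $\vv$. That uniformity comes from \Cref{thm:mineyev}~(2),(3) together with precompactness of $\calB$. Once it holds, \Cref{eq.L_B} gives the identity directly.
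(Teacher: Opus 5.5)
Your proof is correct and follows the paper's argument: for the first identity, continuity of $\ell_\p$ (\Cref{coro.ellL}), duality on rational currents, and Bonahon density; for the second, \Cref{thm.tauvsell} applied twice against $\varphi$. The normalization you flag as the main obstacle is actually free. If you compare $\Lam\mapsto\int F\,d\frakm_\Lam$ with $\Lam\mapsto\ell_\p(\Lam)$ directly and apply that same density argument to the dual pair consisting of the constant function $1$ and $\hat d$, you get $\frakm_\Lam(\UG)=\ell_{\hat d}(\Lam)$ at once. Alternatively, compare the continuous, projectively invariant functions $\int F\,d\frakm_{[\Lam]}$ and $\ell_\p(\Lam)/\ell_{\hat d}(\Lam)$ on $\bbP\calC_\G$, which needs no normalization at all.
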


\begin{proof}
    Since $\p$ and $F$ are dual, the first identity follows immediately from \Cref{eq:def.dualC-H}, \Cref{coro.ellL}, and the density of rational currents in $\calC_\G$. The second identity is a particular case of the first identity, since by \Cref{thm.tauvsell} we have
\begin{align*}
    \int{F}{d\frakm_{\BMS(\varphi)}}& =\frac{\ell_\p(\BMS(\varphi))}{\ell_{\hat d}(\BMS(\varphi))}
     = \frac{\tau(\p/\varphi)}{\tau(\hat d/\varphi)}/\frac{\tau(\hat d/\varphi)}{\tau(\hat d/\varphi)}=\frac{\tau(\p/\varphi)}{\tau(\hat d/\varphi)}. \qedhere
\end{align*}
\end{proof}


\section{Examples}\label{sec:examples}

In this section we use the dictionary developed in previous sections to construct examples of points in $\MC(\scrH_\G)$ satisfying exotic properties, proving \Cref{thmmain.H_Gnotcomplete} and \Cref{maincor:positiveonnonsimpleintro} from the introduction. Through this section we fix a Mineyev's flow space $(\UG,\msf{g})$ associated to the strongly hyperbolic metric $\hat d\in \calD_\G$.

\subsection{Non-completeness of $\scrH_\G^{++}$}\label{subsec:noncomplete} 

We begin by proving \Cref{thmmain.H_Gnotcomplete}. As an obstruction for a point $[\p]\in \MC(\scrH_\G^{++})$ to belong to $\scrH_\G^{++}$, we study the thermodynamic formalism of potentials dual to $\p$.
\begin{definition}
  For a potential $F\in C(\UG)$, its \emph{pressure} is the quantity
 \[\msf{P}_{\msf{g}}(F):=\sup_{\frakm\in \calM_{\msf{g}}(\UG)}\left(h(\frakm)+\int_{\UG}Fd\frakm\right),\] 
  where $h(\frakm)$ denotes the \emph{entropy} of the measure $\frakm$. If $\frakm\in \calM_\vg(\UG)$ satisfies $\msf{P}_{\vg}(F)=h(\frakm)+\int_{\UG}{Fd\frakm}$, we call it an \emph{equilibrium state} for $F$.
\end{definition}

It follows immediately that an equilibrium state for $F$ is also an equilibrium state for $F+s\mathrm{1}_{\UG}$ for any $s\in \R$. The next proposition describes a necessary condition for a potential to be dual to an element in $\calH_\G$.  This is essentially due to Dilsavor \cite[Theorem~F]{dilsavor}, when $F$ is replaced by an almost additive family $(f_t:\UG \ra \R)_{t\in \R^+}$ of measurable potentials satisfying the Bowen property \cite[Definition~4.2.2]{dilsavor}.

\begin{proposition}\label{prop:Gibbs=BMS} Suppose $F\in C(\UG)$ is dual to a potential in $\calH_\G$. Then $F$ has a unique equilibrium state.
\end{proposition}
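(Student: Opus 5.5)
Suppose $F\in C(\UG)$ is dual to $\varphi\in\calH_\G$. The plan is to show that any equilibrium state of $F$ coincides with a single measure built from $\varphi$, namely a normalized BMS measure $\frakm_{\BMS(\p)}$ for a suitable $\p\in\calH_\G^{++}$. Since equilibrium states are unchanged by adding constants, and $\varphi+a\hat d$ is dual to $F+a\mathsf 1_{\UG}$, I will first replace $F$ by $F+a$ for large $a$. This lets me assume $F>0$ and $\varphi\in\calH_\G^{++}$. Concretely, $\ell_{\varphi+a\hat d}\geq \ell_{\hat d}(a-\|F\|)>0$, and then $\varphi+a\hat d\in\calH_\G^{++}$ up to a bounded modification by \cite[Lemma~3.4]{CRS}.

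The main step is to reformulate the pressure variationally via the reparameterized flow. With $F>0$, consider the time change $\sfP$ with speed $1/F$. By Abramov's formula, $\frakm\mapsto \frakm^F:=F\frakm/\int F\,d\frakm$ is a bijection of invariant measures with $h_{\sfP}(\frakm^F)=h(\frakm)/\int F\,d\frakm$. Hence $\msf{P}_\vg(-sF)=0$ exactly at $s=h_{top}(\sfP)$, and the equilibrium states of $-hF$ correspond to the measures of maximal entropy of $\sfP$. I also expect that $h_{top}(\sfP)=v_\varphi$, using duality $\msf{L}_\sfP=\ell_\varphi$ and counting of closed orbits against $\#\{[g]:\ell_\varphi[g]<T\}$ from \cite{CRS}.

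This reformulation handles the negative potentials $-hF$, but the proposition concerns an arbitrary $F$. For general $F$ I will instead use the entropy inequality directly. Let $\frakm=\frakm_{[\Lam]}$ be an equilibrium state of $F$. By \Cref{cor:dualintegral}, $\int F\,d\frakm=\ell_\varphi(\Lam)/\ell_{\hat d}(\Lam)$, and $h(\frakm)$ should be governed by how $\Lam$ distributes mass. The cleanest route is to let $\p\in\calH_\G^{++}$ be the (properly positive) potential dual to $F':=F+\msf{P}$-related shift. Equivalently, take the Gibbs/Patterson–Sullivan measure constructed directly from the cocycle $\varphi$: a quasi-conformal measure for $\varphi$ exists by \cite[Proposition~3.19]{CRS}, and this yields the BMS current $\Lam_\varphi$.

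I then want to prove a Gibbs property for $\frakm_{\BMS(\varphi)}$ (after the appropriate normalization by $F$ and $\hat d$). The property is that flow boxes of length $T$ around $\vv$ have measure comparable to $\exp(\int_0^T (F-\msf{P})(\vg_t\vv)\,dt)$. This follows from quasi-conformality (\Cref{eq:defquasiconformal}), the shadow lemma, and the Bowen-type bound of \Cref{lem.liftBowen} relating $\varphi(o,g_\calB(\vv,T))$ to $\int_0^T F$. Once the Gibbs property holds, I use the standard argument (Bowen, or Dilsavor \cite[Theorem~F]{dilsavor}) together with ergodicity of $\Lam_\varphi$. That argument shows a Gibbs ergodic measure is the unique equilibrium state, relying on expansivity and specification of $\vg$ (\Cref{prop:weakspecification}, \Cref{thm:mineyevanosov}).

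The main obstacle is that $F$ itself need not have the Bowen property, so \Cref{lem.liftBowen} is not available for it. Duality only controls integrals over closed orbits. I would first try to show that duality forces a Bowen-type control along all orbit segments. The idea is to approximate an orbit segment by a closed orbit via the closing lemma, and then use the bounded Gromov product bound on $\varphi$ (\Cref{prop:criterionHMP}) to compare $\int_0^T F$ with $\varphi(o,g)$ up to a uniform constant. If that fails, the fallback is to work with the almost additive family $f_T(\vv):=\P_\calB(\vv,T)$ built from $\varphi$, which has the Bowen property by \Cref{lem.almostcocycle}. I would then check that its equilibrium states coincide with those of $F$, because \Cref{coro.ellL} and \Cref{cor:dualintegral} give $L_\varphi(\frakm)=\int F\,d\frakm$ for all $\frakm$. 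Uniqueness for almost additive Bowen families is exactly \cite[Theorem~F]{dilsavor}.
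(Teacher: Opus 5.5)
Your fallback in the last paragraph is essentially the paper's proof, and it should be the whole argument rather than a fallback. The paper takes the bounded, almost additive, coarse-Bowen family $f=(f_t)_{t\ge0}$ attached to $\varphi$ by \cite[Theorem~B]{dilsavor}, and notes from its construction that $|f_t-\P_\calB(\cdot,t)|\le C$ for a precompact fundamental domain $\calB$. From \Cref{coro.ellL}, \Cref{cor:dualintegral} and \Cref{lem:homeoCM} it deduces that $\lim_t\frac1t\int f_t\,d\frakm=\int F\,d\frakm$ for every $\frakm\in\calM_{\vg}(\UG)$. Hence $F$ and $f$ have the same pressure functional and the same equilibrium states, and \cite[Theorem~F]{dilsavor} gives uniqueness. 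No reduction to $F>0$, Abramov formula, or Gibbs estimate is needed.

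There are three things to fix.

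\emph{The main plan targets the wrong measure.} After your reduction to $F>0$, the BMS measure built from $\varphi$ is the equilibrium state of $-sF$ for the $s>0$ with $\msf{P}_{\vg}(-sF)=0$ (cf.\ \Cref{rmk:BMSalternative}). So its Gibbs property is with respect to $-sF$, not $F-\msf{P}_{\vg}(F)$. To target the equilibrium state of $F$ you would need the BMS measure of $\msf{P}_{\vg}(F)\hat d-\varphi$, which is dual to $\msf{P}_{\vg}(F)-F$. That potential lies in $\calH_\G^{++}$ only if $\inf_{\frakm}\bigl(\msf{P}_{\vg}(F)-\int F\,d\frakm\bigr)>0$. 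This means no zero-entropy measure is an equilibrium state of $F$, which is not known a priori.

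\emph{The closing-lemma attempt cannot work from duality alone.} Duality determines $F$ only up to weak cohomology. Weak cohomology controls Birkhoff integrals of $F$ minus a Bowen representative only up to $o(T)$, not $O(1)$. So there is no uniform comparison between $\int_0^T F$ and $\varphi(o,g)$ along arbitrary orbit segments.

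\emph{The coarse Bowen property of $f_T=\P_\calB(\cdot,T)$ does not follow from \Cref{lem.almostcocycle}.} That lemma gives almost additivity only. The Bowen bound needs a short separate argument:
\begin{itemize}
\item Suppose $d_{\UG}(\vg_t\vv,\vg_t\vw)\le\ep$ on $[0,T]$ with $\ep$ small.
\item By the connectedness argument in the proof of \Cref{lem:timechange}, a single translate $b\wh\vw$ satisfies $d_{\wt\UG}(\wt{\msf{g}}_t(\wh\vv),\wt{\msf{g}}_t(b\wh\vw))\le\ep$ for all $0\le t\le T$.
\item Hence $g_\calB(\vw,T)=b^{-1}g_\calB(\vv,T)a'$ with $b,a'$ in a fixed finite subset of $\G$.
\item Then \cite[Lemma~3.3(2)]{CRS} bounds $|\P_\calB(\vv,T)-\P_\calB(\vw,T)|$ uniformly.
\end{itemize}
Alternatively, take the family from \cite[Theorem~B]{dilsavor} as the paper does; it comes with the hypotheses of Theorem~F built in.
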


\begin{proof}
   Let $F$ be dual to $\p\in \calH_\G$. In the language of \cite[Definition~3.1.1]{dilsavor}, $\p$ is an almost $\G$-invariant and roughly geodesic potential, which is locally bounded near the diagonal. Hence by \cite[Theorem~B]{dilsavor}, $\p$ has associated a family $f=(f_t:\UG \ra \R)_{t\geq 0}$ of bounded, almost additive measurable potentials satisfying the coarse Bowen property.

For this family $f$, its pressure is defined as
    \begin{equation}\label{eq:pressuref}
        \msf{P}_{\msf{g}}(f):=\sup_{\frakm\in \calM_{\msf{g}}(\UG)}\left(h(\frakm)+\int_{\UG}fd\frakm\right),
    \end{equation}
    where we have \[\int_{\UG}fd\frakm:=\lim_{t\to \infty}{\frac{1}{t}\int_{\UG}}{f_td\frakm}.\]
    In addition, \cite[Theorem~F]{dilsavor} tells us that $f$ has a unique equilibrium state. Therefore, to show that $F$ has a unique equilibrium state it is enough to prove that
    \[\int{F}{d\frakm}=\int{f}{d\frakm} \quad \text{ for any }\frakm\in \calM_{\vg}(\UG).
    \]

    Moreover, if we follow the proof of \cite[Theorem~B]{dilsavor}, it turns out that we can find a constant $C>0$ and a precompact Borel fundamental domain $\calB\subset \wt\UG$ for the action of $\G$ on $\wt\UG$ such that
    \[|f_t(\vv)-\P_\calB(\vv,t)|\leq C \quad \text{ for all }\vv\in \UG \text{ and }t\geq 0,\]
    for $\P_\calB$ as in \Cref{def:Psi_B}. From this and \Cref{coro.ellL} it follows that 
    \[\int{f}{d\frakm_\Lam}=\ell_{\psi}(\Lam)\]
    for any $\Lam\in \calC_\G$. But \Cref{cor:dualintegral} and the fact that $F$ is dual to $\p$ imply that
    \[\int{F}{d\frakm_\Lam}=\ell_{\psi}(\Lam)\]
    for all $\Lam\in \calC_\G$. Then \Cref{lem:homeoCM} implies that the integrals of $F$ and $f$ coincide for any $\frakm\in \calM_{\vg}(\UG)$, as desired.
\end{proof}

\begin{remark}\label{rmk:BMSalternative}
For $F$ and $\p$ as in the lemma above, it follows from the work in \cite{dilsavor} that if $F>0$ satisfies $\msf{P}_{\vg}(-F)=0$, then the equilibrium state of $-F$ is exactly $\frakm_{\BMS(\p)}$.
\end{remark}

Our second ingredient is the existence of continuous potentials with several equilibrium states. In the case of continuous potentials on transitive one-sided subshifts of finite type, this result goes back to the work of Israel \cite[Theorem~V.2.2]{israel}.

\begin{proposition}\label{prop:israel}
The set of continuous functions $F:\UG \ra \R$ with uncountably many ergodic equilibrium states is dense in $C(\UG)$.
\end{proposition}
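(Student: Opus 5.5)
The plan is to transfer Israel's theorem from symbolic dynamics to $(\UG,\vg)$ via a symbolic coding, and then use the Bowen--Walters/Ledrappier machinery to handle the passage between a flow and its suspension. The metric-Anosov property (\Cref{thm:mineyevanosov}) together with transitivity gives, by the work of Pollicott and Sambarino (see \cite[Section~2]{sambarino.report}), a Markov coding: a transitive two-sided subshift of finite type $(\Sigma,\sigma)$, a H\"older positive roof function $r:\Sigma\to\R$, and a H\"older semiconjugacy $\pi_\Sigma:\Sigma_r\to\UG$ from the suspension flow. This map is surjective, finite-to-one, and injective off a set that is null for every invariant measure of positive entropy (alternatively, one can use the strong coding of \cite{cantrell-tanaka.invariant}). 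Through this coding, invariant measures on $\UG$ of positive entropy correspond bijectively and entropy-preservingly to $\sigma$-invariant measures on $\Sigma$ via $\frakm\leftrightarrow \mu$, with $h(\frakm)=h_\sigma(\mu)/\int r\,d\mu$ and $\int F\,d\frakm=\int \Delta_F\,d\mu/\int r\,d\mu$, where $\Delta_F(x)=\int_0^{r(x)}F(\pi_\Sigma(x,t))\,dt$.

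Under this correspondence, $\frakm$ is an equilibrium state for $F$ exactly when $\mu$ is an equilibrium state for $\Delta_F-\msf{P}_\vg(F)\,r$ on $\Sigma$, and the latter has pressure zero. Equilibrium states have positive entropy after adding a large constant to $F$, since the flow has positive topological entropy and periodic-orbit measures have zero entropy; this step is harmless because adding constants does not change equilibrium states. Hence $F$ has uncountably many ergodic equilibrium states if and only if $\Delta_F-\msf{P}_\vg(F)r$ does, and ergodicity is preserved by the correspondence.

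The density step goes as follows. Fix $F\in C(\UG)$ and $\epsilon>0$. Israel's theorem, passing from one-sided to two-sided shifts through the usual reduction, gives $\phi$ within $\epsilon'$ of $\Delta_F-\msf{P}_\vg(F)r$ in $C(\Sigma)$ with uncountably many ergodic equilibrium states. The hard part is realizing $\phi$ as $\Delta_{F'}-\msf{P}_\vg(F')r$ for some $F'$ close to $F$ on $\UG$. Functions on $\Sigma$ do not descend to $\UG$, because of boundary identifications and because $F'$ must live on the suspension rather than on the base. For the suspension I would take $F'=F+G$, where $G$ on $\Sigma_r$ is $(\phi-\Delta_F+\msf{P}r)(x)\,\rho(t)/r(x)$ with a bump $\rho$ of integral one; this has sup norm about $\epsilon'/\min r$.

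To get from $\Sigma_r$ to $\UG$, I would instead run Israel's argument directly on $\UG$. That argument is a Hahn--Banach/tangent-functional argument in the Banach space $C(X)$ for a convex pressure: the set of continuous functions whose pressure has a non-smooth point whose tangent space is infinite-dimensional is dense, provided the space of invariant measures is large enough (its closed span is infinite-dimensional) and the entropy map is upper semicontinuous. Upper semicontinuity holds on $\UG$ by expansivity of metric-Anosov flows. Infinitely many independent ergodic measures come from periodic orbits, which are dense by \Cref{lem:homeoCM}. With these inputs Israel's proof (Israel--Phelps) applies verbatim to $\msf{P}_\vg$ on $C(\UG)$, giving density of potentials with uncountably many ergodic equilibrium states via ergodic decomposition of tangent functionals.

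The main obstacle I anticipate is verifying Israel's abstract hypotheses for the flow, in particular the approximation of invariant measures by ergodic ones with controlled entropy that Israel uses. Specification, as proven later in \Cref{prop:weakspecification}, together with expansivity should supply this, as in \cite{iommi-velozo}.
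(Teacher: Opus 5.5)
Your final plan is correct and is essentially the paper's proof: once you abandon the symbolic transfer, you apply Israel's theorem abstractly on $(\UG,\vg)$ as in \cite[Remark~3.9]{iommi-velozo}, with upper semicontinuity of entropy coming from expansivity of metric-Anosov flows, and entropy density coming from expansivity plus weak specification (\Cref{prop:weakspecification}), which is precisely \cite[Theorem~5.1]{CLT.weak}. Be aware that the hypotheses actually needed are finite entropy, entropy density and upper semicontinuity; weak$^\ast$ density of periodic-orbit measures is beside the point, since those measures have zero entropy. Also, the discarded symbolic detour contains false steps: for example, adding a constant to $F$ cannot give its equilibrium states positive entropy, because it does not change them.
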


As noted in \cite[Remark~3.9]{iommi-velozo}, the same result above holds for any continuous flow $\vg=(\vg_t)_{t\in \R}$ on a compact metric space $X$ satisfying:
\begin{enumerate}
    \item \emph{Finite entropy}: $h(\vg)<\infty$;
    \item \emph{Entropy density}: for any $\vg$-invariant probability measure $\frakm$ on $X$, there exists a sequence of $\vg$-invariant ergodic probability measures $(\frakm_k)_k$ such that $\frakm_k  \xrightharpoonup{\ast} \frakm$ and $h(\frakm_k)\to h(\frakm)$; and,
    \item \emph{Upper semi-continuity of the entropy}: if $(\frakm_k)_k$ is a sequence of $\vg$-invariant probability measures on $X$ weak$^\ast$ converging to $\frakm$, then $\limsup_{k\to \infty}h(\frakm_k)\leq h(\frakm)$.
\end{enumerate}

For the Mineyev's flow space $(X,\vg)=(\UG,\vg)$, we know that the entropy $h(\vg)$ is finite. In addition, upper semi-continuity of the entropy follows from \emph{expansivity}, which always holds for metric-Anosov flows \cite[Theorem~3.2]{CLT.strong}. Thus, in order to prove \Cref{prop:israel}, we are left to prove the entropy density property for the Mineyev's flow space. Before verifying this, we first prove \Cref{thmmain.H_Gnotcomplete} assuming \Cref{prop:israel}.

\begin{proof}[Proof of \Cref{thmmain.H_Gnotcomplete}]
By \Cref{prop:israel}, let $F\in C(\UG)$ be a positive continuous function such that $-F$ has more than one equilibrium state.

Using \Cref{lem:bowendense}, let $(F_k)_k\in C^B(\UG)$ be a sequence such that $\|F_k-F\| \to 0$ as $k\to \infty$. Up to ignoring the first terms of this sequence, we can assume that $F_k>0$ for each $k$, and by \Cref{prop.dictionaryBowen} we let $\p_k\in \calH_\G^{++}$ be dual to $F_k$ for each $k$. Defining $\vx_k:=[\p_k]$ for each $k$, \Cref{lem:convF} tells us that this sequence converges in $\MC(\scrH_\G^{++})$ to $\vx=[\p]$ for some $\p\in \MC(\calH_\G^{++})$ dual to $F$. 
Our starting assumption on $F$ together with \Cref{prop:Gibbs=BMS} imply that $\p$ cannot belong to $\calH_\G^{++}$, and hence $\vx\notin \scrH_\G^{++}$. Thus the sequence $(\vx_k)_k$ is Cauchy in $\scrH_\G^{++}$ with no limit in $\scrH_\G^{++}$.
\end{proof}

\begin{remark}
Indeed, \Cref{prop:israel} implies a stronger conclusion: the complement $\MC(\scrH_\G^{++}) \bs \scrH_\G^{++}$ is dense in $\MC(\scrH_\G^{++})$. 
We leave the details to the reader.
\end{remark}

To complete the proof of \Cref{prop:israel}, we are left to prove the entropy density of the Mineyev's flow space. This property is known to hold for the geodesic flow of a compact locally $\CAT(-1)$ space with non-elementary fundamental group \cite[Theorem~B]{CLT.weak}.
More generally, Constantine--Lafont--Thompson proved that entropy density holds for any continuous flow on a compact metric space that is both expansive and satisfies the weak specification property \cite[Theorem~5.1]{CLT.weak}.

\begin{definition}
Let $\vg=(\vg_t)_{t\in \R}$ be a continuous flow on the compact metric space $(X,d)$. This flow  has \emph{weak specification at scale} $\del>0$ if there exists $\tau > 0$ such that for
every collection of points $x_1,\dots,x_k\in X$ and times $t_1,\dots,t_k>0$ there exists a point $y$ and a
sequence of transition times $\tau_1,\dots,\tau_{k-1}\in [0, \tau]$ such that for $s_j = \sum_{i=1}^j{t_i} + \sum_{i=1}^{j-1}{\tau_i}$ and $s_0 = \tau_0 = 0$, we have
\begin{equation*}
   d(\vg_{s_{j-1}+\tau_{j-1}+s}(y), \vg_s(x_j)) < \del \quad \text{ for every } 1\leq j\leq k \text{ and }0\leq s\leq t_j.  
\end{equation*}
In that case, we say that $\tau$ is a \emph{maximum transition time}. We say that $\vg$ has the \emph{weak specification property} if it has weak specification at every scale $\del > 0$.
\end{definition}

\begin{proposition}\label{prop:weakspecification}
    The Mineyev's flow space $(\UG,\vg)$ has the weak specification property. 
\end{proposition}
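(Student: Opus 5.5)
We prove weak specification at a fixed scale $\del>0$ by working in the universal cover $\wt\UG$ and building the shadowing orbit from a bi-infinite geodesic in $\G$. The idea is to concatenate the orbit segments coarsely inside the group, and then use the exponential convergence of \Cref{prop.geodesicsclose2} to turn coarse fellow-travelling into $\del$-closeness.

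\textbf{Step 1 (lifting and coarse concatenation).} Given $x_1,\dots,x_k\in\UG$ and times $t_1,\dots,t_k$, fix a compact set $K$ with $\G K=\wt\UG$, and lift each $x_j$ to $\wt\vv_j\in K$. By \Cref{thm:mineyev}~(2)--(3), the segment $s\mapsto \wt\vg_s(\wt\vv_j)$, $0\le s\le t_j$, is coarsely the orbit of a geodesic in $(\G,\hat d)$ from $o$ to some $h_j$ with $\wt\vg_{t_j}(\wt\vv_j)\in h_jK$. We will translate the $j$-th segment by $g_j=h_1 a_1 h_2 a_2\cdots h_{j-1}a_{j-1}$, where the $a_i$ are chosen from a fixed finite set $A\subset\G$.

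The choice of $A$ is the usual non-elementary ping-pong trick: $A$ is chosen so that the concatenated path of segments is a uniform local quasigeodesic with no backtracking at the junctions. Concretely, for each junction we pick $a\in A$ so that the Gromov products $\gpr{h_i^{-1}}{a}_o$ and $\gpr{a^{-1}}{h_{i+1}}_o$ are bounded by a constant depending only on $\G$. This is possible since $\pG$ is infinite and $A$ can contain elements with pairwise distinct endpoint directions. Long segments then give a local-to-global quasigeodesic, and short segments (of length less than a fixed $L_0$) can be handled by padding with transition time, absorbing them into the constant.

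\textbf{Step 2 (producing the shadowing point).} Extend the concatenation by geodesic rays at both ends to obtain a bi-infinite uniform quasigeodesic in $\G$. Let $(a,b)$ be its endpoints and let $\wt\vy$ be a point of the line $[a,b]$. By the Morse lemma and \Cref{thm:mineyev}~(2), the line $[a,b]$ passes within a uniform distance $A_0$ of both endpoints of each translated segment $g_j\wt\vv_j,\ \wt\vg_{t_j}(g_j\wt\vv_j)$, and this happens at flow times $u_j<u'_j$. Applying \Cref{prop.geodesicsclose2} with this $A_0$ and $\ep_1$ small gives shifts $t_0^{(j)}\in(-T_1,T_1)$ such that the line and the $j$-th segment are $\ep_1$-close, hence $\del$-close, on the middle portion, away from $T_1$ of each end.

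\textbf{Step 3 (handling the ends and the transition times).} Closeness fails only near the ends of each segment. To repair this, we first extend each $x_j$-segment by $T_2$ on both sides, choosing $T_2\ge T_1$ large enough that \Cref{prop.geodesicsclose2} gives $\del$-closeness on the original $[0,t_j]$. Equivalently, we apply the construction to the points $\vg_{-T_2}(x_j)$ with times $t_j+2T_2$. The transition time $\tau_j$ is then read off as the difference between consecutive flow-time markers, namely $u_{j+1}-u'_j$ after the shifts. This quantity is bounded above by a constant depending on $A$, $T_1$, $T_2$ and the Morse constants, which gives the maximum transition time $\tau$. If it comes out negative, we enlarge the $a_i$ (for instance by using powers) so that it is nonnegative.

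Projecting $\wt\vy$ to $\UG$ gives the required point $y$. \textbf{The main obstacle is Step 1}: we must ensure uniform bounds on the Gromov products at the junctions, so that the concatenation is a uniform quasigeodesic with controlled overlaps for arbitrary segments, including short ones.
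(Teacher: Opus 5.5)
Your argument is correct in outline, but it takes a genuinely different route from the paper's. The paper does not construct the shadowing orbit by hand. Instead it uses the Cantrell--Tanaka coding: a suspension flow $(Y,f)$ with H\"older roof over a transitive subshift of finite type, together with a continuous surjective semi-orbit-preserving map $h:(Y,f)\ra(\UG,\vg)$. Weak specification for $(Y,f)$ follows from transitivity of the base. It is then pushed down to $\UG$ using:
\begin{itemize}
\item uniform continuity of $h$;
\item continuity of the time change $\k(y,t)$, and its boundedness over bounded times;
\item \Cref{lem:timechange}, which converts closeness of orbits up to an increasing time change into closeness at equal times.
\end{itemize}
The transition times are then defined inductively by $\k(\wh{r}_j+\wh\tau_j)=r_j+\tau_j$.

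You instead work directly in $\wt\UG$. A uniform turning set makes the concatenation a uniform quasigeodesic, the Morse lemma places one flow line uniformly close to all segment endpoints, and \Cref{prop.geodesicsclose2} upgrades this coarse fellow-travelling to $\del$-closeness, with the $T_2$-extensions absorbing the bad ends. The paper's proof is short and parallels Constantine--Lafont--Thompson, but it rests on a deep coding theorem. Yours needs only \Cref{thm:mineyev}, \Cref{prop.geodesicsclose2} and standard hyperbolic-group facts. Closing up the concatenation with one more turning element and taking the axis of the resulting loxodromic would even give periodic shadowing orbits.

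A few points need tightening.
\begin{itemize}
\item The ``padding'' of short segments in Step 1 does not work as stated, since a short segment still has to be $\del$-shadowed. What actually removes short segments is your $T_2$-extension, which gives every piece length at least $2T_2$.
\item The transition pieces $a_i$ are themselves short. Take $A$ to consist of large powers of three loxodromics with pairwise distinct fixed points. Then your two Gromov-product bounds genuinely rule out backtracking between consecutive segments, and the local-to-global principle applies.
\item Fix constants in this order: $A$ and the Gromov-product bound; then the local-to-global threshold and the Morse constant $A_0$, which do not depend on $T_2$; then $T_1=T_1(\del,3A_0)$; then $T_2$.
\item Coarse monotonicity of the projection onto the line gives $u_j<u'_j\leq u_{j+1}+c$ and $|u'_j-u_j-(t_j+2T_2)|\leq 2A_0$. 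This is exactly the hypothesis of \Cref{prop.geodesicsclose2}, with constant $3A_0$.
\item Set $\sigma_j=u_j+t^{(j)}_0+T_2$. The correct transition times are $\tau_j=\sigma_{j+1}-\sigma_j-t_j\geq 2T_2-2T_1-2A_0-c$, so nonnegativity follows by taking $T_2$ large; there is no need to enlarge the $a_i$.
\item The shadowing point is $\pi(\wtg_{\sigma_1}(\wt\vy))$, not $\pi(\wt\vy)$.
\end{itemize}
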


We require the following lemma regarding time reparameterizations of flow lines.

\begin{lemma}\label{lem:timechange}
    There exists a constant $c_0>0$ satisfying the following. Given $0<c\leq c_0$, let $\vv,\vw\in \UG$ and $T>0$ be  such that 
    \[d_{\UG}(\vg_t(\vv),\vg_{\k(t)}(\vy))<c \quad \text{ for all }0\leq t\leq T, \]
    for $\k:[0,T]\ra \R$ a continuous and strictly increasing function with $\k(0)=0$. Then \[|\k(t)-t|\leq 2c \quad \text{ and }\quad d_{\UG}(\vg_t(\vv),\vg_t(\vw))< 3c \quad  \text{ for all } 0\leq t \leq T.\]
\end{lemma}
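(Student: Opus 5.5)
The plan is to control the drift $\k(t)-t$ through the local product structure of the flow at small scales; the bound on $d_{\UG}(\vg_t(\vv),\vg_t(\vw))$ then follows from the triangle inequality. (In the statement, $\vy$ should read $\vw$.) Once we know $|\k(t)-t|\leq 2c$, \Cref{thm:mineyev}~(3) shows that flow lines are unit-speed isometric embeddings in $\wt\UG$, so $d_{\UG}(\vg_{\k(t)}(\vw),\vg_t(\vw))\leq|\k(t)-t|$. Then
\[d_{\UG}(\vg_t(\vv),\vg_t(\vw))<c+2c=3c.\]
The real content is therefore the drift estimate.

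To bound the drift, I would lift the situation to $\wt\UG$. Choose $c_0$ smaller than a third of the injectivity radius of the covering $\pi$; this is positive because the $\G$-action is free, properly discontinuous and cocompact. With this choice, $d_{\UG}$-balls of radius $3c_0$ lift isometrically. Continuity of $t\mapsto \vg_t(\vv)$ and of $t\mapsto \vg_{\k(t)}(\vw)$ then lets us pick lifts $\wt\vv$ and $\wt\vw$ with
\[d_{\wt\UG}(\wt\vg_t(\wt\vv),\wt\vg_{\k(t)}(\wt\vw))<c \quad \text{ for all } 0\leq t\leq T.\]
This is a continuity argument: the set of times at which the lifted distance stays below $c$ is open and closed, since the alternative lift lies at distance at least $3c_0-c$.

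Next, I would compare parameters. For $0\leq s\leq t\leq T$, the triangle inequality together with \Cref{thm:mineyev}~(3) gives
\[|(t-s)-(\k(t)-\k(s))|<2c.\]
Applying this with $s=0$ and using $\k(0)=0$ yields $|\k(t)-t|<2c$.

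The step I expect to be delicate is justifying this last comparison. It uses the identity
\[d_{\wt\UG}(\wt\vg_t(\wt\vv),\wt\vg_0(\wt\vv))=t,\]
and the analogous identity for $\wt\vw$ between the times $0$ and $\k(t)$. The triangle inequality then bounds $|t-\k(t)|$ by the two endpoint distances, each less than $c$. This is legitimate precisely because both flow lines are geodesics for $d_{\wt\UG}$. So the only real care is in the lifting step: I need the two lifted orbits to stay close simultaneously at times $0$ and $t$, which is guaranteed by the continuity argument above. If the injectivity-radius lifting were problematic, I would instead fall back on \Cref{lem:timecontrol}-type compactness to rule out large drift.
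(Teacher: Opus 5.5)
Your proposal is correct and is essentially the paper's argument. You choose $c_0$ small compared with the minimal displacement of nontrivial elements of $\G$ on $\wt\UG$, use connectedness of $[0,T]$ to pick lifts whose orbit segments stay $c$-close in $\wt\UG$, and then conclude from \Cref{thm:mineyev}~(3) and the triangle inequality. The only thing your lifting step actually uses is that $c<\frac12\inf\{d_{\wt\UG}(\wt\vv,g\wt\vv): \wt\vv\in\wt\UG,\ g\neq o\}$, so the aside that $3c_0$-balls lift isometrically can be dropped.
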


\begin{proof}
    By compactness of $\UG$, let $c_0/2>0$ be such that $d_{\wt\UG}(\wt\vv,g\wt\vv)\geq c_0$ for any $\wt\vv\in \wt\UG$ and $g\neq o$ in $\G$. Now consider $\vv,\vw\in \UG$ and $\k:[0,T]\ra \R$ as in the statement, and consider lifts $\wt\vv,\wt\vw\in \wt\UG$ of $\vv$ and $\vw$ respectively and such that $d_{\wt\UG}(\wt\vv,\wt\vv)< c$. By connectedness of $[0,T]$ and our choice of $c_0$, we must have $d_{\wt\UG}(\wt\vg_{t}(\wt\vv),\wt\vg_{\k(t)}(\wt\vw))<c$ for all $0\leq t\leq T$. But then \Cref{thm:mineyev}~(3) implies that $|t-\k(t)|<2c$ for all $0\leq t\leq T$, and that $d_{\wt\UG}(\wt\vg_{t}(\wt\vv),\wt\vg_{t}(\wt\vw))\leq 3c$ for any $0\leq t\leq T$. Projecting these flow lines to $\UG$ implies the lemma.
\end{proof}

Now we prove \Cref{prop:weakspecification}. The argument is very similar to that of \cite[Theorem~3.2]{CLT.weak}. However, we do not rely on $\CAT(-1)$ geometry, so we provide a proof for completeness.

\begin{proof}
By \cite[Theorem~1.1]{cantrell-tanaka.invariant}, there exists a suspension flow $(Y,f=(f_t)_{t\geq 0})$ with positive H\"older continuous roof function over a transitive subshift of finite type and a continuous and surjective semi-orbit preserving map $h:(Y,f)\ra (\UG,\vg)$. For the appropriate metric on $Y$, we have that $(Y,f)$ has the weak specification property (this follows from transitivity of the base subshift, as in \cite[Proposition~2.2]{CLT.weak}).

Since the flow $\vg$ has no fixed-points in $\UG$, by \cite[Proposition~2.4]{CLT.weak} the function $\k: Y \times \R^+\ra \R^+$ such that $h(f_t(y))=\vg_{\k(y,t)}(h(y))$ is continuous.

Given a scale $\del>0$, we fix the following quantities.
\begin{itemize}
\item A constant $c_0>0$ given by \Cref{lem:timechange}.
\item Using that $h$ is uniformly continuous, we find $\wh\del>0$ such that $$d_{\UG}(h(y_1),h(y_2))<\min\{\del/3,c_0/2\}=:\ep_0$$ for $y_1,y_2\in Y$ such that $d(y_1,y_2)<\wh\del$.

    \item A maximum transition time $\wh\tau$ at scale $\wh\del$ in $(Y,f)$.
    \item A number $R>0$ such that for any $y\in Y$ any $0\leq s \leq \wh\tau$, we have $\k(y,s)\leq R$. Such an $R$ exists by \cite[Corollary~2.5]{CLT.weak}.
\end{itemize}

Now we fix $\vv_1,\dots,\vv_k\in \UG$ and $t_1,\dots,t_k>0$. We consider points $y_1,\dots,y_k$ in $Y$ and times $s_1,\dots,s_k>0$ such that $h(y_i)=\vg_{-\ep_0}(x_i)$ and $h(f_{s_i}(y_i))=\vg_{t_i+\ep_0}(x_i)$ for all $i$. That is, we have $\k(y_i,s_i)=t_i+2\ep_0$ for all $i$. By weak specification at scale $\wh\del$ on $(Y,f)$, there exists $z\in Y$ and transition times $\wh\tau_1,\dots,\wh\tau_k\in [0,\wh\tau]$ such that for $\wh{r}_j=\sum_{i=1}^j{s_j}+\sum_{i=1}^{j-1}{\wh\tau_i}$ and $r_0=\wh\tau_0=0$, we have 
\begin{equation}\label{eq:defWEPproof}
   d(f_{\wh{r}_{j-1}+\wh\tau_{j-1}+s}(z), f_s(y_j)) < \wh\del \quad \text{ for every } 1\leq j\leq k \text{ and }0\leq s\leq s_j.  
\end{equation}

We set $\k(t)=\k(z,t)$ and $\vw=\vg_{\ep_0}(h(z))$. We claim that the partial orbits of $\vv_1,\dots,\vv_k$ stay close to the orbit $\vw$ in the expected way. 

We set $\tau:=R+4\ep_0$, $t_0=\tau_0=0$, and inductively we construct a sequence $\tau_1,\dots,\tau_k\in [0,\tau]$ such that for $r_j:=\sum_{i=1}^{j}t_j+\sum_{i=1}^{j-1}\tau_i$, we have for $1\leq j\leq k$:
\begin{itemize}
\item[(a)]$|\k(\wh{r}_{j})-r_{j}-2\ep_0|\leq 2\ep_0$; and,
    \item[(b)] $d_{\UG}(\vg_{
    r_{j-1}+\tau_{j-1}+t}
    (\vw),\vg_t(\vv_j))<3\ep_0$ for all $0\leq t\leq t_j$.
\end{itemize}

For the base case $j=1$, note that $\wh{r}_1=s_1$ and $r_1=t_1$. We use \Cref{eq:defWEPproof} for $j=1$ and our choice of $\hat\del$ to deduce
$$d_{\UG}(\vg_{\k(s)-\ep_1}(\vw),\vg_{\k(y_1,s)-\ep_1}(\vv_1))<\ep_0<c_0$$ for $0\leq s\leq s_1$. By \Cref{lem:timechange}, we deduce that $|\k(\wh{r}_1)-t_1-2\ep_0|=|\k(s_1)-\k(y_1,s_1)|\leq 2\ep_0$ and $d_{\UG}(\vg_t(\vw),\vg_t(\vv_1))<3\ep_0$ for all $0\leq t\leq t_1$. 

Assume now that we have found $\tau_1,\dots,\tau_{j-1}\in [0,\tau]$ satisfying (a) and (b) above for $1\leq k\leq j$. We define $\tau_j$ as the solution to 
\[\k(\wh{r}_{j}+\wh\tau_j)=r_j+\tau_j.\]
By our inductive assumption, note that $\k(\wt{r}_j+\wh{\tau}_j)\geq \k(\wh{r}_j)\geq r_j$, so that $\tau_j\geq 0$. The inductive assumption and the definition of $R$ also imply
$\tau_j=\k(\wh{r}_j+\wh\tau_j)-r_j\leq \k(\wh{r}_j)-r_j+R\leq R+4\ep_0=\tau$, so that $\tau_j\in [0,\tau]$.  As before, we use \Cref{eq:defWEPproof} and our choice of $\hat\del$ to obtain
$$d_{\UG}(\vg_{\k(\wh{r}_{j}+\wh\tau_{j}+s)}(\vw),\vg_{\k(y_{j+1},s)}(\vv_{j+1}))<\ep_0<c_0$$
for $0\leq s\leq s_{j+1}$. Since $\k(\wh{r}_j+\wh\tau_j)=r_j+\tau_j$ and $\k(y_{j+1},s_{j+1})=t_{j+1}+2\ep_0$, from \Cref{lem:timechange} we deduce
$|\k(\wh{r}_{j+1})-(r_j+\tau_j)-t_{j+1}-2\ep_0|=|\k(\wh{r}_{j+1})-r_{j+1}-2\ep_0|\leq 2\ep_0$ 
and $$d_{\UG}(\vg_{t+r_j+\tau_j}(\vw),\vg_t(\vv_{j+1}))<3\ep_0.$$ These inequalities imply (a) and (b), and hence weak specification at scale $\del$ since we chose $3\ep_0\leq \del$.  
\end{proof}


\subsection{Metrics vanishing on simple geodesics on a surface and on simple elements of free groups}\label{subsec:exotic}

In this subsection we construct two exotic metrics: a metric on a surface group with zero stable translation length precisely on simple closed curves (\Cref{prop:positiveonnonsimple}, implying \Cref{maincor:positiveonnonsimpleintro}), and an analogous metric on a free group of rank $r \geq 2$ (\Cref{prop:positiveonnonsimple_free}). We recall the subset $\partial \calD_\G \subset \calH_\G$ from \Cref{sec:HMP}.

\begin{proposition}\label{prop:positiveonnonsimple}
    Let $\G$ be the fundamental group of the closed hyperbolic surface $\Sig$. Then there exists a symmetric pseudometric $d\in \partial \calD_\G$ such that:
    \begin{enumerate}
        \item $\ell_{d}[g]=0$ for any $[g]\in [\G]'$ representing a simple closed geodesic.
        \item $\ell_{d}[g]>0$ for any $[g]\in [\G]'$ representing a non-simple closed geodesic.
\end{enumerate}
Moreover, $d$ also satisfies:
\begin{enumerate}
 \item[(3)] $d$ is not roughly similar to the a pseudometric induced by an action of $\G$ on a real tree; and,
\item[(4)] $\ell_{d}$ cannot be recovered as  the intersection number with a geodesic current.
\end{enumerate}
\end{proposition}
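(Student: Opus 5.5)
The plan is to construct $d$ through the dictionary, as $\psi_F$ for a suitable nonnegative, flip-invariant potential $F$ that vanishes exactly on the vectors tangent to simple geodesics. We work on $\msf{T}^1\Sig$ with its geodesic flow, which is Hölder orbit equivalent to Mineyev's flow space $(\UG,\vg)$. Let $K\subset \msf{T}^1\Sig$ be the set of unit vectors tangent to complete simple geodesics (the Birman--Series set). It is closed, flow-invariant and $\iota$-invariant, and it is nowhere dense. Define
\[F(\vv):=\min\{1,\, d_{\msf{T}^1\Sig}(\vv,K)\}.\]
This $F$ is Lipschitz, nonnegative, $\iota$-invariant, and satisfies $F^{-1}(0)=K$. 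We transport $F$ to $\UG$ through the Hölder orbit equivalence, multiplying by the Hölder time-change derivative, so that integrals over periodic orbits are preserved. The result is still Hölder, nonnegative, $\iota$-invariant, and vanishes exactly on the image of $K$. By \Cref{lem:bowendense} it has the Bowen property. \Cref{prop.dictionaryBowen}~(2) then provides a symmetric $d=\psi_F\in \ov\calD_\G$ dual to $F$, so that $\ell_d[g]=\int_{\gam_{[g]}}F$ for $[g]$ primitive.

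Items (1) and (2) follow from duality. If $[g]$ is simple, then $\gam_{[g]}\subset K$, so $\ell_d[g]=0$. If $[g]$ is non-simple, then no vector of $\gam_{[g]}$ lies in $K$: a vector in $K$ is tangent to a unique complete geodesic, which is simple, whereas the geodesic through $\vv\in\gam_{[g]}$ is the closed non-simple one. Hence $F>0$ on $\gam_{[g]}$ and $\ell_d[g]>0$. Since simple classes have zero length, $d\notin\calD_\G$. Since non-simple classes have positive length and $\ell_d[g^k]=k\,\ell_d[g]$, $d$ is unbounded. Therefore $d\in\partial\calD_\G$.

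For (3), suppose that $d$ were roughly similar to the pseudometric of an action on a real tree. Then $\ell_d$ would be a positive multiple of the tree translation length, so ellipticity coincides with vanishing of $\ell_d$. Choose simple classes $a,b$ intersecting once, so that they span a one-holed torus. Then $a$, $b$ and $ab$ are all simple, hence elliptic. By Serre's lemma (if $a,b,ab$ are elliptic, they have a common fixed point), the whole subgroup $\langle a,b\rangle$ fixes a point. In particular $a^2b$ would be elliptic, but $a^2b$ is non-simple, a contradiction.

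For (4), suppose $\ell_d=i(\mu,\cdot)$ for some current $\mu$. Then $\mu\neq 0$, because $\ell_d$ is positive on non-simple classes, and $i(\mu,\al)=0$ for every simple closed $\al$. Hence no geodesic in $\supp\mu$ crosses any simple closed geodesic transversally. Pick a pair of simple closed geodesics that fill. Any complete geodesic crossing neither of them lies in a complementary disk, which is impossible unless it is one of the two curves. So $\supp\mu$ is contained in the lifts of that pair. Choosing a second filling pair sharing no curve with the first forces $\supp\mu=\emptyset$, a contradiction.

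The main obstacle is the transport of $F$ to $\UG$ while keeping it Hölder, and hence Bowen, for $d_{\UG}$. If the orbit equivalence is only known to be continuous, I would instead define $K$ directly in $\UG$ as the set of flow lines $[a,b]$ with $(a,b)$ a pair of endpoints of a simple geodesic, which is closed in $\ppG$. I would then take $F=\min\{1,d_{\UG}(\cdot,K)\}$, which is Lipschitz for $d_{\UG}$. The positivity argument is unchanged, since a flow line lies in $K$ if and only if its endpoints bound a simple geodesic.
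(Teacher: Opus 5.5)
\textbf{The argument for (3) fails at its last step, because $a^2b$ is simple.} If $a,b$ meet once and span a one-holed torus $S\subset\Sig$, then $a^2b$ is the image of $b$ under the square of the Dehn twist about $a$. Equivalently, $a^2b$ is primitive in $\pi_1(S)=F(a,b)$, since $\{a,a^2b\}$ is a basis, and its homology class $(2,1)$ is primitive. So $\ell_d[a^2b]=0$, $a^2b$ is elliptic in the hypothetical tree, and no contradiction arises.

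The Serre-lemma strategy itself is sound. It is the same tree fact the paper uses, in Culler--Morgan's form: in a pair of pants, the boundary curves $g,h,gh^{-1}$ have zero length while the figure eight $gh$ has positive length. What you need is an element of $\langle a,b\rangle$ that is genuinely non-simple, for instance $a^2b^2$ or $abab^{-1}$. Their homology classes $(2,2)$ and $(2,0)$ are neither zero nor primitive. They are also not conjugate to $(ab)^2$, respectively $a^2$ (compare cyclically reduced words). Hence they are not powers of simple classes of $S$. Since $S$ is essential, their geodesic representatives in $\Sig$ are non-simple as well. Item (2) then gives them positive $\ell_d$, which contradicts ellipticity.

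\textbf{Items (1)--(2).} Drop the route through $\msf{T}^1\Sig$. A H\"older orbit equivalence only provides a H\"older time-change cocycle, so there is no derivative to multiply by. A Liv\v{s}ic-type cohomologous replacement would not preserve the pointwise nonnegativity that \Cref{prop.dictionaryBowen} requires. Your fallback, $F=\min\{1,d_{\UG}(\cdot,\pi(\calL\times\R))\}$ with $\calL$ the endpoint pairs of simple geodesics, is exactly the paper's argument via \Cref{prop:mather}. The rest of (1)--(2), including $d\in\partial\calD_\G$, then matches.

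\textbf{Item (4).} Your route is genuinely different and more elementary. The paper invokes an external criterion: a length function of a current must satisfy $\ell[g]+\ell[h]\geq \ell[gh]$ whenever the axes of $g$ and $h$ cross. You instead show directly that a current $\mu$ with $i(\mu,\alpha)=0$ for every simple $\alpha$ must vanish. Its support avoids all transverse crossings, so it lies in the lifts of any filling pair of simple geodesics, and two filling pairs with no common curve force the support to be empty. This is correct and self-contained. It even proves more: no nonzero current has zero intersection with every simple closed curve.
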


This proposition implies \Cref{maincor:positiveonnonsimpleintro}, since there is a $\G$-equivariant isometric embedding of (the metric identification of) the pseudometric space $(\G,d)$ into a geodesic Gromov hyperbolic symmetric metric space $X$ \cite{lang}. Moreover, since $d$ is roughly geodesic \cite[Lemma~6.2]{cantrell-reyes.manhattan}, the action of $\G$ on $X$ is cobounded, and hence it has bounded backtracking by \cite[Lemma~6.3]{cantrell-reyes.manhattan}.

Indeed, we will first prove a general result for arbitrary hyperbolic groups. This relies in the notion of lamination.

\begin{definition}
    An \emph{algebraic lamination} is a closed $\G$-invariant subset of $\ppG$. Such a lamination  $\calL$ is \emph{Mather} if $\calL=\bigcup_{\Lam}\supp(\Lam)$, where the union runs among all the geodesic currents $\Lam \in \calC_\G$ satisfying $\supp(\Lam) \subset \calL$.
\end{definition}

For free groups, (flip-invariant) algebraic laminations were discussed in \cite{CHL.I,CHL.II,CHL.III}. The notion of Mather lamination resembles that of the Mather set for a continuous potential on a continuous dynamical system, see e.g. \cite{morris.mather}. The relation between $\calH_\G$ and Mather laminations is given by the following result.

\begin{proposition}\label{prop:mather}
If $\calL\subset \ppG$ is an algebraic lamination, then there exists $\p\in \partial\calH_\G^{+}$ such that for any geodesic current $\Lam \in \calC_\G$, we have $\ell_\p(\Lam)=0$ if and only if $\supp(\Lam)\subset \calL$. If $\calL$ is Mather, we also have
$$\calL=\bigcup\{\supp(\Lam): \ell_\p(\Lam)=0\}.$$
Moreover, if $\calL$ is flip-invariant  then we can choose $\p\in \partial \calD_\G$.
\end{proposition}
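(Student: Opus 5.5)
The plan is to build the potential through the dictionary of \Cref{prop.dictionaryBowen}: construct a nonnegative H\"older (hence Bowen, by \Cref{lem:bowendense}) function $F$ on $\UG$ that vanishes exactly on the lift of $\calL$, and let $\p=\p_F$.

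First, set $\wt Z:=\{(a,b,s):(a,b)\in\calL\}\subset\wt\UG$. This set is closed and $\G$-invariant, so it descends to a closed $\vg$-invariant set $Z=\pi(\wt Z)\subset\UG$. I would take $F(\vv):=\min\{1,d_{\UG}(\vv,Z)\}$. This function is $1$-Lipschitz, hence H\"older with the Bowen property, and it satisfies $F\geq 0$ with $F^{-1}(0)=Z$. If $\calL$ is flip-invariant, then $Z$ is $\iota$-invariant, and since $d_{\UG}$ is $\iota$-invariant by \Cref{thm:mineyev}~(1), we get $F\circ\iota=F$. \Cref{prop.dictionaryBowen} then produces $\p=\p_F\in\calH_\G^{+}$ dual to $F$, and in the flip-invariant case $\p\in\ov\calD_\G$.

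Second, identify the zero set of $\ell_\p$ on currents. By \Cref{cor:dualintegral} we have $\ell_\p(\Lam)=\ell_{\hat d}(\Lam)\int F\,d\frakm_{[\Lam]}$, and $\ell_{\hat d}(\Lam)>0$ for $\Lam\neq 0$ (using \Cref{coro.ellL} with $L_{\hat d}\equiv 1$). Since $F\geq 0$ is continuous, $\ell_\p(\Lam)=0$ holds iff $\frakm_{[\Lam]}(\{F>0\})=0$, which holds iff $\supp\frakm_{[\Lam]}\subset Z$. Because $\wt\frakm_\Lam=\Lam\otimes\Leb$ and $\wt Z=\calL\times\R$ is closed, this is equivalent to $\supp\Lam\subset\calL$. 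For a Mather lamination, the displayed identity $\calL=\bigcup\{\supp\Lam:\ell_\p(\Lam)=0\}$ follows directly from this characterization together with the definition of Mather.

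Third, check that $\p\in\partial\calH_\G^{+}$ (respectively $\partial\calD_\G$), i.e. that $\p$ is not properly positive but is still unbounded. If $\calL\neq\emptyset$, then some current is supported in $\calL$: take a weak$^*$ limit of normalized orbit measures along periodic orbits approaching $Z$, or a Krylov--Bogolyubov measure on the compact invariant set $Z$, and transfer it via \Cref{lem:homeoCM}. Hence $\ell_\p$ vanishes on a nonzero current. By \Cref{lem.comp} and continuity of the extended length, this forces $\p\notin\calH_\G^{++}$. Unboundedness holds provided $\calL\neq\ppG$, since then $F\not\equiv 0$ and some periodic orbit meets $\{F>0\}$ by density of periodic orbits. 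I expect the main obstacle to be this membership check, in particular interpreting $\partial\calH_\G^{+}$ correctly (presumably $\calH_\G^{+}$ minus $\calH_\G^{++}$, with the degenerate cases $\calL=\emptyset$ or $\calL=\ppG$ excluded or treated separately), together with confirming symmetry of $\p$ in the flip-invariant case via \cite[Lemma~3.6]{CRS}.
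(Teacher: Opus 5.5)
Your proposal is correct and follows essentially the paper's route: take $F$ to be the Lipschitz distance to the compact invariant set $\pi(\calL\times\R)$, let $\p=\p_F$ be the potential given by \Cref{prop.dictionaryBowen}, and identify the currents with $\ell_\p(\Lam)=0$ through the duality and \Cref{lem:homeoCM}; your appeal to \Cref{cor:dualintegral} makes explicit the passage from periodic orbits to arbitrary currents, which the paper leaves implicit. Your third step goes beyond the paper, which does not check that $\p$ lies in the boundary: you use a Krylov--Bogolyubov measure on $Z$ to show $\p\notin\calH_\G^{++}$ and $\calL\neq\ppG$ to get unboundedness, and you are right that the degenerate cases $\calL=\emptyset$ and $\calL=\ppG$ must be excluded for that conclusion.
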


If $\calL$ and $\p$ satisfy the above conditions, we say that the lamination $\calL$ is \emph{dual} to $\p$. For small actions of free groups on real trees, dual algebraic laminations were constructed in \cite{CHL.II}. 

\begin{proof}
    Let $\calL$ be an algebraic lamination and consider the set $\calK=\pi(\calL\times \R) \subset \UG$ (recall that $\pi:\wt\UG \ra \UG$ denotes the covering projection). Since $\calL$ is closed and $\G$-invariant, $\calK$ is compact and $\vg$-invariant (and $\iota$-invariant if $\calL$ is flip-invariant). Let $F:\UG \ra \R$ be the distance function to $\calK$ for the metric $d_{\UG}$, and note that $F$ is Lipschitz and nonnegative (and $\iota$-invariant if $\calK$ is $\iota$-invariant since $d_{\UG}$ is $\iota$-invariant).
    
    Note that $\calK$ is a $(-F)$\emph{-maximizing set} \cite{morris.sufficient}. That is, a measure $\frakm\in \calM_{\vg}(\UG)$ satisfies $\int{F}{d\frakm}=0$ if and only if $\supp(\frakm)\subset \calK$. Applying \Cref{lem:homeoCM} and the duality from \Cref{eq:def.dualC-H}, we have that $\Lam\in \calC_\G$ satisfies $\ell_\p(\Lam)=0$
    if and only if $\supp(\Lam)\subset \calK$. Furthermore, if  $\calL$ is Mather, then 
    \begin{align*}
        \calL & =\bigcup\{\supp(\Lam):\Lam\in \calC_\G \text{ and }\supp(\Lam)\subset \calL\} \\
        & =\bigcup\{\supp(\Lam):\Lam\in \calC_\G \text{ and }\ell_\p(\Lam)=0\}.
    \end{align*}
    Finally, if $\calL$ is flip-invariant, then we can choose $\p\in \partial \calD_\G$ by \Cref{prop.dictionaryBowen}.
\end{proof}

\begin{proof}[Proof of \Cref{prop:positiveonnonsimple}]
    We let $\calL\subset \ppG$ be the set of pairs of endpoints of geodesics in the universal cover of the surface $\Sig$ that project to simple geodesics in $\Sig$. This set is a flip-invariant, closed, and $\G$-invariant, hence an algebraic lamination. Then \Cref{prop:mather} gives us a pseudometric $d\in \partial \calD_\G$ such that a geodesic current $\Lam\in \calC_\G$ satisfies $\ell_\p(\Lam)=0$ if and only if $\supp(\Lam)\subset \calL$. In particular, $\ell_\p[g]=0$ if and only if $[g]$ represents a simple closed geodesic in $\Sig$, which settles the first two items.

    For the last items of the statement, consider an embedded pair of pants $P \subset \Sig$ with fundamental group generated by standard generators $g,h$,
whose conjugacy classes correspond to simple closed curves $[g], [h]$ homotopic to two of the boundary components of $P$. Let
$gh$ represent a self-intersecting figure eight curve in $P$, and $gh^{-1}$ represent a simple closed curve corresponding to the third
boundary component of $P$. 
In~\cite[Section~1.11]{CM87:GroupActions} it is shown that the translation length $$ \G \ni k\mapsto\|k\| \coloneqq \inf_{p \in T} d_T(p,k\cdot p)$$ of an isometric action of $\G$ on a real tree $T$ must satisfy that for every
pair $k_1,k_2 \in \G$, that $\|k_1k_2\| = \|k_1k_2^{-1}\|$, or
$\|k_1\| + \|k_2 \| \geq \max \{\|k_1k_2 \|, \|k_1k_2^{-1}\|\}$.
We also note that the hypothesis can be preserved under equivariant rough isometry, by~\cite[Theorem~3.7]{CM87:GroupActions} and from the fact that for a real tree the translation length and the stable length are equal.
We see that, for this pair $(g,h)$, $\ell_d$ violates Culler-Morgan's axioms. Hence, since for a real tree translation length and stable length are equal, it follows that $\ell_d$ cannot be the stable length of a real tree, proving the third item.

In~\cite[Theorem~A']{MGT26:Intersections} it is shown that for a stable length to be an intersection number with a geodesic current,
a necessary condition is that for any pair of elements $g, h \in \G$ whose hyperbolic axes cross on the hyperbolic disk, one must have
$\ell_d[g] + \ell_d[h] \geq \ell_d[gh]$. This, again, is violated by $\ell_d$, so it cannot arise as an intersection number with a geodesic current.
\end{proof}

The analog of \Cref{prop:positiveonnonsimple} for free groups is the following. An element in a free group $\G$ is \emph{simple} if it is not contained in a proper free factor of $\G$.

\begin{proposition}\label{prop:positiveonnonsimple_free}
    Let $\G$ be a free group of rank at least $2$. Then there exists a pseudometric $d\in \partial \calD_\G$ such that:
    \begin{enumerate}
        \item $\ell_{d}[g]=0$ for any $[g]\in [\G]'$ representing a simple element.
        \item $\ell_{d}[g]>0$ for any $[g]\in [\G]'$ representing a non-simple element.
\end{enumerate}
\end{proposition}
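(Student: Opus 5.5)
We follow the strategy used for \Cref{prop:positiveonnonsimple}: find a flip-invariant algebraic lamination $\calL\subset\ppG$ whose carried geodesic currents are exactly those coming from simple elements, then apply \Cref{prop:mather}. The candidate is the smallest algebraic lamination containing all endpoint pairs of simple elements:
\[\calL:=\overline{\bigcup\{\G\cdot(g^-,g^+),\ \G\cdot(g^+,g^-) : g \text{ simple}\}}.\]
This set is closed, $\G$-invariant and flip-invariant. So \Cref{prop:mather} gives $d\in\partial\calD_\G$ such that, for every current $\Lam$, $\ell_d(\Lam)=0$ if and only if $\supp(\Lam)\subset\calL$.

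\textbf{Item (1).} For a simple element $g$, the rational current $\eta_{[g]}$ is supported on $\G\cdot(g^-,g^+)\subset\calL$. Hence $\ell_d[g]=0$.

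\textbf{Item (2).} Let $g$ be non-simple. Then $\supp(\eta_{[g]})=\G\cdot(g^-,g^+)$, so we must show $(g^-,g^+)\notin\calL$. This amounts to showing that the axis of $g$ is not a limit of translates of axes of simple elements. This is the main obstacle.

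\textbf{Plan for the main obstacle.} We use Whitehead's criterion. Fix a free basis and view elements as cyclic words. A cyclically reduced element lies in a proper free factor only if its Whitehead graph has a cut vertex or is disconnected. Conversely, if the Whitehead graph of a cyclically reduced word is connected with no cut vertex, the element is not in any proper free factor; this is Whitehead's cut-vertex lemma, also used by Stallings.

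Now suppose $(g^-,g^+)$ were a limit of translates of axes $(h_k^-,h_k^+)$ with each $h_k$ simple. After translating, the bi-infinite words of the axes of $h_k$ contain arbitrarily long subwords $w^N$, where $w$ is a cyclically reduced representative of $g$. Whitehead graphs are monotone under taking subwords, so the Whitehead graph of $h_k$ contains that of $g$, since $w^N$ realizes every turn of $g$. Adding edges to a graph that is connected and has no cut vertex keeps it connected with no cut vertex, provided it already spans all vertices, which holds here. Hence $h_k$ would be non-simple, a contradiction.

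The delicate step is the reduction to the bi-infinite word. Convergence in $\ppG$ is equivalent to agreement of the reduced bi-infinite words on larger and larger windows about the origin. I would verify this carefully, as well as the claim that $w^2$ already realizes all turns of the cyclic word $g$.

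This argument requires the Whitehead graph of every non-simple $g$ to have no cut vertex. That is exactly the statement that non-simple elements are those with cut-vertex-free Whitehead graph after minimizing over $\mathrm{Aut}$, and this fails in general. I would therefore fall back on one of two fixes.

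The first fix is to use the lamination criterion directly: the set of simple elements is closed in the \emph{Whitehead–Stallings} sense, via the theorem of Bestvina–Feighn–Handel or Reynolds, that limits of simple elements in $\ppG$ are carried by the "primitive" lamination. Non-simple elements are exactly those whose axis is not in the closure. I would cite Reynolds' result that a current is in the closure of simple currents iff its Whitehead graph has a cut vertex, applied to $\eta_{[g]}$.

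The second fix is to enlarge $\calL$ to the closure of the supports of all currents with $\ell=0$, checking directly that it still avoids non-simple axes. I would try the first fix.
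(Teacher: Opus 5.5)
\textbf{Gap in item (2).} Your overall reduction is the same as the paper's. You take $\calL$ to be the closure of the axes of simple elements; the paper uses primitive elements, which give the same closed set, and your choice makes item (1) immediate. You then apply \Cref{prop:mather} and reduce item (2) to showing $(g^-,g^+)\notin\calL$ for non-simple $g$. The paper settles this step by citing \cite[Lemma~5.4]{GuirardelHorbez2019}.

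Your proposal, as written, does not settle it. The result you finally fall back on, ``a current lies in the closure of simple currents iff its Whitehead graph has a cut vertex,'' is not a correct statement. Whitehead graphs depend on the basis, and for a fixed basis the equivalence fails. For example, in $F(a,b)$ the element $a^3bab$ is the image of the non-simple element $a^2b^2$ under $b\mapsto ab$, so it is non-simple. Yet its Whitehead graph with respect to $\{a,b\}$ is the path $b^{-1}-a-a^{-1}-b$, which has cut vertices. So item (2) rests on a misstated citation.

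\textbf{Your discarded argument works.} The argument you abandoned is the right one, and your reason for abandoning it is mistaken. Neither $\calL$ nor convergence in $\ppG$ depends on the free basis used to spell words. So for a given non-simple $g$ you may choose the basis: take one in which a cyclically reduced representative $w$ of $g$ has minimal length in its $\mathrm{Aut}(\G)$-orbit. Then $\mathrm{Wh}(w)$ is connected with no cut vertex, by the easy half of Whitehead's algorithm.

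To see this, let $(A,x)$ denote the Whitehead automorphism with $x\in A$ and $x^{-1}\notin A$. It changes the cyclic length of $w$ by $e(A,A^c)-\deg(x)$, where $e(A,A^c)$ counts the edges of $\mathrm{Wh}(w)$ between $A$ and its complement.
\begin{enumerate}
\item Suppose $\mathrm{Wh}(w)$ is disconnected. If every component is closed under inversion, then consecutive letters of $w$ lie in a common component, so $w$ lies in a proper free factor. Otherwise some component $C$ contains $x$ but not $x^{-1}$. Then $(C,x)$ shortens $w$ by $\deg(x)$, unless $\deg(x)=0$; in that case $x$ does not occur in $w$, which again puts $w$ in a proper free factor.
\item Suppose $v$ is a cut vertex. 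Let $C$ be a component of $\mathrm{Wh}(w)\setminus\{v\}$ not containing $v^{-1}$, and set $A=C\cup\{v\}$. Every edge leaving $A$ starts at $v$, and at least one edge at $v$ goes into $C$. Hence $(A,v)$ changes the length by at most $-1$.
\end{enumerate}
Both cases contradict minimality or non-simplicity, so your claim that this ``fails in general'' is wrong.

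With this basis, your monotonicity argument goes through verbatim. If simple $h_k$ have axes converging to that of $g$, the periodic word of $h_k$ eventually contains $w^2$. Hence $\mathrm{Wh}(h_k)\supseteq \mathrm{Wh}(w)$ is connected without cut vertex on the full vertex set. This contradicts Stallings' form of the cut-vertex lemma, which says a separable cyclically reduced word has a disconnected Whitehead graph or one with a cut vertex. Either complete the argument this way, which is self-contained modulo classical Whitehead theory, or cite Guirardel--Horbez as the paper does.
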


\begin{proof}
   Let $\calL$ be the closure in $\ppG$ of the set $\{(g^-,g^+) \in \partial^2 \G : g \text{ is primitive}\}$, so that $\calL$ is a  flip-invariant algebraic lamination. 
   Then \Cref{prop:mather} gives us a pseudometric $d\in \partial \calD_\G$ such that a geodesic current $\Lam\in \calC_\G$ satisfies $\ell_\p(\Lam)=0$ if and only if $\supp(\Lam)\subset \calL$. In addition, By \cite[Lemma~5.4]{GuirardelHorbez2019} we have that $(g^-,g^+)\notin \calL$ for $g$ non-simple. In particular, $\ell_\p[g]=0$ if and only if $[g]$ represents a simple element in $\G$.
    \end{proof}



\section{Density of Green metrics}\label{sec:proofgreen}

In this section we prove \Cref{mainthm.greendense}.  For the reader's convenience, we restate the result here. Recall that for a probability measure $\mu$ on the group $\G$, $d_\mu$ denotes the associated Green metric on $\G$.

\begin{theorem}[Density of Green metrics]\label{thm.greendense}
    Let $\G$ be a non-elementary hyperbolic group and $d\in \calD_\G$. Then there exists a constant $\del_1>0$ satisfying the following. Let $\del>\del_1$ and $\mu_l$ be the uniform probability measure supported on $S_l=\{g\in \G \colon l-\del<d(o,g)\leq l\}$. Then $S_l$ generates $\G$ for $l$ large enough and $[d_{\mu_l}]$ converges to $[d]$ in $\scrD_\G$.
\end{theorem}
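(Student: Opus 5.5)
We need to show that $\Del([d_{\mu_l}],[d])\to 0$. By \Cref{eq:formulaDel}, this amounts to finding constants $\lam_l>0$ such that $\ell_{d_{\mu_l}}[g]/\ell_d[g]\in[\lam_l(1-\ep_l),\lam_l(1+\ep_l)]$ for all $[g]\in[\G]'$, with $\ep_l\to 0$. The natural guess is $\lam_l=\log|S_l|/l$, which is approximately $v_d$ when $\del$ is fixed, because $|S_l|\asymp e^{v_d l}$ by Coornaert. The plan is to prove that, up to additive errors of order $o(l)\cdot d(o,g)/l+O_l(1)$,
\[
d_{\mu_l}(o,g)\approx \frac{\log|S_l|}{l}\, d(o,g).
\]
After stabilizing along powers $g^k$ and dividing by $k$, the additive errors disappear and the multiplicative error becomes $1+O(\log l/l)$.

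The first step is generation. I would take $\del_1$ larger than the rough-geodesic constant of $d$ (which is roughly geodesic by \cite[Theorem~2.2]{BHM.harmonic}), plus the hyperbolicity constant and the diameter of a ball containing a generating set. Any $s$ with $d(o,s)\le R_0$ can then be written as $s=ab$ with $a\in S_l$ and $b^{-1}\in S_l$: pick $a$ on a rough geodesic ray from $o$ passing near $s$, at distance about $l$, using non-elementarity to guarantee that such rays exist in all directions. Hence $S_l\cup S_l^{-1}$ generates $\G$. For generation as a semigroup, note that $S_l$ is symmetric because $d$ is symmetric.

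For the upper bound on $d_{\mu_l}$, write $n=\lceil d(o,g)/l\rceil$. Along a rough geodesic from $o$ to $g$, choose points $x_0=o,x_1,\dots,x_n\approx g$ spaced about $l-\del/2$ apart, so that each increment $x_{i-1}^{-1}x_i$ lies in $S_l$, with an $O(1)$ correction at the end. Then $\bbP(Z_n=g)\ge |S_l|^{-n}\cdot c_l$, where $c_l$ accounts for the last correction step and is independent of $g$. This gives $d_{\mu_l}(o,g)\le n\log|S_l|+O_l(1)$.

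The lower bound is the main obstacle: we must show that the walk cannot reach $g$ much more efficiently by zigzagging. I would bound the first-hitting probability by summing over all $k$ of the $k$-step transition probabilities $\mu_l^{*k}(g)$. A path of $k$ steps in $S_l$ has total $d$-length at most $kl$. The number of such paths ending at $g$ is at most the number of sequences whose concatenation reaches $g$. Using hyperbolicity, each path $o=y_0,\dots,y_k=g$ with steps in $S_l$, projected to the geodesic $[o,g]$, makes progress at most $l$ per step. Backtracking steps cost entropy: a step of size $\approx l$ making progress $p$ along the geodesic has only about $e^{v_d(l+p)/2}$ choices rather than $e^{v_d l}$, by the standard Gromov-product counting of elements $h$ with $d(o,h)\approx l$ and $\gpr{h}{\xi}_o\ge(l+p)/2$. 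Consequently,
\[
\mu_l^{*k}(g)\le \sum_{p_1+\dots+p_k\ge d(o,g)-O(k)}\ \prod_i \frac{C e^{v_d(l+p_i)/2}}{|S_l|}\lesssim C^k e^{-v_d d(o,g)}\cdot(\text{poly}(l))^k,
\]
since $|S_l|\asymp e^{v_d l}$ with constants depending on $\del$. Summing over $k\ge n$ gives a geometric series provided $C\,\text{poly}(l)\,e^{-v_d(l-p)/2}$ is controlled when $p<l$. The delicate point is obtaining a constant $C$ that is uniform in $l$ while keeping the entropy of the $O(1)$-per-step errors at $O(\log l)$ per step. This yields
\[
d_{\mu_l}(o,g)\ge v_d\, d(o,g)-O\big((d(o,g)/l)\log l\big)-O(1).
\]

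Combining the upper and lower bounds, passing to $g^k$, and normalizing by $v_d l/\log|S_l|\to1$, we obtain $\sup_{[g]}\big|\log \ell_{d_{\mu_l}}[g]-\log(v_d\ell_d[g])\big|=O(\log l/l)\to0$. Hence $\Del([d_{\mu_l}],[d])\to 0$, completing the proof.
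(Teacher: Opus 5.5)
Your overall architecture is the same as the paper's: the thickened sphere generates $\Gamma$; an explicit chain of roughly $d(o,g)/l$ steps along a rough geodesic gives the upper bound; the lower bound comes from $G_{\mu_l}(o,g)=\sum_k\mu_l^{*k}(g)$ plus a count of $k$-fold products in $S_l$ equal to $g$; and $\log(\#S_l)/l\to v_d$. Two side remarks: with spacing $l-\delta/2$ you need about $d(o,g)/(l-\delta/2)$ steps rather than $\lceil d(o,g)/l\rceil$, which is harmless; and using purely exponential growth for $\log(\#S_l)/l\to v_d$ is a fine substitute for the paper's appeal to Arzhantseva--Lysenok. The gap is the counting estimate in the lower bound. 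It is the real content of the theorem, you assert it rather than prove it, and the sketch you give for it does not work.

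\emph{The per-step count is false.} Already for the first step from $o$, the $h\in S_l$ whose projection to $[o,g]$ advances by $p$ are those with $(h\mid g)_o\approx p$, and there are about $e^{v_d(l-p)}$ of them. So a step with $p\approx 0$ has $\asymp e^{v_d l}$ choices, not $e^{v_d l/2}$, and your exponent $(l+p)/2$ has the wrong sign. A saving of $e^{-v_d l/2}$ per step exists only on average: a path that leaves $[o,g]$ must come back, and the returning steps are heavily constrained. Capturing this needs a global count, not a step-by-step one.

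\emph{The displayed bound is not summable.} Your bound $C^k e^{-v_d d(o,g)}\mathrm{poly}(l)^k$ grows with $k$, so it cannot be summed over $k\ge n$. What is actually needed is
\[
\mu_l^{*k}(g)\le e^{-v_d d(o,g)/2}\Big(\frac{C\,(l/\delta+1)\,e^{v_d l/2}}{\#S_l}\Big)^{k}
\]
with $C=C_\delta$ independent of $l$. This is exactly the uniformity you call ``the delicate point'' and leave open.

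\emph{How the paper closes it.} The paper imports this from \cite[Lemma~B.3]{CDS}:
\[
\#\{(g_1,\dots,g_k)\in S_l^k: g_1\cdots g_k=g\}\le C^k(l/\delta+1)^k e^{v_d(kl-d(o,g))/2},
\]
after noting that the geometric-action hypothesis there relaxes to the roughly geodesic space $(\Gamma,d)$. With it, the ratio above is $<1$ for large $l$ because $\log(\#S_l)/l\to v_d>v_d/2$. Summing from $k\ge |g|_{S_l}\ge d(o,g)/l$ gives
\[
d_{\mu_l}(o,g)\ge d(o,g)\big(\log(\#S_l)/l-\log(C(l/\delta+1))/l\big)-\kappa_l,
\]
hence $\mathrm{Dil}(d,d_{\mu_l})\to v_d^{-1}$. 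Without that lemma, or a correct proof of it, your lower bound is not established, and neither is the convergence $\Delta([d_{\mu_l}],[d])\to 0$.
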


Each set $S_l$ as in the above statement is a ``thickened sphere'' of radius $l$ around $o$ in $(\G,d)$, and $\m_l$ is the uniform probability measure on $l$. A sketch of this  approximation result is the following. 
\begin{enumerate}
    \item We give an upper bound of $d_{\mu_l}$ in terms of $d_{S_l}$, the word metric with respect to $S_l$ (\Cref{lem.greeneasyineq}). This only requires mild assumptions on $\G$.
    \item We give a lower bound of $d_{\mu_l}$ in terms of $d$ and $d_{S_l}$ (\Cref{prop.greenmetricineqHYP}). This uses hyperbolicity of $\G$ in a crucial way, as we rely on counting results from \cite{CDS}. 
    \item For $l$ large enough, we show that $d$ is well-approximated (in $\scrD_\G$) by $d_{S_l}$ (\Cref{lem.worddensesphere}). Again, this result does not require hyperbolicity of $\G$, but uses that $d$ is roughly geodesic. This approximation upgrades the bounds in (1) and (2) to bounds of $d_{\mu_l}$ in terms of $d$ only. 
    \item As $l$ tends to infinity, these bounds become more precise (up to additive errors), implying the convergence $[d_{\mu_l}]\to [d]$ in $\scrD_\G$. 
\end{enumerate}

We begin by showing that word metrics on thickened spheres approximate roughly geodesic metrics. This is analogous to \cite[Lemma~2.5~(2)]{cantrell-reyes.approx}, where it is shown that word metrics on large balls approximate roughly geodesic metrics.

\begin{lemma}\label{lem.worddensesphere}
    Let $\G$ be any group and let $d$ be an unbounded left-invariant $\al$-roughly geodesic (non-necessarily symmetric) metric on $\G$. Then for any $l>\del>2(\lceil\al \rceil+\al+2)$ the set $S_l=\{g\in \G: l-\del<d(o,g)\leq l\}$ generates $\G$ as a semi-group and there exists $M=M_{l,\al}>0$ such that
\begin{equation}\label{eq.WMdensesphere}
        (l-\lceil{\al \rceil})|g|_{S_l}-M\leq d(o,g)\leq l|x|_{S_l} \quad \text{ for all }g\in \G.
    \end{equation}
\end{lemma}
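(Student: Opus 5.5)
The upper bound is immediate from the triangle inequality and left-invariance. If $g=s_1\cdots s_n$ with $s_i\in S_l$ and $n=|g|_{S_l}$, then
\[d(o,g)\le \sum_i d(s_1\cdots s_{i-1},s_1\cdots s_i)=\sum_i d(o,s_i)\le nl.\]
So the work is in generation and the lower bound, and both come out of one construction: cutting a rough geodesic from $o$ to $g$ into pieces whose $d$-lengths land in the window $(l-\del,l]$.

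Fix $g$ with $d(o,g)$ large and an $\al$-rough geodesic $\gam:[0,D]\to\G$ from $o$ to $g$, where $D$ is the parameter length, so $|D-d(o,g)|\le\al$. Set the step $L=l-\lceil\al\rceil-\al-1$ and the points $x_i=\gam(iL)$ for $0\le i\le k$, with $k=\lfloor D/L\rfloor-1$, taking $x_{k+1}=g$. The rough geodesic inequality gives
\[d(x_{i-1},x_i)\in[L-\al,L+\al]\subset(l-\del,l],\]
and this inclusion is exactly what the hypothesis $\del>2(\lceil\al\rceil+\al+2)$ secures. By left-invariance each increment $x_{i-1}^{-1}x_i$ therefore lies in $S_l$.

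The last increment $x_k^{-1}g$ has $d$-length between roughly $L-\al$ and $2L+\al$, which may exceed $l$. I would handle it by splitting at most once more: take an intermediate point on $\gam$ so that both pieces have length in $(l-\del,l]$. This works because the window has width $\del>2(\lceil\al\rceil+\al+2)$ while the rough geodesic moves in increments controlled up to $\al$. Only finitely many elements have small $d(o,g)$, so they contribute a bounded error. For generation of these short elements, I would write $h=s\cdot(s^{-1}h)$ with $s$ chosen so that both factors lie in $S_l$, or more simply $h=(hs)s^{-1}$. Here I need $S_l^{-1}$ to be expressible as well. Since $d$ may be asymmetric, I would instead note that $s^{-1}=s'\,(s's)^{-1}$ is awkward. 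The cleaner route is to use unboundedness: choose $y$ with $d(o,y)$ large and $d(o,hy)$ large, represent both $y$ and $hy$ positively, and aim for $h$ via a long element. I expect this semigroup-generation step for short or arbitrary elements, with asymmetric $d$, to be the main obstacle. My first attempt is to show every $g$ with $d(o,g)\ge l$ is a positive word, then write any $h$ as $h=(hz)\,z^{-1}$ and realize $z^{-1}$ itself as a long element, using that $d(o,z^{-1})=d(z,o)$ is also unbounded as $z$ varies.

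For the lower bound, the decomposition above gives
\[|g|_{S_l}\le k+2\le \frac{D}{L}+1\le \frac{d(o,g)+\al}{L}+1.\]
Hence $L|g|_{S_l}-M\le d(o,g)$. To match the stated constant $l-\lceil\al\rceil$ rather than $L$, I would choose the step $L$ as large as the window allows, namely $L=l-\lceil\al\rceil$ with $L+\al\le l$ adjusted by rounding. If the constant stays off by $\al+1$, I would absorb the difference into $M$ only up to linear error, so the step choice must be sharp, and I would fine-tune it there.
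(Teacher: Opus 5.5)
Your upper bound is correct, and the step-length issue is harmless. With the rigid step $L=l-\lceil\alpha\rceil$ you get $[L-\alpha,L+\alpha]\subset(l-\delta,l]$, because $\alpha\le\lceil\alpha\rceil$ and $\delta>\lceil\alpha\rceil+\alpha$, and this gives exactly the constant $l-\lceil\alpha\rceil$.

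\textbf{The gap: the last increment.} After cutting $\gamma$ with step $L$, the leftover segment has parameter length anywhere in $[L,2L)$. The window $(l-\delta,l]$ has fixed width $\delta$, and $l>\delta$ is arbitrary; in the application $\delta$ is fixed and $l\to\infty$. Two consecutive subsegments of $\gamma$ with $d$-lengths in $(l-\delta,l]$ have total length within $O(\delta+\alpha)$ of $2l$. So a leftover of length about $3l/2$ is neither one letter of $S_l$ nor two letters cut along $\gamma$. The width $\delta$ does not help, since each piece must have length close to $l$. You therefore have no $S_l$-word for $g$ at all, hence neither semigroup generation nor $|g|_{S_l}\le k+2$.

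Your treatment of short elements is also not yet an argument. For an arbitrary group, $d$-balls need not be finite. More importantly, a bounded error needs these elements to lie in the semigroup generated by $S_l$ with uniformly bounded word length, and that is exactly what you leave open.

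\textbf{The missing ingredient.} You need a device that writes every element of $d$-length at most $l$ as an $S_l$-word of length bounded in terms of $l$ and $\alpha$. The paper gets this from the small ball $B=\{g: d(o,g)\le 2+\alpha\}$.
\begin{enumerate}
\item Rough geodesicity shows that $B$ generates $\Gamma$, with $|z|_B$ bounded by an affine function of $d(o,z)$.
\item Unboundedness plus rough geodesicity give $h$ with $|d(o,h)-(l-2\lceil\alpha\rceil-2)|\le\alpha$. Then $h$ and $hb$ lie in $S_l$ for every $b\in B$. This is precisely where the full hypothesis $\delta>2(\lceil\alpha\rceil+\alpha+2)$ is used; your step windows only need about half of it.
\item Writing $b=h^{-1}(hb)$ gives $|b|_{S_l}\le 2$.
\item Cut $\gamma$ rigidly with step $l-\lceil\alpha\rceil$. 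The leftover $\hat z$ has $d(o,\hat z)\le l$, so it costs at most $2|\hat z|_B$, a constant depending on $l$ and $\alpha$. This yields both generation and the lower bound.
\end{enumerate}
Step 3 uses $h^{-1}\in S_l$, and step 2 uses $d(b,o)=d(o,b)$; both require symmetry of $d$. Symmetry holds in the only case used later ($d\in\mathcal{D}_\Gamma$, where $S_l=S_l^{-1}$), so your asymmetry obstacle does not arise there.

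\textbf{An alternative repair.} You could instead handle long elements with variable step lengths in $(l-\delta+\alpha,\,l-\lceil\alpha\rceil]$. Using $\lceil D/(l-\lceil\alpha\rceil)\rceil$ steps, this works once $d(o,g)$ exceeds a constant of order $l^2/\delta$, and still gives the constant $l-\lceil\alpha\rceil$. Short elements would then still need the ball trick, or your $h=(hz)z^{-1}$ idea applied on top of this repair in the symmetric case.
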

\begin{proof}
  Since $d$ is $\al$-rough geodesic, the ball $B=\{g\in \G \colon d(o,g)\leq  2+\al\}$ generates $\G$ as a semi-group and satisfies \begin{equation}\label{eq.WMdenseball}
      (\lceil{\al \rceil}-\al+1)|g|_B\leq d(o,g)+\lceil{\al \rceil}+1
  \end{equation} for all $g\in \G$, see \cite[Lemma~2.5~(2)]{cantrell-reyes.approx}. Therefore, since $d$ is unbounded and $\al$-roughly geodesic and $l>\del>2(\al+\lceil \al \rceil+1)$, we can find $h\in \G$ with $|d(o,h)-(l-2\lceil{\al \rceil}-2)|\leq \al$. In particular, if $b\in B$, then both $h,hb$ belong to $S_l$ and $|b|_{S_l}\leq 2$, implying
  \begin{equation}\label{eq.ineqS-B}
      |g|_{S_l}\leq 2|g|_B \quad \text{ for all }g\in \G.
  \end{equation}

  Now we prove \Cref{eq.WMdensesphere}, for which the second inequality holds easily by the triangle inequality. To prove the first inequality in \Cref{eq.WMdensesphere}, we let $o=g_0,\dots,g_m=g$ be an $\al$-rough geodesic joining $o$ and $g\in \G$, and write $m=q(l-\lceil{\al \rceil})+r$ with $q,r$ integers and $0\leq r<l-\lceil{\al \rceil}$. We set $z_1=g_{l-\lceil{\al \rceil}}, z_j=g_{(j-1)(l-\lceil{\al \rceil})}^{-1}g_{j(l-\lceil{\al \rceil})}$ for $2\leq j\leq q$, and $\hat z=z_q^{-1}g=g_{q(l-\lceil{\al \rceil})}^{-1}g_{q(l-\lceil{\al \rceil})+r}$. The definition of the $g_i's$ implies that $$d(o,z_j )\in [l-\lceil{\al \rceil}-\al,l-\lceil{\al \rceil}+\al]\subset (l-\del,l]$$ for all $j$, where we used $l-\al>2(\lceil \al\rceil+\al+2)$. This yields 
  \begin{align*}
      |g|_{S_l} \leq q+|\hat z|_{S_l}\leq (l-\lceil{\al \rceil})^{-1}m+|\hat z|_{S_l}
       \leq (l-\lceil{\al \rceil})^{-1}(d(o,g)+\al)+|\hat z|_{S_l}.
  \end{align*}  
  In addition, we also have $d(o,\hat z)=d(g_{q(l-\lceil{\al \rceil})},g_m)\leq l$, and so from \Cref{eq.WMdenseball} and \Cref{eq.ineqS-B}  we deduce
  \[|\hat z|_{S_l}\leq 2|\hat z|_B\leq (l+\lceil{\al \rceil}+1)(\lceil{\al \rceil}-\al+1)^{-1}.\]
  Combining the last two inequalities we obtain the first inequality in \Cref{eq.WMdensesphere} with $M=(l+\lceil{\al \rceil}+1)(l-\lceil{\al \rceil})(\lceil{\al \rceil}-\al+1)^{-1}+\al$.
\end{proof}

For $\G$ hyperbolic and $d\in \calD_\G$, the lemma above has the following corollary.

\begin{corollary}\label{coro.convEGR}
    Let $\G$ be non-elementary hyperbolic and $d\in \calD_\G$ be $\al$-roughly geodesic. Then there exists a constant $\beta>0$ depending only on $d$ such that for any $l>\del >2(\lceil \al \rceil+\al+2)$ we have
    \[l^{-1}(\log(\# S_l)-\beta)\leq v_d\leq (l-\lceil \al\rceil)^{-1}\log(\# S_l).\]
\end{corollary}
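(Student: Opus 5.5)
We read off both bounds on $v_d$ from the two-sided comparison \Cref{eq.WMdensesphere} of \Cref{lem.worddensesphere}, combined with crude counting for the word metric $|\cdot|_{S_l}$.

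\emph{Upper bound.} By the first inequality in \Cref{eq.WMdensesphere}, if $d(o,g)<T$ then $|g|_{S_l}\le (T+M)/(l-\lceil\al\rceil)$. The number of elements of $S_l$-word length at most $n$ is at most $\sum_{k\le n}(\#S_l)^k\le (n+1)(\#S_l)^n$. Therefore
\[
\#\{g:d(o,g)<T\}\le \Big(\tfrac{T+M}{l-\lceil\al\rceil}+1\Big)(\#S_l)^{(T+M)/(l-\lceil\al\rceil)}.
\]
Taking $\frac1T\log$ and letting $T\to\infty$ gives $v_d\le (l-\lceil\al\rceil)^{-1}\log\#S_l$. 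The constant $M=M_{l,\al}$ disappears in the limit, so its dependence on $l$ does no harm.

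\emph{Lower bound.} Here we need exponential growth of $\#S_l$ to be controlled by $e^{v_dl}$, and this is where hyperbolicity enters. Since $d\in\calD_\G$ and $\G$ is non-elementary, Coornaert's estimate (via quasi-conformal measures, \Cref{subsec:QC}) gives a constant $C\ge1$ with $\#\{g:d(o,g)\le R\}\le Ce^{v_dR}$ for all $R\ge0$. Then
\[
\#S_l\le \#\{g: d(o,g)\le l\}\le Ce^{v_d l},
\]
so $\log\#S_l-\beta\le v_d l$ with $\beta=\log C$. This $\beta$ depends only on $d$, which is what the statement requires.

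\emph{Main obstacle.} The main obstacle is justifying the uniform purely exponential upper bound on ball growth for $d$, as opposed to a bound holding only up to subexponential factors. If we want to avoid citing Coornaert, there is an alternative route: rough geodesicity together with hyperbolicity gives almost-submultiplicativity of spheres, $\#B(l_1+l_2)\ge c\,\#B(l_1)\#B(l_2)$. This yields $\#B(l)\le c^{-1}e^{v_d l}$ by Fekete's lemma applied to $\log(c\,\#B(l))$, and that route is superadditive in the right direction.
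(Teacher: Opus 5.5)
Your proof is correct. The upper bound is the paper's argument in substance. The paper writes it as $v_d\le (l-\lceil\al\rceil)^{-1}v_{S_l}$, using the first inequality of \Cref{lem.worddensesphere}, combined with the trivial bound $v_{S_l}\le\log\#S_l$; this is exactly your word count.

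The lower bound is where you take a genuinely different route.
\begin{itemize}
\item The paper again goes through \Cref{lem.worddensesphere}. The inequality $d(o,g)\le l|g|_{S_l}$ gives $v_{S_l}\le l\,v_d$. It then invokes the Arzhantseva--Lysenok theorem: $v_S\ge\log\#S-\beta$ for \emph{every} finite generating set $S$ of a non-elementary hyperbolic group, with $\beta$ depending only on $\G$.
\item You bypass the word metric altogether. You use only $S_l\subset\{g: d(o,g)\le l\}$ together with the upper half of purely exponential growth for the fixed metric $d\in\calD_\G$, which is a standard consequence of quasi-conformal measures and the shadow lemma.
\end{itemize}
Your route is shorter, and your lower bound holds for every $l>0$ without needing $S_l$ to generate. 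The paper's route yields a $\beta$ depending only on $\G$, hence uniform over all of $\calD_\G$. That uniformity is not needed here, since $d$ is fixed. If you want $\beta>0$ strictly, just enlarge $C$.

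One caveat about the fallback sketch in your last paragraph. The concatenation argument does not directly give $\#B(l_1+l_2)\ge c\,\#B(l_1)\#B(l_2)$ for balls. A point of the target ball can arise from roughly $l_1$ different pairs, because the first factor may sit at any height along the geodesic from $o$. So the map is not boundedly-many-to-one. The bounded-to-one version works for thickened spheres, and you then sum over annuli to control balls. Since your main argument relies on Coornaert-type growth anyway, this does not affect the proof.
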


\begin{proof}
  Given a (symmetric) finite generating set $S\subset \G$, let $v_S$ denote the exponential growth rate of the word metric $d_S$. Then we have the immediate bound \begin{equation}\label{eq.1boundeasy}
      v_S\leq \log(\# S). 
  \end{equation}
  On the other hand, by \cite[Theorem~1]{arzhantseva-lysenok} there exists a constant $\beta$ depending only on $\G$, such that for any such $S$ we have \begin{equation}\label{eq.AL}
      v_S \geq \log(\# S)-\beta.
  \end{equation} 
  Now, given $l>\del >2(\lceil \al \rceil+\al+2)$, 
  from \Cref{lem.worddensesphere} we obtain the inequalities
\[l^{-1}v_{S_l}\leq v_d\leq (l-\lceil \al\rceil)^{-1}v_{S_l}.\]
These inequalities together with \Cref{eq.1boundeasy} and \Cref{eq.AL} imply the corollary.
\end{proof}

We continue by giving an upper bound for Green metrics in terms of the word metrics defined by their supports. If $d_\mu$ is the Green metric on $\G$ associated to the probability measure $\mu$, we have the formula
\[d_\mu(g,h)=-\log\left(\frac{G_\mu(g,h)}{G_\mu(o,o)}\right) \quad \text{ for }g,h\in \G,\]
where $G_\mu(g,h)=\sum_{k\geq 0}{\mu^{\ast k}(g^{-1}h)}$ is the \emph{Green function} and $\mu^{\ast k}$ denotes the $k$th-convolution of $\mu$ \cite[p.~686]{BHM.harmonic}.

\begin{lemma}\label{lem.greeneasyineq}
Let $\G$ be an infinite finitely generated group that is not virtually $\Z$ or $\Z^2$ and let $S\subset \G$ be a finite set generating $\G$ as a semi-group. If $\m=\m_S$ denotes the uniform probability measure supported on $S$, then for all $g\in \G$ we have
\begin{equation*}
    d_\mu(o,g)\leq \log\left(\#S\right)|g|_S+\log\left(G_\m(o,o)(1-(\#S)^{-1})\right).
\end{equation*}
\end{lemma}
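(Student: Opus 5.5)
The plan is to write $g=s_1\cdots s_n$ with $n=|g|_S$ and $s_i\in S$. I will bound $G_\mu(o,g)$ from below by a quantity that yields exactly $\#S^{-n}\cdot\frac{\#S}{\#S-1}=\sum_{j\ge 0}(\#S)^{-(n+j)}$. This is what the statement requires, since
\[
d_\mu(o,g)=\log G_\mu(o,o)-\log G_\mu(o,g),
\]
so the claimed inequality is equivalent to
\[
G_\mu(o,g)\ \ge\ (\#S)^{-n}\Bigl(1-(\#S)^{-1}\Bigr)^{-1}.
\]
The hypothesis that $\G$ is infinite and not virtually $\Z$ or $\Z^2$ is used only to guarantee transience (Varopoulos), so that $G_\mu$ is finite and $d_\mu$ is defined. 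The case $g=o$ is then exactly the inequality $G_\mu(o,o)\ge \#S/(\#S-1)$, and I will treat it as part of the last step.

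\textbf{Step 1: splitting off the last letter.} Set $h=s_1\cdots s_{n-1}$. Concatenating a path of length $n-1$ to $h$ with an arbitrary path from $h$ to $g$ gives, for $n\ge 1$,
\[
G_\mu(o,g)=\sum_k\mu^{\ast k}(g)\ \ge\ \mu^{\ast(n-1)}(h)\sum_{k}\mu^{\ast k}(s_n)\ \ge\ (\#S)^{-(n-1)}G_\mu(o,s_n).
\]
Here the first inequality uses $\mu^{\ast k}\ge \mu^{\ast(n-1)}(h)\,\delta_h\ast\mu^{\ast(k-n+1)}$, and the second uses $\mu^{\ast(n-1)}(h)\ge(\#S)^{-(n-1)}$. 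It therefore suffices to show
\[
G_\mu(o,s)\ \ge\ \frac{1}{\#S-1}=\sum_{j\ge1}(\#S)^{-j}\qquad\text{for every } s\in S.
\]

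\textbf{Step 2: the one-letter estimate, via a renewal decomposition.} Conditioning on the last step that lands on $s$ gives
\[
G_\mu(o,s)\ \ge\ \mu(s)\,G_\mu(o,o)=\frac{G_\mu(o,o)}{\#S},
\]
because every excursion from $o$ back to $o$ can be followed by the step $s$. Combined with Step 1, this reduces the whole lemma, including the case $g=o$, to the single inequality
\[
G_\mu(o,o)\ \ge\ \frac{\#S}{\#S-1},\qquad\text{equivalently}\qquad \bbP(\text{return to } o)\ \ge\ (\#S)^{-1}.
\]

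\textbf{Step 3: the return-probability bound, which I expect to be the main obstacle.} My first attempt will use the convolution identity $G_\mu\ast\mu=G_\mu-\delta_o$ evaluated at $o$:
\[
G_\mu(o,o)-1=\frac1{\#S}\sum_{s\in S}G_\mu(o,s^{-1}).
\]
Since $S$ generates $\G$ as a semigroup, each $s^{-1}$ is a positive word in $S$. I will then try to bound $\sum_{s\in S}G_\mu(o,s^{-1})$ from below by $\frac{1}{\#S-1}\cdot\#S$, comparing it with $\sum_{s}G_\mu(o,s)$ via Step 2 and the maximum principle $G_\mu(o,x)\le G_\mu(o,o)$.

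If this route does not close, I would instead keep the sharper renewal form in Step 2. That is, I would not discard the first-passage structure $G_\mu(o,s)=F(o,s)G_\mu(o,o)$, and would aim directly for $F(o,s)\ge \bigl((\#S-1)G_\mu(o,o)\bigr)^{-1}$. I would then sum over all paths that realise $s$ after extra loops through $S$ to recover the geometric tail $\sum_{j\ge 1}(\#S)^{-j}$.

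This last inequality is the delicate point of the argument; Steps 1 and 2 are routine.
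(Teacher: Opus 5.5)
Your Steps 1 and 2 are correct, and together they show more than you claim: the lemma is \emph{equivalent} to its own case $g=o$, namely $G_\mu(o,o)\ge \#S/(\#S-1)$, i.e.\ $\mathbb{P}(\text{return to } o)\ge(\#S)^{-1}$. The gap is Step 3, and it cannot be filled. For a finite $S$ that generates $\Gamma$ only as a semigroup, this inequality is false.

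Take $\Gamma=\mathbb{Z}/3*\mathbb{Z}/3*\mathbb{Z}/3$ with $S=\{s_1,s_2,s_3\}$ the three generators of order $3$. A torsion-free version is $\Gamma=F_2=\langle a,b\rangle$ with $S=\{a,b,(ab)^{-1}\}$, which generates as a semigroup since $a^{-1}=b(ab)^{-1}$ and $b^{-1}=(ab)^{-1}a$; its Cayley complex for $\langle a,b,c\mid abc\rangle$ is the same tree of triangles, so the computation below is identical. The Cayley graph is a tree of directed triangles, three at each vertex, and the walk advances one edge along a uniformly chosen triangle. Let $p$ be the probability of ever reaching the forward neighbour $xs$ from $x$. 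After stepping into one of the other two triangles at $x$, the walk can only return to $x$ by going around that triangle (probability $p^2$), so $p=\tfrac13+\tfrac23p^3$. Transience rules out the root $p=1$, so $p=(\sqrt3-1)/2$, and the return probability is $p^2=1-\sqrt3/2<\tfrac13$. Hence $G_\mu(o,o)=2/\sqrt3<\tfrac32$, and the lemma already fails at $g=o$: its right-hand side is $\log\bigl(4/(3\sqrt3)\bigr)<0=d_\mu(o,o)$.

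Accordingly, none of your routes can work:
\begin{itemize}
\item The maximum principle only gives upper bounds on $G_\mu$.
\item Expanding $s^{-1}$ as a positive word of length $m$ and using Steps 1--2 only gives $G_\mu(o,s^{-1})\ge(\#S)^{-m}G_\mu(o,o)$, which is too weak as soon as some $s^{-1}\notin S$.
\item Your fallback $F(o,s)\ge((\#S-1)G_\mu(o,o))^{-1}$ is just the Step 1 target rewritten, and it fails in this example ($p<\sqrt3/4$).
\end{itemize}

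What is missing is the hypothesis $S=S^{-1}$. It costs nothing in the paper, because $S_l$ is defined through a symmetric $d\in\mathcal{D}_\Gamma$. With it, Step 3 is one line: $\mathbb{P}(\text{return})\ge\mu^{\ast 2}(o)=\sum_{s\in S}\mu(s)\mu(s^{-1})=(\#S)^{-1}$. Equivalently, your identity gives $G_\mu(o,o)-1=\frac{1}{\#S}\sum_{s\in S}G_\mu(o,s)\ge G_\mu(o,o)/\#S$ by Step 2.

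The paper's one-line proof does not get around this either. It uses $\mu^{\ast k}(g)\ge(\#S)^{-k}$ for every $k\ge|g|_S$, which fails for parity reasons (e.g.\ $F_2$ with $S=\{a^{\pm1},b^{\pm1}\}$ and $k=|g|_S+1$). Its correct version, again under $S=S^{-1}$, pads a geodesic word for $g$ with $j$ pairs $ss^{-1}$. This gives $\mu^{\ast(n+2j)}(g)\ge(\#S)^{-n-j}$ for $n=|g|_S$, and summing over $j\ge0$ yields exactly the stated bound. Once symmetry is assumed, your renewal route (Steps 1--2 plus the one-line Step 3) is an equally valid alternative, and it isolates the single place where symmetry enters.
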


\begin{proof}
    The assumptions on $\G$ imply that $G_\m(o,o)$ is finite and $\#S\geq 2$. Then for an arbitrary $g\in \G$ we have
    \begin{align*}
        e^{-d_\m(o,g)} & =G_\m(o,o)^{-1}G_\m(o,g)=G_\m(o,o)^{-1}\cdot \sum_{k\geq 0}{\m^{\ast k}(g)} \\
        & =G_\m(o,o)^{-1} \cdot \sum_{k\geq |g|_S}{\m^{\ast k}(g)}\\
        & \geq G_\m(o,o)^{-1}\cdot \sum_{k\geq |g|_S}{\frac{1}{(\#S)^k}} =G_\m(o,o)^{-1}\frac{(\#S)^{-|g|_S}}{1-(\#S)^{-1}}.    \end{align*}
        This last inequality implies the lemma.
\end{proof}

Our next proposition is the key inequality in the proof of \Cref{thm.greendense}, and relies crucially on the hyperbolicity of $\G$.

\begin{proposition}\label{prop.greenmetricineqHYP}
    Let $\G$ be a non-elementary hyperbolic group and consider $d\in \calD_\G$. Then there exists $\del_0$ satisfying the following. Let $\del>\del_0$ and for $l$ large enough define $S_l=S_l^\del:=\{g\in \G \colon l-\del< d(o,g)\leq l\}$. Then $S_l$ generates $\G$ (as a group) and if $\m_l$ is the uniform probability measure supported on $S_l$, then there exists $\k_l>0$ such that for all $g\in \G$ we have \begin{equation}\label{eq.propdGreen}
            d_{\m_l}(o,g)\geq \frac{v_dd(o,g)}{2}+|g|_{S_l}\cdot \left(\log\left(\frac{\#S_l}{D_0\left(\frac{l}{\del}+1\right)}\right)-\frac{v_d l}{2}\right)-\k_l.
        \end{equation}
\end{proposition}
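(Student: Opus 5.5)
We need a lower bound on $d_{\m_l}(o,g)$, i.e.\ an upper bound on $G_{\m_l}(o,g)=\sum_{k}\m_l^{\ast k}(g)$. Since $\m_l^{\ast k}(g)=(\#S_l)^{-k}N_k(g)$, where $N_k(g)$ is the number of words $s_1\cdots s_k=g$ with $s_i\in S_l$, the plan is to bound $N_k(g)$ by counting. Generation of $\G$ by $S_l$ for large $l$ is \Cref{lem.worddensesphere}. Terms with $k<|g|_{S_l}$ vanish, so it suffices to show, for $k\geq |g|_{S_l}$,
\[
N_k(g)\leq \kappa\,\bigl(D_0(l/\del+1)\bigr)^k e^{v_d(kl-d(o,g))/2}.
\]
Dividing by $(\#S_l)^k$ gives a term $e^{-v_dd(o,g)/2}\,q^k$ with $q=D_0(l/\del+1)e^{v_dl/2}/\#S_l$. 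By \Cref{coro.convEGR} we have $\#S_l\approx e^{v_dl}$, so $q<1$ for $l$ large. Summing the geometric series from $k=|g|_{S_l}$ gives $G\lesssim e^{-v_dd(o,g)/2}q^{|g|_{S_l}}/(1-q)$. Taking $-\log$ yields exactly \eqref{eq.propdGreen}, with $\k_l$ absorbing $\log(1-q)$, $\log G_{\m_l}(o,o)$ and $\log\kappa$.

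The key counting step is as follows. A word $s_1\cdots s_k=g$ gives a path $o=x_0,x_1,\dots,x_k=g$ in $\G$ whose consecutive $d$-jumps lie in $(l-\del,l]$. Set $D=kl-d(o,g)\geq 0$ as the total ``defect.'' By hyperbolicity, the points $x_i$ far from a geodesic $[o,g]$ incur defect. Let $t_i$ be the projection of $x_i$ onto $[o,g]$ and $r_i$ the distance from $x_i$ to $[o,g]$ (Gromov products). The triangle-type inequality then gives roughly
\[
D\gtrsim \sum_i\bigl(2r_i+\text{backtracking}\bigr)-O(k\delta_{\rm hyp}).
\]
Given the projection data, the number of choices for $x_i$ is controlled by the counting results of \cite{CDS}. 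The number of points at distance about $r$ from a given point of the geodesic is about $e^{v_d r}$, and the number of points at distance $\approx l$ from $x_{i-1}$ whose projection position is prescribed within the window is about $e^{v_d\cdot(\text{excess})/2}$ times a bounded factor. Choosing which of the $\sim l/\del$ windows of width $\del$ each projection increment falls into gives the combinatorial factor $(l/\del+1)$ per step. Multiplying these bounds and using $\sum(\text{excess})\leq D$ produces the factor $e^{v_dD/2}$. The factor $1/2$ arises because a point at height $r$ costs $2r$ in defect but contributes only $e^{v_d r}$ in multiplicity.

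The main obstacle is this per-step counting estimate uniformly in $l$: we need $\#\{x:\ d(x_{i-1},x)\in(l-\del,l],\ \text{position data fixed}\}\leq D_0\,e^{v_d\,\mathrm{excess}_i/2}$ with $D_0$ independent of $l$ and $k$. It requires purely exponential shadow/annulus counts, namely $\#\{x: d(o,x)\in(n-\del,n],\ x\in\text{shadow}\}\asymp e^{v_d(\cdots)}$ from \cite{CDS}, together with careful bookkeeping of the hyperbolicity constants. This is also where $\del>\del_0$ enters, so that annuli of width $\del$ are nonempty and have comparable counts. I would first establish a two-point version, bounding the number of $x$ with $d(a,x)\approx l$ and $d(x,b)\approx d(a,b)-l+e$ by $D_0e^{v_de/2}$, and then iterate it along the word.
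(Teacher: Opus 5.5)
Your argument matches the paper's proof. Both write $\mu_l^{\ast n}(g)=\#\mathcal{O}_l(g,n)(\#S_l)^{-n}$, bound this count by $C^n(l/\del+1)^n e^{v_d(nl-d(o,g))/2}$, use \Cref{coro.convEGR} to get the ratio $q<1$ for large $l$, sum the geometric series from $n=|g|_{S_l}$, and take generation from \Cref{lem.worddensesphere}. The only difference is that the paper imports the counting estimate directly from \cite[Lemma~B.3]{CDS}, applied to $X=(\G,d)$ with the remark that cocompactness and geodesicity can be relaxed, whereas you re-derive it by iterating a two-point estimate with $a=x_{i-1}$, $b=g$; this works because the defects $e_i=d(x_i,g)-d(x_{i-1},g)+l\in[0,2l]$ sum exactly to $nl-d(o,g)$.
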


\begin{proof}
    Given $l,n,\del>0$ and $g\in \G$, we consider the set 
    $$\calO_l^\del(g,n)=\{(g_1,\dots,g_n)\in (S_l^\del)^{n}\colon g_1\cdots g_n=g\}.$$
     From \cite[Lemma~B.3]{CDS} we can find $\del_0'>0$ such that if $l>\del>\del_0'$, then there exists $C=C_\del>0$ satisfying 
    \begin{equation}\label{eq.ineqO}
        \#\calO_l(g,n)\leq C^n\left(\frac{l}{\del}+1\right)^ne^{v_d\left(\frac{nl-d(o,g)}{2}\right)}
    \end{equation}
    for all $g\in \G$. The proof of \cite[Lemma~B.3]{CDS} assumes that $d$ is induced by a geometric action of $\G$ on a geodesic metric space $X$.  However, cocompactness can be relaxed to coboundedness and the space $X$ only requires to be roughly geodesic. Therefore, we can apply that lemma to the left action of $\G$ on $X=(\G,d)$.

    Now we set $\del_0:=\max\{2(\lceil{\al\rceil}+\al+2),\del_0'\}+1$, where $\al$ is a rough geodesic constant for $d$. Hence \Cref{lem.worddensesphere} implies that $S_l=S_l^\del$ generates $\G$ for $\del>\del_0$. Moreover, for any $l,n>0$ and $g\in \G$ we have
    $$\m_{l}^{\ast n}(g)=\sum_{(g_1,\dots,g_n)\in \calO_{l}(g,n)}{\m_l(g_1)\cdots \m_l(g_n)}=\#\calO_{l}(g,n)(\#S_l)^{-n}.$$
    Let $C=C_\del$ be the constant from \Cref{eq.ineqO}. By \Cref{coro.convEGR} we have that $\log (\# S_l) /l$ converges to $v_d$ as $l$ tends to infinity, so that for all $l$ large enough we have $C(\frac{l}{\del}+1)e^{\frac{v_dl}{2}}(\# S_l)^{-1}<1$. Therefore, there exists $\k=\k_l>0$ such that for all $g\in \G$ we have
    \begin{align*}
         e^{-d_{\m_l}(o,g)} & =G_{\m_l}(o,o)^{-1}\cdot \sum_{n\geq |g|_{S_l}}{\m_l^{\ast n}(g)}\\
         & \leq G_{\m_l}(o,o)^{-1}\cdot 
         \sum_{n\geq |g|_{S_l}}{C^n\left(\frac{l}{\del}+1\right)^ne^{v_d\left(\frac{nl-d(o,g)}{2}\right)}(\# S_l)^{-n}}\\
         & \leq e^{\k_l}e^{\frac{v_d d(o,g)}{2}}{\left(C\left(\frac{l}{\del}+1\right)e^{\frac{v_dl}{2}}(\# S_l)^{-1} \right)^{|g|_{S_l}} }.
    \end{align*}
    After rearranging, this inequality becomes \Cref{eq.propdGreen}. 
\end{proof}

\begin{proof}[Proof of \Cref{thm.greendense}]
    Let $d\in \calD_\G$, which we assume to be $\al$-roughly geodesic. We fix $\del>\max(\del_0,2(\lceil \al\rceil+\al+2)),$ so that both \Cref{lem.worddensesphere} and  \Cref{prop.greenmetricineqHYP} apply. For $l>0$ large enough and $g\in \G$ this gives us 
    \begin{align*}
                d_{\m_l}(o,g) &\geq \frac{v_dd(o,g)}{2}+|g|_{S_l}\cdot \left(\log\left(\frac{\#S_l}{C\left(\frac{l}{\del}+1\right)}\right)-\frac{v_d l}{2}\right)-\k_l\\
                & \geq  \frac{v_dd(o,g)}{2}+d(o,g)\cdot \left(\frac{\log(\# S_l)}{l}-\frac{\log(C\left(\frac{l}{\del}+1\right))}{l}-\frac{v_d}{2}\right)-\k_l\\
                & = d(o,g)\cdot \left(\frac{\log(\# S_l)}{l}-\frac{\log(C\left(\frac{l}{\del}+1\right))}{l}\right)-\k_l.
    \end{align*}    
In particular, we obtain
    \begin{equation}\label{eq.ineqDil1}
        \Dil(d,d_{\m_l})\leq \left(\frac{\log{\# S_l}}{l}-\frac{\log(C(\frac{l}{\del}+1))}{l}\right)^{-1},
    \end{equation}
 and the right hand side of \Cref{eq.ineqDil1} tends to $v_d^{-1}$ as $l$ tends to  $\infty$ by \Cref{coro.convEGR}.

 Also, applying \Cref{lem.greeneasyineq} to $S=S_l$ and \Cref{lem.worddensesphere} we get
\begin{align*}
    d_{\m_l}(o,g)& \leq \log(\# S_l)|g|_{S_l}+\log(G_{\m_l}(o,o)^{-1}(1-(\#S_l)^{-1})) \\
    & \leq \frac{\log(\# S_l)}{(l-\lceil \al\rceil)}\cdot d(o,g)+\log(G_{\m_l}(o,o)^{-1}(1-(\#S_l)^{-1}))+\frac{M}{(l-\lceil \al \rceil)}, 
\end{align*}
and hence
\begin{equation}\label{eq.greenDil2}
    \Dil(d_{\m_l},d)\leq  \frac{\log(\# S_l)}{(l-\lceil \al\rceil)}.
\end{equation}
 The right hand side of \Cref{eq.greenDil2} tends to $v_d$ as $l$ tends to infinity by \Cref{coro.convEGR}, implying that  $\Del([d_{\lam_l}],[d])=\log(\Dil(d_{\lam_l},d)\Dil(d,d_{\lam_l}))$ tends to $0$ as $l$ tends to infinity, as desired.
\end{proof}

\medskip

\bibliographystyle{alpha}
\bibliography{references.bib}

\end{document}